\newif\ifarxiv
\newif\ifsubmission
\newcommand{\arxivorsubmission}[2]{\ifarxiv #1\else #2\fi}
\newcommand{\arxivonly}[1]{\arxivorsubmission{#1}{}}
\definecolor{darkgreen}{rgb}{0,0.45,0}
\definecolor{darkred}{rgb}{0.75,0,0}
\definecolor{darkblue}{rgb}{0,0,0.6}
\theoremstyle{plain}
\newtheorem{theorem}{Theorem}[section]
\newtheorem{proposition}[theorem]{Proposition}
\newtheorem{lemma}[theorem]{Lemma}
\newtheorem{corollary}[theorem]{Corollary}
\newtheorem*{theorem*}{Theorem}
\theoremstyle{definition}
\newtheorem{definition}[theorem]{Definition}
\newtheorem{remark}[theorem]{Remark}
\newtheorem{example}[theorem]{Example}
\newtheorem{construction}[theorem]{Construction}
\newenvironment{varnotation}[1] 
  {%
   \addtocounter{theorem}{-1}
   \notation}
  {\endnotation}
\newcommand{\todo}[1]{} 
\newcommand{\optionaltodo}[1]{} 
\newcommand{\defemph}[1]{\textbf{#1}} 
\newenvironment{maxwidth}{\begin{adjustbox}{max width=\linewidth}}{\end{adjustbox}}
\newenvironment{maxwidthtikzpicture}[1][]
  {\begin{maxwidth}\begin{tikzpicture}[#1]}
  {\end{tikzpicture}\end{maxwidth}}
\newenvironment{maxwidthtikzcd}[1][]
  {\begin{maxwidth}\begin{tikzcd}[#1]}
  {\end{tikzcd}\end{maxwidth}}
\tikzset{
  commutative diagrams/arrow style=tikz,
  commutative diagrams/diagrams={row sep=large},
}
\tikzset{cd-style/.style={commutative diagrams/every diagram}}
\tikzset{cd-arrow-style/.style={commutative diagrams/.cd, every arrow, every label}}
\newcommand{\arr}[1][]{\draw[cd-arrow-style,#1]}
\tikzset{
  label/.style n args={2}{
    edge node={node [
      execute at begin node=\iftikzcd@mathmode$\fi,
      execute at end node=\iftikzcd@mathmode$\fi,
      /tikz/commutative diagrams/.cd,every label,
      #2
      ] {#1}}
  }
}
\tikzset{fibtip/.tip={Triangle[open,angle=60:4.5pt]}}
\tikzset{tfibtip/.tip={Bar[sep]Triangle[open,angle=45:4pt]}} 
\tikzset{drpb/.style={commutative diagrams/.cd, dr, phantom, "\lrcorner", very near start}}
\tikzset{fib/.code={\pgfsetarrowsend{fibtip}}}
\tikzset{fibb/.code={\pgfsetarrowsend{fibbtip}}}
\tikzset{tfib/.code={\pgfsetarrowsend{tfibtip}}}
\tikzset{inj/.code={\pgfsetarrowsstart{c}}}
\tikzset{tcof/.style={tail}}
\tikzset{zigzag/.style={commutative diagrams/rightsquigarrow}}
\tikzset{lw/.style={"lw"{#1}},lw/.default={very near end}}
\tikzset{lw'/.style={"lw"'{#1}},lw'/.default={very near end}}
\tikzset{weq/.style={"\sim"}}
\tikzset{weq'/.style={"\sim"'}}
\tikzset{sup/.style={label={#1}{auto=left,pos=1}}}  
\tikzset{sub/.style={label={#1}{auto=right,pos=1}}}
\newcommand{\generalto}[2]{ \mathrel{\mkern-1mu
  \tikz[baseline={([yshift=-0.55ex]a.south)}]{%
    \node[minimum width=1.5em,align=center,inner xsep=0.5ex,inner ysep=0.15ex] (a) {$\scriptstyle #2$};
    \draw[#1] (a.south west) -- (a.south east);}
 \mkern-1mu}}
\newcommand{\generalfrom}[2]{ \mathrel{\mkern-1mu
  \tikz[baseline={([yshift=-0.55ex]a.south)}]{%
    \node[minimum width=1.5em,align=center,inner xsep=0.5ex,inner ysep=0.15ex] (a) {$\scriptstyle #2$};
    \draw[#1] (a.south east) -- (a.south west);}
  \mkern-1mu}}
\renewcommand{\to}[1][]{ \generalto{->}{#1} }
\newcommand{\from}[1][]{ \generalfrom{->}{#1} }
\newcommand{\fibto}[1][]{ \generalto{fib}{#1} }
\newcommand{\fibbto}[1][]{ \generalto{fibb}{#1} }
\newcommand{\extto}[1][]{ \generalto{>->}{#1} }
\newcommand{\injto}[1][]{ \generalto{->,inj}{#1} }
\newcommand{\lwto}[1][]{ \generalto{->}{#1}_{\lw} }
\newcommand{\weqto}[1][]{\@ifmtarg{#1}{\generalto{->}{\sim}}{\NotYetDefined}}
\newcommand{\weqfrom}[1][]{\@ifmtarg{#1}{\generalfrom{->}{\sim}}{\NotYetDefined}}
\newcommand{\zigzagto}[1][]{ \generalto{zigzag}{#1} }
\newenvironment{verticalhack}
  {\begin{array}[b]{@{}c@{}}\displaystyle}
  {\\\noalign{\hrule height0pt}\end{array}}
\newcommand{\Abar}{\bar{A}}
\renewcommand{\AA}{{\vec A}}
\newcommand{\BB}{{\vec B}}
\newcommand{\Bbar}{\bar{B}}
\newcommand{\C}{\mathbf{C}}
\newcommand{\CC}{{\vec C}}
\newcommand{\D}{\mathbf{D}}
\newcommand{\EE}{{\vec E}}
\newcommand{\I}{\mathcal{I}}
\newcommand{\J}{\mathcal{J}}
\newcommand{\N}{\mathbb{N}}
\newcommand{\T}{\mathbf{T}}
\newcommand{\W}{\mathcal{W}}
\newcommand{\yon}{\mathbf{y}}
\newcommand{\del}{\partial}
\newcommand{\bdry}[1]{{\del #1}}
\newcommand{\CwA}{\mathrm{CwA}}
\newcommand{\Set}{\mathrm{Set}}
\newcommand{\FinSet}{\mathrm{FinSet}}
\newcommand{\SpanCat}{\mathrm{Span}}
\newcommand{\WkMapCat}{\mathrm{WkMap}}
\newcommand{\EqvCat}{\mathrm{Eqv}}
\newcommand{\supfunctor}[2]{\ifthenelse{\equal{#2}{}}%
  {(-)^\mathrm{#1}}%
  {{#2}^\mathrm{#1}}%
}
\newcommand{\Span}[1][]{\supfunctor{Span}{#1}}
\newcommand{\elem}[2][]{\int_{#1} #2} 
\newcommand{\slice}[2]{( #2 \downarrow #1 ) }
\newcommand{\strictslice}[2]{\partial \slice{#1}{#2} }
\newcommand{\cotensor}[2]{#1 \mathbin{\hat{\pitchfork}} #2}
\DeclareMathOperator{\Cyl}{Cyl}
\newcommand{\op}{\mathrm{op}}
\DeclareMathOperator{\ob}{Ob}
\newcommand{\id}{\mathrm{id}}
\newcommand{\ev}{\mathsf{ev}}
\newcommand{\lw}{\mathrm{lw}} 
\newcommand{\str}{\mathrm{str}} 
\DeclareMathOperator{\cod}{cod}
\newcommand{\wkreedy}{\mathrm{wR}}
\newcommand{\reedy}{\mathrm{R}}
\DeclareMathOperator{\im}{im}
\newcommand{\iso}{\cong}
\renewcommand{\equiv}{\simeq}
\newcommand{\homot}{\sim}
\newcommand{\Id}{\mathsf{Id}}
\newcommand{\refl}{\mathsf{r}}
\newcommand{\Piext}{\mathsf{\Pi}_{\mathsf{ext}}}
\newcommand{\synsplit}{\mathsf{split}}
\newcommand{\synJ}{\mathsf{J}}
\newcommand{\pair}{\mathsf{pair}}
\newcommand{\rec}{\mathsf{rec}} 
\newcommand{\funext}{\mathsf{funext}}
\newcommand{\funextcomp}{\mathsf{funext\textsf{-}comp\textsf{-}prop}}
\newcommand{\ext}{\mathrm{ext}}
\newcommand{\Ty}{\mathrm{Ty}}
\newcommand{\barbar}[1]{\overline{\overline{#1}}}
\newcommand{\restr}[1]{|_{#1}} 
\newcommand{\descend}[2]{#1\! \urcorner \, #2} 
\newcommand{\disc}[1]{{#1}^{\circ}}
\newcommand{\copsh}[1]{\Set^{#1}}
\newcommand{\jpaacite}[2][]{\@ifmtarg{#1}%
  {\cite{\arxivorsubmission{#2}{#2-for-jpaa}}}%
  {\cite[#1]{\arxivorsubmission{#2}{#2-for-jpaa}}}}
\begin{document}

\ifsubmission \begin{frontmatter} \fi
  
\title{Homotopical inverse diagrams in categories with attributes}

\ifarxiv
  \author[K.~Kapulkin]{Krzysztof Kapulkin}
  \address{Dept.\ of Mathematics\\The University of Western Ontario\\London, Ontario}
  
  \author[P.~LeF.~Lumsdaine]{Peter LeFanu Lumsdaine}
  \address{Dept.\ of Mathematics\\Stockholm University\\Stockholm, Sweden}
\else
  \author[1]{Krzysztof Kapulkin}
  \author[2]{Peter LeFanu Lumsdaine}
\fi
  
 
\begin{abstract}
 We define and develop the infrastructure of \emph{homotopical inverse diagrams} in categories with attributes.

  Specifically, given a category with attributes $\C$ and an ordered homotopical inverse category $\I$, we construct the category with attributes $\C^\I$ of \emph{homotopical diagrams} of shape $\I$ in $\C$ and \emph{Reedy types} over these;
  and we show how various logical structure ($\Pi$-types, identity types, and so on) lifts from $\C$ to $\C^\I$.
  This may be seen as providing a general class of diagram models of type theory.
   
  In a companion paper ``The homotopy theory of type theories'' (\arxivorsubmission{\href{http://arxiv.org/abs/1610.00037}{\path{arXiv:1610.00037}}}{Kapulkin, Lumsdaine, \emph{Advances in Mathematics}, 2018}), we apply the present results to construct semi-model structures on categories of contextual categories.
\end{abstract}

\begin{filecontents}{arxiv-metadata.txt}
TITLE:

Homotopical inverse diagrams in categories with attributes

AUTHORS:

Chris Kapulkin and Peter LeFanu Lumsdaine

ABSTRACT:
 We define and develop the infrastructure of homotopical inverse diagrams in categories with attributes.

  Specifically, given a category with attributes $C$ and an ordered homotopical inverse category $I$, we construct the category with attributes $C^I$ of homotopical diagrams of shape $I$ in $C$ and Reedy types over these;
and we show how various logical structure ($\Pi$-types, identity types, and so on) lifts from $C$ to $C^I$.
This may be seen as providing a general class of diagram models of type theory.
   
  In a companion paper "The homotopy theory of type theories" (arXiv:1610.00037), we apply the present results to construct semi-model structures on categories of contextual categories.

COMMENTS:

v3: various minor revisions; no change in theorem numbering.

CLASSES:

03B15 Higher-order logic and type theory (primary), 03G30 Categorical logic, topoi, 18C50 Categorical semantics of formal languages

[related classes considered: 18G55 Homotopical algebra, 55U35 Abstract and axiomatic homotopy theory]
\end{filecontents}

\ifarxiv \maketitle \fi

\setcounter{tocdepth}{1}
\setcounter{secnumdepth}{2}

\ifarxiv \tableofcontents \fi

\ifsubmission \end{frontmatter} \fi

\section{Introduction}

Diagram models are a well-established tool in both categorical logic and homotopical algebra.
For semantics of type theory (particularly intensional and homotopical type theory), however, diagram models are comparatively under-developed (with the exception of presheaf models).
The complication is that logical constructors in this setting are typically not strictly functorial, making it harder to lift them to diagram models.

Specific cases, however, such as spans \jpaacite{tonelli}, spreads \cite{martin-lof:spreads-seminars}, and the various categories considered in \jpaacite[\textsection 5]{kapulkin-lumsdaine:homotopy-theory-of-type-theories}, have shown that at least on certain domain categories, diagram models for intensional type theory should be viable.

Homotopy theory provides a well-established setup to deal with such non-functorial constructions and unify these special cases: the language of \emph{Reedy diagrams} on \emph{inverse categories}.

The first main contribution of the present paper is constructing Reedy diagram models on inverse categories in \emph{categories with attributes} (CwA’s), an algebraic formulation of type theories, and showing that these inherit various logical constructors from the original CwA.
Specifically, we show:

\begin{theorem*}
 Let $\C$ be a category with attributes, and $\I$ an inverse category.%
 \footnote{In fact the diagram model will depend on some extra structure on $\I$ --- certain \emph{orderings} --- but every inverse category admits such structure.}
 Then the diagram category $\C^\I$, together with the presheaf of \emph{Reedy types} over $\I$ in $\C$, is again a category with attributes.
 If in addition $\C$ carries identity, $\Sigma$-, unit-, or $\Pi$-types, possibly with functional extensionality, then so does $\C^\I$.
\end{theorem*}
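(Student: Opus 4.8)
The plan is to run a transfinite induction using the ordering of $\I$ (which refines the rank): objects of $\C^\I$, Reedy types over them, context extension, reindexing, and---later---the type-formers are all built one object of $\I$ at a time, and at each stage both the construction and its verification reduce to the corresponding data and axioms of $\C$, applied over a \emph{matching object}, together with the inductive hypothesis. The feature that makes this work inside a CwA---where general limits need not exist---is that the matching objects can be taken to be iterated reindexings of context projections: for $X$ in $\C^\I$ one arranges that each $X_i$ comes presented as a context extension $(M_iX).A_i$ of its matching object $M_iX=\lim_{\strictslice{\I}{i}}X$ (the limit over the category of non-identity arrows out of $i$), so that each leg $X_i\to M_iX$ is a context projection and each $M_iX$ is assembled, one arrow of $\strictslice{\I}{i}$ at a time using the ordering, as a pullback of such projections---which are exactly the reindexings supplied by $\C$. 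Note also that $\strictslice{\I}{i}$ is again an ordered inverse category of strictly smaller size, so the matching-object construction itself belongs to the same induction.

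Concretely, I would define by induction on rank: the objects and morphisms of $\C^\I$ (functors $X\colon\I\to\C$ with each $X_i$ presented as above, and natural transformations); the matching objects $M_iX$ with their projections $X_i\to M_iX$; and the presheaf $\Ty_{\C^\I}$, where a Reedy type $A$ over $X$ assigns to each $i$ a type $A_i\in\Ty_\C\bigl(X_i\times_{M_iX}M_i(X.A)\bigr)$, with the extension $X.A\in\C^\I$ defined simultaneously by $(X.A)_i:=\bigl(X_i\times_{M_iX}M_i(X.A)\bigr).A_i$ (the pullback making sense because $M_i(X.A)$ depends only on lower-rank data). The CwA axioms---the terminal object (the constant diagram on the terminal object of $\C$, all of whose matching objects are terminal), the presheaf structure on $\Ty_{\C^\I}$ (reindexing of Reedy types along morphisms of $\C^\I$), the universal property of $X.A$, and stability of everything under reindexing---then each follow by the same induction, invoking at object $i$ the corresponding axiom of $\C$ for $A_i$ over the matching object, plus the inductive hypothesis on $\strictslice{\I}{i}$.

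For the logical structure I would treat $\unit$-, $\Id$-, $\synSigma$- and $\synPi$-types in turn, in each case giving the value of the lifted constructor at $i$ by an expression in the $\C$-level constructor that also involves the matching data. The underlying calculational fact is a commutation lemma expressing $M_i$ of the output in terms of $M_i$ of the inputs---for instance that, by stability of $\synSigma$ in $\C$, $M_i(X.\synSigma_AB)$ is a $\synSigma$-type over $M_i(X.A)$ of the matching data of $B$, and similarly for $\unit$ and $\Id$---proved by induction on rank from the stability of the relevant constructor in $\C$; granted this, the stagewise formula is well-typed, assembles into a Reedy type, and satisfies the computation and uniqueness rules because it does so at each object. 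Functional extensionality is a propositional condition---that a certain canonical map is an equivalence---so once $\synPi$ and $\Id$ are available the witness in $\C^\I$ is produced stagewise, and ``being an equivalence'' is itself among the already-lifted operations, which reduces the verification levelwise to $\funext$ in $\C$.

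The main obstacle is the $\synPi$-case, on account of its contravariance in the domain. Since $B$ lives over $X.A$, its matching data at $i$ is indexed over $M_i(X.A)$, which mixes the $X$- and $A$-information; and whereas for $\unit$, $\Id$ and $\synSigma$ the stagewise value stays close to a single application of the $\C$-level former over the matching context (up to a reindexing), the value $(\synPi_AB)_i$ must be built as an iterated $\C$-level $\synPi$ over that matching data composed with a reindexing, and one must check that this genuinely commutes with the limit defining $M_i$---rather than merely mapping to or from it---and still validates the $\synPi$ computation and uniqueness rules. Getting this to work is exactly what pins down the precise bookkeeping in the definitions of the matching objects and of Reedy types: one chooses those definitions so that the comparison maps arising for $\synPi$ are literal instances of the $\synPi$-stability equations of $\C$. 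Once the setup is fixed in that way, each remaining verification is a routine rank-induction.
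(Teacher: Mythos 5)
Your overall architecture (induction along the ordering, matching objects built as iterated context extensions so that no genuine limits are needed, stagewise reduction to the structure of $\C$) matches the paper's, and your account of the $\Pi$-case --- an iterated $\C$-level $\Pi$ over the matching data of $A$ and $B$, with the bookkeeping arranged so the comparison maps are instances of $\Pi$-stability --- is essentially what the paper does. But there is a genuine gap in your treatment of $\Id$, $\Sigma$ and unit types: the ``commutation lemma'' you rest them on is false. Stability of a constructor in $\C$ says it commutes with \emph{reindexing along a single map}; a matching object, however, is an iterated pullback over the whole boundary $\bdry{i}$, i.e.\ a multi-object limit, and such limits do not commute with $\Sigma$ or $\Id$. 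Concretely, for $\I$ the walking span and $i=01$: the matching object of $\Sigma_AB$ is $\Gamma_{01}.\Sigma_{A_0}B_0.\Sigma_{A_1}B_1$, while the $\C$-level data $A_{01}$, $B_{01}$ live over $\Gamma_{01}.A_0.B_0.A_1.B_1$; these contexts are related by the pairing map but are not equal or isomorphic, so the ``single application of the $\C$-level former over the matching context, up to a reindexing'' does not land in $\Ty$ of the matching object that the lower-rank data has already forced on you. The same happens for $\Id_A$: the naive $\Id_{A_i}$ lives over $M_iA.A_i.A_i$, not over $M_i\Id_A$.

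The missing idea is the paper's elimination-structure/descent machinery. One shows by the same Reedy induction that the constructor maps ($\refl_A$, $\pair_{A,B}$, $\star_\Gamma$) carry elimination structures on each comparison component; that elimination structures are preserved by the Reedy-limit construction, so the induced maps such as $M_i\pair : M_i(A.B)\to M_i(\Sigma_AB)$ again carry elimination structures; and that a map with an elimination structure admits a retraction, along which any type over its domain \emph{descends} to a type over its codomain restricting back to it on the nose. Defining $(\Sigma_AB)_i$ (resp.\ $(\Id_A)_i$) as a $\Sigma$- (resp.\ descended $\Id$-) type of the descended data is what makes the stagewise formula well-typed over the correct matching object, and the same elimination structures are then exactly what produce the eliminators $\synJ$, $\synsplit$, $\rec$ in $\C^\I$ (by choosing diagonal fillers level by level). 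A secondary issue: your formula $A_i\in\Ty_\C(X_i\times_{M_iX}M_i(X.A))$ uses absolute matching objects $M_iX$, $M_i(X.A)$, which need not exist in a CwA for an arbitrary base diagram $X$ (e.g.\ $M_{01}X=X_0\times X_1$ for the span); either you must work with relative matching objects directly, as the paper does, or you are tacitly restricting the base to Reedy-fibrant diagrams, which is not the category $\C^\I$ the theorem asserts.
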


Further mileage can be obtained by restricting to \emph{homotopical} diagrams, another tool borrowed from homotopy theory.

Specifically, given a category with a class of morphisms distinguished as “equivalences”, a diagram on it is \emph{homotopical} if it sends these equivalences to equivalences in the type-theoretic sense.
We show:

\begin{theorem*}
 Let $\C$ be a CwA with identity types, and $\I$ an inverse category equipped with a class of equivalences.
 Then the \emph{homotopical} diagrams and Reedy types forms a sub-CwA $\C^\I$ of the non-homotopical diagram CwA $\C^{\disc{\I}}$.
 If in addition $\C$ carries $\Sigma$-types or unit-types, then $\C^\I$ is closed under these in $\C^{\disc{\I}}$; and similarly for $\Pi$-types with functional extensionality, provided all maps in $\I$ are equivalences.
\end{theorem*}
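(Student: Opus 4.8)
The plan is to characterise $\C^\I$ as the sub-CwA of $\C^{\disc{\I}}$ consisting of those diagrams and Reedy types all of whose structural transition maps are type-theoretic equivalences in $\C$, and then to check that each piece of logical structure already built on $\C^{\disc{\I}}$ preserves this property. Throughout I will use the standard closure properties of type-theoretic equivalences in a CwA with identity types: they contain all identities, satisfy the two-out-of-three property, are closed under composition, and --- crucially --- are stable under arbitrary substitution. I will also use the fact that $\synSigma$ takes a pair of equivalences (on the base and fibrewise) to an equivalence, and that, in the presence of functional extensionality, $\synPi$ takes a fibrewise equivalence lying over an equivalence of bases to an equivalence. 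All of these hold in any CwA carrying the relevant structure, and it is precisely the identity-type hypothesis that makes the notion of \emph{equivalence in the type-theoretic sense} available at all.

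First I would make the homotopicality condition local. By Reedy induction along the chosen ordering on $\I$, a diagram $X \in \C^{\disc{\I}}$ amounts to a choice, for each object $i$, of a Reedy type $X_i$ over the matching context of $X\restr{\bdry i}$; and $X$ is homotopical --- i.e.\ it carries the distinguished maps of $\I$ to type-theoretic equivalences --- precisely when the comparison map induced on the values of $X$ by each distinguished map is an equivalence, a condition testable one matching object at a time using two-out-of-three and stability under pullback. The same analysis shows that, for a homotopical diagram $X$, a Reedy type $A$ over $X$ makes $\extend{X}{A}$ homotopical exactly when it satisfies a levelwise condition at each matching object. Granting this local characterisation, the sub-CwA claim is routine: contexts of $\C^\I$ are the homotopical diagrams and types over $X$ are those Reedy types making $\extend{X}{A}$ homotopical, so closure under $\ft$ is immediate; closure under substitution along a map $f \colon Y \to X$ of homotopical diagrams holds because the matching-object limits defining $f^\ast A$ are formed by pullback, under which equivalences are stable, while the transition maps of $f^\ast A$ arise from those of $A$ and of $f$ by composition and pasting, to which two-out-of-three applies; and the empty diagram is homotopical trivially.

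Next, closure under $\synSigma$- and $\unit$-types. The constant ($\unit$-type) Reedy type has identity transition maps and is therefore homotopical, so closure is immediate. For $\synSigma$, given homotopical $A$ over $X$ and homotopical $B$ over $\extend{X}{A}$, the Reedy type $\synSigma A B$ formed in $\C^{\disc{\I}}$ has, at each object, the value obtained by applying $\synSigma$ in $\C$ to the values of $A$ and of $B$ over the relevant matching contexts; its transition map along a distinguished $e \colon i \to j$ is, modulo the coherence isomorphisms already in place for $\C^{\disc{\I}}$, the induced map on total spaces determined by the transition maps of $A$ and of $B$, hence an equivalence since $\synSigma$ takes equivalences to equivalences. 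The local characterisation then yields that $\extend{X}{\synSigma A B}$ is homotopical.

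Finally --- and here is where the real work lies --- closure under $\synPi$-types when \emph{every} map of $\I$ is a distinguished equivalence. The Reedy type $\synPi A B$ at an object $i$ is assembled not from a plain $\synPi$ in $\C$ but from a generalised product $\Piext$ over an appropriate slice category of $\I$, reflecting that $\synPi$ of diagrams is not computed pointwise. Its transition map along $e \colon i \to j$ involves precomposition with the action of $e$ on $X$ and on $A$ and with the induced maps of matching contexts; it is covariant in $B$ but ``contravariant'' through the base and the domain type, so to conclude that it is an equivalence one needs the action of $e$ on $X$, on $A$, and on all the intervening matching data to be equivalences --- which is exactly what the hypothesis ``all maps of $\I$ are equivalences'' secures, since by the inductive hypothesis the whole matching diagram and its comparison maps then consist of equivalences. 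Granting that, the transition map can be written, using functional extensionality and the usual adjunction and base-change identities relating the $\Piext$-constructions along the various slices, as a composite of a $\synPi$ applied to the fibrewise equivalence coming from the action of $e$ on $B$ and a reindexing $\synPi$-equivalence along the equivalences coming from $X$ and $A$; both factors are equivalences --- the second by the fact that $\synPi$ takes a fibrewise equivalence over an equivalence to an equivalence, which uses funext --- so their composite is too. The principal obstacle is precisely this bookkeeping: verifying that the $\Piext$-construction over the slice categories, together with the coherence isomorphisms already fixed for $\C^{\disc{\I}}$, really does rearrange the transition map into such a composite of manifest equivalences. Everything else reduces to the closure properties of type-theoretic equivalences recalled at the outset.
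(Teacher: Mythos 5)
Your overall architecture matches the paper's for the sub-CwA part: homotopicality is a levelwise condition on comparison maps, closed under reindexing because equivalences between context extensions are stable under substitution in the base, and closed under context extension by right properness and 2-out-of-3 (Proposition~\ref{prop:homotopical-types}). For $\Sigma$- and unit types, however, you take a genuinely different and shakier route. The paper does not analyse the transition maps of $\Sigma_A B$ or $1_\Gamma$ at all: it observes that the constructor maps $\star_\Gamma : \Gamma \to \Gamma.1_\Gamma$ and $\pair : \Gamma.A.B \to \Gamma.\Sigma_A B$ are equivalences in any CwA with the relevant structure, lie in fibrant slices and hence are \emph{levelwise} equivalences (Lemma~\ref{lem:equivalences-are-levelwise}), so the new comprehension is levelwise equivalent to a diagram already known to be homotopical (Proposition~\ref{prop:diag-equiv-to-homot-diag}). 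Your version needs two claims the paper never has to prove: that ``$\Sigma$ takes a pair of equivalences to an equivalence'' and that the Reedy transition map of $\Sigma_A B$ \emph{is} that induced map --- which is not transparent, since $(\Sigma_A B)_i$ is built by descent along $M_i\pair$. Your statement that the unit Reedy type ``has identity transition maps'' is literally false for the construction of Proposition~\ref{prop:reedy-unit-types}, where $(1_\Gamma)_i$ lives over the matching object $M_i 1_\Gamma$, itself a tower of unit types; the conclusion survives, but not for the reason you give.

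The genuine gap is in the $\Pi$ case. You correctly name the ingredients (contravariant action of $\Pi$ on equivalences, covariant action via funext, and the all-maps-equivalences hypothesis), but the assertion that the transition map ``can be written as a composite'' of such equivalences is the entire content of Proposition~\ref{prop:pi-types-homotopical}, not bookkeeping around it, and it is not an on-the-nose identity. Two points you elide: first, the levelwise product $\Pi[A_{\yon i},B_{\yon i}]$ is not covariantly functorial in $i$, so there is no natural comparison; the paper must build, for each single $\alpha : i \to j$, a pentagon of equivalences through three intermediate products over different bases, and this pentagon commutes only \emph{up to homotopy} --- establishing that homotopy needs extensionality for iterated $\Pi$-types (Lemma~\ref{lem:iterated-pi-extensionality}) on top of Lemmas~\ref{lem:levelwise-pi-as-iterated}, \ref{lem:levelwise-pi-contravariant} and~\ref{lem:iterated-pi-covariant}. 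Second, the hypothesis that all maps of $\I$ are equivalences is not needed to make the action of the given $\alpha$ on $\Gamma$ and $A$ an equivalence (homotopicality already gives that); it is needed so that the map $C_{j/\I}\Gamma_j \to \Gamma\restr{j/\I}$, whose components are the actions of \emph{all} arrows out of $j$, is a levelwise equivalence, which is what permits replacing the product over the co-slice by a product over a constant diagram. As written, your argument stops exactly where the proof begins.
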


Constructions along these lines are familiar from abstract homotopy theory, presented in terms of \emph{fibration categories} \arxivorsubmission{\cite{brown:abstract-homotopy-theory,radulescu-banu,szumilo:two-models}}{\cite{brown:abstract-homotopy-theory,radulescu-banu-for-jpaa,szumilo:two-models-for-jpaa}} or comparable settings.
An application of such constructions to type theory has previously been given by Shulman \jpaacite{shulman:inverse-diagrams}, using \emph{type-theoretic fibration categories}; see Remark~\ref{rmk:related-work}(\ref{item:shulman}) for comparison with the present work.

This paper originated as a spin-off of another paper of ours \jpaacite{kapulkin-lumsdaine:homotopy-theory-of-type-theories}, developing a left semi-model structure on the category of contextual categories.
Section 5 of that paper requires four specific diagram models, which we originally planned to give individually as they appeared.
However, the details became sufficiently lengthy and repetitive to write out (and to read!) that it seemed more worthwhile to break them out into a separate paper, and give the construction in generality.

We therefore make use in the present paper of results from Sections 1--4 of \jpaacite{kapulkin-lumsdaine:homotopy-theory-of-type-theories}, while Sections 5--6 in that paper make use of the constructions presented here.

\subsection*{Organization.}

We begin in Section~\ref{sec:background} by setting up some general background material on categories with attributes and logical structure on them.
There are no substantively novel ideas, but some aspects of the presentation are new --- for instance, the systematic development of elimination structures.

In Section~\ref{sec:inverse-diagrams}, we introduce inverse categories, and set up the CwA’s of Reedy types, along with an auxiliary infrastructure of \emph{Reedy limits}, for constructing matching objects and the like.
Having done this, we show in Section~\ref{sec:logical-structure} how logical structure on the base CwA $\C$ (identity types, $\Pi$-types, and so on) induces similar structure on the diagram CwA’s $\C^\I$.
Along the way, we also characterise elimination structures and equivalences in $\C^\I$.

In Section~\ref{sec:homotopical-diagrams}, we consider the restriction to CwA’s of \emph{homotopical} diagrams, and show when the logical structure restricts from plain to homotopical diagram CwA’s.
Lastly, in Section~\ref{sec:fibrations-and-shit}, we give conditions on a functor $u \colon \J \to \I$ for the induced map $\C^\I \to \C^\J$ to be a \emph{local fibration} or \emph{local equivalence} in the sense of \jpaacite[\textsection 4]{kapulkin-lumsdaine:homotopy-theory-of-type-theories}.

We conclude in Section~\ref{sec:conclusion} by summarising the main constructions of the earlier sections for quick reference, briefly surveying the connections with related work, and noting various possible generalisations that we have not covered in this paper.


\section{Type-theoretic background} \label{sec:background}

We recall in this section some background on categories with attributes, and logical structure on them.
Most of the material is standard, but some aspects of the presentation --- for instance, the systematic use of elimination structures --- are novel.
\optionaltodo{Reword this so it doesn’t echo what we said in the overview so much!}

\subsection{Categories with Attributes}

\begin{definition}
 A \defemph{category with attributes} (CwA) $\C$ consists of:
 \begin{enumerate}
  \item a category $\C$, with a chosen terminal object $1$;%
    \footnote{We include the terminal object for the sake of following the established definition.  However, it is irrelevant to the constructions of the present paper, which work without alteration if it is omitted.}
  \item a functor $\Ty \colon \C^\op \to \Set$;
  \item an assignment to each $A \in \Ty(\Gamma)$, an object $\Gamma.A \in \C$ (the \defemph{extension} of $\Gamma$ by $A$) and a map $p_A \colon \Gamma.A \to \Gamma$ (which we distinguish graphically as $\fibto$);
  \item for each $A \in \Ty(\Gamma)$ and $f \colon \Delta \to \Gamma$, a map $f.A \colon \Delta.f^*A \to \Gamma.A$ such that the following square is a pullback:
  \[\begin{tikzcd}
    \Delta.f^*A \ar[r, "f.A"] \ar[d, fib, "p_{f^*A}"'] \arrow[drpb] & \Gamma.A \ar[d, fib, "p_A"] \\
    \Delta \ar[r, "f"] & \Gamma
  \end{tikzcd}\]  
 \end{enumerate}
\end{definition}

As defined, categories with attributes are models for an evident essentially algebraic theory.
A map of categories with attributes is a homomorphism of such models: explicitly, a functor $F \colon \C \to \C'$ and transformation $F_\Ty \colon \Ty_\C \to \Ty_{\C'}\cdot F$, strictly preserving all the structure (chosen terminal object, context extension, and so on).
Write $\CwA$ for the category of categories with attributes.%
\footnote{It is very arguably more natural to consider the \emph{2-category} of CwA’s, with \emph{pseudo-maps} preserving context extension just up to isomorphism; but that is beyond the scope or needs of the present work.}

\begin{definition}
 A \defemph{comprehension category} consists of a category $\C$ together with a Grothendieck fibration $p \colon \T \to \C$ and functor $\chi \colon \T \to \C^\rightarrow$, such that $\mathrm{cod} \cdot \chi = p$, and sending $p$-cartesian arrows to pullback squares.
\end{definition}

  It is easy to see that  CwA's correspond to discrete comprehension categories (i.e., ones in which the fibration $p$ is discrete), via the correspondence between presheaves and discrete fibrations.%
\footnote{This does not contradict the perhaps more well-known fact that CwA’s correspond to \emph{full split} comprehension categories; compare how sets may be seen as corresponding to either discrete or codiscrete categories.}

  Elements of the sets $\Ty(\Gamma)$ in a CwA, or of the fibers $\T_\Gamma$ in a comprehension category, will be called \defemph{types over $\Gamma$}.
It is often also useful to consider more general \emph{context extensions}:

\begin{definition}
  A \defemph{context extension} $\AA$ of an object $\Gamma$ of a CwA $\C$ is a sequence $(A_1, \ldots, A_n)$, where $A_i \in \Ty(\Gamma.A_1.\ldots.A_{i-1})$.
\end{definition}

We will often write just \defemph{extensions}, when this is not ambiguous.
Context extensions form an evident presheaf $\Ty^*$ over $\C$, and indeed an alternative CwA structure on $\C$ \cite[1.3.1]{lumsdaine:thesis}, with $p_\AA$ given by the composite $p_{A_1}p_{A_2} \ldots p_{A_n} \colon \Gamma . \AA \to \Gamma$. %
We distinguish maps of the form $p_\AA$ diagrammatically as $\fibbto$.

\begin{definition} \label{def:fibrant-slice}
  Let $\C$ be a CwA.
  Then for each $\Gamma$ in $\C$, there is a CwA $\C(\Gamma)$, the \defemph{fibrant slice} over $\Gamma$, in which objects are extensions $\AA$ of $\Gamma$, maps $f \colon \AA \to \BB$ are maps $\Gamma.\AA \to \Gamma.\BB$ over $\Gamma$ in $\C$, and $\Ty_{\C(\Gamma)}(\AA) \coloneqq \Ty_\C(\Gamma.\AA)$.
  
  Moreover, a map $f \colon \Gamma' \to \Gamma$ induces an evident CwA map $f^* \colon \C(\Gamma) \to \C(\Gamma')$, and this forms a (strict) functor $\C(-) \colon \C^\op \to \CwA$.
\end{definition}

This justifies very liberal use of the notation for reindexing: for instance, for  $A \in \Ty(\Gamma)$, $B \in \Ty(\Gamma . A)$, and $f \colon \Gamma' \to \Gamma$, we will write just $f^*B$ for $(f.A)^*B \in \Ty(\Gamma.f^*A)$, and so on.
We will also often suppress “weakening” (that is, reindexing along a dependent projection $p_A$); so for instance, for $A, B \in \Ty(\Gamma)$, we will write just $\Gamma.A.B$ for $\Gamma.A.p_A^*B$.

By a \defemph{section} of a type $A \in \Ty(\Gamma)$ (sometimes for emphasis a section of $A$ \defemph{over $\Gamma$}), we mean a section of the dependent projection $p_A \colon \Gamma.A \to \Gamma$.

Categories with attributes correspond to the structural rules of dependent type theory; for the logical rules, one adds extra structure.

\begin{definition}
A \defemph{$\Pi$-type structure} on a CwA $\C$ consists of:
\begin{enumerate}
  \item for each $\Gamma \in \C$, $A \in \Ty(\Gamma)$, and $B \in \Ty(\Gamma.A)$, a type $\Pi[A,B] \in \Ty (\Gamma)$;
  \item for each such $\Gamma$, $A$, $B$, and section $b$ of $B$ over $\Gamma . A $, a section $\lambda(b)$ of $\Pi[A, B]$ over $\Gamma$;
  \item for each such $\Gamma$, $A$, $B$, a map $\ev_{A,B} \colon \Gamma.\Pi[A,B].A \to \Gamma . A . B$ over $\Gamma . A$;
 \item such that for $\Gamma$, $A$, $B$ as above and sections $a$ of $A$ and $b$ of $B$, we have $\ev_{A,B}(\lambda(b),a) = b a \colon \Gamma \to \Gamma.A.B$;
 \item all stable under reindexing: i.e.,\ for any $f \colon \Gamma' \to \Gamma$ and appropriate arguments as above,
   \[
     f^*(\Pi [A, B]) = \Pi [f^*A, f^*B], \hskip 0.5em plus 1em
     f^*\lambda(b) = \lambda({f^*b}), \hskip 0.5em plus 1em
     f^*(\ev_{A,B}) = \ev_{f^*A, f^*B}.
   \]
\end{enumerate}

  A $\Pi$-type structure satisfies the \defemph{$\Pi$-$\eta$ rule} if for any $\Gamma$, $A$, $B$ as above, the ``$\eta$-expansion'' map
\[ (p_{\Pi(A,B)} . \Pi(A,B)) \cdot \lambda (\id_{p_{\Pi(A,B)}^*A}, \ev_{A,B})  \colon  \Gamma . \Pi(A,B) \to \Gamma . \Pi(A,B) \]
is equal to the identity map $\id_{\Gamma . \Pi(A,B)}$.
  For brevity, we call a $\Pi$-structure satisfying this condition a \defemph{$\Pi_\eta$-structure}.
\end{definition}

\begin{definition}
A \defemph{$\Sigma$-type structure} on a CwA $\C$ consists of:
\begin{enumerate}
  \item for each $\Gamma \in \C$, $A \in \Ty(\Gamma)$, and $B \in \Ty(\Gamma.A)$, a type $\Sigma[A,B] \in \Ty (\Gamma)$;
  
  \item for each such $\Gamma$, $A$, $B$, a map $\pair_{A,B} \colon \Gamma . A . B \to \Gamma . \Sigma[A,B]$ over $\Gamma$;
  
  \item for each $\Gamma$, $A$, $B$ as above,
  extension $\Delta \in \Ty^*(\Gamma . \Sigma[A,B])$,
  type $C \in \Ty(\Gamma . \Sigma[A,B] . \Delta)$,
  and map $d \colon \Gamma . A . B . (\pair_{A,B})^* \Delta \to \Gamma . \Sigma[A,B] . \Delta . C$ 
  over $\pair_{A,B} . \Delta \colon \Gamma . A . B . \allowbreak (\pair_{A,B})^* \Delta \allowbreak \to \Gamma . \Sigma[A,B] . \Delta$,
  a section $\synsplit_{\Delta,C,d}$ of $C$, 
  such that  $\synsplit_{\Delta,C,d} (\pair_{A,B}.\Delta) = d$;

  \item all stable under reindexing: for $f \colon \Gamma' \to \Gamma$ and arguments as above,
   \begin{gather*}
     f^*(\Sigma[A, B]) = \Sigma [f^*A, f^*B], \\
     f^*\pair_{A,B} = \pair_{f^*A,f^*B}, \qquad
     f^*\synsplit_{\Delta,C,d} = \synsplit_{f^*\Delta,f^*C,f^*d}.
   \end{gather*}
 \end{enumerate}
\end{definition}

\begin{definition}
An \defemph{$\Id$-type structure} on a CwA $\C$ consists of:
\begin{enumerate}
  \item for each $\Gamma \in \C$ and $A \in \Ty(\Gamma)$, an element $\Id_A \in \Ty (\Gamma . A . p_A^*A)$;
  
  \item for each such $\Gamma$ and $A$, a map $\refl_A \colon \Gamma . A \to  \Gamma . A . p_A^*A . \Id_A$, over $(\id_A, \id_A) \colon \allowbreak \Gamma . A  \to \Gamma . A . p_A^*A$;

  \item for each $\Gamma$ and $A$ as above,
  \arxivonly{and each} extension $\Delta \in \Ty^*(\Gamma. A . p_A^*A . \Id_A)$,
  type $C \in \Ty(\Gamma. A . p_A^*A . \Id_A. \Delta)$,
  and map $d \colon \Gamma . A . \refl_A^*\Delta \to \Gamma . A . p_A^*A . \Id_A . \Delta .  C$
    over $p_C  d = \refl_A . \Delta$, 
  a section $\synJ_{\Delta, C,d}$ of $C$, 
  such that $\synJ_{\Delta, C,d} (\refl_A . \Delta) = d$;

  \item all stable under reindexing: for $f \colon \Gamma' \to \Gamma$ and appropriate arguments as above,
   \begin{gather*}
     f^*(\Id_A) = \Id_{f^*A}, \qquad
     f^*\refl_A = \refl_{f^*A}, \qquad 
     f^*\synJ_{\Delta, C,d} = \synJ_{f^*\Delta, f^*C,f^*d}.
   \end{gather*}
 \end{enumerate}
\end{definition}

\begin{definition}
A \defemph{unit-type structure} on a CwA $\C$ consists of:
\begin{enumerate}
  \item for each $\Gamma \in \C$, a type $1_\Gamma \in \Ty(\Gamma)$;
  \item for each $\Gamma \in \C$, a section $\star_\Gamma$ of $1_\Gamma$;
  \item for each $\Gamma \in \C$, 
  extension $\Delta \in \Ty^*(\Gamma . 1_\Gamma)$, 
  type $C \in \Ty(\Gamma . 1_\Gamma . \Delta)$,
  and \arxivonly{each} map $d \colon \Gamma . \star_\Gamma^* \Delta \to \Gamma . 1_\Gamma . \Delta . C$
    over $\star_\Gamma . \Delta$,
  a section $\rec_{\Delta, C, d}$ of $C$ over $ \Gamma . 1_\Gamma . \Delta $, 
    such that $\rec_{\Delta, C, d} (\star_\Gamma . \Delta) = d$;
  \item all stable under reindexing: for $f \colon \Gamma' \to \Gamma$ and appropriate arguments as above,
   \begin{gather*}
     f^*(1_\Gamma) = 1_{\Gamma'}, \qquad
     f^*\star_\Gamma = \star_{\Gamma'}, \qquad
     f^*\rec_{\Delta, C, d} = \rec_{f^*\Delta, f^*C,f^*d}.
   \end{gather*} 
 \end{enumerate}
\end{definition}

\begin{remark}
  The context extension argument $\Delta$ in the “eliminator” operations above is usually omitted.
  In the presence of $\Pi$-type structure, it is redundant, since these general eliminators can be constructed from the special case where $\Delta$ is empty.
  When considering the logical constructors individually, though, the more general form seems desirable, as noted in \jpaacite{gambino-garner} for the case of identity types.

  For disambiguation, these could be called \defemph{Frobenius} $\Id$-type structure, and so on, since as noted in \jpaacite{garner-berg:types-are-weak}, the argument $\Delta$ corresponds to a categorical Frobenius condition.

  Except for this difference, the present definitions are unchanged from \jpaacite[App.~A,~B]{kapulkin-lumsdaine:simplicial-model}, which in turn are direct algebraic translations of the original rules of \cite{martin-lof:bibliopolis}.
\end{remark}

\subsection{Elimination structures}

It is often profitable to encapsulate type-theoretic induction principles in the notion of an \emph{elimination structure}, defined in \cite{lumsdaine:thesis}.
We work for this subsection in a fixed ambient CwA $\C$.

\begin{definition}[{Cf.~\cite[Def.~1.2.8]{lumsdaine:thesis}}]
  A \defemph{pre-elimination structure} $e$ on a map $j \colon \Delta' \to \Delta$ is an operation providing, for each type $C \in \Ty(\Delta)$ and map $d \colon \Delta' \to \Delta.C$ over $\Delta$, a section $e_{C,d}$ of $C$ such that $e_{C,d}j = d$.
  \[ \begin{tikzcd}[column sep=small]
       \Delta' \ar[rr,"d"] \ar[dr,"j"'] & & \Delta.C \ar[dl,fib] \\
      & \Delta \ar[ur,dashed,bend left=15,shift left=0.5ex,"e_{C,d}"]
  \end{tikzcd} \]

  A \defemph{Frobenius pre-elimination structure} on $j \colon \Delta' \to \Delta$ is a family of pre-elimination structures $e_{\EE}$ on $j.\EE \colon \Delta'.j^*\EE \to \Delta.\EE$, for each context extension $\EE$ of $\Delta$.
  (We impose no compatibility condition between these elimination structures.)

  Given context extensions $\AA$, $\BB$ of a common base $\Gamma$, a \defemph{stable Frobenius elimination structure} on a map $j \colon \Gamma.\AA \to \Gamma.\BB$ over $\Gamma$ is:
  \begin{itemize}
  \item a family of Frobenius pre-elimination structures $e^f$ on $f^* j \colon \Gamma'.f^*\AA \to \Gamma'.f^*\BB$, for each $f \colon \Gamma' \to \Gamma$,
  \item commuting with reindexing, in that given any $g \colon \Gamma'' \to \Gamma'$, $f \colon \Gamma' \to \Gamma$, $C \in \Ty(\Gamma'.f^*\BB)$, and suitable inputs $\EE$, $C$, $d$ for $e^f$, we have $g^*e^f_{\EE,C,d} = e^{fg}_{g^*\EE,g^*C,g^*d}$.
  \end{itemize}
  By an \defemph{elimination structure}, we mean in the present paper a \emph{stable Frobenius elimination structure}.
\end{definition}

\begin{proposition}
  Suppose $j \colon \Gamma.\AA \to \Gamma.\BB$ is equipped with an elimination structure $e$ over $\Gamma$.
  Then for any $f \colon \Gamma' \to \Gamma$, the pullback $f^*j \colon \Gamma'.f^*\AA \to \Gamma'.f^*\BB$ carries an elimination structure $f^*e$; and this operation is functorial in $f$.
  Similarly, any context extension $j.\EE$ of $j$ carries an elimination structure $e.\EE$; and this commutes with the preceding operation, i.e.\ $f^*(e.\EE) = f^*e.f^*\EE$. \qed
\end{proposition}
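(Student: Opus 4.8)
The proof is pure bookkeeping: an elimination structure on $j$ is by definition a family indexed over the slice $\C/\Gamma$ subject to a reindexing-compatibility condition, and both asserted constructions merely reorganise this family. For the first claim, given $f : \Gamma' \to \Gamma$, I would define $f^*e$ on $f^*j : \Gamma'.f^*\AA \to \Gamma'.f^*\BB$ by declaring, for each $g : \Gamma'' \to \Gamma'$, the component $(f^*e)^g$ to be the Frobenius pre-elimination structure $e^{fg}$ on $g^*(f^*j) = (fg)^*j$. The compatibility required of $f^*e$ --- that $h^*(f^*e)^g_{\EE,C,d} = (f^*e)^{gh}_{h^*\EE,h^*C,h^*d}$ for $h : \Gamma''' \to \Gamma''$ --- unfolds to $h^*e^{fg}_{\EE,C,d} = e^{f(gh)}_{h^*\EE,h^*C,h^*d}$, which is exactly the compatibility built into $e$ for the composable pair $(fg,h)$, using associativity $(fg)h = f(gh)$. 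Functoriality in $f$ then follows from the same associativity and from unitality of composition, since $(\id_\Gamma)^*e = e$ and $(f')^*(f^*e) = (ff')^*e$ hold componentwise ($e^{\id \cdot g} = e^g$ and $e^{f(f'g)} = e^{(ff')g}$).

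For the second claim, fix a context extension $\EE$ of the codomain $\Gamma.\BB$, so that $j.\EE : \Gamma.\AA.j^*\EE \to \Gamma.\BB.\EE$ is a map over $\Gamma$ between the extensions $(\AA, j^*\EE)$ and $(\BB, \EE)$ of $\Gamma$. For each $f : \Gamma' \to \Gamma$, the standard CwA identities give $f^*(j.\EE) = (f^*j).(f^*\EE)$, and for any context extension $\mathcal D$ of $\Gamma'.f^*\BB.f^*\EE$ we have $\bigl((f^*j).(f^*\EE)\bigr).\mathcal D = (f^*j).(f^*\EE, \mathcal D)$, with $(f^*\EE, \mathcal D)$ the concatenated extension of $\Gamma'.f^*\BB$. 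I would therefore define the component $(e.\EE)^f$, a Frobenius pre-elimination structure on $f^*(j.\EE)$, by reusing what $e^f$ already supplies on that particular extension of $f^*j$: set $(e.\EE)^f_{\mathcal D} \coloneqq e^f_{(f^*\EE, \mathcal D)}$. Reindexing-compatibility of $e.\EE$ then reduces directly to that of $e$, evaluated at the extension $(f^*\EE, \mathcal D)$ and using $g^*(f^*\EE, \mathcal D) = ((fg)^*\EE, g^*\mathcal D)$.

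Finally, the identity $f^*(e.\EE) = f^*e.f^*\EE$ is checked by unfolding both sides at an arbitrary $g : \Gamma'' \to \Gamma'$ and inputs $\mathcal D, C, d$: the left-hand side is $(e.\EE)^{fg}_{\mathcal D,C,d} = e^{fg}_{((fg)^*\EE, \mathcal D), C, d}$, while the right-hand side is $(f^*e)^g_{(g^*f^*\EE, \mathcal D), C, d} = e^{fg}_{(g^*f^*\EE, \mathcal D), C, d}$, and these agree because $g^*f^*\EE = (fg)^*\EE$. The only point that needs care --- not a genuine obstacle --- is the coherence of the reindexing bureaucracy for context extensions: that $f^*(k.\EE) = (f^*k).(f^*\EE)$, that reindexing is strictly functorial, and that concatenation of context extensions is strictly associative and strictly compatible with reindexing. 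Each of these holds on the nose in a CwA, being an instance of pullback pasting together with the strictness of the presheaf $\Ty^*$, so the proposition is in effect a formal consequence of the slice-indexed shape of the definition of elimination structure.
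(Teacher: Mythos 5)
Your proof is correct and is exactly the argument the paper has in mind: the paper omits the proof entirely (the proposition is stated with a \qed), treating it as immediate from the fact that a stable Frobenius elimination structure is a family indexed by maps $f:\Gamma'\to\Gamma$ and context extensions, so that reindexing and extension are just re-indexing of that family via $(f^*e)^g = e^{fg}$ and $(e.\EE)^f_{\mathcal D} = e^f_{(f^*\EE,\mathcal D)}$. Your verification of the compatibility conditions and of $f^*(e.\EE)=f^*e.f^*\EE$ via $g^*f^*\EE=(fg)^*\EE$ is the routine bookkeeping the authors are suppressing.
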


\begin{definition}
  Suppose $\C$ is equipped with (Frobenius) $\Id$-type structure.
  Then for any $A \in \Ty(\Gamma)$, the map $\refl_A \colon \Gamma.A \to \Gamma.A.p^*A.\Id_A$ carries an elimination structure, which we call the \emph{canonical} such structure, induced by the elimination operation and computation axiom of the $\Id$-type structure.
  Moreover, this is stable under reindexing in $\Gamma$, and functorial in $\C$.

  Similarly, if $\C$ is equipped with (Frobenius) $\Sigma$-type structure, then for all suitable $\Gamma$, $A$, $B$, the pairing map $\pair_{A, B} \colon \Gamma.A.B \to \Gamma. \Sigma_A B$ carries a \emph{canonical} elimination structure, stably in $\Gamma$ and functorially in $\C$.
\end{definition}

For inductive types/families with multiple constructors, e.g., binary sums, one could generalise the definitions above to elimination structures not only on single maps but on families of maps; but we do not treat such type-formers in the present work.

Elimination structures are a type-theoretic analogue of left lifting properties, and are frequently used as such:
\begin{proposition}
  Let $e \colon \Gamma.\AA \to \Gamma.\BB$ be equipped with an elimination structure over $\Gamma$, and suppose $k \colon \Gamma.\BB \to \Delta$ and $h \colon \Gamma.\AA \to \Delta.C$ form a commutative square from $e$ to $\pi_C$.
  Then there is an induced diagonal filler $j$, making both triangles commute:
  \[ \begin{tikzcd}
       \Gamma.\AA \ar[d,"e"'] \ar[r,"h"] & \Delta.C \ar[d,fib] \\
      \Gamma.\BB \ar[ur,dashed,"j"] \ar[r,"k"] & \Delta
  \end{tikzcd} \]
  Moreover, this construction commutes with pullback in the base.
\end{proposition}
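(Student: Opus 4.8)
The plan is to reduce the lifting problem to the defining property of the pre-elimination structure carried by $e$ itself, by pulling $C$ back along $k$.

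First I would form the pullback square exhibiting the reindexed type $k^*C \in \Ty(\Gamma.\BB)$:
\[ \begin{tikzcd}
 \Gamma.\BB.k^*C \ar[r,"k.C"] \ar[d,fib,"p_{k^*C}"'] \arrow[drpb] & \Delta.C \ar[d,fib,"p_C"] \\
 \Gamma.\BB \ar[r,"k"] & \Delta.
\end{tikzcd} \]
Since $p_C h = k e$ by hypothesis, the pair $(e,h)$ forms a cone over this pullback, so there is a unique $d : \Gamma.\AA \to \Gamma.\BB.k^*C$ with $p_{k^*C}\, d = e$ and $(k.C)\, d = h$. In particular $d$ is a map over $\Gamma.\BB$, hence exactly the kind of datum consumed by the pre-elimination structure on $e$ --- namely the one obtained from the stable Frobenius elimination structure $e$ by specialising to $f = \id_\Gamma$ and empty Frobenius parameter. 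Feeding it the type $k^*C$ and the map $d$ produces a section $e_{k^*C,d}$ of $p_{k^*C}$ over $\Gamma.\BB$ satisfying $e_{k^*C,d}\, e = d$. I would then set $j \coloneqq (k.C) \cdot e_{k^*C,d} : \Gamma.\BB \to \Delta.C$ and check the triangles: the lower one commutes since $p_C\, j = p_C\,(k.C)\, e_{k^*C,d} = k\, p_{k^*C}\, e_{k^*C,d} = k$, using the pullback square and that $e_{k^*C,d}$ is a section; the upper one commutes since $j\, e = (k.C)\, e_{k^*C,d}\, e = (k.C)\, d = h$.

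For the stability clause, the point is that every ingredient of the construction is natural under reindexing along a map $g : \Gamma' \to \Gamma$ --- whereupon $k$ and $h$ are replaced by $k\,(g.\BB)$ and $h\,(g.\AA)$, which again form a square from $g^*e$ to $p_C$. Pullbacks compose and are functorial, so $g^*(k^*C) = (k\,(g.\BB))^*C$, and the formation of $d$, being a universal property, is natural; and the reindexed section agrees with the one computed over $\Gamma'$ precisely because $e$ is a \emph{stable} Frobenius elimination structure, i.e.\ $g^*e_{k^*C,d} = (g^*e)_{g^*k^*C,\, g^*d}$. Composing these, $g^*j$ is visibly the filler produced by the same construction over $\Gamma'$.

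I expect the only genuinely fiddly part to be this last clause: one must match up the reindexed data carefully --- checking that $g^*d$ really is the map induced into the pullback over $\Gamma'$ by $h\,(g.\AA)$ and $g^*e$, and that the type fed to $g^*e$ is indeed $g^*(k^*C)$ --- so that the stability axiom of the elimination structure applies verbatim. Everything else is pullback-pasting and unwinding of definitions.
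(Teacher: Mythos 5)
Your argument is exactly the paper's proof: the paper disposes of this in one line ("Apply the elimination structure to the induced map $\Gamma.\AA \to \Gamma.\BB.k^*C$"), and you have correctly filled in the details — forming $k^*C$, inducing $d$ by the pullback property, applying the elimination structure, and composing with $k.C$, with stability following from stability of the elimination structure. No gaps.
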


\begin{proof}
Apply the elimination structure to the induced map $\Gamma.\AA \to \Gamma.\BB.k^*C$.
\end{proof}

Under that analogy, the following propositions are familiar from the theory of weak factorisation systems, and their proofs adapt directly.
\begin{proposition}
  Every isomorphism $\Gamma.\AA \to \Gamma.\BB$ over $\Gamma$ canonically carries an elimination structure over $\Gamma$.
  Given elimination structures on composable maps $f$, $g$ over a base $\Gamma$, the composite $gf$ carries an induced elimination structure.
  Moreover, these constructions are stable in $\Gamma$, and functorial in $\C$.
  \qed
\end{proposition}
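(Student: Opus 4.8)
The plan is to follow the familiar facts about the left class of a weak factorisation system, as the remark preceding the proposition suggests, while keeping careful track of the Frobenius-extension and base-change data bundled into the notion of elimination structure.

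\textbf{Isomorphisms.} Given an isomorphism $j : \Gamma.\AA \to \Gamma.\BB$ over $\Gamma$, note first that for any $f : \Gamma' \to \Gamma$ the pullback $f^*j$ is again an isomorphism, and for any context extension $\EE$ of $\Gamma'.f^*\BB$ so is $(f^*j).\EE$, being the map in the defining pullback square of a context extension. It therefore suffices to equip an arbitrary isomorphism $k : \Delta' \to \Delta$ with a pre-elimination structure: given $C \in \Ty(\Delta)$ and $d : \Delta' \to \Delta.C$ with $p_C d = k$, set $e_{C,d} := d\, k^{-1}$. Then $p_C\, e_{C,d} = (p_C d)\, k^{-1} = \id_\Delta$, so $e_{C,d}$ is a section of $C$, and $e_{C,d}\, k = d$. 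As this formula is built only from composition and inversion — operations that reindexing and CwA maps preserve strictly — assembling these pre-elimination structures over all $\EE$ and all $f$ automatically yields a stable Frobenius elimination structure, which is moreover the \emph{unique} one compatible with $k$ being invertible, hence canonical.

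\textbf{Composites.} Suppose $f : \Gamma.\AA \to \Gamma.\BB$ and $g : \Gamma.\BB \to \Gamma.\CC$ carry elimination structures $e^f$, $e^g$ over $\Gamma$. By the standard CwA identities, every context extension of $gf$ over $\Gamma$ factors as $(gf).\EE = (g.\EE)\cdot(f.g^*\EE)$, a context extension of $g$ followed by one of $f$; and reindexing along any $f' : \Gamma' \to \Gamma$ sends $gf$ to $((f')^*g)\,((f')^*f)$. So it is enough to show that the composite of two pre-elimination structures on composable maps is a pre-elimination structure, naturally in everything. For $C \in \Ty(\Gamma.\CC)$ and $d : \Gamma.\AA \to \Gamma.\CC.C$ with $p_C d = gf$: since the $g.C$-square is a pullback, the pair $(f,d)$ induces $d' : \Gamma.\AA \to \Gamma.\BB.g^*C$ with $p_{g^*C}\, d' = f$ and $(g.C)\, d' = d$. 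Apply the pre-elimination structure of $f$ to $g^*C$ and $d'$, obtaining a section $t := e^f_{g^*C,\, d'}$ of $g^*C$ over $\Gamma.\BB$ with $tf = d'$; then $(g.C)\, t$ satisfies $p_C\,(g.C)\, t = g\, p_{g^*C}\, t = g$, so applying the pre-elimination structure of $g$ to $C$ and $(g.C)\, t$ yields a section $s := e^g_{C,\,(g.C)\,t}$ of $C$ over $\Gamma.\CC$ with $sg = (g.C)\,t$. Then $s\,(gf) = (g.C)\,t\,f = (g.C)\,d' = d$, as required. Running this for every $\EE$ (using the Frobenius components $e^f_{g^*\EE}$, $e^g_\EE$) and every $f'$ (using the base-change components) assembles the desired stable Frobenius elimination structure on $gf$.

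\textbf{Naturality and the main point.} Stability in $\Gamma$ and functoriality in $\C$ are then immediate, since both the $d\,k^{-1}$ formula and the two-step factor-then-fill procedure are expressed entirely in terms of composites, inverses, pullbacks, and the input elimination structures, all strictly preserved by reindexing and by CwA maps. The only genuine work — and the step I expect to absorb the bulk of the bookkeeping — is verifying that the decomposition identities $(gf).\EE = (g.\EE)(f.g^*\EE)$ and $(f')^*(gf) = ((f')^*g)((f')^*f)$ hold on the nose, and that the two-step filler is performed uniformly across all Frobenius-extension and base-change indices; with the "no compatibility condition between the $e_\EE$" clause in the definition of elimination structure, there is nothing further to check.
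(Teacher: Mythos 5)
Your proof is correct and follows exactly the route the paper intends: the paper omits the argument entirely, appealing to the analogy with left classes of weak factorisation systems ("their proofs adapt directly"), and your $d\,k^{-1}$ formula for isomorphisms and the factor-through-the-pullback-then-fill-twice argument for composites are precisely the standard adaptations, with the Frobenius and stability bookkeeping handled correctly.
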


\begin{proposition}
  Let $f \colon \Gamma.\AA \to \Gamma.\BB$ be a map over $\Gamma$.
  Then an elimination structure on $f$ in $\C$ induces an elimination structure on $f$ in $(\C,\Ty^*)$;
  concretely, one can eliminate into arbitrary context extensions, not just single types.
  Moreover, this construction is stable in $\Gamma$, and functorial in $\C$.
  \qed
\end{proposition}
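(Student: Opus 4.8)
The plan is to unwind the definition of an elimination structure on $f$ in the CwA $(\C, \Ty^*)$ and reduce it, by an induction on the length of a context extension, to the elimination structure already available on $f$ in $\C$. The point is that $(\C, \Ty^*)$ has the same underlying category and the same extension objects $\Gamma.\AA$ as $\C$: a ``type'' over $\Delta$ in $(\C, \Ty^*)$ is a context extension $\CC$ of $\Delta$, with $\Delta.\CC$ and $p_\CC$ computed as in $\C$, and its context extensions are just sequences of context extensions, hence again context extensions. Spelling it out, an elimination structure on $f$ in $(\C, \Ty^*)$ provides, for every $g : \Gamma' \to \Gamma$, context extension $\EE$ of $\Gamma'.g^*\BB$, context extension $\CC$ of $\Gamma'.g^*\BB.\EE$, and map $d : \Gamma'.g^*\AA.(g^*f)^*\EE \to \Gamma'.g^*\BB.\EE.\CC$ with $p_\CC d = g^*f.\EE$, a section of $p_\CC$ over $\Gamma'.g^*\BB.\EE$ that composes with $g^*f.\EE$ to $d$, all stable under reindexing in $g$.

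The heart of the argument is the following \emph{one-step} claim: if a map $k : X \to Y$ in $\C$ carries a pre-elimination structure (into single types over $Y$), then one can eliminate $k$ into an arbitrary context extension $\CC = (C_1, \dots, C_n)$ of $Y$; that is, given $d : X \to Y.\CC$ with $p_\CC d = k$, there is a section $\bar e_{\CC, d}$ of $p_\CC$ with $\bar e_{\CC, d}\, k = d$. I would prove this by induction on $n$, taking $\bar e = \id_Y$ when $n = 0$. For the step, write $\CC = (\CC', C_n)$ with $\CC'$ of length $n - 1$ and $C_n \in \Ty(Y.\CC')$; then $p_{C_n} d : X \to Y.\CC'$ is a map over $k$, so the inductive hypothesis gives a section $s' : Y \to Y.\CC'$ of $p_{\CC'}$ with $s' k = p_{C_n} d$. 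Now $d$ and $k$ together induce a map $\tilde d : X \to Y.(s')^*C_n$ into the pullback $Y.(s')^*C_n = Y \times_{Y.\CC'} Y.\CC'.C_n$, with $p_{(s')^*C_n}\tilde d = k$ and $(s'.C_n)\tilde d = d$; applying the given pre-elimination structure of $k$ to the single type $(s')^*C_n \in \Ty(Y)$ and the map $\tilde d$ yields a section $s'' : Y \to Y.(s')^*C_n$ of $p_{(s')^*C_n}$ with $s'' k = \tilde d$. A direct check then shows that $(s'.C_n)\circ s''$ is a section of $p_\CC$ satisfying the required computation rule.

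Granting the one-step claim, the proposition follows by assembly. For each $g : \Gamma' \to \Gamma$ and context extension $\EE$ of $\Gamma'.g^*\BB$, the map $g^*f.\EE$ carries the pre-elimination structure $e^g_\EE$ extracted from the stable Frobenius elimination structure on $f$; feeding this into the one-step construction produces a pre-elimination structure on $g^*f.\EE$ in $(\C, \Ty^*)$. Letting $\EE$ vary gives a Frobenius pre-elimination structure in $(\C, \Ty^*)$ on $g^*f$, and letting $g$ vary assembles these into the candidate elimination structure on $f$ in $(\C, \Ty^*)$. What remains is to check the stability (reindexing-commutation) clause, together with stability in $\Gamma$ and functoriality in $\C$; all of these reduce to the corresponding properties of the components $e^g_\EE$ plus the fact that the one-step construction is built purely from those components and from pullbacks along sections, each of which commutes with reindexing and is preserved by CwA maps.

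I do not expect a genuine conceptual obstacle here — which is presumably why the statement is given with \qed rather than a proof. The one thing requiring care is bookkeeping: tracking the various ``over $Y$'' conditions through the length induction (in particular that $\tilde d$ really does land in the pullback and satisfies $p_{(s')^*C_n}\tilde d = k$), and then patiently discharging the stability clause for the assembled structure, which is routine but somewhat tedious to write out in full.
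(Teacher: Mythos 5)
Your proof is correct and is exactly the argument the paper leaves implicit behind its \qed: induction on the length of the context extension, peeling off the last type, pulling it back along the section already obtained for the initial segment, and applying the single-type pre-elimination structure there, with stability and functoriality following because every step is built from the given stable structure together with reindexing. The verification that $\tilde d$ lands in the pullback (via the inductive computation rule $s'k = p_{C_n}d$) and that $(s'.C_n)s''$ is a section satisfying the computation rule are the right points to check, and you check them correctly.
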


\begin{lemma} \label{lem:descent-from-elim-structure}
  Suppose $e \colon \Gamma.\AA \to \Gamma.\BB$ carries an elimination structure over $\Gamma$.

  \begin{enumerate}
  \item There is an induced retraction $r$ for $e$.
  \item For any $C \in \Ty(\Gamma.\AA)$, there is a type $\descend{e}{C} \in \Ty(\Gamma.\BB)$ such that $e^*(\descend{e}{C}) = C$;
    we call $\descend{e}{C}$ the \defemph{descent} of $C$ along $e$. 
  \[\begin{tikzcd}
    \Gamma.\AA.C \ar[d, fib] \ar[r,dashed] \arrow[drpb] & \Gamma.\BB.\descend{e}{C} \ar[d, fib, dashed] \\
    \Gamma.\AA \ar[r, "e"] & \Gamma.\BB
  \end{tikzcd}\]  
  \end{enumerate}
  Moreover, both these constructions are stable in the base $\Gamma$, and functorial in $\C$.
\end{lemma}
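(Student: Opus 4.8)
The plan is to derive both parts from the lifting-property reformulation of elimination structures, worked out in the context-extension CwA $(\C,\Ty^*)$.

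For part (1): first I would transfer the elimination structure on $e$ from $\C$ to $(\C,\Ty^*)$, using the preceding Proposition, so that one may eliminate into arbitrary context extensions rather than just single types. Viewing $\AA$ as a type of $(\C,\Ty^*)$ over $\Gamma$, with dependent projection $p_\AA : \Gamma.\AA \to \Gamma$, the square with top edge $\id_{\Gamma.\AA}$, left edge $e$, bottom edge $p_\BB$ and right edge $p_\AA$ commutes, since $e$ lies over $\Gamma$. Applying the Proposition on diagonal fillers in the CwA $(\C,\Ty^*)$ then produces a filler $r : \Gamma.\BB \to \Gamma.\AA$ with $r e = \id_{\Gamma.\AA}$ and $p_\AA r = p_\BB$; this $r$ is the retraction sought. (Unwinding that Proposition, $r$ is the composite of the projection $\Gamma.\BB.p_\BB^*\AA \to \Gamma.\AA$ with the section of $p_\BB^*\AA$ obtained by eliminating the map $\langle e, \id_{\Gamma.\AA}\rangle : \Gamma.\AA \to \Gamma.\BB.p_\BB^*\AA$ along $e$.) Stability of $r$ in $\Gamma$ is exactly the ``commutes with pullback in the base'' clause of the filler Proposition, and functoriality in $\C$ follows because every ingredient used — the transfer to $(\C,\Ty^*)$ and the filler construction — is itself functorial.

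For part (2): given $C \in \Ty(\Gamma.\AA)$, I would simply set $\descend{e}{C} := r^*C \in \Ty(\Gamma.\BB)$. Since reindexing is contravariantly functorial and $r e = \id_{\Gamma.\AA}$, we get $e^*(\descend{e}{C}) = (r e)^*C = C$; the square displayed in the statement is then precisely the canonical pullback square exhibited by the CwA structure of $\C$ for $\descend{e}{C}$ reindexed along $e$. Stability and functoriality of the descent operation reduce to those of $r$: for $f : \Gamma' \to \Gamma$, stability of $r$ gives $r \cdot (f.\BB) = (f.\AA) \cdot r'$, with $r'$ the retraction attached to the reindexed map $f^*e$, and reindexing this equation yields $f^*(\descend{e}{C}) = (r')^*(f^*C) = \descend{f^*e}{f^*C}$; functoriality in $\C$ is entirely analogous.

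I do not anticipate any serious obstacle here: the only real insight is that the retraction is a diagonal lift of $\id_{\Gamma.\AA}$ against the projection $p_\AA$, and that once it is in hand the descent of a type is just reindexing along it. The points demanding care are (a) remembering to invoke the Frobenius/context-extension strengthening of the elimination structure, since $\AA$ is in general an iterated extension and not a single type, and (b) tracking the stability equalities carefully enough that the ``moreover'' claims about stability in $\Gamma$ and functoriality in $\C$ come out cleanly.
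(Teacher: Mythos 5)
Your proof is correct and is essentially the paper's own argument: the paper likewise obtains $r$ by applying the (context-extension strengthened) elimination structure to $(e,\id):\Gamma.\AA\to\Gamma.\BB.\pi_\BB^*\AA$ and composing the resulting section with the projection back to $\Gamma.\AA$, and then sets $\descend{e}{C}:=r^*C$. Your routing through the diagonal-filler proposition is only a presentational repackaging of the same construction, and your handling of the Frobenius/context-extension point and of stability and functoriality matches the paper.
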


\begin{proof}
  The elimination structure, applied to $(e,\id) \colon \Gamma.\AA \to \Gamma.\BB.\pi_{\BB} \AA$, gives a section of $\pi_\BB^*\AA$, whose composite with $\pi_\BB.\AA$ yields the desired retraction $r$.
  \[ \begin{tikzcd}[column sep=small]
       \Gamma.\AA \ar[rr,"{(e,\id_\AA)}"] \ar[dr,"e"'] & & \Gamma.\BB.\pi_\BB^*\AA \ar[dl,fib] \ar[dr,"\pi_\BB.\AA"] \\
      & \Gamma.\BB \ar[ur,dashed,bend left=15,shift left=0.5ex] \ar[rr,dashed,"r"] & & \Gamma.\AA
  \end{tikzcd} \]
  Now, given $C \in \Ty(\Gamma.\AA)$, take $\descend{e}{\CC}$ to be $r^*\CC$.
\end{proof}

\subsection{Logical structure on context extensions}

Many of the logical structures on types extend, by iteration, to analogous constructions on context extensions in a CwA.

\begin{construction}[{\jpaacite[Prop.~3.3.1]{garner:2-d-models}}] \label{def:identity-context}
  If a CwA $\C$ carries identity type structure, then so does $(\C,\Ty^*)$, functorially in $\C$.
  
  Concretely, any context extension $\AA \in \Ty^*(\Gamma)$ has an \defemph{identity context} $\Id_\AA \in \Ty^*(\Gamma.\AA.\AA)$, along with a reflexivity map $\refl_\AA \colon \Gamma.\AA \to \Gamma.\AA.\AA.\Id_\AA$ over $(\id,\id) \colon \Gamma.\AA \to \Gamma.\AA.\AA$, equipped with an elimination structure over $\Gamma$.
 
  $\Id_\AA$ is given by induction on the length of $\AA$.
  When $\AA$ is empty, so is $\Id_\AA$, and $\refl_\AA$ is the identity map.
  When $\AA = (\AA_{<n},A_n)$, then assuming that we have constructed $\Id_{\AA_{<n}}$, $\refl_{\AA_{<n}}$, and its elimination structure, we take $\Id_\AA$ to be $(\Id_{\AA_{<n}}, (\Id_\AA)_n )$, where the type $(\Id_\AA)_n$ is given as a descent of $\Id_{A_n}$ as in the following diagram:
  \[ \begin{maxwidthtikzcd}[row sep=scriptsize]
       \Gamma.\AA_{<n}.A_n \ar[r,"\refl_{A_n}"] \ar[dr] & \Gamma.\AA_{<n}.A_n.A_n.\Id_{A_n} \ar[d,fib] \ar[rr,dashed] \ar[drpb] & &[-1em] \Gamma.\AA.\AA.\Id_{\AA_{<n}}. (\Id_\AA)_n \ar[d,dashed,fib] \\
       & \Gamma.\AA_{<n}.A_n.A_n \ar[r,"\refl_{\AA_{<n}}.A_n.A_n"] \ar[dr] & \Gamma.\AA_{<n}. \AA_{<n}. \Id_{\AA_{<n}}.A_n.A_n \ar[r,"\iso"] \ar[d] & \Gamma.\AA.\AA.\Id_{\AA_{<n}} \ar[d,fibb] \\
       & & \Gamma.\AA_{<n}. \AA_{<n}.A_n.A_n \ar[r,"\iso"] & \Gamma.\AA.\AA
  \end{maxwidthtikzcd} \]
  Then $\refl_\AA$ is the composite the upper edge of the diagram; that is, of $r_{A_n}$, a context extension of $\refl_{\AA_{<n}}$, and an isomorphism, and so as such, carries an elimination structure.

  We note for future use that $\refl_\AA$ is always a composite of context extensions of the individual reflexivity maps $\refl_{A_i}$ and isomorphisms. \qed
\end{construction}

Similarly, the $\Pi$-type structure can be lifted to $(\C, \Ty^*)$ via the following construction.
(Although this construction is well-known in practice, we are unaware of it being documented in the literature.)

\begin{construction} \label{prop:iterated-pi}
  If a CwA $\C$ carries $\Pi$-type structure, then so does $(\C,\Ty^*)$, functorially in $\C$.

  Let $\AA = A_1.\cdots.A_n$ be an extension of $\Gamma \in \C$, and $\BB = B_1.\cdots.B_m$ a further extension of $\Gamma.\AA$.
  We define the context extension $\Pi[\AA,\BB]$ and map $\ev_{\AA,\BB} \colon \Gamma.\Pi[\AA,\BB].\AA \to \Gamma.\AA.\BB$ over $\Gamma.\AA$, as follows.
  
  For a single type $B$, $\Pi[\AA,B]$ is the type $\Pi[A_1,\Pi[A_2, \ldots \Pi[A_n,B] \ldots ]]$, and $\ev_{\AA,B}$ is the composite
   \[ \begin{tikzcd}[row sep=scriptsize]
       \Gamma . \Pi[A_1,\Pi[A_2, \ldots \Pi[A_n,B] \ldots ]] . A_1 . \cdots .A_n
       \ar[d, "{\ev_{A_1, \Pi[A_2, \ldots \Pi[A_n,B] \ldots ]}.A_2. \cdots .A_n}"]
       \\ \Gamma . A_1 . \Pi[A_2, \ldots \Pi[A_n,B] \ldots ] . A_2 . \cdots .A_n
       \ar[d, "{\ev_{A_2, \Pi[A_3, \ldots \Pi[A_n,B] \ldots ]}.A_3. \cdots .A_n}"]
       \\ \vdots
       \ar[d, "{\ev_{A_n,B}}"]
       \\ \Gamma. \AA . B.
     \end{tikzcd} \]
   
   For general $\BB$, work by induction on its length $m$.
   When $m=0$, $\Pi[\AA,\BB]$ is the trivial 0-step context extension (as must be $\BB$), and $\ev_{\AA,\BB}$ is $\id_{\Gamma.\AA}$.
   When $m > 0$, write $\BB = \BB_{<m}.B_m$.
   By induction, we have $\Pi[\AA,\BB_{<m}]$ and
   \[ \ev_{\AA,\BB_{<m}} \colon \Gamma.\Pi[\AA,\BB_{<m}].\AA \to \Gamma.\AA.\BB_{<m}. \]
   Now we take $\Pi[\AA,\BB]$ to be $\Pi[\AA,\BB_{<m}] . \Pi[\AA, \ev_{\AA,\BB_{<m}}^*B_m]$ (where the new last component is given by the single-step case above), and $\ev_{\AA,\BB}$ to be the composite
   \[ \begin{tikzcd}[row sep=scriptsize]
       \Gamma . \Pi[\AA,\BB_{<m}] . \Pi[\AA, \ev_{\AA,\BB_{<m}}^*B_m] . \AA
       \ar[d, "{\ev_{\AA,\ev_{\AA,\BB_{<m}}^*B_m}}"]
       \\ \Gamma . \Pi[\AA,\BB_{<m}] . \AA . \ev_{\AA,\BB_{<m}}^*B_m
       \ar[d, "{\ev_{\AA,\BB_{<m}}.B_m}"]
       \\ \Gamma . \AA . \BB.
     \end{tikzcd} \]

  The construction of the $\lambda$ operation is straightforward along similar lines, again by induction on the length of $\BB$. \qed
\end{construction}

We will generally consider both identity contexts and iterated $\Pi$-types not from the point of view of $(\C,\Ty^*)$ as a CwA in its own right, but as auxiliary operations for working in $\C$.
A similar construction is possible for $\Sigma$-types, but we do not recall it here as we will not require it.

\subsection{Equivalences}

We recall here the well-known definition of \emph{equivalences} of types or contexts, along with a generalisation from \jpaacite{kapulkin-lumsdaine:homotopy-theory-of-type-theories} to arbitrary maps in CwA’s, and some key facts about these notions.

Work for the remainder of this subsection in a fixed ambient CwA $\C$ equipped with $\Id$-types.

\begin{definition}
  Let $f, g \colon \Gamma.\AA \to \Gamma.\BB$ be a map between context extensions over some base $\Gamma \in \C$.
  A \defemph{homotopy} $h \colon f \homot h$ over $\Gamma$ is a map $h \colon \Gamma.\AA \to \Gamma.\BB.\BB.\Id_{\BB}$ (where $\Id_{\BB}$ is the iterated identity type as constructed above), such that $p_{\Id_{\BB}}h = (f,g)$.
  We write $f \homot g$ to mean that some such $h$ exists.
\end{definition}

\begin{definition} \label{def:equivalence-in-fib-slice}
  Let $\AA, \BB$ be context extensions of some base $\Gamma \in \C$.
  A map $f \colon \Gamma.\AA \to \Gamma.\BB$ over $\Gamma$ is an \defemph{equivalence} over $\Gamma$ if there exist maps $g, g' \colon \Gamma.\BB \to \Gamma.\AA$ and homotopies $\eta \colon fg \homot \id_{\Gamma.\BB}$ and $\varepsilon \colon g'f \homot \id_{\Gamma.\AA}$ over $\Gamma$.
  \defemph{Equivalence data} for $f$ consists of some choice of such $g$, $\eta$, $g'$, $\varepsilon$.
\end{definition}

This approach works only within “fibrant slices” of a CwA, since iterated identity types are available just for context extensions, not for arbitrary objects.
In Section~\ref{sec:homotopical-diagrams}, we will need a more general notion:

\begin{definition}[{\jpaacite[Def.~4.5]{kapulkin-lumsdaine:homotopy-theory-of-type-theories}}] \label{def:equivalence-in-cwa}
  A map $f \colon \Gamma' \to \Gamma$ in $\C$ is an \defemph{equivalence} if it satisfies the following properties:
  \begin{enumerate}
  \item \defemph{weak type lifting}: given any context extension $\AA$ of $\Gamma$ and type $B \in \Ty(\Gamma'. f^*\AA)$, there exists some type $B' \in \Ty(\Gamma.\AA)$ and equivalence $w \colon \allowbreak \Gamma'. f^*\AA.B \weqto \Gamma'.f^*\AA.f^*B'$ over $\Gamma' .f^*\AA$ (in the sense of Definition~\ref{def:equivalence-in-fib-slice});
  \item \defemph{weak term lifting}: given any context extension $\AA$ of $\Gamma$, type $B \in \Ty(\Gamma'.\AA)$, and section $b$ of $f^*B$, there exists some section $b'$ of $B$, such that $f^*b'$ is propositionally equal to $b$.
  \end{enumerate}
\end{definition}
From the point of view of $f$, these might be more naturally called \emph{descent} properties; they are \emph{lifting} when viewed as properties of the induced functor $f^* \colon \C(\Gamma) \to \C(\Gamma')$ (cf.~Definition~\ref{def:fibrations-and-equivs-of-cwas} below).

We recall without proof some key facts about these notions:

\begin{lemma}[{\jpaacite[Prop.~4.6]{kapulkin-lumsdaine:homotopy-theory-of-type-theories}}] \label{lem:equivalences-equivalent}
  A map $f \colon \Gamma.\AA \to \Gamma.\BB$ over $\Gamma$ is an equivalence in the sense of Definition~\ref{def:equivalence-in-cwa} if and only if it is one in the sense of Definition~\ref{def:equivalence-in-fib-slice}. \qed
\end{lemma}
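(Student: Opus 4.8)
The statement is a dictionary between two notions of equivalence for a map $f : \Gamma.\AA \to \Gamma.\BB$ over $\Gamma$: that it carries homotopy-inverse data (Definition~\ref{def:equivalence-in-fib-slice}), and that it enjoys weak type- and term-lifting (Definition~\ref{def:equivalence-in-cwa} applied with codomain/base $\Gamma.\BB$ and domain $\Gamma.\AA$). The plan is to prove both implications by \emph{transport} along the relevant homotopies and homotopy-inverses. The common tool is a transport sub-lemma: if $h : k \homot k' : \Gamma.\AA \to \Gamma.\BB$ is a homotopy over $\Gamma$, then for every extension $\CC$ of $\Gamma.\BB$ there is an equivalence $\Gamma.\AA.k^*\CC \weqto \Gamma.\AA.k'^*\CC$ over $\Gamma.\AA$, and any section of $k^*\CC$ is carried, after this transport, to a section propositionally equal to the corresponding section of $k'^*\CC$; moreover all of this is stable under reindexing. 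This is the iterated-identity-type ``transport''/$\ap$ construction, built along the lines of Construction~\ref{def:identity-context}, and it is the workhorse of both directions.

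For Definition~\ref{def:equivalence-in-fib-slice} $\Rightarrow$ Definition~\ref{def:equivalence-in-cwa}: from homotopy-inverse data $(g,\eta,g',\varepsilon)$ one extracts, by a routine bi-invertibility argument, a single two-sided inverse $g : \Gamma.\BB \to \Gamma.\AA$ with $\varepsilon : g f \homot \id_{\Gamma.\AA}$ and $\eta : f g \homot \id_{\Gamma.\BB}$. Given an extension $\CC$ of $\Gamma.\BB$ and $B \in \Ty(\Gamma.\AA.f^*\CC)$, let $B'$ over $\Gamma.\BB.\CC$ be the result of pulling $B$ back along $g$ and applying the transport sub-lemma (along $\eta$) to land over the correct extension; then the transport sub-lemma applied to $\varepsilon$ gives the required equivalence $\Gamma.\AA.f^*\CC.f^*B' \weqto \Gamma.\AA.f^*\CC.B$ over $\Gamma.\AA.f^*\CC$. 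Weak term lifting is the same argument one level down: transport a given section of $f^*B$ along $g$ to a section $b'$ of $B$, and use $\varepsilon$ together with the section-clause of the sub-lemma to see that $f^*b'$ is propositionally equal to the original.

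For the converse, Definition~\ref{def:equivalence-in-cwa} $\Rightarrow$ Definition~\ref{def:equivalence-in-fib-slice}: factor $f$ over $\Gamma$ through its mapping-path extension, $f = p\cdot s$, where $s : \Gamma.\AA \to \Gamma.\BB.\CC$ ($\CC$ a path-space extension of $\Gamma.\BB$ built from $\Id_\BB$) is a section of a dependent projection and hence carries a stable Frobenius elimination structure, while $p = p_\CC : \Gamma.\BB.\CC \fibbto \Gamma.\BB$ is itself a dependent projection. Maps with elimination structures are equivalences in both senses (by Lemma~\ref{lem:descent-from-elim-structure} and a homotopy built from the induced retraction), so $s$ is; since the CwA-equivalences are closed under $2$-out-of-$3$, $p$ is a CwA-equivalence too. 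For the dependent projection $p_\CC$, weak term lifting applied to the identity section over $\Gamma.\BB.\CC$ yields an honest section $t$ with $p_\CC t = \id$, and weak type lifting upgrades $t$ to a homotopy section of $p_\CC$ in the fib-slice sense; composing $t$ with a retraction of $s$, and running the symmetric argument for the other side, assembles homotopy-inverse data for $f$.

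The main obstacle is the transport sub-lemma and its bookkeeping with context extensions: one must set transport up so that it is a genuine CwA-level operation, compatible with the iteration over the components of $\CC$ and stable under reindexing, so that the descents, the mapping-path factorisation, and all the composites really live in the fibrant slices and compose correctly — essentially the same care that went into Construction~\ref{def:identity-context} and Construction~\ref{prop:iterated-pi}. A secondary subtlety, in the $\Rightarrow$ direction, is that weak type/term lifting give data only up to equivalence and up to propositional equality, so turning them into honest homotopy-inverse data requires the bi-invertible packaging; this is precisely why Definition~\ref{def:equivalence-in-fib-slice} is phrased with separate $g$ and $g'$ rather than a single coherent inverse.
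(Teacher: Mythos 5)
The paper does not actually prove this lemma: it appears in the block introduced by ``We recall without proof some key facts about these notions'' and is imported wholesale, with a \verb|\qed|, from Prop.~4.6 of the companion paper. So there is no in-paper proof to compare against; what follows is an assessment of your argument on its own terms. Your overall strategy --- transport along the homotopy-inverse data for the forward direction, and the mapping-path factorisation $f = p_{\CC}\cdot s$ with $s$ carrying an elimination structure for the converse --- is the standard one and is essentially sound; the ingredients you lean on (Construction~\ref{def:identity-context}, Lemma~\ref{lem:descent-from-elim-structure}, the Gambino--Garner factorisation) are all available with only $\Id$-types, as required here.

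Two points need more care than your sketch gives them. First, in the forward direction the term-lifting clause asks for $f^*b' \ideq b$, and the obvious candidate $b' \coloneqq \eta_*(g^*b)$ yields $f^*b' = (\eta f)_*\bigl((gf)^*b\bigr)$ while $b \ideq (f\varepsilon)_*\bigl((gf)^*b\bigr)$; identifying these requires the triangle coherence $\eta f \homot f\varepsilon$, which raw bi-invertibility data does not supply. One must first improve $(g,\eta,g',\varepsilon)$ to half-adjoint equivalence data (pure path algebra, so available), and your remark about ``bi-invertible packaging'' --- which you moreover attach to the wrong direction --- does not make clear that this coherence step is what is actually needed. Second, your converse invokes 2-out-of-3 for the lifting-property notion of equivalence to transfer equivalence from $f$ and $s$ to $p_{\CC}$; in both this paper and its companion that closure property is stated \emph{after} the present comparison lemma, so you should either verify that 2-out-of-3 for Definition~\ref{def:equivalence-in-cwa} is established independently of it (it can be, by a direct argument with the lifting properties), or replace the appeal by a direct check that $p_{\CC}$ inherits weak type- and term-lifting from $f$ using the strict lifting enjoyed by $s$ via its elimination structure. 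With those two repairs the proof is complete.
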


This justifies making no terminological distinction between the general and the more restricted notion.

\begin{lemma}
  Equivalences are stable under reindexing between fibrant slices:
  if $w \colon \Gamma.\AA \to \Gamma.\BB$ is an equivalence over $\Gamma$, then for any $f \colon \Gamma' \to \Gamma$, the map $f^*w \colon \allowbreak \Gamma'.f^*\AA \to \Gamma'.f^*\BB$ is again an equivalence.
  
  Moreover, this is canonically witnessed by operations on equivalence data, functorially in $\Gamma$ and $\C$.
  That is, if $w \colon \Gamma.\AA \to \Gamma.\BB$ carries equivalence data $E$, then $\id_\Gamma^* E = E$, and for any $\Gamma'' \to[g] \Gamma' \to[f] \Gamma$, $g^*f^*E = (fg)^*E$;
  and if $F \colon \C \to \C'$ is a map of CwA’s with identity types, $w \colon \Gamma.\AA \to \Gamma.\BB$ a structured equivalence over $\Gamma$ with equivalence data $E$, and $f \colon \Gamma' \to \Gamma$ a map, then $f^*(FE)$ and $(Ff)^*(FE)$ are equal as equivalence data on $F(f^*w)$ over $F \Gamma'$.
\end{lemma}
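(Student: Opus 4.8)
The plan is to observe that the reindexing functors between fibrant slices, and the action of a CwA map, preserve on the nose all the ingredients used to define homotopies and equivalence data; then $E \mapsto f^*E$ and $E \mapsto FE$ are immediate, and the functoriality statements reduce to composites of strict equalities.

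First I would collect the strict-preservation facts for the reindexing functor $f^* : \C(\Gamma) \to \C(\Gamma')$ associated to $f : \Gamma' \to \Gamma$ by Definition~\ref{def:fibrant-slice}: being a functor it preserves composition and identities, being a CwA map it preserves the formation of pairs $(u,v) : \Gamma.\AA \to \Gamma.\BB.\BB$ and of dependent projections, and — since Construction~\ref{def:identity-context} equips $(\C,\Ty^*)$ with a genuine $\Id$-type structure, whose reindexing-stability clause is a strict equality — we have $f^*(\Id_\BB) = \Id_{f^*\BB}$, $f^*(\refl_\BB) = \refl_{f^*\BB}$, hence also $f^*(p_{\Id_\BB}) = p_{\Id_{f^*\BB}}$. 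It follows directly from the definitions that $f^*$ carries a homotopy $h : u \homot v$ over $\Gamma$ to a homotopy $f^*h : f^*u \homot f^*v$ over $\Gamma'$ (the endpoint identity $p_{\Id_{f^*\BB}}(f^*h) = (f^*u, f^*v)$ being the $f^*$-image of $p_{\Id_\BB}h = (u,v)$), and hence that, given equivalence data $E = (g,\eta,g',\varepsilon)$ for $w$, the tuple $f^*E := (f^*g, f^*\eta, f^*g', f^*\varepsilon)$ is equivalence data for $f^*w$: e.g.\ $f^*\eta$ is a homotopy $f^*w \cdot f^*g = f^*(wg) \homot f^*(\id_{\Gamma.\BB}) = \id_{\Gamma'.f^*\BB}$, and symmetrically for $f^*\varepsilon$. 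By the lemma comparing Definitions~\ref{def:equivalence-in-fib-slice} and~\ref{def:equivalence-in-cwa}, $f^*w$ is then also an equivalence in the general sense.

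For functoriality in $\Gamma$ I would use that $\C(-) : \C^\op \to \CwA$ is a \emph{strict} functor, so $\id_\Gamma^*$ is literally the identity on $\C(\Gamma)$ and $(fg)^* = g^*f^*$ on the nose; applying this componentwise to $E$ gives $\id_\Gamma^*E = E$ and $g^*f^*E = (fg)^*E$. The only point needing a remark is that the identifications $f^*\Id_\BB = \Id_{f^*\BB}$ used en route compose coherently: the equalities $g^*(f^*\Id_\BB) = g^*\Id_{f^*\BB} = \Id_{g^*f^*\BB}$ and $(fg)^*\Id_\BB = \Id_{(fg)^*\BB}$ are all strict instances of the reindexing-stability of the $\Id$-structure on $(\C,\Ty^*)$, and since $g^*f^* = (fg)^*$ strictly they agree. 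For functoriality in $\C$, let $F : \C \to \C'$ be a CwA map preserving identity types; since Construction~\ref{def:identity-context} is functorial in $\C$, $F$ preserves identity contexts and reflexivity maps of extensions, $F(\Id_\AA) = \Id_{F\AA}$ and $F(\refl_\AA) = \refl_{F\AA}$, and being a CwA map it preserves composition, identities, pairs and projections and commutes strictly with reindexing, $F(f^*x) = (Ff)^*(Fx)$. Exactly as above this shows that $FE$ is equivalence data for $Fw$ over $F\Gamma$, and that $F(f^*E)$ and $(Ff)^*(FE)$ coincide as equivalence data on $F(f^*w) = (Ff)^*(Fw)$ over $F\Gamma'$, each component being the same composite of strict equalities taken in either order.

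There is no genuine conceptual obstacle here; the only care required is bookkeeping. One must be sure that the identity-context apparatus of Construction~\ref{def:identity-context} really does come with strict reindexing-stability and strict functoriality in $\C$ — which it does, being packaged as an honest $\Id$-type structure on $(\C,\Ty^*)$ — and that the strict equalities so obtained compose without coherence issues, which they do precisely because they are equalities. Everything else is a mechanical transcription of the definitions of homotopy and of equivalence data through structure-preserving functors, which is presumably why the statement is left to the reader.
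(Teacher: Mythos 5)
Your argument is correct and is exactly the routine bookkeeping the paper omits (the lemma is stated with no proof): reindexing between fibrant slices is a strict CwA map preserving the $\Id$-structure on $(\C,\Ty^*)$, so it carries homotopies and equivalence data to the same componentwise, and the strict functoriality of $\C(-):\C^\op\to\CwA$ together with the strict stability equations $f^*\Id_\BB=\Id_{f^*\BB}$ gives the coherence claims. Your reading of the (apparently garbled) final clause as the equality $F(f^*E)=(Ff)^*(FE)$ is also the intended one.
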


\begin{proof}
The ``data'' version is straightforward; the ``property'' version follows directly.
\end{proof}

\begin{lemma}[{\jpaacite[Prop.~4.7]{kapulkin-lumsdaine:homotopy-theory-of-type-theories}, \jpaacite[Lemma~3.2.6]{avigad-kapulkin-lumsdaine}}]
  Equivalences in $\C$ satisfy the 2 out of 3 property.
  Moreover, within fibrant slices $\C(\Gamma)$, the 2 out of 3 property is witnessed by operations on equivalence data, stably in $\Gamma$ and functorially in $\C$. \qed
\end{lemma}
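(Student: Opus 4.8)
\emph{Proof proposal.} The plan is to reduce the whole statement to the standard groupoidal algebra of the identity types, carried out with \emph{chosen} homotopies throughout, so that the resulting operations on equivalence data are manifestly stable and functorial.

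First I would assemble the usual toolkit of operations on homotopies between maps of context extensions over a base $\Gamma$: a reflexivity homotopy $\id_f : f \homot f$; a composite $\beta \cdot \alpha : f \homot f''$ of $\alpha : f \homot f'$ and $\beta : f' \homot f''$; an inverse $\alpha^{-1} : f' \homot f$; and left and right whiskering $k\alpha : kf \homot kf'$, $\alpha k : fk \homot f'k$ along maps $k$. Each of these is produced from the Frobenius, stable $\Id$-elimination of $\C$ --- equivalently, from the iterated identity types $\Id_\BB$ and their elimination structures of Construction~\ref{def:identity-context} --- exactly as in the familiar construction of the groupoid laws; and since those elimination structures are stable under reindexing and preserved by CwA maps, so are all of these operations. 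I would then record the (also standard, also toolkit-only) translation between \emph{equivalence data} $(g, \eta, g', \varepsilon)$ for a map $f : \Gamma.\AA \to \Gamma.\BB$ as in Definition~\ref{def:equivalence-in-fib-slice} and \emph{homotopy-inverse data} $(g, \eta : fg \homot \id, \varepsilon' : gf \homot \id)$: from the former one has $g' \homot g'(fg) = (g'f)g \homot g$ by whiskering and composition, hence $gf \homot g'f \homot \id$, giving $\varepsilon'$; conversely, homotopy-inverse data is equivalence data with $g' \coloneqq g$. From here on I would work with homotopy-inverse data.

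With the toolkit in place, the three instances of $2$-out-of-$3$ for composable $f : X \to Y$ and $g : Y \to Z$ in a fibrant slice $\C(\Gamma)$ each come down to writing an explicit homotopy inverse of the third map: (i) if $f, g$ are equivalences with homotopy inverses $\bar f, \bar g$, then $\bar f \bar g$ inverts $gf$, the two required homotopies being got by whiskering $f\bar f \homot \id$, $\bar f f \homot \id$, $g\bar g \homot \id$, $\bar g g \homot \id$ and composing; (ii) if $f$ and $gf$ are equivalences with homotopy inverses $\bar f$ and $\bar k$ (writing $k = gf$), then $f\bar k$ inverts $g$ --- on one side $g(f\bar k) = k\bar k \homot \id_Z$, and on the other $\bar k g \homot (\bar k g)(f\bar f) = \bar k k \bar f \homot \bar f$, so $(f\bar k)g = f(\bar k g) \homot f\bar f \homot \id_Y$; (iii) dually, if $g$ and $gf$ are equivalences with homotopy inverses $\bar g, \bar k$, then $\bar k g$ inverts $f$, by the mirror-image chain $f\bar k \homot (\bar g g)(f\bar k) = \bar g k \bar k \homot \bar g$, hence $f(\bar k g) \homot \bar g g \homot \id_Y$, while $(\bar k g)f = \bar k k \homot \id_X$. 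Converting each resulting homotopy-inverse datum back to equivalence data via the translation above finishes the case.

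The ``stably in $\Gamma$, functorially in $\C$'' clause then needs no extra work: every map and homotopy produced in (i)--(iii) and in the two translations is built solely from the toolkit operations, all of which commute with reindexing $f^* : \C(\Gamma) \to \C(\Gamma')$ and with maps of CwA's carrying $\Id$-types, so the induced assignment of new equivalence data to old satisfies the stated naturality equations on the nose. The main obstacle I anticipate is not any individual case but the upfront investment: verifying that whiskering, composition, inversion, and whatever associativity/interchange coherences one actually invokes in the chains of (i)--(iii) are genuinely available as \emph{stable and functorial} operations coming from the Frobenius $\Id$-structure, and being disciplined about carrying \emph{chosen} homotopies --- not mere inhabitation of $\homot$ --- through those chains, so that the final operations on equivalence data are well-defined.
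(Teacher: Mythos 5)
Your argument for the second claim --- 2-out-of-3 within fibrant slices, witnessed by stable and functorial operations on equivalence data --- is correct, and it is essentially the standard argument of the cited reference \cite[Lemma~3.2.6]{avigad-kapulkin-lumsdaine}: assemble the groupoid algebra of \emph{chosen} homotopies (composition, inversion, both whiskerings) from the elimination structure on $\refl_\BB$ supplied by Construction~\ref{def:identity-context}, convert the bi-invertibility data of Definition~\ref{def:equivalence-in-fib-slice} into a single two-sided homotopy inverse, and run the three explicit whiskering computations. Your three chains check out, and since every step is one of these toolkit operations, the stability in $\Gamma$ and functoriality in $\C$ do come for free, as you say. (Note the paper itself offers no proof here; it defers entirely to the two cited references.)

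The gap is the first sentence of the lemma, which your proposal does not address. ``Equivalences in $\C$'' there refers to the general notion of Definition~\ref{def:equivalence-in-cwa}: maps $f : \Gamma' \to \Gamma$ between \emph{arbitrary} objects of $\C$, characterised by weak type lifting and weak term lifting. For such maps there are no iterated identity contexts, hence no homotopies and no equivalence data, so the groupoid-algebra argument simply does not apply; this is exactly why the paper maintains two definitions of equivalence and proves them to agree only within fibrant slices. Establishing 2-out-of-3 for the general notion (this is \cite[Prop.~4.7]{kapulkin-lumsdaine:homotopy-theory-of-type-theories}) is a separate argument chasing the weak type- and term-lifting properties around the composite $\Gamma'' \to \Gamma' \to \Gamma$; it uses the fibrant-slice 2-out-of-3 and stability of equivalences under reindexing as ingredients, but is not subsumed by them. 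So you have proved the ``moreover'' clause in full, but you still owe the first claim.
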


\begin{lemma}[{\jpaacite[Prop.~4.9]{kapulkin-lumsdaine:homotopy-theory-of-type-theories}}]
  Equivalences satisfy “right properness”: if $w \colon \Gamma' \to \Gamma$ is an equivalence, and $\AA$ is any context extension of $\Gamma$, then $w.\AA \colon \Gamma'.w^*\AA \to \Gamma.\AA$ is again an equivalence.
  
  Moreover, within a fibrant slice $\C(\Gamma)$, this is witnessed by operations on equivalence data, stably in $\Gamma$ and functorially in $\C$. \qed
\end{lemma}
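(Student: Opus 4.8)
The plan is to prove the two assertions by different means: the bare statement is most naturally a statement about the lifting characterisation of equivalences, whereas the ``operations on equivalence data'' refinement calls for an explicit homotopy-theoretic construction.

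For the first assertion I would argue through Definition~\ref{def:equivalence-in-cwa} rather than through homotopies. The observation that does all the work is purely formal: since reindexing of context extensions is strictly functorial (context extensions form the presheaf $\Ty^*$), for any further context extension $\EE$ of $\Gamma.\AA$ one has $\Gamma'.w^*\AA.(w.\AA)^*\EE = \Gamma'.w^*(\AA.\EE)$ on the nose, and the map $(w.\AA).\EE$ coincides with $w.(\AA.\EE)$. Hence an instance of weak type lifting for $w.\AA$ --- with dummy context-extension argument $\EE$ and a type over $\Gamma'.w^*\AA.(w.\AA)^*\EE$ --- is literally an instance of weak type lifting for $w$, taken with dummy argument $\AA.\EE$; the type it produces over $\Gamma.\AA.\EE$ and the witnessing equivalence are exactly what is wanted. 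The same relabelling handles weak term lifting, so $w.\AA$ is an equivalence.

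For the refinement I work in a fibrant slice, so $w : \Gamma.\BB' \to \Gamma.\BB$ comes with equivalence data $(g,\eta,g',\varepsilon)$ and I must produce equivalence data for $w.\AA$. If the equivalence of Definitions~\ref{def:equivalence-in-cwa} and~\ref{def:equivalence-in-fib-slice} has been set up as mutually inverse operations on data, one simply conjugates the relabelling above by them and is done. Otherwise I give the operation by hand: by induction on the length of $\AA$, applied at each stage to the equivalence obtained so far, it suffices to treat $\AA = A$ a single type. Then transport supplies the maps. Take $\tilde g := (g.w^*A)\circ t$, where $t : \Gamma.\BB.A \to \Gamma.\BB.(wg)^*A$ is transport of $A$ along $\eta^{-1}$; then $(w.A)\circ\tilde g = ((wg).A)\circ t$ by functoriality of context extension, and a homotopy to $\id$ is produced by pairing the base homotopy $\eta$ with the canonical path-over witnessing that transport cancels its inverse. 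Before the retraction side I would upgrade $(g,\eta,g',\varepsilon)$ to half-adjoint form, keeping $g$ and $\eta$ and adjusting the rest so that the triangle identity $\ap_w(\varepsilon)\homot\eta\cdot w$ holds --- the standard adjointification, carried out entirely with the composition, inversion, whiskering and naturality operations of the iterated-$\Id$ structure. With that in hand the same $\tilde g$ also satisfies $\tilde g\circ(w.A)\homot\id$, by the mirror-image pairing, so $(\tilde g,-,\tilde g,-)$ is equivalence data for $w.A$. Stability in $\Gamma$ and functoriality in $\C$ then follow since every ingredient --- transport, the path-overs, the adjointification --- is assembled from the $\Id$- and iterated-$\Id$-structure, which the preceding lemmas record as stable and functorial.

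The hard part will be exactly this retraction side: the transport recipe built from $\eta$ only ever yields a homotopy \emph{section} of $w.A$, and obtaining a homotopy retraction forces one to match $\ap_w(\varepsilon)$ with $\eta\cdot w$ --- i.e.\ to adjointify --- which is routine but fiddly. The other real cost is the bookkeeping needed to confirm that the ambient $2$-cell operations (whiskering of homotopies by maps, naturality of homotopies along paths, the groupoid laws) genuinely exist in, and are stable over, the iterated-$\Id$ structure of a CwA; that is where most of the length of a complete proof would sit.
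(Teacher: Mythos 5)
The paper recalls this lemma \emph{without proof}, citing Prop.~4.9 of the companion paper, so there is no in-text argument to compare against; but your proposal is correct, and your first paragraph coincides exactly with the companion paper's one-line proof of the bare statement: weak type and term lifting for $w.\AA$ relative to a further extension $\EE$ are literally instances of the corresponding properties of $w$ relative to $\AA.\EE$, since $\Gamma'.w^*\AA.(w.\AA)^*\EE = \Gamma'.w^*(\AA.\EE)$ on the nose. For the ``operations on equivalence data'' refinement, your hand-built construction is the right one, and you have correctly located the only genuine difficulty: the transport recipe driven by $\eta$ yields a homotopy \emph{section} of $w.A$, and producing the retraction forces the triangle identity $\ap_w(\varepsilon) \homot \eta \cdot w$, hence an adjointification step (or an equivalent detour: show the section $\tilde g$ is itself an equivalence by the same method applied to $g$, then conclude for $w.A$ by the 2-out-of-3 operations on equivalence data that the paper already records). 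The one weak point is your fallback option of ``conjugating by the equivalence of the two definitions'': the lifting-style characterisation is a mere-existence property, so it does not carry equivalence data canonically and cannot simply be conjugated; but since you supply the explicit transport construction as the main route, nothing is lost.
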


\subsection{Function extensionality}

\begin{definition}[{\jpaacite[Def.~B.3.1]{kapulkin-lumsdaine:simplicial-model}, \jpaacite[\textsection 5.1]{garner:on-the-strength}}] \label{def:funext-structure}
  Suppose $\C$ is equipped with $\Pi_\eta$- and $\Id$-type structure.
  Then \defemph{function extensionality structure} on $\C$ consists of operations giving:
  \begin{enumerate}
  \item for each $\Gamma$, $A$, $B$ as in the $\Pi$ operation, a map
    \[ \funext_{A,B} \colon \Gamma. \Pi[A,B]. \Pi[A,B] . \Pi[A,e_{A,B}^*\Id_B] \to \Gamma. \Pi[A,B]. \Pi[A,B] . \Id_{\Pi[A,B]} \]
    over $\Gamma. \Pi[A,B]. \Pi[A,B] $, where $\Id_B$ in the domain is pulled back along the map
    $e_{A,B} \colon \Gamma.\Pi[A,B]. \Pi[A,B]. A \to \Gamma.A.B.B$
    that evaluates both its function arguments;
  \item and for each such $\Gamma$, $A$, $B$, a homotopy
   \begin{multline*}
     \funextcomp_{A,B} \colon \funext_{A,B} \lambda(\refl_B \ev_{A,B}) \homot \refl_{\Pi[A,B]} \\
     \colon \Gamma.\Pi[A,B] \to \Gamma. \Pi[A,B]. \Pi[A,B] . \Id_{\Pi[A,B]},
   \end{multline*}
  \item all stable under pullback in the base.
  \end{enumerate}

  We say a $\Pi_\eta$-type structure is \defemph{extensional} if it is additionally equipped with function extensionality structure,
  and write \defemph{$\Pi_\ext$-structure} to refer to the combined structure.
\end{definition}

For the remainder of this subsection, fix a CwA $\C$ equipped with $\Id$- and $\Pi$-type structure.

The following alternative form of function extensionality is less type-theo\-retically economical, but more categorically succinct:

\begin{lemma}[due to Voevodsky \cite{UniMath}; see \cite{lumsdaine:funext-blog}] \label{lem:funext-iff-r-equiv}
  A given triple $\Gamma$, $A$, $B$ admits maps $\funext_{A,B}$, $\funextcomp_{A,B}$ as in the arguments of function extensionality structure if and only if the map
  \[  \lambda_{A,B.B.\Id_B}(\refl_B \ev_{A,B}) \colon \Gamma . \Pi[A,B] \to \Gamma . \Pi[A,B.B.\Id_B] \] 
  admits equivalence data.
  Moreover, the operations converting between these two types of data on $\Gamma$, $A$, $B$ are stable under reindexing in $\Gamma$, and functorial in $\C$ under maps preserving $\Id$- and $\Pi_\eta$-structure.

  Overall, $\C$ therefore admits function extensionality structure if and only if it admits a choice of equivalence data on each map $\lambda_{A,B.B.\Id_B}(\refl_B \ev_{A,B})$, stably in $\Gamma$;
  moreover, a map of CwA’s preserving the extensionality structure preserves the resulting choice of equivalence data, and a map preserving such chosen equivalence data preserves the resulting derived extensionality structure.  \qed
\end{lemma}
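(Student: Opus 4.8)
The plan is to interpolate the ``canonical comparison'' map between the two formulations of function extensionality and then invoke the facts about equivalences recalled above; the single non-formal step will be Voevodsky's, which we cite. Concretely, I would first introduce, for each triple $\Gamma, A, B$, the canonical map $\mathrm{happly}_{A,B} \colon \Gamma.\Pi[A,B].\Pi[A,B].\Id_{\Pi[A,B]} \to \Gamma.\Pi[A,B].\Pi[A,B].\Pi[A, e_{A,B}^*\Id_B]$ over $\Gamma.\Pi[A,B].\Pi[A,B]$ --- so, running opposite to $\funext_{A,B}$ --- obtained by $\Id$-elimination (equivalently, by descent along the canonical elimination structure on $\refl_{\Pi[A,B]}$, cf.\ Lemma~\ref{lem:descent-from-elim-structure}), characterised by $\mathrm{happly}_{A,B} \cdot \refl_{\Pi[A,B]} = \lambda_{A,B.B.\Id_B}(\refl_B \ev_{A,B})$ once the codomains are identified by a routine unwinding of Construction~\ref{prop:iterated-pi} (under which the codomain of $\mathrm{happly}_{A,B}$ becomes $\Gamma.\Pi[A, B.B.\Id_B]$, and its domain the domain of $\funext_{A,B}$). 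Since $\refl_{\Pi[A,B]}$ carries canonical equivalence data (from its canonical elimination structure), the $2$-out-of-$3$ property for equivalences --- with its implementation by operations on equivalence data --- shows that $\lambda_{A,B.B.\Id_B}(\refl_B \ev_{A,B})$ admits equivalence data if and only if $\mathrm{happly}_{A,B}$ does, by explicit mutually inverse conversions; and since $\mathrm{happly}_{A,B}$ is a map between single types over $\Gamma.\Pi[A,B].\Pi[A,B]$, the comparison between the base-free and fibrant-slice notions of equivalence recalled after Definition~\ref{def:equivalence-in-cwa} lets us ask for this equivalence data over $\Gamma.\Pi[A,B].\Pi[A,B]$ just as well as over $\Gamma$. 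So it suffices to convert between function extensionality data for $\Gamma, A, B$ and equivalence data on $\mathrm{happly}_{A,B}$ over $\Gamma.\Pi[A,B].\Pi[A,B]$.

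One direction is immediate: from equivalence data on $\mathrm{happly}_{A,B}$, take $\funext_{A,B}$ to be the chosen one-sided inverse satisfying $\funext_{A,B} \cdot \mathrm{happly}_{A,B} \homot \id$; this has exactly the type of $\funext_{A,B}$, and whiskering that homotopy with $\refl_{\Pi[A,B]}$ produces $\funextcomp_{A,B}$. For the converse, $\funextcomp_{A,B}$ is precisely the $\refl$-instance of the assertion ``$\funext_{A,B}(\mathrm{happly}_{A,B}(p)) = p$'', so $\Id$-elimination for $\Id_{\Pi[A,B]}$ upgrades it to a homotopy $\funext_{A,B} \cdot \mathrm{happly}_{A,B} \homot \id$ over $\Gamma.\Pi[A,B].\Pi[A,B]$, exhibiting $\funext_{A,B}$ as a homotopy retraction of $\mathrm{happly}_{A,B}$ over that base. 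Promoting such a retraction to full equivalence data on $\mathrm{happly}_{A,B}$ is Voevodsky's argument \cite{UniMath} (see \cite{lumsdaine:funext-blog}): the retraction forces the context of functions pointwise homotopic to a fixed $f \in \Pi[A,B]$ --- in type-theoretic shorthand, $\sum_{g}(f \homot g)$ --- to be contractible, i.e.\ it yields \emph{weak} function extensionality, whence the total map of $\mathrm{happly}_{A,B}$ over the first copy of $\Pi[A,B]$ is an equivalence of essentially contractible contexts, so $\mathrm{happly}_{A,B}$ is an equivalence and carries equivalence data. I expect this promotion to be the main obstacle: it is the irreducible content of function extensionality, and, unlike the rest of the argument, it does not stay confined to the single triple $\Gamma, A, B$ (one invokes function extensionality at auxiliary type families), so some care is needed to verify that the data it consumes is available --- the per-triple equivalence being best read in the presence of the ambient $\Id$- and $\Pi$-structure.

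For the stability and functoriality clauses, I would observe that every ingredient of the two conversions --- the passage through $\mathrm{happly}_{A,B}$ (built from $\Id$-elimination and Construction~\ref{prop:iterated-pi}, both functorial), the canonical equivalence data on $\refl_{\Pi[A,B]}$, the $2$-out-of-$3$ operations on equivalence data, the comparison between the two notions of equivalence, and the whiskering of homotopies --- has already been shown, or is routine to check, to be stable under reindexing in $\Gamma$ and functorial in $\C$ under maps preserving $\Id$- and $\Pi_\eta$-structure; composing these, the conversions inherit both properties. The ``overall'' statement is then formal: function extensionality structure on $\C$ gives, by applying the first conversion pointwise, a stable choice of equivalence data on each $\lambda_{A,B.B.\Id_B}(\refl_B \ev_{A,B})$, and conversely; and functoriality of the conversions yields the stated compatibility with maps of CwA's preserving the relevant structure. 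Beyond Voevodsky's step, the only delicate bookkeeping is tracking the base of each piece of equivalence data (over $\Gamma$ versus over $\Gamma.\Pi[A,B].\Pi[A,B]$), which is handled uniformly by the base-independence of the notion of equivalence of Definition~\ref{def:equivalence-in-cwa}.
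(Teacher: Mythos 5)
The paper does not actually prove this lemma --- it is stated with a \qed and deferred entirely to the cited sources (Voevodsky's formalisation and the accompanying blog post) --- so there is no in-text argument to compare against. Your proposal is a faithful reconstruction of exactly the argument those sources give: interpolate $\mathrm{happly}_{A,B}$ so that $\lambda_{A,B.B.\Id_B}(\refl_B\ev_{A,B}) = \mathrm{happly}_{A,B}\cdot\refl_{\Pi[A,B]}$, use the canonical equivalence data on $\refl_{\Pi[A,B]}$ and 2-out-of-3 (plus the base-independence of equivalence data) to reduce to equivalence data on $\mathrm{happly}_{A,B}$, read off $\funext_{A,B}$ and $\funextcomp_{A,B}$ as the homotopy retraction and its whiskering with $\refl_{\Pi[A,B]}$, and conversely promote a retraction to an equivalence via weak function extensionality. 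The one caveat you raise is real and worth keeping: the promotion step applies weak functional extensionality to auxiliary families (e.g.\ $x \mapsto \sum_{y}\Id_B(fx,y)$), so the ``only if'' direction is not literally available from $\funext$-data at the single triple $\Gamma, A, B$ in isolation; it is the global, stable form --- which is what the ``Overall'' clause asserts and what the paper actually uses later --- that the argument delivers. Your treatment of stability and functoriality by composing already-stable ingredients is the intended bookkeeping.
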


It is straightforward to check that just like $\Id$- and $\Pi$-types, extensionality also lifts to context extensions, and hence to general homotopies:
\begin{lemma} \label{lem:iterated-pi-extensionality}
  Suppose $\C$ is equipped with $\Pi$-types, $\Id$-types, and function extensionality structure.
  Then $(\C,\Ty^*)$ carries functional extensionality structure, for the $\Pi$-type structure and $\Id$-type structure given above.
  
  In particular, given two sections $b, b'$ of a context extension $\BB \in \Ty^*(\Gamma.\AA)$, and a homotopy $h \colon b \homot b'$ between them over $\AA$, there is an induced homotopy between their abstractions $\lambda(b), \lambda(b') \colon \Pi[\AA,\BB]$, stably in $\Gamma$.

  Moreover, these constructions are functorial in $\C$. \qed
\end{lemma}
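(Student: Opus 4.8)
The plan is to reduce, via Lemma~\ref{lem:funext-iff-r-equiv}, to function extensionality for \emph{single} types in $\C$, and then to run an induction on the length of $\AA$, using the closure properties of equivalences recalled above. To begin, $(\C,\Ty^*)$ carries $\Id$- and $\Pi$-type structure by Constructions~\ref{def:identity-context} and~\ref{prop:iterated-pi}, and a routine induction on the length of $\BB$ (using the $\eta$-rule for each single-variable $\Pi$) shows that it inherits the $\Pi$-$\eta$ rule as well; so it makes sense to ask for function extensionality structure on it. By Lemma~\ref{lem:funext-iff-r-equiv}, applied inside $(\C,\Ty^*)$, such a structure is the same thing as a choice, for each $\Gamma$, each extension $\AA$ of $\Gamma$, and each extension $\BB$ of $\Gamma.\AA$, of equivalence data on the map
\[
  \mathsf{w}_{\AA,\BB} \coloneqq \lambda_{\AA, \BB.\BB.\Id_\BB}(\refl_\BB \ev_{\AA,\BB}) : \Gamma.\Pi[\AA,\BB] \to \Gamma.\Pi[\AA,\BB.\BB.\Id_\BB]
\]
(a section of the dependent projection onto $\Gamma.\Pi[\AA,\BB]$), this choice being stable under reindexing in $\Gamma$ and functorial in $\C$; so it suffices to produce such data.

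I would first record that applying $\Pi[A,-]$ to an equivalence over $\Gamma.A$ yields an equivalence over $\Gamma$, with operations on equivalence data stably in $\Gamma$ and functorially in $\C$: for an equivalence between single types this is precisely $\C$'s assumed function extensionality structure read through Lemma~\ref{lem:funext-iff-r-equiv} (together with the $\Pi$-$\eta$ rule), and for an equivalence between context extensions it follows by a sub-induction on the length of the codomain, using the structure of iterated identity contexts from Construction~\ref{def:identity-context}. Granting this, I argue by induction on the length of $\AA$. If $\AA$ is empty, $\Pi[\AA,-]$, $\ev_{\AA,-}$ and $\lambda_\AA$ are all trivial by Construction~\ref{prop:iterated-pi}, so $\mathsf{w}_{\AA,\BB}$ is just $\refl_\BB$, which carries equivalence data: by Construction~\ref{def:identity-context} it is a composite of context extensions of the reflexivity maps $\refl_{A_i}$ and of isomorphisms, each $\refl_{A_i}$ is canonically an equivalence (by $\Id$-elimination --- contractibility of based path spaces), and equivalences are closed under composition, right properness and reindexing between fibrant slices, all with operations on equivalence data stably in $\Gamma$ and functorially in $\C$. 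For $\AA = A.\AA'$, the decomposition $\Pi[A.\AA',-] = \Pi[A,\Pi[\AA',-]]$ of iterated products identifies $\mathsf{w}_{A.\AA',\BB}$ over $\Gamma$, up to the canonical reindexing isomorphisms, with the image under $\Pi[A,-]$ of the map $\mathsf{w}_{\AA',\BB}$ formed over the base $\Gamma.A$; the inductive hypothesis equips the latter with equivalence data, and the preservation statement above transports it to $\mathsf{w}_{A.\AA',\BB}$. Stability in $\Gamma$ and functoriality in $\C$ are inherited throughout.

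For the displayed consequence, suppose $b, b'$ are sections of $\BB$ over $\Gamma.\AA$ and $h : b \homot b'$ a homotopy over $\AA$. Bundling gives a section $\bar h \coloneqq (b,b',h)$ of the extension $\BB.\BB.\Id_\BB$ of $\Gamma.\AA$, and $s \coloneqq \lambda(\bar h)$ is a section of $\Pi[\AA,\BB.\BB.\Id_\BB]$ over $\Gamma$ whose first two components are $\lambda(b)$ and $\lambda(b')$. Take the equivalence data on $\mathsf{w}_{\AA,\BB}$ produced above, with homotopy inverse $g$ and homotopy $\mathsf{w}_{\AA,\BB}\, g \homot \id$; then $gs$ is a section of $\Pi[\AA,\BB]$ over $\Gamma$, the map $\mathsf{w}_{\AA,\BB}(gs)$ has both its first two components equal to $gs$, and composing the homotopy $\mathsf{w}_{\AA,\BB}(gs) \homot s$ with the first and with the second projection $\Pi[\AA,\BB.\BB.\Id_\BB] \to \Pi[\AA,\BB]$ yields homotopies $gs \homot \lambda(b)$ and $gs \homot \lambda(b')$ over $\Gamma$; concatenating these gives the required homotopy $\lambda(b) \homot \lambda(b')$. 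All the operations used are stable in $\Gamma$ and functorial in $\C$, which gives the stated naturality.

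The main obstacle is the bookkeeping concealed in the inductive step and the preservation result: matching up the many nested iterated-$\Pi$ and iterated-$\Id$ context extensions together with their reindexing isomorphisms from Constructions~\ref{prop:iterated-pi} and~\ref{def:identity-context}, so that $\mathsf{w}_{A.\AA',\BB}$ genuinely is the image under $\Pi[A,-]$ of (a reindexing of) $\mathsf{w}_{\AA',\BB}$, and setting up the action of $\Pi[A,-]$ on maps together with its preservation of equivalences while carrying the equivalence data along. By contrast, the genuinely homotopy-theoretic input is slight --- $\C$'s single-type function extensionality, the contractibility of based path spaces, and the recalled closure properties of equivalences.
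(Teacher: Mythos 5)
The paper gives no proof of this lemma --- it is stated with a bare \qed{} after the remark that extensionality ``is straightforward to check'' to lift to context extensions --- so there is no official argument to compare against; your proof is a correct and reasonable reconstruction of what is being claimed. The overall strategy (reduce via Lemma~\ref{lem:funext-iff-r-equiv} to equivalence data on $\lambda(\refl_\BB\,\ev_{\AA,\BB})$, then induct on the length of $\AA$, with the base case handled by the structure of $\refl_\BB$ from Construction~\ref{def:identity-context}) is exactly in the spirit of the paper's parallel constructions for $\Id$- and $\Pi$-types, and your derivation of the ``in particular'' clause from the equivalence data is the standard one. The one place you are too quick is the claim that single-type covariance of $\Pi[A,-]$ on equivalences ``is precisely'' the funext structure read through Lemma~\ref{lem:funext-iff-r-equiv}: that lemma only makes the \emph{specific} map $\lambda(\refl_B\,\ev_{A,B})$ an equivalence, whereas showing $\Pi[A,w]$ is an equivalence for a general equivalence $w$ requires producing the homotopies $\lambda(wg\,\ev)\homot\id$, i.e.\ converting pointwise homotopies into homotopies of $\lambda$-abstractions --- which is the single-type instance of the very ``in particular'' clause you are proving (derivable from the given $\funext_{A,B}$, $\funextcomp_{A,B}$, but needing its own short argument, not just a citation). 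Since single-type funext is a hypothesis there is no circularity, only a compressed step; with that spelled out, and the iterated-$\Pi$/iterated-$\Id$ bookkeeping you already flag, the proof is sound.
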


As a corollary, we derive the covariant action of iterated $\Pi$-types on equivalences between their codomains (though without making its functoriality precise, as we do not need it):
\begin{lemma} \label{lem:iterated-pi-covariant}
  Suppose $w \colon \Gamma.\AA.\BB \to \Gamma.\AA.\BB'$ is an equivalence over $\Gamma.\AA$.
  Then the induced map $\lambda(w) \colon \Gamma.\Pi[\AA,\BB] \to \Gamma.\Pi[\AA,\BB']$ is an equivalence over $\Gamma$. \qed
\end{lemma}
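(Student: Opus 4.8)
The plan is to prove Lemma~\ref{lem:iterated-pi-covariant} by building explicit equivalence data for $\lambda(w)$ out of the equivalence data for $w$, using the fact (from Construction~\ref{prop:iterated-pi}) that $\Pi[\AA,-]$ acts functorially on maps between context extensions over $\Gamma.\AA$, together with the lifted function extensionality of Lemma~\ref{lem:iterated-pi-extensionality}. So first I would fix equivalence data $(g,\eta,g',\varepsilon)$ for $w$, where $g,g' : \Gamma.\AA.\BB' \to \Gamma.\AA.\BB$ are maps over $\Gamma.\AA$ and $\eta : wg \homot \id$, $\varepsilon : g'w \homot \id$ are homotopies over $\Gamma.\AA$. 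Applying the functorial action of iterated $\Pi$-types (the $\lambda(-)$ operation from Construction~\ref{prop:iterated-pi}) to $g$ and $g'$ produces maps $\lambda(g),\lambda(g') : \Gamma.\Pi[\AA,\BB'] \to \Gamma.\Pi[\AA,\BB]$ over $\Gamma$; these are the candidate inverses.

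Next I would produce the required homotopies. The key point is that the $\lambda$-action is (up to the coherence isomorphisms of Construction~\ref{prop:iterated-pi}) functorial, so $\lambda(w)\lambda(g)$ is homotopic to $\lambda(wg)$ and $\lambda(g')\lambda(w)$ to $\lambda(g'w)$; and $\lambda$ sends the identity map to the identity (again up to the canonical iso). So it remains to turn the homotopies $\eta : wg \homot \id$ and $\varepsilon : g'w \homot \id$, which live over $\Gamma.\AA$, into homotopies $\lambda(wg) \homot \lambda(\id)$ and $\lambda(g'w) \homot \lambda(\id)$ over $\Gamma$. This is exactly what Lemma~\ref{lem:iterated-pi-extensionality} supplies: given sections $b,b'$ of $\BB$ (here the two sides of the homotopy, viewed suitably as sections) with a homotopy $b \homot b'$ over $\AA$, there is an induced homotopy $\lambda(b)\homot\lambda(b')$ over $\Gamma$. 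Composing these homotopies with the coherence isomorphisms (which are equivalences, hence homotopy equivalences) and using 2-out-of-3 / the groupoidal structure of homotopies, I would assemble homotopies $\lambda(w)\lambda(g) \homot \id_{\Gamma.\Pi[\AA,\BB']}$ and $\lambda(g')\lambda(w) \homot \id_{\Gamma.\Pi[\AA,\BB]}$, giving the required equivalence data for $\lambda(w)$.

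A cleaner alternative I would also consider is to invoke Lemma~\ref{lem:funext-iff-r-equiv} in the iterated form, or simply to observe that equivalences between context extensions in the fibrant slice $\C(\Gamma.\AA)$ are detected by Definition~\ref{def:equivalence-in-cwa}, and then check the weak type- and term-lifting properties for $\lambda(w)$ directly; but the constructive route via explicit equivalence data matches the rest of the paper's style and gives the (implicit) functoriality for free.

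The main obstacle I expect is bookkeeping with the coherence isomorphisms threaded through Construction~\ref{prop:iterated-pi}: the functoriality of $\lambda(-)$ on maps between context extensions holds only up to the canonical isomorphisms built into that construction (the various $\iso$'s appearing when $\BB$ is decomposed component by component), and one must check that these interact correctly with the homotopy-lifting of Lemma~\ref{lem:iterated-pi-extensionality} so that the composites really do reduce to the identity up to homotopy. Since the lemma explicitly disclaims needing functoriality of the resulting construction, I would keep this at the level of "assemble the evident homotopies", deferring the full coherence verification, which is routine but tedious.
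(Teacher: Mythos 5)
Your proposal is correct and is exactly the argument the paper has in mind: the lemma is stated there as an immediate corollary of Lemma~\ref{lem:iterated-pi-extensionality} with no written proof, the intended route being precisely to apply $\lambda(-)$ to a quasi-inverse of $w$ and use the lifted function extensionality (together with the $\beta$- and $\eta$-rules) to transport the homotopies $wg\homot\id$ and $g'w\homot\id$ to homotopies $\lambda(w)\lambda(g)\homot\id$ and $\lambda(g')\lambda(w)\homot\id$ over $\Gamma$. The only point worth making explicit is that $\lambda(\id)=\id$ uses the $\Pi$-$\eta$ rule, which is available since function extensionality structure presupposes $\Pi_\eta$.
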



\section{Inverse diagrams and Reedy types} \label{sec:inverse-diagrams}

In this section, we recall the definition of inverse categories, and define \emph{Reedy types} over diagrams on inverse categories valued in a CwA.
These are analogous to \emph{Reedy fibrations}, a well developed tool from the homotopy theory of (co-)fibration categories \arxivorsubmission{\cite{radulescu-banu,szumilo:two-models}}{\cite{radulescu-banu-for-jpaa,szumilo:two-models-for-jpaa}} and similar settings.

Compared to such settings, Reedy types in CwA’s incur several extra complications.
Due to these, we split the definition into two stages.
We first define \emph{weak Reedy types}, which are unsatisfactorily flabby, but suffice for setting up the machinery of \emph{Reedy limits}.
Armed with these we can then define \emph{(strict) Reedy types}, which are what we really want.

\subsection{Inverse categories and weak Reedy types}

\begin{definition}
  Let $\I$ be a category.
  The \defemph{precedence} ordering $<$ on $\ob \I$ is defined by taking $i < j$ just if there exists some non-identity arrow $\alpha \colon j \to i$.
 
  An \defemph{inverse category} $\I$ is a category in which the precedence ordering is well-founded, and each object has finitely many predecessors.%
  \footnote{The finiteness condition is sometimes omitted in this definition.
  In that case, it should instead be added in Definition~\ref{def:ordering} (\emph{ordered} inverse categories) and assumed in Propositions~\ref{prop:orderable} and~\ref{prop:weak-reedy-comp-cat}.}

  For $i \in \I$, we write $\yon{i} \in \copsh{\I}$ for the Yoneda co-presheaf $\I(i,-)$, and $\bdry{i} \subseteq \yon{i}$ for the sub-co-presheaf of non-identity maps out of $i$.
\end{definition}

\begin{example}
  A useful running example throughout this paper is the “walking span”: the posetal inverse category $(0 \from 01 \to 1)$, which we denote $\SpanCat$.
\end{example}

\begin{definition}
  Given categories $\C$, $\I$, diagrams $F \in \copsh{\I}$, $D \in \C^\I$, and an object $C \in \C$, an \defemph{$F$-cylinder} $\lambda$ from $C$ to $D$ consists of maps $\lambda_x \colon C \to D(i)$ for each $i \in \I$ and $x \in F(i)$, such that $D(\alpha) \lambda_x = \lambda_{\alpha x}$ for each $\alpha \colon i \to j$, $x \in F(i)$.%
\footnote{These are a special case of the cylinders of enriched category theory \cite[\textsection 3.1]{kelly:basic-concepts}.  As we require only a few facts about them, we give a self-contained treatment rather than assuming familiarity with that theory.}
  
  We denote such cylinders by $\lambda\colon C \to_F D$.
  They are easily seen to be functorial in all parameters: contravariantly in $C$, covariantly in $D$, contravariantly in $F$, covariantly 2-functorial in $\C$, and contravariantly 2-functorial in $\I$.
\end{definition}

\begin{definition}
  Let $\C$, $\I$ be categories, and $p \colon Y \to X$ a map in $\C^\I$.
  Then for $i \in \I$, a \defemph{(relative) matching object for $p$ at $i$} is
  \begin{itemize}
  \item an object $M_i p \in \C$ equipped with a $\yon{i}$-cylinder $\lambda \colon M_i p \to_{\yon{i}} X$ and a $\bdry{i}$-cylinder $\mu \colon M_i p \to_{\bdry{i}} Y$, such that $\lambda \restr{\bdry{i}} = p \mu$,
  \item that is moveover terminal among objects equipped with such cylinders.
  \end{itemize}

  A functor $F \colon \C \to \D$ \defemph{preserves} some relative matching object $(M_i p, \lambda, \mu)$ in $\C$ if its image $(F M_i p, F \lambda, F \mu)$ is a matching object for $F p$ at $i$ in $\D$.
\end{definition}

In classical presentations of Reedy fibrations, relative matching objects are not often explicitly defined, appearing instead as pullbacks of absolute matching objects: $M_i p \coloneqq M_i Y \times_{M_i X} X_i$.
In our setting of CwA’s, however, these absolute matching objects may often fail to exist; so we define the relative ones directly.

Relative matching objects may also be seen as limits indexed over the oplax pushout category $\displaystyle \strictslice{\I}{i} \underset{\strictslice{\I}{i}}{+^{\rightarrow}} \slice{\I}{i}$.

\begin{proposition} \label{prop:discrete-opfib-matching-object}
  Let $u \colon \J \to \I$ be a discrete opfibration of inverse categories.
  Then for any category $\C$, map $p$ in $\C^\I$, and $j \in \J$, if $(M_{uj} p, \mu, \lambda)$ is some matching object for $p$ at $uj$, then $(M_i p, \strictslice{u}{j}^* \mu, \slice{u}{j}^* \lambda)$ is a matching object for $u^*p$ at $j$.
\end{proposition}

\begin{proof}
  Since $u$ is a discrete opfibration, its induced maps $\slice{u}{j}$ on co-slices and $\strictslice{u}{j}$ on strict co-slices 
  are isomorphisms; so the required limit property of matching objects is unchanged by $u^*$. 
\end{proof}

Before defining the “Reedy types” that we actually want, we need to start with a more general “weak” notion, from which we will afterwards carve out the “strict” ones.
\optionaltodo{This is repeating the intro a bit…}

Fix, for the next few definitions, a CwA $\C$ and an inverse category $\I$.
We will define the fibration of \defemph{weak Reedy $\I$-types in $\C$} over the diagram category $\C^\I$ (or just \defemph{weak Reedy types}, when $\I$ and $\C$ are clear).

\begin{definition} \label{def:weak-reedy-type}
  A \defemph{weak Reedy $\I$-type} over a base $X \in \C^\I$ consists of:
  \begin{enumerate}
    \item an object $Y$ and map $p \colon Y \to X$ in $\C^\I$, together with
    \item chosen matching objects $M_ip$ for $p$, for each $i \in \I$, and
    \item for each $i \in \I$, a type $A_i \in \Ty(M_ip)$, and an isomorphism $\varphi_i \colon Y_i \to M_i p.A_i$ over $M_i p$.
  \end{enumerate}

  A map of weak Reedy types $(Y',p',M',A',\varphi') \to (Y,p,M,A,\varphi)$, over a map $f \colon X' \to X$ in $\C^\I$, consists of:
  \begin{enumerate}
    \item a map $\bar{f} \colon Y' \to Y$ over $f$;
    \item maps $\barbar{f}_i \colon A'_i \to A_i$ in $\elem{\Ty}$ over the induced maps $M_i \bar{f}_i \colon M'_ip' \to M_ip$,
    \item commuting with the isomorphisms $\varphi$, $\varphi'$:
      \[ \begin{tikzcd}
        M'_ip'.A'_i \ar[r,equals] & M'_ip'.(M_i \bar{f})^*A_i \ar[r,"\barbar{f}_i"] & M_i p . A_i \\
        Y'_i \ar[u,"\varphi'_i","\iso"'] \ar[rr,"\bar{f}_i"] & & Y_i \ar[u,"\varphi_i","\iso"']
      \end{tikzcd} \]
  \end{enumerate}

  Write $\T_{\wkreedy(\C,\I)}$ for the total category of weak Reedy $\I$-types in $\C$; this has an evident forgetful functor to $\C^\I$.

  We will often write $A$ for a weak Reedy $\I$-type $(Y,p,M,A,\varphi)$ over $\Gamma$, and in this case write $\Gamma.A$ for its total diagram $Y$, $\pi_A \colon \Gamma.A \to \Gamma$ for the projection $p$, and $M_i A$ for the given matching objects.
  This notation is motivated by the special case of strict Reedy types, Definition~\ref{def:strict-reedy-types}, in which the types $A$ fully determine the rest of the data, and which form the types of a CwA structure on $\C^\I$.
\end{definition}

Weak Reedy types are reasonably transparently an analogue of Reedy fibrations, as ordinarily defined in a fibration category or similar settings.
However, in our setting they are a little unsatisfactorily flabby, due to the essentially redundant data of chosen matching objects: for instance, in the case $\I = 1$, a weak Reedy type corresponds to “a map in $\C$ isomorphic to a dependent projection”.

%
For these reasons, we will rein in the redundancy by cutting down to \emph{strict} Reedy types, whose matching objects are constructed canonically rather than forming extra data.
These matching objects are also needed to show that weak Reedy types form a comprehension category; for now we note as much of that as is obvious.

\begin{proposition} \label{prop:weak-reedy-almost-comp-cat}
  There is a “comprehension” functor $\chi \colon \T_{\wkreedy(\C,\I)} \to (\C^\I)^{\rightarrow}$ over $\C$, sending $(Y,p,M_i,A,\varphi)$ to $(Y,p)$, forgetting the given matching objects and types.
  \[ \begin{tikzcd}[column sep=tiny]  \T_{\wkreedy(\C,\I)} \ar[rr,"\chi"] \ar[dr] & & (\C^\I)^{\rightarrow} \ar[dl,"\cod"] \\
      & \C &
    \end{tikzcd} \]
  We write $\C^\I_{\wkreedy}$ to denote this whole triangle of data, considered as a comprehension category minus the existence and preservation of cartesian lifts.

  Moreover, any discrete opfibration $u \colon \J \to \I$ induces a functor $u^* \colon \allowbreak \T_{\wkreedy(\C,\I)} \allowbreak \to \T_{\wkreedy(\C,\J)}$ (by Proposition~\ref{prop:discrete-opfib-matching-object}), and any CwA map $F \colon \C \to \D$ induces a ``partial reindexing functor'', i.e.\ a functor $F^\I \colon \T_{\wkreedy(\C,\I)}^F \to \T_{\wkreedy(\D,\I)}$, where $\T_{\wkreedy(\C,\I)}^F$ is the full subcategory of $\T_{\wkreedy(\C,\I)}$ on types whose matching objects are preserved by $F$. \qed
\end{proposition}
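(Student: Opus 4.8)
The plan is to verify each piece of the claimed structure essentially by unwinding definitions; the content is bookkeeping rather than any deep argument, so I would present it concisely. First, for the comprehension functor $\chi$: on objects, send $(Y,p,M_i,A,\varphi)$ to the object $(Y,p)$ of $(\C^\I)^\rightarrow$; on morphisms, a map of weak Reedy types includes as part of its data a map $\bar f : Y' \to Y$ over $f : X' \to X$ in $\C^\I$, which is precisely a morphism $(Y',p') \to (Y,p)$ in the arrow category. Functoriality of $\chi$ is immediate since composition of weak Reedy type maps is defined componentwise, so composition of the $\bar f$ part is composition in $(\C^\I)^\rightarrow$. That $\cod \cdot \chi$ equals the forgetful functor to $\C^\I$ is immediate from the definition of $\chi$ on objects and morphisms. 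This gives the triangle, and one records it as $\C^\I_{\wkreedy}$, noting explicitly that we are not yet claiming the existence or stability of cartesian lifts (that needs the canonical matching-object construction, which is deferred).

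Next, for the functoriality in $\I$ along a discrete opfibration $u : \J \to \I$: given a weak Reedy $\I$-type $(Y,p,M_i,A_i,\varphi_i)$ over $X \in \C^\I$, I would define $u^*$ of it to have underlying map $u^* p : u^* Y \to u^* X$ in $\C^\J$, matching objects $M_j(u^*p) \coloneqq M_{u(j)} p$ — legitimate by Proposition~\ref{prop:discrete-opfib-matching-object}, which says $u^*$ preserves relative matching objects, so $M_{u(j)}p$ with the transported cone really is a matching object for $u^*p$ at $j$ — and then take $A_j \coloneqq A_{u(j)} \in \Ty(M_j(u^*p))$ and $\varphi_j \coloneqq \varphi_{u(j)}$. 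Checking that this is again a weak Reedy type is then just that the isomorphism condition transports along, and the action on morphisms is defined the same way on each component; functoriality of $u^*$ follows from functoriality of the underlying precomposition $u^* : \C^\I \to \C^\J$ together with the componentwise definition.

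Finally, for the partial functoriality in $\C$ along a CwA map $F : \C \to \D$: here the subtlety is that $F$ need not preserve the chosen matching objects $M_i p$, since limits are not preserved by arbitrary CwA maps. So one restricts the domain: $F^\I$ is defined on a weak Reedy $\I$-type $(Y,p,M_i,A_i,\varphi_i)$ only when, for every $i$, the object $F(M_i p)$ together with the image $F$-cone is again a matching object for $F^\I p$ at $i$ (equivalently, the canonical comparison map $F(M_i p) \to M_i(F^\I p)$ is an isomorphism). On such objects, set $F^\I(Y,p,M_i,A_i,\varphi_i) \coloneqq (FY, Fp, F(M_ip), F_\Ty(A_i), F\varphi_i)$, using that $F$ strictly preserves context extension so that $F(M_i p . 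A_i) = F(M_i p).F_\Ty(A_i)$ and hence $F\varphi_i$ is an isomorphism onto the right object; the action on morphisms is again componentwise, applying $F$ and $F_\Ty$. One should note this subclass is closed under the morphisms considered (a map of weak Reedy types whose source and target both lie in the subclass stays there), so the partial functor is well-defined, and functoriality is inherited componentwise from functoriality of $F$ and $F_\Ty$.

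The only genuinely non-routine point — and the one I would flag as the main obstacle — is the partial functoriality in $\C$: one must be careful about exactly which weak Reedy types have their matching objects preserved, and check that this condition is stable under the morphisms of $\T_{\wkreedy}$ so that the "partial functor" terminology is justified. Everything else reduces to the componentwise definitions of weak Reedy types and their maps, together with Proposition~\ref{prop:discrete-opfib-matching-object}. \qed
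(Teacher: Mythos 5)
Your verification is correct and matches the paper's intent exactly: the paper attaches the \qed directly to the statement, treating everything as immediate from the componentwise definitions of weak Reedy types and their maps together with Proposition~\ref{prop:discrete-opfib-matching-object}, which is precisely what you spell out. Your flagging of the partial functor in $\C$ (restricting to types whose matching objects are preserved, and checking this full subcategory is respected by the componentwise action) is the right place to put the only non-trivial care, and your treatment of it is sound.
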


In Proposition~\ref{prop:weak-reedy-comp-cat} below we will fill in the gaps: we will show that $\T_{\wkreedy(\C,\I)}$ has and $\chi$ preserves cartesian lifts, and CwA maps preserve matching objects for Reedy types; so $\C^\I_{\wkreedy}$ forms a comprehension category, functorially in $\C$ and $\I$.

\subsection{Reedy limits}

It is familiar from homotopy theory that one does not really need general finite completeness to work with Reedy diagrams, since the limits used---matching objects of Reedy fibrant diagrams, and related constructions---can always be constructed from just pullbacks of fibrations.
Analogously, in our setting, matching objects and other limits we use can be constructed as context extensions, using just pullbacks of types.

Here we set up machinery for constructing and manipulating these limits rather precisely,
%
%
since we often care about \emph{strict} preservation of types/context extensions under reindexing and CwA maps.

Fix, for this subsection, an inverse category $\I$.

\begin{definition} \label{def:relative-pullback}
  Let $i \colon F \to G$ be a map in $\copsh{\I}$, $p \colon Y \to X$ a map in $\C^\I$ (for some category $\C$); suppose moreover we are given $C \in \C$, and a $G$-cylinder $\lambda \colon C \to_G X$ and $F$-cylinder $\mu \colon C \to_F Y$, such that $p \mu = \lambda i \colon C \to_F X$.
  Then a \defemph{$i$-pullback of $p$ along $(\lambda,\mu)$} is an object $A \in \C$ together with a map $q \colon A \to C$ and $G$-cylinder $\gamma \colon A \to_G Y$, such that :  
  \begin{enumerate}
  \item $p \gamma = \lambda q \colon A \to_G X$;
  \item $\gamma i = \mu q \colon A \to_F Y$;
  \item $(A,q,\gamma)$ is universal among objects over $\Delta$ equipped with such a cylinder.
  \end{enumerate}
  \[ \begin{tikzcd}[column sep=huge]
    A \ar[d,dashed,"q"'] \ar[r,dashed,shift left=0.3ex,"G"' very near end,"\gamma"] & Y \ar[d,"p"] \\
    C \ar[r,"\lambda", "G"' very near end] \ar[ur,end anchor={[yshift=-0.3ex]},"\mu","F"' very near end] & X
  \end{tikzcd}\]
  \optionaltodo{Can we think of a less bad terminology for these than the current “$i$-pullback”, “relative pullback”?}
\end{definition}

\begin{remark} \label{rem:matching-object-as-relative-pullback}
  When enough limits exist, this universal property says just that $(A,q,\gamma)$ is a pullback of $\cotensor{i}{p}$ along $(\mu,\lambda)$, where $\cotensor{i}{p}$ is the Leibniz cotensor of $i$ with $p$:
  \[ \begin{tikzcd}
    A \ar[d,dashed] \ar[r,dashed] \ar[drpb] & Y^G \ar[d,"{\cotensor{i}{p}}"] \\
    C \ar[r,"{(\mu,\lambda)}"] &  Y^F \times_{X^F} X^G 
  \end{tikzcd} \]
  %
  %
  Relative matching objects are a special case, with $F = 0$, $G = \delta{i}$, and $C = X_i$.
\end{remark}

To construct such relative pullbacks, we will assume additional structure on $i \colon F \to G$.

\begin{definition}
  A \defemph{finite extension} in $\copsh{\I}$ is a monomorphism $i \colon F \injto G$, together with a linear ordering on its total complement $\coprod_i G_i \setminus F_i$, such that:
  \begin{enumerate}
  \item the total complement is finite, and
  \item for any $\alpha \colon i \to j$ and $x \in G_i \setminus F_i$, if $\alpha x \in G_j \setminus F_j$ then $\alpha x \leq x$.
  \end{enumerate}
  
  A map of finite extensions is a pushout square between them that preserves the given orders on the total complements.
\end{definition}

Equivalently, a finite extension is a map exhibited as a finite cell complex of the boundary inclusions $\bdry{i} \injto \yon{i}$.

The orderings given in finite extensions are exactly the extra data needed to construct relative pullbacks of Reedy types.

\begin{lemma}[Master lemma for Reedy limits] \label{lem:master-lemma}
  Suppose we are given a finite extension $i \colon F \extto G$ in $\copsh{\I}$, a weak Reedy type $A$ over $\Gamma$ in $\C^\I$, an object $\Delta \in \C$, a $G$-cylinder $\lambda \colon \Delta \to_G \Gamma$, and an $F$-cylinder $\mu \colon \Delta \to_F \Gamma.A$ over $\lambda \restr{F}$. 

  Then we obtain a context extension $(\lambda,\mu)^*A \in \Ty^*{\Delta}$ (with length equal to the length of the extension $F \extto G$) and a $G$-cylinder $(\lambda,\mu).A \colon \Delta.(\lambda,\mu)^*A \to_G \Gamma.A$, such that $\Delta. (\lambda,\mu)^*A$ together with $\pi_{(\lambda,\mu)^*A}$ and $(\lambda,\mu).A$ forms an $i$-relative pullback of $\pi_A$ along $(\lambda,\mu)$:
  \[ \begin{tikzcd}[column sep=huge]
    \Delta.(\lambda,\mu)^*A \ar[d,fibb,dashed] \ar[r,dashed,shift left=0.3ex,"G"' very near end,"{(\lambda,\mu).A}"] & \Gamma.A \ar[d,fib] \\
    \Delta \ar[r,"\lambda", "G"' very near end] \ar[ur,end anchor={[yshift=-0.3ex]},"\mu","F"' very near end] & \Gamma
  \end{tikzcd} \]
\end{lemma}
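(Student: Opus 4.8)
The plan is to build $(\lambda,\mu)^*A$ by induction on the length of the finite extension $i : F \extto G$, using the linear ordering on the total complement $\coprod_i G_i \setminus F_i$ to add one cone-component at a time. When the complement is empty, $F = G$, and we take $(\lambda,\mu)^*A$ to be the trivial $0$-step extension and $(\lambda,\mu).A$ to be $\mu$ itself (viewed as a $G$-cone, which is legitimate since $F=G$); the universal property is immediate. For the inductive step, let $x \in G_k \setminus F_k$ be the $\leq$-maximal element of the complement, and let $F' \subseteq G$ be the sub-co-presheaf obtained by adjoining $x$ to $F$; the ordering condition for finite extensions guarantees that $\{x\}$ generates a single fresh orbit on top, so $F \extto F' \extto G$ are again finite extensions, with $F' \extto G$ one step shorter.

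First I would apply the inductive hypothesis to $i' : F' \extto G$ to obtain a context extension $(\lambda,\mu')^*A$ over some $\Delta'$ and a $G$-cone realizing the $i'$-relative pullback — but this requires already knowing the $F'$-cone $\mu'$, i.e.\ knowing the value of the cone at $x$. So the induction has to be organized the other way: I first handle the single step $F \extto F'$, producing one new type, and then feed the result into the hypothesis for $F' \extto G$. Concretely, for the one-step case $F \extto F'$ adjoining $x \in G_k$: the matching data for $A$ at $k$ gives $A_k \in \Ty(M_k p)$ with $Y_k \iso M_k p . A_k$ over $M_k p$, and the cone $\lambda$ together with $\mu\restr{\bdry k}$ (the restriction of the already-given $F$-cone along the maps factoring through $x$, which land in $F$ by the ordering condition) induces a map $\Delta \to M_k p$. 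Pulling $A_k$ back along this map, and using the isomorphism $Y_k \iso M_k p . A_k$, produces a single type over $\Delta$ whose section-extension $\Delta.(\text{new type})$ carries a canonical map to $Y_k$ over $\Delta$; this is exactly the data of extending the $F$-cone $\mu$ to an $F'$-cone on $\Delta.(\text{new type})$. Checking this is an $i$-relative pullback for the one-step inclusion is a direct unwinding of the pullback square in the matching-object definition together with the CwA pullback square for $A_k$.

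Then I would splice: given the one-step result over $\Delta_1 := \Delta.(\text{new type})$ with its $F'$-cone $\mu_1$, apply the inductive hypothesis to $F' \extto G$, $\lambda$ composed down to $\Delta_1$, and $\mu_1$, obtaining $(\lambda,\mu_1)^*A \in \Ty^*(\Delta_1)$ with its $G$-cone; then $(\lambda,\mu)^*A$ is the concatenation of the one new type with $(\lambda,\mu_1)^*A$, of total length one more, matching the length of $F \extto G$. The universal property for $F \extto G$ follows by pasting the two relative-pullback universal properties (one-step $F \extto F'$, then $F' \extto G$) — this is the standard "pullbacks paste" argument, here carried out for relative pullbacks along a composite of co-presheaf inclusions. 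The main obstacle I expect is bookkeeping: making sure the cone-restrictions land where claimed (this is precisely what the ordering axiom $\alpha x \leq x$ buys us — when we add $x$, everything the maps out of $x$ hit is already present, so the matching object $M_k p$ receives a well-defined map from $\Delta$), and verifying that the resulting construction is genuinely strictly functorial in $(\lambda,\mu)$ and stable under reindexing, which will be needed later even though it is not asserted in this lemma statement. The conceptual content, by contrast, is entirely captured by Remark~\ref{rem:matching-object-as-relative-pullback}: relative pullbacks are pullbacks of Leibniz cotensors, and a finite extension is a finite cell complex of boundary inclusions, so the whole lemma is "cell complexes of pullbacks of types are again (context extensions built from) pullbacks of types."
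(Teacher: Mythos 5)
Your overall strategy is the same as the paper's: handle a single-step extension by inducing a map into the matching object $M_k p$ from the given cones and pulling back the type $A_k$ along it, then induct over the cell decomposition of $F \extto G$ supplied by the linear ordering on the total complement, splicing the resulting one-step context extensions into a tower over $\Delta$. However, there is a concrete error in how you set up the induction: you peel off the $\leq$-\emph{maximal} element $x$ of the complement and adjoin it to $F$. The ordering axiom says $\alpha x \leq x$ for non-identity $\alpha$, i.e.\ the functorial images of $x$ are \emph{smaller} than $x$; so for $F \cup \{x\}$ to be a sub-co-presheaf --- equivalently, for the attaching map $\bdry{k} \to G$, $\beta \mapsto \beta x$, to factor through $F$, which is exactly what you need in order to form the $\bdry{k}$-cone $\mu\restr{\bdry{k}}$ into $\Gamma.A$ and hence the map $\Delta \to M_k p$ --- you need all non-identity images of $x$ to lie in $F$ already. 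That holds for the $\leq$-\emph{minimal} element of the complement and fails in general for the maximal one. Concretely, let $\I$ have two objects and one non-identity arrow $\alpha : 1 \to 0$, let $F$ be the initial co-presheaf and $G = \yon{1}$; the complement is $\{\alpha < \id_1\}$, the maximal element is $\id_1$, and $F \cup \{\id_1\}$ contains $\id_1 \in G(1)$ but not its image $\alpha \in G(0)$, so it is not a sub-co-presheaf. Your parenthetical ``which land in $F$ by the ordering condition'' is exactly backwards at this point.

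The fix is essentially one word --- adjoin the \emph{minimal} remaining element at each step, so that the cells $\bdry{i} \extto \yon{i}$ are attached in increasing order --- and with that change your argument coincides with the paper's proof, which writes $F = F_0 \extto F_1 \extto \cdots \extto F_n = G$ as a composite of single-step extensions $F_k \extto F_k +_b \yon{i}$ (each attaching map $b$ landing in $F_k$ by construction) and forms the tower $\Delta_{k+1} = \Delta_k.(\lambda_k\restr{F_{k+1}},\mu_k)^*A$ exactly as you describe. Your remaining points --- the verification of the one-step universal property from the matching-object and CwA pullback squares, the pasting of relative pullbacks across the composite $F \extto F' \extto G$, and the conceptual reading via Remark~\ref{rem:matching-object-as-relative-pullback} of the whole lemma as ``finite cell complexes of boundary inclusions give iterated pullbacks of types'' --- are all correct, and are either stated or left implicit in the paper's proof.
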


\begin{proof}
  First consider the case where $F \extto G$ is a single-step extension $F \extto F +_b \yon{i}$, for some $b \colon \bdry{i} \to F$.
  In this case, $\lambda \restr{\yon{i}}$ and $\mu \restr{\bdry{i}}$ induce a map $m \colon \Delta \to M_iA$.
  We take $(\lambda,\mu)^*A$ to be $m^*A_i \in \Ty{\Delta} \subseteq \Ty^*{\Delta}$, and $(\lambda,\mu).A$ to be the $(F +_b \yon{i})$-cylinder consisting of $\mu$ together with $\varphi_i^{-1} (m.A_i) \colon \Delta. m^*A_i \to (\Gamma.A)_i$, where $\varphi_i \colon (\Gamma.A)_i \to M_iA . A_i$ is supplied by the weak Reedy data of $A$.

  For the general case, suppose $F \extto G$ is given as a composite of single-step extensions:
  \[ F = F_0 \extto F_1 \extto \cdots \extto F_n = G. \]
  Then we define by induction a sequence of objects $\Delta_k \in \C$, along with $G$-cylinders $\lambda_k \colon \Delta_k \allowbreak \to_G \Gamma$ and $F_k$-cylinders $\mu_k \colon \Delta_k \to \Gamma.A$ over $\lambda_k$, for $k = 0, \ldots, n$, as follows:
  \begin{enumerate}
  \item $\Delta_0 \coloneqq \Delta$; $\lambda_0 \coloneqq \lambda$; $\mu_0 \coloneqq \mu$;
  \item $\Delta_{k+1} \coloneqq \Delta_k . (\lambda_k \restr{F_{k+1}},\mu_k)^* A$;
    $\lambda_{k+1} \coloneqq \lambda \pi_{(\lambda_k \restr{F_{k+1}},\mu_k)^* A}$;
    and
    $\mu_{k+1} \coloneqq \allowbreak (\lambda_k \restr{F_{k+1}},\mu_k).A$.
  \end{enumerate}
  Now we take $(\lambda,\mu)^*A$ to be the resulting context extension $\Delta_n \fibbto \Delta$, and $(\lambda,\mu).A$ to be $\mu_n$.
  \optionaltodo{TODO: add diagrams?}
\end{proof}

By \defemph{Reedy limits}, we will mean the objects and cylinders constructed according to the preceding lemma.

\begin{lemma} \label{lem:first-properties-of-reedy-limits}
  Reedy limits enjoy the following properties:

  \begin{enumerate}
  \item Functoriality in $\Delta$: Given $F \extto G$, $\Gamma$, $A$, $\Delta$, $\lambda$, $\mu$ as above, and a map $f \colon \Delta' \to \Delta$, we have $f^*(\lambda,\mu)^*A = (\lambda f,\mu f)^*A$, and $((\lambda,\mu).A) (f.((\lambda,\mu)^*A)) \allowbreak = (\lambda f,\mu f).A$.
  \item Functoriality in $\Gamma$, $A$: Given data as above together with $\Gamma'$, $A'$, $f \colon \Gamma \to \Gamma'$, and $\bar{f} \colon \Gamma.A \to \Gamma'.A'$ over $f$, we get a map $\Delta.(\lambda,\mu)^*A \to \Delta.(f\lambda,\bar{f}\mu)^*A'$ over $\Delta$, functorially in $(f,\bar{f})$; and in the case that $A = f^*A'$ and $\bar{f} = f.A$, then $(\lambda,\mu)^*A' = (\lambda,\mu)^*A'$ and this map is the identity.
  \item \label{item:functoriality-of-reedy-limits-in-extension}
    Functoriality in $i \colon F \extto G$: Given data as above together with a finite extensions $j \colon F' \to G'$ and map of extensions $(h,k) \colon j \to i$, we have $(\lambda k,\mu h)^*A = (\lambda,\mu)^*A $, and $((\lambda,\mu).A) k = (\lambda k,\mu h).A$.
  \item Functoriality in $\C$: Given $F \extto G$, $\Gamma$, $A$, $\Delta$, $\lambda$, $\mu$ as above, and a CwA map $H \colon \C \to \D$ preserving the matching objects of $A$ (and hence sending $A$ to a weak Reedy type $HA$ in $\D$), we have $H((\lambda,\mu)^*A) = (H\lambda,H\mu)^*HA$ and $H((\lambda,\mu).A) = (H\lambda,H\mu).HA$.
    (Note that the first equality here is an on-the-nose equality of context extensions, not merely an isomorphism of objects of $\D$.)
  \item \label{item:cwa-maps-preserve-reedy-limits} 
    Preservation under CwA maps: Given $F \extto G$, $\Gamma$, $A$, $\Delta$, $\lambda$, $\mu$ as above, and an arbitrary CwA map $H \colon \C \to \D$ (not assumed to preserve the matching objects of $A$), the universal property of $(\lambda,\mu)^*A$ and $(\lambda,\mu).A$ as an $i$-pullback is preserved by $H$.
  \end{enumerate}
\end{lemma}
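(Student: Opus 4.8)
The plan is to prove all five clauses by a common method: induction on the length of the finite extension $i : F \extto G$, reducing in each case to the single-step situation $F \extto F +_b \yon{i}$ treated in the proof of the Master Lemma. Recall that there the Reedy limit $(\lambda,\mu)^*A$ is \emph{defined} to be the pullback $m^*A_i$ of the type $A_i \in \Ty(M_iA)$ along the map $m : \Delta \to M_iA$ classified by the cone data $\lambda\restr{\yon{i}}$, $\mu\restr{\bdry{i}}$, while the cone $(\lambda,\mu).A$ consists of $\mu$ together with the projection $m.A_i$ (modulo the structure isomorphism $\varphi_i : Y_i \iso M_iA.A_i$). Thus each property decomposes into: (a) a naturality statement about the matching-object construction $M_i(-)$ and the classifying maps $m$; and (b) an entirely routine fact about the defining pullback squares of a CwA --- strict stability of $\Ty$ under reindexing, functoriality of context extension, and the pullback-pasting lemma. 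The inductive step in every clause is then pure bookkeeping: one runs the tower $\Delta_0 \extto \cdots \extto \Delta_n$ of the Master Lemma alongside its transformed version and checks that the cones $\lambda_k$, $\mu_k$ are carried along as stated.

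For the internal functorialities (1)--(3): in (1), precomposing $m$ with $f : \Delta' \to \Delta$ yields exactly the map classified by $(\lambda f)\restr{\yon{i}}$, $(\mu f)\restr{\bdry{i}}$, so $f^*(m^*A_i) = (mf)^*A_i$ by strictness of $\Ty$ and the cone identity is pullback-pasting; the general case applies this at each tower step. In (2), a map $\bar f : \Gamma.A \to \Gamma'.A'$ of the underlying diagrams over $f : \Gamma \to \Gamma'$ induces $M_i\bar f : M_iA \to M_iA'$ by the covariant functoriality of matching objects built into Definition~\ref{def:reedy-diags-trad}, and the naturality identity $\pi_{A'_i}\,\bar f_i = (M_i\bar f)\,\pi_{A_i}$ together with $m' = (M_i\bar f)\,m$ induces the claimed map $\Delta.(\lambda,\mu)^*A \to \Delta.(\lambda,\mu)^*A'$ over $\Delta$; when $A$ is the reindexing $f^*A'$ and $\bar f$ the corresponding cartesian map, $M_i\bar f$ is literally the classifying map induced by $f$ on matching objects, so $m^*A_i = m^*(M_i\bar f)^*A'_i = (m')^*A'_i$ on the nose and the induced map is the identity. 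For (3), a map of finite extensions $(h,k) : j \to i$ is a pushout square preserving the orderings on total complements, hence induces an order-isomorphism of those complements; consequently a cell decomposition of $j$ pushes forward to one of $i$ attaching the same cells in the same order, and the two inductive constructions run in lockstep --- at each single step the classifying map $m$ is unchanged, since $k$ and $h$ restrict to identities on the cell being attached and its boundary. The equality of context extensions and the cone identity (the latter being the restriction of the $G$-cone along $k$) then drop out.

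For the CwA-map clauses (4)--(5): in (4) the hypothesis is exactly that $H$ preserves the chosen matching objects $M_ip$ of $A$; hence $H(M_iA) = M_i(HA)$ with cones preserved, $Hm$ classifies the cones $(H\lambda)\restr{\yon{i}}$, $(H\mu)\restr{\bdry{i}}$, and $H((\lambda,\mu)^*A) = H(m^*A_i) = (Hm)^*(H_\Ty A_i) = (H\lambda,H\mu)^*HA$ since a CwA map strictly preserves $\Ty$, reindexing, and context extension; the cone identity and the tower induction go the same way, the intermediate matching objects $M_{i_k}A$ again being among those $H$ preserves. In (5), observe that in the single-step case the $i$-pullback is an honest pullback square in $\C$ --- modulo $\varphi_i$, precisely the CwA reindexing square $m.A_i$ --- and an arbitrary CwA map preserves such squares \emph{as pullbacks}, so $H$ preserves the single-step $i$-pullback universal property even when it does not preserve the matching object itself; for a general finite extension, relative pullbacks paste (the composability used implicitly in the proof of the Master Lemma), so $H$ preserves the composite universal property of the full Reedy limit.

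The main obstacle is less any individual clause than isolating the auxiliary naturality package for $M_i(-)$ and the classifying maps $m$ in exactly the form the five inductions consume, so that each becomes mechanical. One must in particular keep the two grades of CwA-functoriality strictly apart: an arbitrary CwA map preserves only the \emph{universal property} of a Reedy limit (clause (5)), whereas strict equality of the constructed context extension (clause (4)) genuinely requires $H$ to preserve matching objects. Beyond that, the remaining work --- threading the cones $\lambda_k$, $\mu_k$ through the tower $\Delta_0 \extto \cdots \extto \Delta_n$ in each case, and checking that the single-step base cases are just pullback-pasting and $\Ty$-strictness manipulations --- is routine but needs care, the Master Lemma's construction being somewhat compressed.
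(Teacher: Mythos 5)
Your proposal is correct and follows essentially the same route as the paper's (very terse) proof: verify each clause in the single-step case, where everything reduces to strictness of reindexing, pullback pasting, and — for the preservation clause — the fact that CwA maps preserve the canonical pullback squares of types, then extend by induction along the tower of single-step extensions. You have simply spelled out the details the paper declares routine, including the correct distinction between on-the-nose preservation in clause (4) and mere preservation of the universal property in clause (5).
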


\begin{proof}
  The functoriality statements are routine to verify, immediate in the single-step case and extending by induction to the general case.
  The preservation statement comes in the single-step case from the fact that CwA maps preserve canonical pullbacks of types, and again extends to the general case straightforwardly by induction.
\end{proof}

Reedy limits are moreover functorial in the inverse category $\I$, but the statement of this functoriality is slightly less straightforward.

For a functor $F \colon \C \to \D$, write $F_!$ and $F^*$ for its pushforward/restriction functors:
\[ \begin{tikzcd}[column sep=huge]
  \copsh{\C} \ar[r,bend left=15,"F_!"] \ar[r,phantom,"\bot"] & \copsh{\D} \ar[l,bend left=15,"F^*"]
\end{tikzcd} \]

\begin{lemma}
  Given a discrete opfibration $u \colon \J \to \I$, we have $u_! ( \bdry{j} \extto \yon{j} ) \iso (\bdry{uj} \extto \yon{uj})$ for each $j \in \J$.
  More generally, $u_!$ sends finite extensions in $\copsh{\J}$ to finite extensions in $\copsh{\I}$.
\end{lemma}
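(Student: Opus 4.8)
The plan is to exploit the standard description of co-presheaves on the total category of a discrete opfibration as a slice category. Write $P \in \copsh{\I}$ for the co-presheaf classifying $u$, so that $P(i)$ is the fibre $\J_i$ and $\J \iso \elem[\I]{P}$ with $u$ the canonical projection. Then there is the familiar equivalence $\copsh{\J} \simeq \copsh{\I}/P$, under which the restriction functor $u^\ast$ becomes $(-)\times P : \copsh{\I}\to\copsh{\I}/P$, and hence its left adjoint $u_!$ becomes the forgetful functor $\copsh{\I}/P \to \copsh{\I}$. In particular $u_!$ preserves all colimits and all monomorphisms, and on underlying co-presheaves $(u_!G)(i) \iso \coprod_{j\in\J_i} G(j)$ naturally in $i$, with the functorial action along $\alpha : i \to i'$ carrying the summand indexed by $j$ to the summand indexed by $\alpha_!j$ via the unique $u$-lift $\hat\alpha : j \to \alpha_!j$.

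For the first assertion I would simply trace $\yon{j}$ and $\bdry{j}$ through this equivalence. For $j\in\J$ over $i := uj$, corresponding to $x\in P(i)$, the co-presheaf $\yon{j}$ goes to the object $(\yon{i}\to P)\in\copsh{\I}/P$ whose structure map classifies $x$ (immediate from $\copsh{\J}(\yon{j},G)\iso G(j)$ and the explicit form of the equivalence). A map $\alpha : j\to j'$ of $\J$ is an identity exactly when $u\alpha$ is --- this is the single place where discreteness of $u$ is used, ruling out non-identity morphisms lying over identities --- so $\bdry{j}\subseteq\yon{j}$ corresponds precisely to $(\bdry{i}\injto\yon{i})$ regarded as a map over $P$. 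Applying $u_!$ (the forgetful functor) then gives exactly $(\bdry{uj}\injto\yon{uj})$, as an isomorphism of arrows. (Alternatively, one can argue as in Proposition~\ref{prop:discrete-opfib-matching-object}: $u_!$ preserves the colimit presentation $\bdry{j}=\operatorname{colim}_{(k,\alpha)\in\strictslice{\J}{j}^\op}\yon{k}$, and the co-slice isomorphism $\strictslice{\J}{j}\iso\strictslice{\I}{uj}$ identifies the result with the corresponding presentation of $\bdry{uj}$.)

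For the general statement, recall that a finite extension $F\extto G$ in $\copsh{\J}$ is equivalently a finite cell complex of boundary inclusions: a factorisation $F=F_0\extto F_1\extto\cdots\extto F_n=G$ in which each $F_t\extto F_{t+1}$ is a pushout of some $\bdry{j_t}\injto\yon{j_t}$ along an attaching map $\bdry{j_t}\to F_t$. Since $u_!$ preserves pushouts and (by the first assertion) carries $\bdry{j_t}\injto\yon{j_t}$ to $\bdry{uj_t}\injto\yon{uj_t}$, each $u_!F_t\to u_!F_{t+1}$ is a pushout of $\bdry{uj_t}\injto\yon{uj_t}$; and since $u_!$ preserves monomorphisms and boundary inclusions are monic, these maps, and hence $u_!F\to u_!G$, are again monic. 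Thus $u_!F\extto u_!G$ is a finite cell complex of boundary inclusions in $\copsh{\I}$, i.e.\ a finite extension. It remains to pin down the ordering data: from $(u_!G)(i)\iso\coprod_{j\in\J_i}G(j)$, compatibly with the inclusion of $F$, one reads off a bijection $\coprod_j (G_j\setminus F_j)\iso\coprod_i\bigl((u_!G)_i\setminus(u_!F)_i\bigr)$ ($j$ over objects of $\J$, $i$ over objects of $\I$), and transports the chosen linear order along it. The precedence condition $\alpha x\leq x$ for $\alpha : i\to i'$ in $\I$ then reduces, via the summand-wise description of the functorial action recalled above, to the precedence condition for $F\extto G$ applied to the unique lift $\hat\alpha$ --- again using discreteness of $u$.

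The homotopical input ($u_!$ cocontinuous and mono-preserving) is essentially free from the slice presentation, so the only genuine work is the bookkeeping in the last paragraph: that the total complement of $u_!F$ in $u_!G$ decomposes as a fibrewise coproduct of the complements of $F$ in $G$, and that the precedence inequality survives transport. Both hinge on uniqueness of lifts in a discrete opfibration --- without it the functorial action of $u_!G$ could mix the fibre summands and the decomposition would fail. I would also recall in passing the two standard facts being leaned on: that $u_!$ really is (up to the slice equivalence) the forgetful functor, and that the isomorphism of the first assertion is an isomorphism in the arrow category, so that it legitimately feeds into the cell-complex argument.
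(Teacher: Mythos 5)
Your proof is correct and follows essentially the same route as the paper: the fibrewise-coproduct formula $(u_!G)(i)\iso\coprod_{j\in\J_i}G(j)$ for a discrete opfibration gives the isomorphism on boundary inclusions, and preservation of pushouts plus the cell-complex description of finite extensions gives the general case. You additionally spell out the slice-category presentation of $u_!$ and the transport of the ordering on the total complement, details the paper's three-line proof leaves implicit.
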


\begin{proof}
  Since $u$ is a discrete opfibration, $u_!$ can be computed by $u_!(F)(i) = \coprod_{j \in \J_i} F(j)$.
  %
  %
  The desired isomorphism is immediate.
  As a left adjoint, $u_!$ also preserves pushouts, so the action on finite extensions follows, viewing them as cell complexes of the boundary inclusions.
\end{proof}

\begin{lemma}
  For any category $\C$, object $X$ in $\C$, diagram $Y \colon \I \to \C$, and co-presheaf $F \in \copsh{\I}$, there is an isomorphism (natural in $F$, $X$, and $Y$) between $u_!F$-cylinders $\lambda \colon X \to_{u_!F} Y$ and $F$-cylinders $\lambda' \colon X \to_F u^*Y$. 
\end{lemma}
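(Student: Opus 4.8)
The plan is to reduce the claim to the adjunction $u_! \dashv u^*$ between co-presheaf categories, using the elementary fact that a cone is itself a morphism of co-presheaves.

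First I would record the following reformulation of the cone construction, valid over any inverse category. Given a category $\C$, an object $X \in \C$, a diagram $D$ valued in $\C$, and a co-presheaf $G$ over the same inverse category, write $\C(X, D(-))$ for the co-presheaf $i \mapsto \C(X, D(i))$. Then a $G$-cone $X \to_G D$ is exactly the same data as a morphism $G \to \C(X, D(-))$ of co-presheaves: a family of maps $\lambda_x : X \to D(i)$ for $i$ an object and $x \in G(i)$ is a family of functions $G(i) \to \C(X, D(i))$, and the cone condition $D(\alpha)\lambda_x = \lambda_{\alpha x}$ for $\alpha : i \to j$ is precisely naturality in $i$. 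This bijection is visibly natural in $G$, $X$, and $D$.

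Applying this reformulation over $\I$ identifies $u_!F$-cones $X \to_{u_!F} Y$ with $\Hom_{\copsh{\I}}(u_! F, \C(X, Y(-)))$, and applying it over $\J$ identifies $F$-cones $X \to_F u^*Y$ with $\Hom_{\copsh{\J}}(F, \C(X, (u^*Y)(-)))$. Now the co-presheaf $\C(X, (u^*Y)(-))$ sends $j \mapsto \C(X, Y(uj))$, so it is literally $u^*(\C(X, Y(-)))$; hence the second hom-set is $\Hom_{\copsh{\J}}(F, u^*\C(X, Y(-)))$. The adjunction $u_! \dashv u^*$ (recalled just above) gives a bijection with $\Hom_{\copsh{\I}}(u_! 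F, \C(X, Y(-)))$, natural in both arguments. Composing the three bijections yields the asserted isomorphism, and naturality in $F$, $X$, $Y$ follows by composing the (manifest) naturalities of the three steps.

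I do not expect any genuine obstacle: the argument is a matter of unwinding the definition of cone, observing that the hom-co-presheaf construction turns precomposition of diagrams by $u$ into precomposition of co-presheaves by $u$, and then quoting the standard adjunction. The only care needed is bookkeeping of naturality in the three variables, which is routine. If a concrete description is preferred instead, one can use the earlier formula $u_!F(i) = \coprod_{j \in \J_i} F(j)$: an element of $(u_!F)(i)$ is a pair $(j,y)$ with $uj = i$ and $y \in F(j)$, and the isomorphism sends a $u_!F$-cone $\lambda$ to the $F$-cone whose component at $y \in F(j)$ is $\lambda_{(j,y)} : X \to Y(uj) = (u^*Y)(j)$, with the evident inverse; but the abstract route above makes the naturality claims transparent.
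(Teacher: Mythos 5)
Your proof is correct and is essentially the paper's own argument: the paper likewise identifies $G$-cones with co-presheaf morphisms $G \to \C(X,Y_\bullet)$ and then applies the adjunction $u_! \dashv u^*$. The extra unwinding you give (the explicit bijection via $u_!F(i) = \coprod_{j \in \J_i} F(j)$ and the naturality bookkeeping) is just a more detailed rendering of the same three-step chain of isomorphisms.
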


\begin{proof}
  \hfill $\begin{aligned}[t]
    \Cyl_{u_!F}(X,Y) & \iso \copsh{\I}(u_!F, \C(X,Y_\bullet)) \\
                      & \iso \copsh{\I}(F, \C(X,Y_{u\bullet})) \\
                      & \iso \Cyl_F(X,u^*Y) & \hspace{10em} \qedhere
  \end{aligned}$
\end{proof}

\begin{lemma}[Functoriality of Reedy limits in the domain] \label{lem:functoriality-of-reedy-limits-in-domain}
  Suppose given a discrete opfibration of inverse categories $u \colon \J \to \I$, along with $\Gamma \in \C^\I$, a weak Reedy $\I$-type $A$ over $\Gamma$, a finite extension $F \extto G$ in $\copsh{\I}$, an object $\Delta \in \C$, and cylinders $\lambda \colon \Delta \to_{u_!F} \Gamma$, $\mu \colon \Delta \to_{u_!F} \Gamma.A$ (or equivalently $\lambda' \colon \Delta \to_F u^* \Gamma $, $\mu' \colon \Delta \to u^* \Gamma . u^* A$).

  Then we have $u^*((\lambda,\mu)^*A) = (\lambda',\mu')^*(u^*A)$; and moreover the cylinders $(\lambda,\mu).A$ and $(\lambda',\mu').(u^*A)$ correspond in the sense of the preceding lemma.
\end{lemma}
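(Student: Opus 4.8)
The plan is to prove both assertions — the equality of context extensions and the correspondence of the associated cones — at the same time, by induction on the cellular length of the finite extension, reducing everything to the case of a single cell, where the matter comes down to the universal property of relative matching objects.

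First I would assemble the ingredients. By the preceding lemmas, $u_!$ carries $\yon{j}\mapsto\yon{uj}$ and $\bdry{j}\mapsto\bdry{uj}$ and preserves pushouts, so it sends the given finite extension, step by single step, to a finite extension in $\copsh{\I}$ with a matching cellular decomposition; thus the Master Lemma (Lemma~\ref{lem:master-lemma}) applies on the $\J$-side to $F\extto G$, $u^*A$, $\lambda'$, $\mu'$, and on the $\I$-side to the $u_!$-image together with $A$, $\lambda$, $\mu$, and the two runs proceed through the same sequence of single-step constructions. Next, since $u^*$ preserves relative matching objects (Proposition~\ref{prop:discrete-opfib-matching-object}, whence Proposition~\ref{prop:weak-reedy-almost-comp-cat}), the weak Reedy $\J$-type $u^*A$ is constructed so that $M_j(u^*A)=M_{uj}(A)$ as an object of $\C$ and $(u^*A)_j=A_{uj}$ as an element of $\Ty(M_{uj}(A))$, with the tautological $\yon{j}$- and $\bdry{j}$-cones on that object corresponding, under the cone correspondence of the immediately preceding lemma, to the tautological $\yon{uj}$- and $\bdry{uj}$-cones. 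Finally I record that this cone correspondence is natural in the co-presheaf, in the apex, and in the diagram; all three naturalities get used.

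For a single cell — gluing a copy of $\yon{j}$ along some $b:\bdry{j}\extto F$ — I would simply unwind the Master Lemma on both sides. On the $\J$-side it forms the map $m':\Delta\to M_j(u^*A)$ classifying $\lambda'\restr{\yon{j}}$ and $\mu'\restr{\bdry{j}}$, sets $(\lambda',\mu')^*(u^*A)=(m')^*((u^*A)_j)$, and takes the cone to be $\mu'$ glued with $m'.(u^*A)_j$; on the $\I$-side the corresponding cell glues $\yon{uj}=u_!\yon{j}$ along $u_!b$, and the Master Lemma forms $m:\Delta\to M_{uj}(A)$ classifying $\lambda\restr{\yon{uj}}$ and $\mu\restr{\bdry{uj}}$. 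By naturality of the cone correspondence in the co-presheaf, restriction along $\yon{j}\hookrightarrow G$ corresponds to restriction along $\yon{uj}\hookrightarrow u_!G$, so the classifying data of $m'$ and of $m$ correspond; since $M_j(u^*A)=M_{uj}(A)$ with corresponding tautological cones, the universal property forces $m'=m$. Hence $(\lambda',\mu')^*(u^*A)=(m')^*((u^*A)_j)=m^*(A_{uj})=(\lambda,\mu)^*A$ on the nose (using strictness of CwA pullback of types), and the two cones, being glued from corresponding pieces, correspond.

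For the general case I would run the Master Lemma's inductive construction of $\Delta_0=\Delta,\Delta_1,\dots,\Delta_n$ on the $\J$-side and $\Delta_0,\Delta_1',\dots,\Delta_n'$ on the $\I$-side, and show by induction on $k$ that $\Delta_k=\Delta_k'$ as context extensions of $\Delta$, with $\lambda_k$ corresponding to $\lambda_k'$ and $\mu_k$ to $\mu_k'$: the inductive step is exactly the single-cell case above, now over the base $\Delta_k$ (legitimate, since that case was proven for an arbitrary apex) and with the cones transported along the structure projections (naturality of the cone correspondence in the apex and co-presheaf). Taking $k=n$ gives the claim. I expect the only thing needing care here to be the bookkeeping rather than any genuine difficulty: one must check that each map into a matching object produced on one side is literally identified with its counterpart on the other, which rests entirely on $u^*$ reusing the chosen matching objects of $A$ verbatim (Proposition~\ref{prop:discrete-opfib-matching-object}) and on strictness of type reindexing in a CwA, with the naturality clauses of the cone-correspondence lemma supplying the rest.
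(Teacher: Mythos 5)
Your proof is correct and takes the same route as the paper, whose entire proof reads ``Direct from the construction of $(\lambda,\mu)^*A$, together with the action of $u_!$ on finite extensions''; you have simply filled in the details of that one-liner (single-cell case via preservation of matching objects and the cone correspondence, then induction along the matching cellular decompositions). No issues.
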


\begin{proof}
  Direct from the construction of $(\lambda,\mu)^* A$, together with the action of $u_!$ on finite extensions.
\end{proof}

\begin{lemma}[Reedy limits for context extensions] \label{lem:reedy-limits-for-context-extensions}
  Let $\AA$ be a context extension of weak Reedy types over $\Gamma$ in $\C^\I$, and let $i \colon F \extto G$, $\lambda \colon \Delta \to_G \Gamma$, and $\mu \colon \Delta \to_F \Gamma.A$ be as in the master lemma.

  Then we obtain a context extension $(\lambda,\mu)^*A \in \Ty^*{\Delta}$ (with length $mn$, where $m$ is the length of $\AA$ and $n$ the length of $i \colon F \extto G$), along with $(\lambda,\mu).A \colon \Delta.(\lambda,\mu)^*A \allowbreak \to_G \Gamma.A$, forming an $i$-relative pullback of $\pi_{\AA}$ along $(\lambda,\mu)$ as in the master lemma.

  Moreover, this construction enjoys all the functoriality properties of Lemmas \ref{lem:first-properties-of-reedy-limits} and \ref{lem:functoriality-of-reedy-limits-in-domain}.
\end{lemma}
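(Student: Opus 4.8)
The plan is to induct on the length $m$ of the context extension $\AA$, invoking the master lemma (Lemma~\ref{lem:master-lemma}) once more at each step and concatenating the resulting context extensions, in exact analogy with the way the master lemma itself handles composites of single-step extensions.

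The base case $m = 0$ is trivial: then $\Gamma.\AA = \Gamma$, we take $(\lambda,\mu)^*\AA$ to be the empty context extension of $\Delta$ and $(\lambda,\mu).\AA$ to be $\lambda$, and the universal property and functoriality claims are vacuous. For the inductive step, write $\AA = \AA_{<m}.A_m$, so that $A_m$ is a single weak Reedy $\I$-type over $\Gamma' := \Gamma.\AA_{<m}$, with chosen matching objects computed relative to $\Gamma'$ (not to $\Gamma$). Put $\mu' := \pi_{A_m}\mu : \Delta \to_F \Gamma'$, an $F$-cone over $\lambda\restr{F}$. By the inductive hypothesis applied to $\AA_{<m}$ and $(\lambda,\mu')$ we obtain a context extension $\BB := (\lambda,\mu')^*\AA_{<m} \in \Ty^*(\Delta)$ of length $(m-1)n$ and a $G$-cone $\nu := (\lambda,\mu').\AA_{<m} : \Delta.\BB \to_G \Gamma'$ forming an $i$-relative pullback of $\pi_{\AA_{<m}}$ along $(\lambda,\mu')$, with $\nu\restr{F} = \mu'\pi_\BB$. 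Now apply the master lemma to the weak Reedy type $A_m$ over $\Gamma'$, the finite extension $i$, the object $\Delta.\BB$, the $G$-cone $\nu$, and the $F$-cone $\mu\pi_\BB : \Delta.\BB \to_F \Gamma'.A_m = \Gamma.\AA$ (compatible, since $\pi_{A_m}(\mu\pi_\BB) = \mu'\pi_\BB = \nu\restr{F}$); this produces a context extension $\CC \in \Ty^*(\Delta.\BB)$ of length $n$ and a $G$-cone $\Delta.\BB.\CC \to_G \Gamma.\AA$. Set $(\lambda,\mu)^*\AA := \BB.\CC$, of length $(m-1)n + n = mn$, and let $(\lambda,\mu).\AA$ be this composite cone.

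It then remains to check two things. The $F$-restriction condition follows by tracing the two master-lemma invocations: $((\lambda,\mu).\AA)\restr{F} = (\mu\pi_\BB)\pi_\CC = \mu\pi_{\BB.\CC}$. For the universal property, we must see that $\Delta.(\lambda,\mu)^*\AA$, with its projection to $\Delta$ and the cone $(\lambda,\mu).\AA$, is an $i$-relative pullback of $\pi_\AA = \pi_{\AA_{<m}}\pi_{A_m}$ along $(\lambda,\mu)$; this is a pasting argument, with an $i$-relative pullback of $\pi_{A_m}$ (from the master lemma) sitting over an $i$-relative pullback of $\pi_{\AA_{<m}}$ (from the inductive hypothesis), the representability statement defining $i$-relative pullbacks (Definition~\ref{def:relative-pullback}) composing exactly as in the classical pasting lemma for pullback squares. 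Finally, each functoriality property of Lemmas~\ref{lem:first-properties-of-reedy-limits} and~\ref{lem:functoriality-of-reedy-limits-in-domain} is inherited: at the inductive step it holds both for the master-lemma application to $A_m$ and, by induction, for $\BB$, and the two combine because $(\lambda,\mu)^*\AA$ is a concatenation — for instance, reindexing along $f : \Delta' \to \Delta$ gives $f^*(\BB.\CC) = (f^*\BB).(g^*\CC)$ for $g : \Delta'.f^*\BB \to \Delta.\BB$ the induced map, and similarly for the remaining parameters.

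I expect the only real obstacle to be the bookkeeping in the universal property: formulating and applying a ``pasting lemma for $i$-relative pullbacks'' with all the cone data aligned correctly, and keeping careful track that the matching objects appearing at stage $m$ are those of $A_m$ over $\Gamma.\AA_{<m}$ rather than over $\Gamma$. Everything else is routine.
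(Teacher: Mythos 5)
Your proposal is correct and matches the paper's (very terse) proof, which likewise builds the context extension by iterating the master lemma over the components of $\AA$ exactly as the master lemma itself iterates over the steps of the finite extension. The details you fill in — the compatibility of the cones at each stage, the pasting of $i$-relative pullbacks, and the inheritance of functoriality through concatenation — are the routine verifications the paper leaves implicit.
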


\optionaltodo{Was there any reason (either expository or mathematical) not to just give this as part of the master lemma itself??}

\begin{proof}
  Built from Reedy limits given by the master lemma, just like the generalisation there from a single-step finite extension to the multi-step case.
\end{proof}

\subsection{Strict Reedy types}

Each weak Reedy type comes supplied, by definition, with matching objects.
However, given a little extra data on the domain category $\I$, the master lemma provides us with canonical matching objects, as context extensions of the objects in the base.

\begin{definition} \label{def:ordering}
  By an \defemph{ordered inverse category} we mean an inverse category $\I$ equipped with, for each $i$, a total order on the set of morphisms out of $i$ extending the precedence ordering of $i/\I$, i.e.\ such that for any composable $f, g$, with $f$ non-identity, we have $fg < g$.

  A discrete fibration $u \colon \J \to \I$ between ordered inverse categories is \defemph{ordered} if it respects the given orderings.
\end{definition}

  Choosing such structure on an inverse category $\I$ amounts precisely to giving each $0 \to \yon{i}$ the structure of a finite extension; or equivalently, each $0 \to \bdry{i}$, since the ordering on $\yon{i}$ must have $\id_i$ as its top element.
  This extra data is not burdensome to supply:
\begin{proposition} \label{prop:orderable}
  Any inverse category $\I$ admits some ordering. 
\end{proposition}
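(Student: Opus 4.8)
The plan is to build the required orders one object at a time; the definition of an ordering imposes no compatibility between the orders attached to different objects, nor any coherence across $\I$, so for each $i \in \I$ we may construct a total order on the set $M_i \coloneqq \coprod_j \I(i,j)$ of morphisms out of $i$ entirely independently. Fix $i$, and on $M_i$ define the relation $\alpha \sqsubset \beta$ to hold exactly when $\alpha = h\beta$ for some non-identity $h$ — i.e.\ when $\alpha$ factors through $\beta$ via a non-identity map. Unwinding Definition~\ref{def:ordering}, a strict total order $<_i$ on $M_i$ is an ordering precisely when it refines $\sqsubset$: the axiom ``$fg < g$ for $f$ non-identity'' is literally the statement $fg \sqsubset g \Rightarrow fg <_i g$, and (equivalently) $<_i$ being a linear extension of the opposite of the precedence order of $i/\I$ says the same thing. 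So the whole task reduces to: (i) show $\sqsubset$ is a strict partial order, and (ii) extend it to a strict total order.

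Step (ii) is immediate once (i) is in hand — one may invoke the order-extension principle (Szpilrajn), or, if one prefers an explicit construction avoiding choice, fix an ordinal-valued rank function $\rho$ for the well-founded relation $\prec$ and order $M_i$ by \emph{decreasing} rank of codomain, breaking ties within each rank level by any fixed linear order; since a non-identity $h : \cod\beta \to \cod\alpha$ forces $\rho(\cod\beta) < \rho(\cod\alpha)$, this order refines $\sqsubset$. The real (though mild) content is step (i), and this is where the two standing hypotheses on an inverse category are used, via the following two consequences of well-foundedness of $\prec$: an inverse category has no non-identity endomorphisms (such an $h : j \to j$ would give $j \prec j$ and hence an infinite descending $\prec$-chain) and hence no isomorphisms between distinct objects (an iso $a : i \to j$ with $i \neq j$ would give the infinite descending chain $j \succ i \succ j \succ \cdots$). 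Irreflexivity of $\sqsubset$ is then immediate. For transitivity, suppose $\alpha = h\beta$ and $\beta = k\gamma$ with $h, k$ non-identity; then $\alpha = (hk)\gamma$, and one checks $hk$ is again non-identity: if $hk$ were an identity then $kh$ would be an endomorphism, hence the identity, so $h$ and $k$ would be mutually inverse isomorphisms between $\cod\beta$ and $\cod\gamma$, which are distinct (else $k$ is a non-identity endomorphism) — contradicting the absence of non-identity isomorphisms. Hence $\alpha \sqsubset \gamma$, and $\sqsubset$ is a strict partial order.

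It then only remains to confirm that the resulting $<_i$ really is an ordering in the sense of Definition~\ref{def:ordering}. That $\id_i$ is the top element is clear: any non-identity $\alpha : i \to j$ satisfies $\alpha = \alpha \cdot \id_i$ with $\alpha$ non-identity, so $\alpha \sqsubset \id_i$, hence $\alpha <_i \id_i$. And for any composable $f, g$ with $f$ non-identity we have $fg \sqsubset g$ by construction, hence $fg <_i g$, which is the required condition. Finally, when $\I$ additionally carries the finiteness condition, so that each $\bdry{i}$ is a finite co-presheaf, the restriction of $<_i$ to $\bdry{i}$ is a total order on a finite set, and the same verification shows this is exactly a finite extension structure on $0 \to \bdry{i}$ — so the orderings we have produced are precisely the finite-extension structures described just before the statement. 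The only step that needs any care is the transitivity argument above, and even that is routine once the basic structural facts about inverse categories are isolated.
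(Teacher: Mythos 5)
Your proof is correct and takes essentially the same route as the paper, whose entire proof is the observation that any finite partial order can be extended to a total order; you have simply made explicit the step the paper leaves implicit, namely that the relation $fg \sqsubset g$ is indeed a strict partial order (via the absence of non-identity endomorphisms and of isomorphisms between distinct objects in an inverse category). Your rank-function variant is a nice bonus in that it works even without the finiteness hypothesis, but it does not change the substance of the argument.
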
 

\begin{proof}
  Any finite partial order can be extended to a total order.
\end{proof}

\begin{example}
  We will consider $\SpanCat$, the category $(0 \from 01 \to 1)$, as ordered by $0 <_{01} 1$.
  (The rest of the ordering is determined.)
\end{example}

\begin{definition} \label{def:canonical-matching-object}
  Once $\I$ is ordered, then given a weak Reedy type $A = (Y,p,M,A,\varphi)$ over a diagram $\Gamma \in \C^\I$, we can for any $i \in \I$ apply the master lemma with the finite extension $0 \to \bdry{i}$ to construct a relative matching object for $p_A$ as observed in Remark~\ref{rem:matching-object-as-relative-pullback}:
  \[ \begin{tikzcd}
    \Gamma_i.A_{\bdry{i}} \ar[d,fibb] \ar[r,"\bdry{i}"' very near end] & \Gamma.A \ar[d,fib] \\
    \Gamma_i \ar[r,"\bdry{i}"' very near end] & \Gamma.
    \end{tikzcd}
  \]

  We call this the \defemph{canonical matching object} for $A$ at $i$, and denote it (as a context extension) by $A_{\bdry{i}} \in \Ty^* \Gamma_i$.
  It has a canonical isomorphism to the given matching objects of $A$, $\psi_i \colon \Gamma_i.A_{\bdry{i}} \iso M_iA$.
  We write $A_{\yon{i}}$ for the further context extension $A_{\bdry{i}}.\psi^*A_i \in \Ty^* \Gamma_i$ (which may be constructed directly as a Reedy limit along the finite extension $0 \to \yon{i}$).
\end{definition}

\begin{varnotation}{{\getrefnumber{def:canonical-matching-object}a}} \label{notation:reedy-limits}
  %
  Generally, for a context extension $\AA$ of weak Reedy types over a diagram $\Gamma$, we write $\AA_{\bdry{i}}$ and $\AA_{\yon{i}}$ for the corresponding Reedy limits supplied by Lemma~\ref{lem:reedy-limits-for-context-extensions},
  and $M_i\AA$ for the resulting matching objects $\Gamma_i.\AA_{\bdry{i}}$.
  
  Correspondingly, for a map $f \colon \Gamma . \AA \to \Gamma. \BB$ over $\Gamma$, we write $M_i f$ for the induced map $M_i \AA \to M_i \BB$.
  An important special case is when $\AA$ is the empty extension, so $f$ is a section of $\BB$.
  Then $M_i \AA$ is just $\Gamma$, and $M_i f$ is a section of $\BB_{\bdry{i}}$.
\end{varnotation}

We can now tie up the dangling thread from above:

\begin{proposition} \label{prop:weak-reedy-comp-cat}
  Let $\I$ is an inverse category.
  Then for any CwA $\C$, weak Reedy $\I$-types in $\C$ form a comprehension category $\C^\I_{\wkreedy}$, and this is functorial in $\C$ and $\I$: a discrete opfibration $u \colon \J \to \I$ induces a functor $u^* \colon \C^\I \to \C^\J$, and a CwA map $F \colon \C \to \D$ induces a functor $F^\I \colon \C^\I \to \D^\I$, each commuting with composition and identities.%
  \footnote{A 2-functoriality statement would of course be nicer here; unfortunately we cannot give it, since we have followed the literature in considering CwAs just as a 1-category.}
\end{proposition}

\begin{proof}
  Fix some ordering on $\I$, as in Proposition~\ref{prop:orderable}.
  As noted in Proposition~\ref{prop:weak-reedy-almost-comp-cat}, we just need to give cartesian lifts, and show they are sent to pullbacks by comprehension.
  
  For construction of the lifts, suppose $f \colon \Gamma' \to \Gamma$ is a map in $\C^\I$, and $A = (Y,p,M_i,A,\varphi)$ a weak Reedy type over $\Gamma$.
  We construct $f^*A$, along with the map $f.A \colon f^*A \to A$ over $f$, by induction on $i \in \I$ (under the precedence ordering).
  Suppose $f^*A$ and $f.A$ have been constructed in all levels $j < i$.
  Then this suffices to construct a matching object $M_i(f^*A)$ as a Reedy limit (using the given ordering on $\I$), and also the induced map $M_i(f.A) \colon M_i(f^*A) \to M_iA$ over $f_i \colon \Gamma'_i \to \Gamma_i$ (following Notation~\ref{notation:reedy-limits}).
  Now pulling back $A_i \in \Ty(M_iA)$ along $M_i(f.A)$ gives us the type $(f^*A)_i$; taking $(\Gamma'.f^*A)_i$ as exactly $M_i(f^*A).f^*A_i$ completes the construction of $f^*A$ and $f.A$ in level $i$:
  \[ \begin{tikzcd}
     (\Gamma'.f^*A)_i \ar[dr] \ar[r,"\id"'] \ar[rrr,bend left=10,shift left=0.6ex,"(f.A)_i" description] & M_i(f^*A).(f^*A)_i \ar[r] \ar[d,fib] \ar[drpb] & M_iA.A_i \ar[d,fib] \ar[r,"\iso"'] & (\Gamma.A)_i \ar[dl] \\
     & M_i(f^*A) \ar[r,"M_i(f.A)"] \ar[drpb] \ar[d,fibb] & M_i A \ar[d] \\
     & \Gamma'_i \ar[r,"f_i"] & \Gamma_i
  \end{tikzcd} \]
  Cartesianness of this lift is immediate.

  To see that the comprehension of $f.A$ is a pullback in $\C^\I$, we show by induction that it is a levelwise pullback.
  Its $i$-th component is as in the diagram above.
  The upper square is a pullback by construction, while the lower is a pullback by the universal property of matching objects together with the assumption that $\chi(f.A)_j$ is a pullback for all $j < i$.

  Functoriality in $\I$ was already noted in Proposition~\ref{prop:weak-reedy-almost-comp-cat}.
  For functoriality in $\C$, note again that matching objects of a weak Reedy type are up to isomorphism Reedy limits, and so preserved by the action of CwA maps.
\end{proof}

\begin{definition} \label{def:strict-reedy-types}
  A \defemph{(strict) Reedy type} over $\Gamma$ is a weak Reedy type $A = (Y,p,M,A,\varphi)$ over $\Gamma$ such that for each $i \in \I$,
  \begin{enumerate}
  \item the given matching objects $M_i p$ are precisely the matching objects $\Gamma_i.A_{\bdry{i}}$ supplied by the master lemma (including their associated cylinders);
  \item $(\Gamma.A)_i = \Gamma. A_{\bdry{i}} . A_i$, and $\varphi_i = \id_{\Gamma_i}$.
  \end{enumerate}

  Write $\T_{\reedy(\C,\I)}$ for the full subcategory of $\T_{\wkreedy(\C,\I)}$ on strict Reedy types.
  We will generally write just “Reedy types” to mean the strict ones, except when explicitly contrasting them with weak Reedy types.
\end{definition}


\begin{proposition} \label{prop:strict-reedy-types} \leavevmode
  \begin{enumerate}
  \item For any $f \colon \Gamma' \to \Gamma$ in $\C^\I$ and weak Reedy type $A$ over $\Gamma$, the reindexing $f^*A$ constructed in Proposition~\ref{prop:weak-reedy-comp-cat} is in fact a strict Reedy type.
  \item For any $f \colon \Gamma' \to \Gamma$ in $\C^\I$ and weak Reedy type $A$ over $\Gamma$, if $B$, $B'$ are strict Reedy types over $\Gamma'$ with maps $\bar{f} \colon B \to A$ and $\bar{f}' \colon B' \to A$ over $f$, then $B = B'$ and $f = f'$. 
  \item $\T_{\reedy(\C,\I)}$ forms a discrete subfibration of $\T_{\wkreedy(\C,\I)}$ over $\C^\I$.
  \item For an ordered discrete opfibration $u \colon \J \to \I$, the contravariant action $u^*$ on weak Reedy types restricts to an action on strict Reedy types.
  \item Sending each weak Reedy type $A$ over $\Gamma$ to its trivial reindexing $\id_\Gamma^* A$ gives a \defemph{strictification} equivalence $\str \colon \T_{\wkreedy(\C,\I)} \to \T_{\reedy(\C,\I)}$ over $\C$. \optionaltodo{Consider notation for $\str$.}
  \item For a CwA map $F \colon \C \to \D$, composing the action $F^\I \colon \T_{\wkreedy(\C,\I)} \to \T_{\wkreedy(\C,\J)}$ with strictification gives an action on strict Reedy types, strictly functorial in $F$. \optionaltodo{If we don’t generalise to mention pseudo maps of CwA’s, then we can remove mention of strictification here.}
  \end{enumerate}
\end{proposition}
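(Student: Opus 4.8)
The plan is to dispatch the six parts in order, most of them by direct inspection of the reindexing construction in Proposition~\ref{prop:weak-reedy-comp-cat} or as formal consequences of the earlier parts. Part (1) is pure inspection: the construction of $f^*A$ was arranged, level by level, so that $M_i(f^*A)$ is the Reedy limit along $0 \to \bdry i$ produced by the master lemma (with its canonical cone), $(f^*A)_i$ is a type over that matching object, $(\Gamma'.f^*A)_i = M_i(f^*A).(f^*A)_i$, and the comparison isomorphism is the identity --- which is exactly the definition of a strict Reedy type. Part (2) is the technical core, proved by the same well-founded induction on $i$ as in Proposition~\ref{prop:weak-reedy-comp-cat}. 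Assuming that $B$ and $B'$, and $\bar f$ and $\bar f'$, already agree on all levels on which the matching object at $i$ depends, the canonical matching objects $M_i p_B$ and $M_i p_{B'}$ coincide and the induced maps $M_i\bar f$ and $M_i\bar f'$ agree; since a morphism of $\int\Ty$ lying over a fixed base map is precisely an equality of types, the components $\barbar f_i$ and $\barbar f'_i$ force $B_i = (M_i\bar f)^*A_i = B'_i$, strictness then fixes $Y^B_i = M_i p_B.B_i = Y^{B'}_i$, and the compatibility square with $\varphi^B_i = \varphi^{B'}_i = \id$ forces $\bar f_i = \bar f'_i$, closing the induction.

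Part (3) is then formal: by (1) the chosen cartesian lift $f^*A$ already lies in $\T_{\reedy}$, which gives existence of a cartesian lift with strict domain, and by (2) such a lift is unique, so $\T_{\reedy} \to \C^\I$ is a discrete fibration; since $\T_{\reedy} \hookrightarrow \T_{\wkreedy}$ is a full subcategory and the lift is literally the same morphism in both, it is still cartesian there, i.e.\ the inclusion is a discrete subfibration. For part (4), given an ordered discrete opfibration $u : \J \to \I$ and a strict Reedy type $A$ over $\Gamma$, the only point is that $u^*A$ is again strict: its comparison isomorphisms are $u^*$ of those of $A$, hence identities, and by Proposition~\ref{prop:discrete-opfib-matching-object} its matching object at $j$ is $M_{uj}A = \Gamma_{uj}.A_{\bdry{uj}}$ with its cone, so it suffices to identify $A_{\bdry{uj}}$ (with cone) with the canonical matching object $(u^*\Gamma)_j.(u^*A)_{\bdry j}$ of $u^*A$. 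This is Lemma~\ref{lem:functoriality-of-reedy-limits-in-domain} applied to the finite extension $0 \to \bdry j$ in $\copsh\J$, using that $u$ ordered makes $u_!$ carry it to $0 \to \bdry{uj}$ in $\copsh\I$ with matching linear orders; the same computation handles $A_{\yon j}$ and context extensions $\AA$.

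Part (5): by (1), $\str A = \id_\Gamma^*A$ is always strict, and $\str$ becomes a functor over $\C$ by transporting morphisms along the cartesian lifts $\id_\Gamma.A$, which are isomorphisms (cartesian over identities) and assemble into a natural isomorphism $\iota\circ\str \Rightarrow \id_{\T_{\wkreedy}}$, where $\iota$ denotes the inclusion. Conversely, for $A$ already strict, $\id_\Gamma^*A$ and $A$ are both strict Reedy types over $\Gamma$ equipped with a map to $A$ over $\id_\Gamma$, so $\str\circ\iota = \id_{\T_{\reedy}}$ on the nose by (2); hence $\str$ is an equivalence over $\C$. For part (6), $F^\I$ on weak Reedy types is \emph{strictly} functorial in $F$, since a CwA map strictly preserves context extension and the CwA structure and preserves the universal property of matching objects by Lemma~\ref{lem:first-properties-of-reedy-limits}(\ref{item:cwa-maps-preserve-reedy-limits}). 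Setting $F^\I_{\reedy} \coloneqq \str\circ F^\I\circ\iota$ we get an action on strict Reedy types; $\id^\I_{\reedy} = \id$ because $\str\circ\iota = \id$, and comparing $(GF)^\I_{\reedy}$ with $G^\I_{\reedy}\circ F^\I_{\reedy}$ comes down to the fact that $F^\I(\iota A)$ and $\iota(\str(F^\I(\iota A)))$ are isomorphic over an identity, together with the observation --- again from (2) --- that $\str$ sends weak Reedy types isomorphic over an identity to \emph{equal} strict ones.

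The main obstacle is part (2): one must keep careful track of exactly which earlier levels the matching object and the induced map $M_i\bar f$ at level $i$ depend on, and verify that discreteness of the presheaf $\Ty$ genuinely rigidifies a morphism of strict Reedy types into a plain weak one, so that nothing is left free. Once (2) is available, parts (3), (5), (6) are essentially bookkeeping on top of (1)--(2), and part (4) is a single application of the domain-functoriality lemma for Reedy limits.
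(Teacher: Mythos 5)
Your proposal is correct and follows essentially the same route as the paper's (much terser) proof: part (1) by inspection of the reindexing construction, part (2) by level-wise induction using discreteness of $\int\Ty$, part (4) via the functoriality of Reedy limits together with the observation that orderedness of $u$ makes $u_!\bdry{j}\iso\bdry{uj}$ a map of finite extensions, part (6) via naturality of Reedy limits under CwA maps, and the remaining parts as formal consequences. Your write-up simply fills in the bookkeeping that the paper leaves implicit.
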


\begin{proof}
  Parts not specifically noted here follow directly from earlier ones.
  (2) is straightforward by induction on levels, using the fact that $\int \Ty$ is a discrete fibration.
  For (4), note that when $u \colon \J \to \I$ is ordered, the isomorphism $u_! \bdry{j} \iso \bdry{uj}$ is a map of finite extensions; naturality of Reedy limits (Lemma~\ref{lem:first-properties-of-reedy-limits}(\ref{item:functoriality-of-reedy-limits-in-extension})) then implies that the canonical matching objects are preserved by $u^*$.
  (6) similarly uses naturality of Reedy limits under CwA maps (Lemma~\ref{lem:first-properties-of-reedy-limits}(\ref{item:cwa-maps-preserve-reedy-limits})).
\end{proof}

These observations justify the following definition:

\begin{definition} \label{def:cwa-of-strict-reedy-types}
  We write just $\C^\I$ for \defemph{the CwA of strict Reedy types}, with base category $\C^\I$ and presheaf $\Ty$ corresponding to the discrete fibration $\T_{\reedy(\C,\I)}$.
  (Occasionally for emphasis or disambiguation we may write $\C^\I_{\reedy}$ or $\Ty_{\reedy(\C,\I)}$.)
  This construction is covariantly functorial in CwA maps, and contravariantly in ordered discrete opfibrations.
  
  When we speak of types over diagrams $\Gamma \in \C^\I$, we will always mean strict Reedy types unless specified otherwise, and similarly for context extensions, etc.
\end{definition}

\begin{example}
  A direct description of the CwA $\C^\SpanCat$ is spelled out in detail in \jpaacite[Definition~5.5 et seq.]{kapulkin-lumsdaine:homotopy-theory-of-type-theories}. 
\end{example}

\begin{remark}
  We will often construct strict Reedy types and maps between them by induction, as with the reindexings in Proposition~\ref{prop:weak-reedy-comp-cat}.
  Such constructions often involve applying lemmas about Reedy types to the type under construction, before it is fully defined.
  
  For instance, suppose one has earlier proved a lemma that “Reedy limits of levelwise nice Reedy types are nice”, and is now constructing by induction a levelwise nice Reedy type $A$.
  In the induction step, one must construct $A_i$ and prove it nice, assuming that $A_j$ is given and nice in all levels $j < i$.
  Then, in the proof of niceness of $A_i$, one may invoke the lemma to conclude that $M_i A$ is nice.

  Formally, this may be justified by noting that the data given constitute a levelwise nice Reedy type over the subcategory $\I_{<i}$ (or the strict slice $\strictslice{\I}{i}$), and that $M_iA$ can be computed as a Reedy limit of this type, to which the lemma then applies.
  
  For the sake of readability, we will generally avoid belabouring such points.
\end{remark}

\subsection{Levelwise extensions}

Besides Reedy types, we will in places make use of \emph{levelwise} context extensions of diagrams.

\begin{definition}
  Let $\I$ be an arbitrary category, and $\Gamma \in \C^\I$ a diagram.
  A \defemph{levelwise extension} $\AA$ over $\Gamma$ is a family of context extensions $\AA_i \in \Ty^*(\Gamma_i)$, for each $i \in \I$, along with maps in $\C$ making the objects $\Gamma_i.A_i$ and their projection maps into a diagram over $\Gamma$ (which we denote $\Gamma.\AA$).

  Write the set of levelwise extensions over $\Gamma$ as $\Ty_\lw(\Gamma)$; these constitute an evident CwA structure on the category $\C^\I$, which we write as $\C^\I_\lw$.
  Moreover, the construction of $\C^\I_\lw$ is functorial in CwA maps $\C \to \D$ and arbitrary functors $\J \to \I$.
  
  In case $\I$ is an ordered inverse category, there is an evident natural map from Reedy types to levelwise extensions, commuting with the context extension operation; that is, a strict CwA map $\C^\I \to \C^\I_\lw$, acting as the identity as on the base category.
\end{definition}

The CwA $\C^\I_\lw$ is in general of less intrinsic interest than $\C^\I$, since it will not typically inherit as much logical structure from $\C$.
However, levelwise extensions appear naturally as intermediate stages in several constructions/proofs on Reedy types.

In diagrams, we denote projections from levelwise extensions by $\Gamma.\AA \lwto \Gamma$.


\section{Logical structure on inverse diagrams} \label{sec:logical-structure}

The goal of this section is to show that if $\C$ carries $\Id$-types (resp.\ $\Sigma$, unit, $\Pi$, funext), then so does $\C^\I$, for any ordered inverse category $\I$. 

As the statement suggests, each piece of logical structure lifts from $\C$ to $\C^\I$ individually (except of course for functional extensionality, which relies on $\Id$ and $\Pi$);
we will tackle them one at a time in the propositions of this section.
In the course of this, we will also require a few technical lemmas on the interaction of Reedy limits with elimination structures and equivalences.

Fix the CwA $\C$ and ordered inverse category $\I$ throughout.
When we say that constructions are functorial in $\C$ and $\I$, we mean with respect to CwA maps $\C \to \D$ preserving whatever logical structure is under consideration, and ordered discrete opfibrations $\J \to \I$.

\subsection{Elimination structures in inverse diagrams}

\begin{lemma} \label{lem:elim-structure-from-levelwise}
  Suppose $f \colon \Gamma.\AA \to \Gamma.\BB$ is a map over $\Gamma$ in $\C^\I$ equipped with a levelwise elimination structure, i.e.\ an elimination structure on each $f_i$ over $\Gamma_i$.
  Then $f$ carries an elimination structure over $\Gamma$ in $\C^\I$.
  Moreover, this is stable under pullback in $\Gamma$, and functorial in $\C$ and $\I$.
\end{lemma}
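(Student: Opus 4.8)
The plan is to build the elimination structure on $f$ one level of $\I$ at a time, in the same inductive style as the construction of reindexings in Proposition~\ref{prop:weak-reedy-comp-cat}. First a reduction. By definition, an elimination structure on $f$ over $\Gamma$ in $\C^\I$ amounts to a pre-elimination structure on each map $(g^*f).\EE$, for $g : \Gamma' \to \Gamma$ in $\C^\I$ and $\EE$ a (Reedy) context extension of $\Gamma'.g^*\BB$, all commuting with reindexing. Each such $(g^*f).\EE$ is itself a map over $\Gamma'$ in $\C^\I$ carrying a levelwise elimination structure: by the functoriality of Reedy limits in the base and for context extensions (Lemmas~\ref{lem:first-properties-of-reedy-limits} and \ref{lem:reedy-limits-for-context-extensions}), its $i$-th component is, up to the usual identifications, the map $g_i^*f_i$ extended by a Reedy limit of $\EE$, and so inherits an elimination structure over $\Gamma'_i$ from the stable Frobenius structure on $f_i$. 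Hence it is enough to produce, for an arbitrary map $f$ over $\Gamma$ in $\C^\I$ equipped just with pre-elimination structures on the $f_i$ over $\Gamma_i$, a pre-elimination structure on $f$ in $\C^\I$ into single strict Reedy types over $\Gamma.\BB$, and to verify its naturality; the coherence conditions of the full structure then fall out level by level.

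For the construction, fix a Reedy type $C$ over $\Gamma.\BB$ and a map $d : \Gamma.\AA \to \Gamma.\BB.C$ with $p_C d = f$, and define a section $e_{C,d}$ of $p_C$ with $e_{C,d}f = d$ by recursion on $i \in \I$ along the precedence ordering. Suppose $(e_{C,d})_j$ is defined for all $j \prec i$, forming a partial diagram map compatible with all the structure maps among those levels, with $(e_{C,d})_j f_j = d_j$. Write $(\Gamma.\BB.C)_i = (\Gamma.\BB)_i.C_{\bdry i}.C_i$, with $(\Gamma.\BB)_i.C_{\bdry i} = M_iC$ the canonical matching object and $C_i \in \Ty(M_iC)$. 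The maps $(e_{C,d})_j \circ (\Gamma.\BB)(\alpha)$, over the non-identity arrows $\alpha : i \to j$, constitute a $\bdry i$-cone from $(\Gamma.\BB)_i$ to $\Gamma.\BB.C$ lying over the tautological $\yon i$-cone from $(\Gamma.\BB)_i$ to $\Gamma.\BB$; by the universal property of $M_iC$ it is classified by a unique section $s_i$ of $C_{\bdry i}$ over $(\Gamma.\BB)_i$, the ``forced boundary''. Using the inductive equations $(e_{C,d})_j f_j = d_j$ together with naturality of $f$, one checks that $p_{C_i}\circ d_i = s_i \circ f_i$ as maps $(\Gamma.\AA)_i \to M_iC$; thus $d_i$, viewed as a map over $f_i$, is precisely a section of $f_i^*(s_i^*C_i)$ over $(\Gamma.\AA)_i$, i.e.\ a map $(\Gamma.\AA)_i \to (\Gamma.\BB)_i.s_i^*C_i$ over $f_i$. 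Feeding the single type $s_i^*C_i \in \Ty((\Gamma.\BB)_i)$ and this map into the pre-elimination structure on $f_i$ over $\Gamma_i$ yields a section $\tau$ of $s_i^*C_i$ over $(\Gamma.\BB)_i$ with $\tau f_i = d_i$ under this identification; set $(e_{C,d})_i \coloneqq (s_i,\tau) : (\Gamma.\BB)_i \to (\Gamma.\BB.C)_i$. Its boundary being $s_i$ encodes exactly the required compatibility with the lower levels, so the enlarged family is again a partial diagram map, and $(e_{C,d})_i f_i = d_i$; this completes the recursion and produces a genuine section $e_{C,d}$ of $p_C$ in $\C^\I$ with $e_{C,d}f = d$.

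Finally, stability of $e$ under reindexing along maps $\Gamma'' \to \Gamma'$ in $\C^\I$, and functoriality in CwA maps $\C \to \D$ (preserving whatever logical structure is at issue) and in ordered discrete opfibrations $\J \to \I$, are all verified level by level: these operations act on the $i$-th level by pullback, by the corresponding operation in $\C$, and by reading off the component at $ui$ respectively; the levelwise pre-elimination structures are stable and functorial; and canonical matching objects and Reedy limits are preserved throughout (Lemmas~\ref{lem:first-properties-of-reedy-limits}(\ref{item:cwa-maps-preserve-reedy-limits}) and \ref{lem:functoriality-of-reedy-limits-in-domain}, Proposition~\ref{prop:strict-reedy-types}), so $s_i$, $\tau$, and hence $e_{C,d}$, transform as required. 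I expect the one genuinely delicate point to be the matching-object bookkeeping in the recursion step — in particular the identity $p_{C_i}d_i = s_i f_i$, which has to be extracted carefully from the universal property of $M_iC$ and the inductive hypothesis; once that is in hand everything else is routine, with the remaining coherences following from the functoriality lemmas for Reedy limits.
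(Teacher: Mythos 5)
Your proposal is correct and follows essentially the same route as the paper: reduce the full (stable Frobenius) structure to the bare pre-elimination case by observing that reindexings and context extensions of $f$ are again levelwise-structured, then build the section $e_{C,d}$ by Reedy induction, at each level packaging the lower components into a section of the matching object $M_iC$ and applying the elimination structure on $f_i$ to the pulled-back type. The only cosmetic difference is that the paper invokes its diagonal-filler proposition for the induction step, whereas you unwind that proposition explicitly via $s_i^*C_i$; the "delicate" identity $p_{C_i}d_i = s_if_i$ you flag is exactly the commutativity of the paper's lifting square and follows, as you say, from the inductive equations and the universal property of $M_iC$.
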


\begin{proof}
  First we give the pre-elimination structure.
  Suppose we have a type $C$ over $\Gamma.\BB$, and map $d \colon \Gamma.\AA \to \Gamma.\BB.C$ over $\Gamma.\BB$.
  We wish to construct a section $e$ of $C$ such that $ef = d$.
    \[ \begin{tikzcd}[column sep=small]
       \Gamma.\AA\ar[rr,"d"] \ar[dr,"f"'] & & \Gamma.\BB.C \ar[dl,fib] \\
      & \Gamma.\BB \ar[ur,dashed,bend left=15,shift left=0.5ex,"e"]
  \end{tikzcd} \]

  Work by Reedy induction: suppose that $e$ has already been constructed in degrees below $i$, satisfying the desired equations.
  Then these components assemble to give a section $e'$ of $M_i C \fibbto (\Gamma.\BB)_i$, forming a commutative square from $f_i$ to $p_{C_i}$; we take $e_i$ as the diagonal filler for this square supplied by the elimination structure of $f_i$:
  \[ \begin{tikzcd}
    (\Gamma.\AA)_i \ar[r,"d_i"] \ar[d,"f_i"] & (\Gamma.\BB.C)_i \ar[d,fib] \\
    (\Gamma.\BB)_i \ar[r,bend left=15,"e'"] \ar[ur,dashed,"e_i"] & M_i C \ar[l,fibb,shift left=0.2ex] 
  \end{tikzcd} \]

  Next, for the Frobenius condition, note that the components of any pullback $f.\CC$ of $f$ along a context extension $\pi_\CC \colon \Gamma.\BB.\CC \fibbto \Gamma.\BB$ are just pullbacks of the components of $f$ along the context extensions $\CC_{\yon{i}}$.
  So any such pullback $f.\CC$ again carries a levelwise elimination structure, and hence a pre-elimination structure by the previous paragraph.

  Finally, note that the above constructions are all stable under pullback in $\Gamma$, so they assemble to produce an elimination structure stably in $\Gamma$, as required; and similarly, are functorial in $\C$ and $\I$.
\end{proof}

So levelwise elimination structures suffice to give elimination structures in $\C^\I$.
However, for some purposes one needs a slightly stronger notion.

\begin{definition} \label{def:comparison-component} \label{def:reedy-elim-structure}
  Let $f \colon \Gamma.\AA \to \Gamma.\BB$ be a map over $\Gamma$ in $\C^\I$.
  For each $i \in \I$, let $m_i f$ denote the \defemph{comparison component} $m_i f \colon (\Gamma.\AA)_i \to M_i \AA . (M_i f)^* B_i$.
  \[ \begin{tikzcd}
    (\Gamma.\AA)_i \ar[r,dashed,"m_i f"']\ \ar[rrr,bend left=14,shift left=0.8ex,"f_i" description] \ar[dr] & M_i \AA . (M_if)^*\BB_i \ar[r,"M_i f . \BB_i"] \ar[d,fibb] \ar[drpb] & M_i \BB . \BB_i \ar[d,fibb] \ar[r,"{\varphi_{\BB,i}}"',"\iso"] & (\Gamma.\BB)_i \ar[dl] \\
    & M_i \AA \ar[r,"M_i f"] & M_i \BB
  \end{tikzcd} \]
  (where $\varphi_{\BB,i}$ is an evident context-reordering isomorphism).
  By a \defemph{Reedy elimination structure on $f$ over $\Gamma$}, we mean a choice of elimination structure on each $m_i f$ over $M_i \AA$.
\end{definition}

\begin{proposition} \label{prop:closure-for-reedy-elim-structures} \leavevmode
  \begin{enumerate}
    \item Suppose $f, g$ are composable maps over $\Gamma$, both equipped with Reedy elimination structures over $\Gamma$.
    Then the composite $gf$ carries an induced Reedy elimination structure.
  
  \item Suppose $f \colon \Gamma.\AA \to \Gamma.\BB$ is equipped with a Reedy elimination structure.
    Then for any Reedy type $C$ over $\Gamma.\BB$, $f.C$ carries an induced Reedy elimination structure.
  \end{enumerate}
  Moreover, these constructions are functorial in $\I$ and $\C$.
\end{proposition}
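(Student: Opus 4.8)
The plan is to reduce both statements, level by level, to the closure properties of ordinary (stable Frobenius) elimination structures established above — closure under composition, stability under reindexing, and closure under context extensions of the map — by expressing the comparison component of a composite, resp.\ of a context extension, in terms of the comparison components we are handed.

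For part (1), let $f : \Gamma.\AA \to \Gamma.\BB$ and $g : \Gamma.\BB \to \Gamma.\CC$ be as stated, and fix $i \in \I$. By functoriality of matching objects, $M_i(gf) = M_ig \cdot M_if : M_i\AA \to M_i\CC$, so $(M_i(gf))^*\CC_i = (M_if)^*(M_ig)^*\CC_i$. Unwinding Definition~\ref{def:comparison-component} and tracking the reordering isomorphisms $\varphi$, one checks that $m_i(gf)$ is, up to composition with isomorphisms (which carry canonical elimination structures), the composite of $m_if : M_i\AA.\AA_i \to M_i\AA.(M_if)^*\BB_i$ with the reindexing $(M_if)^*m_ig : M_i\AA.(M_if)^*\BB_i \to M_i\AA.(M_i(gf))^*\CC_i$ of $m_ig$ along $M_if : M_i\AA \to M_i\BB$. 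Now $m_if$ carries an elimination structure over $M_i\AA$ by hypothesis, and $m_ig$ carries one over $M_i\BB$, so $(M_if)^*m_ig$ carries one over $M_i\AA$ by stability of elimination structures under reindexing; hence the composite $m_i(gf)$ carries an induced elimination structure over $M_i\AA$, by closure under composition. Doing this for every $i$ gives the Reedy elimination structure on $gf$.

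For part (2), let $f : \Gamma.\AA \to \Gamma.\BB$ have a Reedy elimination structure, let $C$ be a Reedy type over $\Gamma.\BB$, and fix $i$. By the construction of Reedy limits for context extensions (Lemma~\ref{lem:reedy-limits-for-context-extensions}), the matching object $M_i(\AA.f^*C)$ is a context extension of $M_i\AA$ — write $q : M_i(\AA.f^*C) \fibbto M_i\AA$ for the projection — and, compatibly, the level-$i$ data of $\AA.f^*C$, of $\BB.C$, and the induced map $M_i(f.C)$ all decompose into the part coming from $\AA$, $\BB$, $f$ and the part coming from $C$. Consequently $m_i(f.C)$ is, up to isomorphisms, a context extension $(q^*m_if).\EE$ of the reindexing $q^*m_if$, where $\EE$ is built from the level-$i$ data of $C$. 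Since $m_if$ carries an elimination structure over $M_i\AA$, so does $q^*m_if$ by stability under reindexing, and so does $(q^*m_if).\EE$ by closure under context extensions of the map (eliminating into context extensions as needed). This yields the required elimination structure on $m_i(f.C)$ over $M_i(\AA.f^*C)$ for each $i$, hence the Reedy elimination structure on $f.C$.

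Finally, functoriality in $\C$ and $\I$ follows because every ingredient used — the equality $M_i(gf) = M_ig \cdot M_if$, the decompositions of matching objects and level-$i$ data, the reindexings along $M_if$ and $q$, the reordering isomorphisms, and the composition/reindexing/Frobenius closure operations on elimination structures — is natural with respect to CwA maps preserving the relevant structure and ordered discrete opfibrations, by Lemmas~\ref{lem:first-properties-of-reedy-limits}, \ref{lem:functoriality-of-reedy-limits-in-domain}, \ref{lem:reedy-limits-for-context-extensions} and the functoriality clauses of the elimination-structure propositions. The one point requiring genuine care — and the main obstacle — is the bookkeeping behind the two structural identities, especially in part (2): identifying $m_i(f.C)$ with a context extension of $q^*m_if$ means carefully unwinding how the matching object of a composite context extension stacks up and how the various reordering isomorphisms interact; once that is in hand, the rest is a direct appeal to the established closure properties.
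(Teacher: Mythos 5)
Your proof is correct and takes the same route as the paper, which disposes of this proposition in one line ("Immediate by the same closure properties for elimination structures in $\C$"); your contribution is to make explicit the decompositions $m_i(gf) \cong ((M_if)^*m_ig)\circ (m_if)$ and $m_i(f.C) \cong (q^*m_if).\EE$ that the authors leave implicit, and these are the right identities.
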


\begin{proof}
  Immediate by the same closure properties for elimination structures in $\C$.
\end{proof}

\begin{lemma} \label{lem:elim-structure-on-reedy-limit}
  Suppose $f \colon \Gamma.\AA \to \Gamma.\BB$ is equipped with a Reedy elimination structure over $\Gamma$.
  Then:
  \begin{enumerate}
  \item For any finite extension $F \extto G$ and $\Delta$, $\lambda$, $\mu$ as in the master lemma (Lemma~\ref{lem:master-lemma}), the induced “pullback” map $(\lambda,\mu)^*f \colon \Delta.(\lambda,\mu)^* \AA \to \Delta.(\lambda, f \mu)^* \BB$ carries an induced elimination structure;
    \[ \begin{maxwidthtikzpicture}[cd-style,x={(4cm,0cm)},y={(0cm,2.2cm)},z={(3cm,-1cm)}]
      \node (D) at (0,0,0) {$\Delta$};
      \node (DA) at (-0.1,1,-0.5) {$\Delta.(\lambda,\mu)^*\AA$};
      \node (DB) at (0.1,1,0.5) {$\Delta.(\lambda,f\mu)^*\BB$};
      \node (G) at (2,0,0) {$\Gamma$};
      \node (GA) at (2,1,-0.5) {$\Gamma.\AA$};
      \node (GB) at (2,1,0.5) {$\Gamma.\BB$};
      \arr[fibb] (DA) to (D);
      \arr[fibb] (DB) to (D);
      \arr (DA) to node[commutative diagrams/.cd,description,pos=0.49] {$(\lambda,\mu)^*f$} (DB);
      \arr[fibb] (GA) to (G);
      \arr[fibb] (GB) to (G);
      \arr (GA) to node {$f$} (GB);
      \arr (D) to[sub={G}] node {$\lambda$} (G);
      \arr[shorten >={0.2cm}] (D) to[label={F}{auto=right,pos=0.95}] node[auto=right] {$\mu$} (GA);
      \arr (DA) to[sub={G}] (GA);
      \arr[crossing over] (DB) to[sub={G}] (GB);
    \end{maxwidthtikzpicture} \]

  \item and moreover, $f$ carries a levelwise elimination structure over $\Gamma$ in $\C^\I$.
  \end{enumerate}

  Moreover, these constructions are stable under pullback in $\Gamma$, and functorial in $\C$ and $\I$.
\end{lemma}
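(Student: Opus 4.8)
The plan is to prove (1) by induction on the number of single-step extensions composing $F \extto G$, using Lemma~\ref{lem:reedy-limits-for-context-extensions} so that the master lemma applies to the context extensions $\AA$, $\BB$, and then to derive (2) as the special case of the extension $0 \extto \yon{i}$. The only facts about elimination structures needed are that they are stable under pullback, closed under composition, closed under context extensions of the underlying map, and carried canonically by isomorphisms (all from Section~\ref{sec:background}), together with the hypothesis that each comparison component $m_i f$ carries one over $M_i\AA$.

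Consider first a single-step extension $F \extto F +_b \yon{i}$. Here the master lemma builds $(\lambda,\mu)^*\AA$ as the pullback of $\AA_i$ along the map $m : \Delta \to M_i\AA$ induced by $\lambda\restr{\yon{i}}$ and $\mu\restr{\bdry{i}}$, and $(\lambda,f\mu)^*\BB$ as the pullback of $\BB_i$ along $M_i f \circ m$. Under these identifications, functoriality of Reedy limits in the diagram (Lemma~\ref{lem:first-properties-of-reedy-limits}(2)) exhibits $(\lambda,\mu)^*f$, up to context-reordering isomorphisms, as the pullback $m^*(m_i f)$ of the comparison component $m_i f : (\Gamma.\AA)_i \to M_i\AA.(M_if)^*\BB_i$; since $m_i f$ carries an elimination structure over $M_i\AA$ by hypothesis and isomorphisms carry canonical ones, $(\lambda,\mu)^*f$ carries an elimination structure over $\Delta$.

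For a general finite extension, write it as a composite $F = F_0 \extto \cdots \extto F_n = G$ of single steps, the step $F_k \extto F_{k+1}$ adding an element at level $i_k$, and let $h_k := (\lambda\restr{F_k},\mu)^*f : \Delta_k \to \Delta'_k$ be the induced map between the intermediate Reedy limits, so that $h_0 = \id_\Delta$ and $h_n = (\lambda,\mu)^*f$. By the master lemma $\Delta_{k+1} = \Delta_k.m_k^*\AA_{i_k}$ for some $m_k : \Delta_k \to M_{i_k}\AA$; tracking the cones $\mu_k$ shows that $h_{k+1}$ lies over $h_k$ and that, using the cone compatibility $m'_k \circ h_k = M_{i_k} f \circ m_k$ to match the codomain, its new component is precisely the pullback $m_k^*(m_{i_k} f)$ of the comparison component at level $i_k$. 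Hence $h_{k+1}$ factors as $m_k^*(m_{i_k} f)$, which carries an elimination structure by the single-step analysis, followed by a context extension of $h_k$; an induction on $k$ using closure under composition and context extension then equips $h_n = (\lambda,\mu)^*f$ with an elimination structure.

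For (2), apply (1) with the finite extension $0 \extto \yon{i}$, base $\Delta = \Gamma_i$, $\lambda$ the tautological $\yon{i}$-cone $(\Gamma(\alpha))_\alpha$ arising from the diagram $\Gamma$, and $\mu$ the empty cone; then $\Gamma_i.(\lambda,\mu)^*\AA = M_i\AA.\AA_i$ and likewise on the $\BB$-side, so the elimination-structured map produced agrees with $f_i : (\Gamma.\AA)_i \to (\Gamma.\BB)_i$ up to the context-reordering isomorphisms $\varphi_{\AA,i}$, $\varphi_{\BB,i}$; composing with these (which carry canonical elimination structures) gives the desired elimination structure on $f_i$ over $\Gamma_i$. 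Stability under pullback in $\Gamma$ and functoriality in $\C$ and $\I$ are inherited from the corresponding properties of the master-lemma construction (Lemmas~\ref{lem:first-properties-of-reedy-limits} and~\ref{lem:functoriality-of-reedy-limits-in-domain}), of comparison components, and of the operations on elimination structures. I expect the main obstacle to be the bookkeeping in the general case of (1): verifying that the maps $h_k$ genuinely sit over one another with new component the stated pullback of a comparison component. This is forced by the recursive structure of the master lemma, but requires careful tracking of the cones $\mu_k$ and the reordering isomorphisms; I would dispatch it, as with similar inductions elsewhere in the paper, once the single-step bookkeeping is set up.
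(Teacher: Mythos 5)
Your proposal is correct and follows essentially the same route as the paper: the single-step case identifies $(\lambda,\mu)^*f$ as a pullback of the comparison component $m_i f$ along the induced map $\Delta \to M_i\AA$, the multi-step case builds the same tower and inducts using stability, composition, and the Frobenius (context-extension) closure of elimination structures, and part (2) is obtained exactly as in the paper by recognising $f_i$ as the Reedy limit along $0 \extto \yon{i}$ up to reordering isomorphisms. The bookkeeping you flag as the main obstacle is precisely what the paper's tower diagram encodes, and your description of it is accurate.
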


\begin{proof}
  For the first statement, we exhibit $(\lambda,\mu)^*f$ as a composite of pullbacks along context extensions of pullbacks of the maps $m_i f$ along maps into the matching objects $M_i A$.

  Specifically, in the case of a single-step extension $F \extto F +_h \yon{i}$, $(\lambda,\mu)^* f$ is a genuine pullback of $m_i f$ along the map $\Delta \to M_i A$ induced by $(\lambda \restr{\yon{i}}, \mu \restr{\bdry{i}})$
  \[ \begin{maxwidthtikzpicture}[cd-style,x={(3cm,0.5cm)},y={(0cm,2.2cm)},z={(3cm,-1cm)}]
    \node (D) at (0,0,0) {$\Delta$};
    \node (DA) at (-0.05,1,-0.5) {$\Delta.(\lambda,\mu)^*\AA$};
    \node (DB) at (0.05,1,0.5) {$\Delta.(\lambda,f\mu)^*\BB$};
    \node (MiA) at (2,0,0) {$M_i \AA$};
    \node (GAi) at (2,1,-0.5) {$(\Gamma.\AA)_i$};
    \node (MiAB) at (2,1,0.5) {$M_i \AA . (M_if)^*\BB_i $};
    \node (MiB) at (2,0,1) {$M_i \BB$};
    \node (GBi) at (2,1,1.5) {$(\Gamma.\BB)_i$};
    \arr[fibb] (DA) to (D);
    \arr[fibb] (DB) to (D);
    \arr (DA) to node[pos=0.4] {$(\lambda,\mu)^*f$} (DB);
    \arr[fibb] (GAi) to (MiA);
    \arr[fibb] (MiAB) to (MiA);
    \arr (GAi) to node {$m_if$} (MiAB);
    \arr (D) to node {$(\lambda,\mu)$} (MiA);
    \arr (DA) to (GAi);
    \arr[crossing over] (DB) to (MiAB);
    \arr[fibb] (GBi) to (MiB);
    \arr (MiA) to (MiB);
    \arr (MiAB) to (GBi);
  \end{maxwidthtikzpicture} \]
  and, as such, carries an induced elimination structure by stability.

  In the case of a multi-step extension $F \extto G$, write it as a composite of single-step extensions $F = F_0 \extto F_1 \extto \cdots \extto F_n = G$.
  Then, similarly to in the proof of the master lemma, we form a tower:
  \[ \begin{tikzcd}[row sep=scriptsize]
    \bullet \ar[d,fibb] \ar[r,"g_n"] & \bullet \ar[dl,fibb] \ar[r,"h_n"] & \bullet \ar[d,fibb] \\
    \bullet \ar[r] \ar[d,phantom,description,"\vdots"] & \bullet \ar[r] & \bullet \ar[d,phantom,description,"\vdots"] \\
    \bullet \ar[d,fibb] \ar[r] & \bullet \ar[dl,fibb] \ar[r] & \bullet \ar[d,fibb] \\
    \bullet \ar[d,fibb] \ar[r] & \bullet \ar[dl,fibb] \ar[r,equal] & \bullet \ar[d,fibb] \\
    \Delta \ar[rr,equal] & & \Delta
  \end{tikzcd} \]
  \optionaltodo{Label/explain this diagram more?}
Each of the comparison maps $g_k$ carries an elimination structure by the single-step case, so by induction up the tower using the Frobenius property and composition of elimination structures, each composite $h_k g_k$ carries an elimination structure; but the final such map $h_n g_n$ is isomorphic over $\Delta$ to our desired map $(\lambda,\mu)^*f$.

  For the levelwise elimination structure, note that $f_i$ is canonically isomorphic to the Reedy limit $f_{\yon i} \colon  M_i \AA . \AA_{\yon i} \to M_i \BB . \BB_{\yon i}$, which carries an elimination structure by the first statement.

  Finally, as ever, all constructions used are stable in $\Gamma$, and functorial in $\C$ and $\I$.
\end{proof}

Combining the preceding two lemmas gives:
\begin{corollary} \label{cor:elim-structure-from-reedy}
  A Reedy elimination structure on $f \colon \Gamma.\AA \to \Gamma.\BB$ over $\Gamma$ induces an elimination structure on $f$ over $\Gamma$ in $\C^\I$, stably in $\Gamma$ and functorially in $\C$, $\I$. \qed
\end{corollary}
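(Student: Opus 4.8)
The plan is to combine the two lemmas immediately preceding the corollary. Given a Reedy elimination structure on $f : \Gamma.\AA \to \Gamma.\BB$ over $\Gamma$ — that is, an elimination structure on each comparison component $m_i f$ over $M_i \AA$ — Lemma~\ref{lem:elim-structure-on-reedy-limit}(2) upgrades this to a \emph{levelwise} elimination structure on $f$ in $\C^\I$, i.e.\ an elimination structure on each $f_i$ over $\Gamma_i$. (The content of that step is the observation that $f_i$ is canonically isomorphic over $\Gamma_i$ to the Reedy limit $f_{\yon i} : M_i\AA.\AA_{\yon i} \to M_i\BB.\BB_{\yon i}$, to which part (1) of that lemma applies.) Then Lemma~\ref{lem:elim-structure-from-levelwise} takes a levelwise elimination structure on $f$ and produces a genuine elimination structure on $f$ over $\Gamma$ in $\C^\I$. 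Composing the two gives exactly the statement.

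Concretely I would write: \emph{By Lemma~\ref{lem:elim-structure-on-reedy-limit}(2), a Reedy elimination structure on $f$ over $\Gamma$ induces a levelwise elimination structure on $f$ over $\Gamma$ in $\C^\I$; by Lemma~\ref{lem:elim-structure-from-levelwise}, this in turn induces an elimination structure on $f$ over $\Gamma$ in $\C^\I$. The stability in $\Gamma$ and functoriality in $\C$ and $\I$ follow since both intermediate constructions enjoy these properties.} That is essentially the whole proof — it is a one-line corollary, which is why the excerpt marks it \texttt{\textbackslash qed} with no displayed proof.

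Since nothing substantive remains to be done, there is no real obstacle; the only point requiring a modicum of care is that the two lemmas must be chained at the right level of generality — Lemma~\ref{lem:elim-structure-on-reedy-limit} outputs precisely the \emph{levelwise} notion that is the \emph{input} to Lemma~\ref{lem:elim-structure-from-levelwise}, so the composite is well-typed — and that the stability-and-functoriality clauses are each carried through both steps (both lemmas assert exactly these clauses, so they compose without incident). If one wanted to be maximally explicit one could also note that the functoriality in $\C$ here is with respect to CwA maps preserving $\Id$-structure (the context in which Reedy elimination structures of interest arise), matching the conventions fixed at the start of Section~\ref{sec:logical-structure}, but this is already subsumed in the cited functoriality of the two lemmas.
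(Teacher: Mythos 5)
Your proof is correct and is precisely the paper's argument: the corollary is introduced there with ``Combining the preceding two lemmas gives:'' and carries no further proof, the intended composition being exactly Lemma~\ref{lem:elim-structure-on-reedy-limit}(2) followed by Lemma~\ref{lem:elim-structure-from-levelwise}, with stability and functoriality inherited from both steps. Nothing to add.
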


\subsection{Unit types in inverse diagrams}

We begin the logical structure with the lowest-hanging fruit.

\begin{proposition} \label{prop:reedy-unit-types}
  Suppose $\C$ is equipped with unit-type structure.
  Then so is $\C^\I$.

  Moreover, this construction is functorial in $\C$ (with respect to CwA maps preserving unit types) and in $\I$.
\end{proposition}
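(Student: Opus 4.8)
The plan is to build the unit-type structure on $\C^\I$ level by level, reading off each ingredient from the unit-type structure of $\C$ applied to the relevant matching object. First I would recall that, unwinding the definition, a unit-type structure amounts to a choice, for each $\Gamma$, of a type $1_\Gamma$ and a section $\star_\Gamma$ of it, together with an elimination structure (in our sense: stable and Frobenius) on the map $\star_\Gamma \colon \Gamma \to \Gamma.1_\Gamma$ over $\Gamma$, all stable under reindexing; indeed the eliminator $\rec_{\Delta,C,d}$ with its context-extension argument $\Delta$ is precisely the data of such an elimination structure. So it suffices to construct $1_\Gamma$, $\star_\Gamma$, and an elimination structure on $\star_\Gamma$ in $\C^\I$, stably in $\Gamma$ and functorially in $\C$ and $\I$; and by Corollary~\ref{cor:elim-structure-from-reedy} the last of these reduces to exhibiting a \emph{Reedy} elimination structure on $\star_\Gamma$.

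For the type and section, I would proceed by Reedy induction, using the standard technique for constructing strict Reedy types level by level. Suppose $1_\Gamma$ and $\star_\Gamma$ have been defined in all levels $j \prec i$; then the canonical matching object $M_i(1_\Gamma) = \Gamma_i.(1_\Gamma)_{\bdry{i}}$ and the induced section $M_i(\star_\Gamma) \colon \Gamma_i \to M_i(1_\Gamma)$ are determined, as Reedy limits of the lower data. I would set
\[ (1_\Gamma)_i := 1_{M_i(1_\Gamma)} \in \Ty(M_i(1_\Gamma)), \qquad (\star_\Gamma)_i := \star_{M_i(1_\Gamma)} \cdot M_i(\star_\Gamma), \]
so that $(\Gamma.1_\Gamma)_i = M_i(1_\Gamma).(1_\Gamma)_i$ as required for a strict Reedy type, the transition maps of $\Gamma.1_\Gamma$ being supplied by the matching-object cone, and $(\star_\Gamma)_i$ a section of $(1_\Gamma)_i$ over $\Gamma_i$. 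Because unit types in $\C$ are stable under reindexing, these levelwise choices are automatically compatible with the diagram structure and with reindexing along maps of $\C^\I$; the latter yields $f^*1_\Gamma = 1_{\Gamma'}$ and $f^*\star_\Gamma = \star_{\Gamma'}$ by a further induction on levels, using that $f^*$ on Reedy types is computed level by level by pulling types back along $M_i(f)$ (Proposition~\ref{prop:weak-reedy-comp-cat}).

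For the Reedy elimination structure on $\star_\Gamma$, I would compute the comparison component $m_i(\star_\Gamma)$ of Definition~\ref{def:reedy-elim-structure}. Since $\star_\Gamma$ is a section, its comparison component at $i$ is a map $\Gamma_i \to \Gamma_i.(M_i\star_\Gamma)^*(1_\Gamma)_i$; and $(M_i\star_\Gamma)^*(1_\Gamma)_i = (M_i\star_\Gamma)^*1_{M_i(1_\Gamma)} = 1_{\Gamma_i}$ by stability of unit types, under which identification $m_i(\star_\Gamma)$ is the section $\star_{\Gamma_i} \colon \Gamma_i \to \Gamma_i.1_{\Gamma_i}$ (this matching of the two sections again being forced by stability of $\star$ together with the pullback universal property). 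This carries the canonical, stable, Frobenius elimination structure over $\Gamma_i$ coming from the unit-type structure of $\C$, and we take these as the data of the Reedy elimination structure on $\star_\Gamma$. Corollary~\ref{cor:elim-structure-from-reedy} then promotes it to an elimination structure on $\star_\Gamma$ in $\C^\I$, stably in $\Gamma$ and functorially in $\C$, $\I$, which is the eliminator we need.

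Finally, functoriality: for an ordered discrete opfibration $u \colon \J \to \I$, $u^*$ preserves matching objects (Proposition~\ref{prop:discrete-opfib-matching-object} and its ordered refinement), hence commutes with the level-by-level construction above, giving $u^*1_\Gamma = 1_{u^*\Gamma}$ and likewise for $\star$ and the eliminator; for a CwA map $F \colon \C \to \D$ preserving unit types, $F$ preserves the Reedy limits computing the matching objects (Lemma~\ref{lem:first-properties-of-reedy-limits}) and sends each $1_{M_i(1_\Gamma)}$, $\star_{M_i(1_\Gamma)}$, and its canonical elimination structure to the corresponding data over $F\Gamma$, so $F$ preserves $1$, $\star$ and $\rec$. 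There is no real obstacle here, as all the substantive work is carried by the Reedy-limit and elimination-structure infrastructure of Section~\ref{sec:inverse-diagrams} and the previous subsection; the only points requiring any care are the routine verification that the Reedy-inductive data genuinely assemble into a strict Reedy type, and the identification of $m_i(\star_\Gamma)$ with $\star_{\Gamma_i}$ via stability of unit types under reindexing.
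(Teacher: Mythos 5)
Your proposal is correct and follows essentially the same route as the paper's own proof: define $(1_\Gamma)_i := 1_{M_i 1_\Gamma}$ and $(\star_\Gamma)_i := \star_{M_i 1_\Gamma}\cdot M_i\star_\Gamma$ by Reedy induction, identify the comparison component $m_i(\star_\Gamma)$ with $\star_{\Gamma_i}$ via stability of unit types, and invoke Corollary~\ref{cor:elim-structure-from-reedy} to promote the resulting Reedy elimination structure to one in $\C^\I$. Your added remarks on the equivalence between the $\rec$ operator and a stable Frobenius elimination structure, and on the functoriality checks, are just explicit versions of what the paper leaves as routine.
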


\begin{proof}
  Given $\Gamma \in \C^\I$, define the unit Reedy type $1_\Gamma$ over it by induction, taking $(1_\Gamma)_i$ to be $1_{M_i 1_\Gamma} \in \Ty(M_i 1_\Gamma)$.
  Similarly, take $(\star_\Gamma)_i$ to be $\star_{M_i 1_\Gamma} \cdot (M_i \star_\Gamma)$.
  The $i$th comparison component of $\star_\Gamma$ is then exactly $\star_{\Gamma_i}$; so the canonical elimination structures on these maps assemble to form a Reedy elimination structure, and hence by Corollary~\ref{cor:elim-structure-from-reedy} an elimination structure in $\C^\I$.
  \[ \begin{tikzcd}[column sep=huge]
      \Gamma_i . 1_{\Gamma_i} \ar[r] \ar[drpb] \ar[d,fib,bend left]
      & (\Gamma.1_\Gamma)_i \ar[d,fib,bend left]
      \\ \Gamma_i \ar[u,bend left,dashed,"\star_{\Gamma_i}" description] \ar[r,"M_i \star_\Gamma"'] \ar[ur,dashed,"(\star_\Gamma)_i" description]
      & M_i 1_\Gamma \ar[u,bend left,"\star_{M_i 1_\Gamma}" description]
  \end{tikzcd} \]

  These constructions are all stable under reindexing, and so constitute unit-type structure on $\C^\I$.
  Functoriality in $\C$ and $\I$ is routine to check, since all constructions involved (matching objects over $\I$, and the unit-type structure on $\C$ applied at each level) were so functorial. 
\end{proof}

\subsection{Identity types in inverse diagrams}

\begin{proposition} \label{prop:reedy-id-types}
  Suppose $\C$ is equipped with $\Id$-type structure.
  Then so is $\C^\I$.
  Moreover, this construction is functorial in $\C$ and $\I$
\end{proposition}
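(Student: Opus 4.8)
The plan is to follow the template of Proposition~\ref{prop:reedy-unit-types} (unit types) very closely, with $\refl_A$ in the role of $\star_\Gamma$. Unwinding the definitions, an $\Id$-type structure on $\C^\I$ amounts to assigning, to each Reedy type $A$ over each $\Gamma \in \C^\I$: a Reedy type $\Id_A$ over $\Gamma.A.p_A^*A$; a map $\refl_A : \Gamma.A \to \Gamma.A.p_A^*A.\Id_A$ over the diagonal $(\id_A,\id_A)$ --- equivalently, since $(\id_A,\id_A)$ projects to $\id_{\Gamma.A}$, a map of context extensions of $\Gamma.A$ from the empty extension to $(p_A^*A,\Id_A)$ --- and a stable Frobenius elimination structure on this $\refl_A$ over $\Gamma.A$ in $\C^\I$, which is exactly the operation $\synJ$ together with its computation axiom and the Frobenius and stability clauses. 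By Corollary~\ref{cor:elim-structure-from-reedy}, to supply the last ingredient it suffices to equip $\refl_A$ with a \emph{Reedy} elimination structure in the sense of Definition~\ref{def:reedy-elim-structure}, i.e.\ an elimination structure on each comparison component $m_i(\refl_A)$ (which here, as $\refl_A$ is a section, is a map $(\Gamma.A)_i \to (\Gamma.A)_i.(M_i\refl_A)^*((p_A^*A)_i,(\Id_A)_i)$ over $(\Gamma.A)_i$). So everything reduces to constructing $\Id_A$, $\refl_A$, and these levelwise elimination structures by Reedy induction.

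For the inductive construction, fix $\Gamma$ and $A$ and induct on $i \in \I$ under the precedence ordering, assuming $\Id_A$, $\refl_A$ and the elimination structures are built in all degrees $j \prec i$. This determines the matching object $M_i(\Id_A) = (\Gamma.A.p_A^*A)_i.(\Id_A)_{\bdry{i}}$. Since the levels of the weakening Reedy type $p_A^*A$ are themselves weakenings of the corresponding levels of $A$, independently of the matching data, $M_i(\Id_A)$ carries a canonical dependent projection onto $M_i A.A_i.p_{A_i}^*A_i$ (project away $(\Id_A)_{\bdry{i}}$ and $(p_A^*A)_{\bdry{i}}$, after a reordering isomorphism); define $(\Id_A)_i$ to be the pullback of $\Id_{A_i} \in \Ty(M_i A.A_i.p_{A_i}^*A_i)$ along this projection, and declare $(\Gamma.A.p_A^*A.\Id_A)_i := M_i(\Id_A).(\Id_A)_i$ with identity comparison isomorphism (so that $\Id_A$ is automatically a strict Reedy type). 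Define $(\refl_A)_i$, following the recipe of Proposition~\ref{prop:weak-reedy-comp-cat} for assembling levelwise maps of Reedy types, as the composite of a context extension of the already-constructed reflexivity map below degree $i$, the map $\refl_{A_i}$, and a reordering isomorphism. Unwinding Definition~\ref{def:reedy-elim-structure}, the comparison component $m_i(\refl_A)$ is then readily identified --- up to the evident context-reordering isomorphisms on domain and codomain --- with $\refl_{A_i}$ itself. As $\refl_{A_i}$ carries the canonical elimination structure over $M_i A$ supplied by the $\Id$-type structure of $\C$, and elimination structures transport along isomorphisms and are stable under pullback in the base, $m_i(\refl_A)$ inherits an elimination structure over $(\Gamma.A)_i$ --- stably in $\Gamma$, since all of the above was.

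These elimination structures on the $m_i(\refl_A)$ constitute a Reedy elimination structure on $\refl_A$ over $\Gamma.A$; Corollary~\ref{cor:elim-structure-from-reedy} promotes it to an elimination structure on $\refl_A$ over $\Gamma.A$ in $\C^\I$, which is precisely the eliminator $\synJ_{\Delta,C,d}$, its computation rule $\synJ_{\Delta,C,d}(\refl_A.\Delta) = d$, and the Frobenius and stability clauses. Stability of all the $\Id$-data under reindexing --- $f^*\Id_A = \Id_{f^*A}$, $f^*\refl_A = \refl_{f^*A}$, $f^*\synJ_{\Delta,C,d} = \synJ_{f^*\Delta,f^*C,f^*d}$ --- holds because each ingredient (canonical matching objects and Reedy limits, the $\Id$-type structure of $\C$, and Corollary~\ref{cor:elim-structure-from-reedy}) is stable under reindexing. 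Hence $\C^\I$ carries $\Id$-type structure, and functoriality in $\C$ (along CwA maps preserving $\Id$-types) and in $\I$ (along ordered discrete opfibrations) is inherited from the corresponding functoriality of these same ingredients.

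The genuine work --- and the main obstacle --- is the bookkeeping suppressed in the induction step: pinning down $M_i(\Id_A)$ and the matching data of the weakening type $p_A^*A$ precisely enough to see (a) that $(\Id_A)_i$ may be taken to be a pulled-back copy of $\Id_{A_i}$, and (b) that $m_i(\refl_A)$ collapses, up to reordering, to $\refl_{A_i}$. This is exactly the compatibility of the matching-object/Reedy-limit construction with the operations $\Gamma \mapsto \Gamma.A.p_A^*A$ and with passing $\Id$ through context extensions (Construction~\ref{def:identity-context}); once it is made explicit, the transfer of elimination structures is immediate from stability --- the precise analogue of the observation in Proposition~\ref{prop:reedy-unit-types} that the $i$-th comparison component of $\star_\Gamma$ is just $\star_{\Gamma_i}$.
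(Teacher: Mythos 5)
There is a genuine gap at the heart of the induction step. You define $(\Id_A)_i$ as the pullback of $\Id_{A_i}$ along a claimed ``canonical dependent projection'' $M_i(\Id_A) \to M_iA.A_i.p_{A_i}^*A_i$, but no such projection exists. Unwinding the construction of $p_A^*A$ as a reindexed Reedy type, one finds $(\Gamma.A.p_A^*A)_i \cong \Gamma_i.A_{\bdry{i}}.A_i.A'_{\bdry{i}}.A'_i$, where the second copy $A'_i$ of the level-$i$ type lives over the \emph{second}, independent copy $A'_{\bdry{i}}$ of the boundary data. By contrast, $\Id_{A_i}$ lives over $M_iA.A_i.A_i = \Gamma_i.A_{\bdry{i}}.A_i.A_i$, where both copies of $A_i$ share the \emph{same} boundary data. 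A point of $(\Gamma.A.p_A^*A)_i$ has its second component in the fibre of $A_i$ over $(\gamma,a'_{\bdry{}})$, which is not a fibre over $(\gamma,a_{\bdry{}})$; so there is no map from $M_i(\Id_A)$ (or even from $(\Gamma.A.p_A^*A)_i$) down to $M_iA.A_i.A_i$ obtained by ``projecting away'' and reordering, and your definition of $(\Id_A)_i$ does not typecheck. Consequently the identification of $m_i(\refl_A)$ with $\refl_{A_i}$ also fails.

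The correct move goes in the opposite direction, and this is exactly where the inductive hypothesis is essential (your proposal never actually uses the Reedy elimination structures in degrees $j<i$ to \emph{build} $(\Id_A)_i$, which is a symptom of the problem). From the inductively given Reedy elimination structure on $\refl_A$ below level $i$, Lemma~\ref{lem:elim-structure-on-reedy-limit} gives an elimination structure on $M_i\refl_A : M_iA \to M_i(A.A.\Id_A)$; pulling back along the context extension $A_i.A_i$ yields a map $r' : M_iA.A_i.A_i \to M_i\Id_A$ carrying an elimination structure. One must then take $(\Id_A)_i$ to be the \emph{descent} of $\Id_{A_i}$ along $r'$ (Lemma~\ref{lem:descent-from-elim-structure}) --- i.e.\ a type over $M_i\Id_A$ whose restriction along $r'$ is $\Id_{A_i}$ --- and define $(\refl_A)_i$ as $\bigl(r'.(\Id_A)_i\bigr)\circ\refl_{A_i}$. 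With that definition the comparison component is $\refl_{A_i}$ by construction, and the rest of your argument (Corollary~\ref{cor:elim-structure-from-reedy}, stability, functoriality) goes through as you describe. In short: the level-$i$ identity type is not a weakened pullback of $\Id_{A_i}$ but a $\synJ$-transported extension of it from the diagonal to the full matching object.
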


\begin{proof}
  Let $\Gamma.A \fibto \Gamma$ be a type over a diagram in $\C^\I$.
  We will construct, by the usual Reedy induction over $i \in \I$, the type $\Id_A \in \Ty(\Gamma.A.A)$, together with a map $\refl_A \colon \Gamma.A \to \Gamma.A.A.\Id_A$ over $\Gamma.A.A$, equipped with a Reedy elimination structure over $\Gamma$.

  Assuming we have done this for all $j < i$, we have the map $M_i \refl_A \colon M_i A \to M_i (A. A.\Id_A)$; and by Lemma~\ref{lem:elim-structure-on-reedy-limit}, $M_i \refl_A$ is equipped with an elimination structure over $\Gamma_i$.
  So taking the pullback diagram
  \[\begin{tikzcd}
    M_i A . A_i . A_i \ar[r] \ar[d, fibb] \ar[drpb] & M_i(A.A.\Id_A).A_i.A_i \ar[d,fibb]  \ar[r,"\iso"] & M_i \Id_A \ar[dl] \\
    M_i A \ar[r, "M_i \refl_A"] & M_i(A.A.\Id_A)
  \end{tikzcd}\]
  the top composite $r' \colon M_i.A_i.A_i \to M_i \Id_A$ carries an elimination structure over $\Gamma_i$.
  
  Now take $(\Id_A)_i$ as the descent of $\Id_{A_i}$ along $r'$ as given by Lemma~\ref{lem:descent-from-elim-structure}, and $(\refl_A)_i$ to be the composite of $r'.(\Id_A)_i$ with $\refl_{A_i}$:
  \[\begin{tikzcd}
    M_i A . A_i \ar[ddr,fib] \ar[dr,"\Delta_{A_i}"] \ar[r,"\refl_{A_i}"] & M_i A . A_i . A_i . \Id_{A_i} \ar[d,fib] \ar[rr,dashed,"r'.(\Id_A)_i"] \ar[drpb] & & M_i \Id_A . (\Id_A)_i \ar[d,fib,dashed] \\
    & M_i A . A_i . A_i \ar[r] \ar[rr,bend left=10,"r'"] \ar[d, fibb] \ar[drpb] & M_i(A.A.\Id_A).A_i.A_i \ar[d,fibb]  \ar[r,"\iso"'] & M_i \Id_A \ar[dl] \\
    & M_i A \ar[r, "M_i \refl_A"] & M_i(A.A.\Id_A)
  \end{tikzcd}\]
The new comparison component $m_ir$ is by construction $\refl_{A_i}$; as required, this lies over the comparison component $m_i \Delta_A$ (since that is exactly $\Delta_{A_i}$), and carries an elimination structure over $M_i A$.

  This completes the inductive construction of $\Id_A$, $\refl_A$, and the Reedy elimination structure on $\refl_A$.
  Corollary~\ref{cor:elim-structure-from-reedy} shows that this constitutes an elimination structure on $\refl_A$ in $\C^\I$;
  and all these components are by construction stable under pullback in $\Gamma$, so together, they constitute $\Id$-type structure on $\C^\I$.
  Functoriality in $\C$ and $\I$ is once again routine to check.
\end{proof}

\subsection{Equivalences in inverse diagrams}

With the identity types available, we are now able to speak of homotopies and equivalences in $\C^\I$, and compare them with these notions in $\C$.
The first lemma is a direct analogue of Lemma~\ref{lem:elim-structure-on-reedy-limit} on Reedy limits of elimination structures.

\begin{definition} \label{def:reedy-equivalence}
  Let $w \colon \Gamma.\AA \to \Gamma.\BB$ be a map over $\Gamma$ in $\C^\I$.
  As in Definition~\ref{def:comparison-component}, write $m_i w$ for the comparison component $(\Gamma.\AA)_i \to \Gamma_i.(M_iw)^*\BB_i$.
  We say $w$ is a \defemph{Reedy equivalence} if $m_i w$ is an equivalence for all $i$; similarly, by \defemph{Reedy equivalence data} on $w$ we mean a choice of equivalence data on each $m_i w$.
\end{definition}

\begin{lemma} \label{lem:equivalences-and-reedy-limits}
  Suppose $w \colon \Gamma.\AA \to \Gamma.\BB$ is a Reedy equivalence (resp.\ equipped with Reedy equivalence data) over $\Gamma$ in $\C^\I$.
  Then:
  \begin{enumerate}
  \item for any finite extension $F \extto G$, and any $\Delta$, $\lambda$, $\mu$ as before, $(\lambda,\mu)^*w$ is an equivalence (resp.\ carries induced equivalence data); \label{item:reedy-limit-of-equiv}
  \item and $w$ is a levelwise equivalence (resp.\ each $w_i$ carries equivalence data). \label{item:levelwise-from-reedy-equiv}
  \end{enumerate}
  Moreover, for the “data” version, these constructions are stable under pullback in the base, and functorial in $\C$ and $\I$.
\end{lemma}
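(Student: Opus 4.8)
The plan is to follow the proof of Lemma~\ref{lem:elim-structure-on-reedy-limit} almost verbatim, substituting for each closure property of elimination structures used there the corresponding closure property of equivalences recalled above. Concretely, the dictionary is: \emph{stability of equivalences under reindexing between fibrant slices} replaces stability of elimination structures under pullback; \emph{right properness} (stability of equivalences under context extension in the codomain) replaces the Frobenius condition; \emph{closure of equivalences under composition}, an instance of 2-out-of-3, replaces composition of elimination structures; and the fact that any isomorphism over the base is an equivalence, with canonical equivalence data given by its inverse together with reflexivity homotopies, replaces the fact that isomorphisms carry canonical elimination structures. Since each of these facts was stated to be witnessed by operations on equivalence data, stable in the base and functorial in $\C$, the ``data'' version of the statement, together with the asserted stability under pullback in the base and functoriality in $\C$ and $\I$, will follow along the same lines, just as in the elimination-structure case.

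For~(\ref{item:reedy-limit-of-equiv}), first take $F \extto G$ a single-step extension $F \extto F +_b \yon{i}$. As in the master lemma (Lemma~\ref{lem:master-lemma}), $(\lambda\restr{\yon{i}},\mu\restr{\bdry{i}})$ induces a map $\Delta \to M_i\AA$, and $(\lambda,\mu)^*w$ is a genuine pullback of the comparison component $m_i w$ along it; since $m_i w$ is an equivalence (resp.\ carries the given equivalence data), stability of equivalences under reindexing makes $(\lambda,\mu)^*w$ one as well. For a multi-step extension $F = F_0 \extto \cdots \extto F_n = G$, we build $(\lambda,\mu)^*w$ one step at a time, forming the same tower as in the proof of Lemma~\ref{lem:elim-structure-on-reedy-limit}: each comparison map $g_k$ is a pullback of some $m_i w$, hence an equivalence by the single-step case, and each $h_k$ is a context extension of the partial map $h_{k-1}g_{k-1}$, hence an equivalence by right properness; inducting up the tower using closure of equivalences under composition shows each composite $h_k g_k$ is an equivalence, and $h_n g_n$ is isomorphic over $\Delta$ to $(\lambda,\mu)^*w$, which is therefore an equivalence too. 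Carrying the corresponding data operations through the same induction gives the ``data'' version.

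Part~(\ref{item:levelwise-from-reedy-equiv}) then follows exactly as for elimination structures: $w_i$ is canonically isomorphic, over $\Gamma_i$, to the Reedy limit $w_{\yon{i}}$ along the finite extension $0 \extto \yon{i}$, which is an equivalence (resp.\ carries induced equivalence data) by~(\ref{item:reedy-limit-of-equiv}), so $w_i$, being isomorphic to it over $\Gamma_i$, is an equivalence as well. Finally, all the operations used --- Reedy limits, pullbacks of equivalences, context extensions of equivalences, composites, and the passage from $w_i$ to $w_{\yon{i}}$ --- are stable under pullback in $\Gamma$ and functorial in $\C$ and $\I$, with the component lemmas supplying matching operations on equivalence data, so the levelwise data assemble stably and functorially. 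I expect the only real difficulty to be the bookkeeping in the multi-step tower: one must check that the maps $h_k$ genuinely arise on the nose as context extensions of the partial maps (so that right properness applies strictly, not merely up to isomorphism), and that the equivalence data produced by the induction agree under further reindexing in $\Delta$ --- both routine given that the recalled lemmas provide exactly the stable, functorial data operations needed, but they are the steps most prone to slips.
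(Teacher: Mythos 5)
Your proposal is correct and matches the paper's argument, which is given there simply as ``entirely analogous to Lemma~\ref{lem:elim-structure-on-reedy-limit}'': the single-step case is a pullback of $m_i w$, the multi-step case decomposes as a composite of context extensions of such pullbacks (using stability under reindexing, right properness, and closure under composition of equivalences/equivalence data), and the levelwise statement follows since $w_i \iso w_{\yon i}$. Your dictionary between closure properties of elimination structures and of equivalences is exactly the intended one.
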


\begin{proof}
  Entirely analogous to Lemma~\ref{lem:elim-structure-on-reedy-limit}.
  When $F \extto G$ is a single-step extension by $\bdry{i} \extto \yon{i}$, then $(\lambda,\mu)^*w$ is a pullback of $m_i w$ along a map into the $\Gamma_i$.
  In the multi-step case, $(\lambda,\mu)^*w$ has a canonical decomposition as a composite of context extensions of such single-step pullbacks.
  Since equivalences (resp.\ equivalence data) are preserved by pullback in the base, pullback along context extensions, and composition, the Reedy limit statement follows.

  The levelwise statement follows as a special case, since $w_i$ is isomorphic to $w_{\yon i}$.

  Finally, stability/functoriality of the “data” version follows from stability/functoriality of all steps in the construction.
\end{proof}

\begin{lemma} \label{lem:levelwise-equivalence-iff-reedy}
  Suppose $w \colon \Gamma.\AA \to \Gamma.\BB$ is a map over $\Gamma$ in $\C^\I$.
  Then $w$ is a levelwise equivalence (resp.\ carries levelwise equivalence data) if and only if it is a Reedy equivalence (resp.\ carries Reedy equivalence data).
  Moreover, the maps between levelwise and Reedy equivalence data are stable under pullback in $\Gamma$, and functorial in $\C$ and $\I$.
\end{lemma}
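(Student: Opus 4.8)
The plan is to treat the two directions of the equivalence separately. The implication from Reedy to levelwise data --- together with its stability in $\Gamma$ and functoriality in $\C$, $\I$ --- is already exactly Lemma~\ref{lem:equivalences-and-reedy-limits}(\ref{item:levelwise-from-reedy-equiv}), so nothing further is needed there (and likewise for the corresponding property-level implication). Everything below concerns the converse, from levelwise to Reedy data.

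So assume $w$ carries levelwise equivalence data, i.e.\ equivalence data $E_i$ on each $w_i$ over $\Gamma_i$. I would build Reedy equivalence data on $w$ --- equivalence data $D_i$ on each comparison component $m_i w$ over $M_i\AA$ --- by Reedy induction on $i$, carrying the $D_j$ for degrees $j$ below $i$ as inductive hypothesis (so that $w$ restricted to those degrees is a Reedy equivalence with data). At stage $i$: applying Lemma~\ref{lem:equivalences-and-reedy-limits}(\ref{item:reedy-limit-of-equiv}) to this restriction of $w$ and the single-step finite extension $0 \extto \bdry i$ equips the induced map $M_i w : M_i\AA \to M_i\BB$ with equivalence data over $\Gamma_i$; the right-properness operations on equivalence data then transfer this to its pullback $M_i w . \BB_i : M_i\AA.(M_i w)^*\BB_i \to M_i\BB.\BB_i$. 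Now using the canonical factorisation of Definition~\ref{def:comparison-component},
\[ w_i \;=\; \varphi_{\BB,i}\,(M_i w . \BB_i)\,(m_i w)\colon (\Gamma.\AA)_i \to (\Gamma.\BB)_i, \]
with $\varphi_{\BB,i}$ an isomorphism and all three factors maps over $\Gamma_i$: since $w_i$ carries equivalence data over $\Gamma_i$ by hypothesis, the data-level forms of composition of equivalences and of $2$-out-of-$3$ inside the fibrant slice $\C(\Gamma_i)$ yield equivalence data on $m_i w$ over $\Gamma_i$.

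The main obstacle is the final step: converting this equivalence data over $\Gamma_i$ into equivalence data over $M_i\AA$, as Reedy equivalence data requires (since $m_i w$ is a map over $M_i\AA = \Gamma_i.\AA_{\bdry i}$, and homotopies in $\C(M_i\AA)$ are strictly more constrained than those in $\C(\Gamma_i)$). At the level of the bare property this is automatic: Definition~\ref{def:equivalence-in-cwa} refers only to $m_i w$ as a map of $\C$, and agrees with the fibrant-slice notion of Definition~\ref{def:equivalence-in-fib-slice} by the lemma identifying the two, so ``$m_i w$ is an equivalence'' does not depend on the chosen base. For the data version one genuinely needs the standard ``total space $\Rightarrow$ fibrewise'' argument, packaged as a stable, functorial operation turning equivalence data over a base into equivalence data over a context extension of that base; this is the only point in the proof that is not a formal manipulation of results already in hand, and if a suitable such operation is available in \cite{kapulkin-lumsdaine:homotopy-theory-of-type-theories} it can simply be invoked.

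Finally, each operation used --- the Reedy-limit and right-properness operations on equivalence data, composition and $2$-out-of-$3$ in fibrant slices, and the total-to-fibrewise refinement --- is stable under reindexing in $\Gamma$ and functorial in $\C$ and $\I$; hence so is the resulting conversion of levelwise into Reedy equivalence data. Together with Lemma~\ref{lem:equivalences-and-reedy-limits}(\ref{item:levelwise-from-reedy-equiv}) for the opposite direction, this provides the asserted (stable, functorial) maps between the two kinds of equivalence data, and the property-level ``if and only if'' follows from the same induction using only the property-level versions of the facts invoked.
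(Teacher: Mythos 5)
Your proof is correct and follows essentially the same route as the paper's: the ``if'' direction is Lemma~\ref{lem:equivalences-and-reedy-limits}(\ref{item:levelwise-from-reedy-equiv}), and the converse is the same Reedy induction, using Lemma~\ref{lem:equivalences-and-reedy-limits}(\ref{item:reedy-limit-of-equiv}) on the restriction to lower degrees to make $M_i w$ an equivalence, then right properness and 2-out-of-3 on the factorisation $w_i = (M_i w . \BB_i)(m_i w)$. The base-change subtlety you flag (equivalence data over $\Gamma_i$ versus over $M_i\AA$) is genuine, but it is precisely the kind of detail the paper's closing sentence ``all the above reasoning extends directly to a construction on equivalence data'' elides, so your treatment is if anything the more careful one.
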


\begin{proof}
  The “if” direction is just part (\ref{item:levelwise-from-reedy-equiv}) of the previous lemma.
  
  For the converse, suppose $w$ is a levelwise equivalence, and work by induction: assume we have shown $m_j w$ is an equivalence for each $j < i$.
  Then $M_i w$ is an equivalence, by part (\ref{item:reedy-limit-of-equiv}) of the previous lemma, applied to $w \restr{i/\I}$ in $\C^{i/\I}$.
  But now $w_i = (M_i w . \BB_i) . (m_i w)$, so by 2 out of 3, $m_i w$ is an equivalence as required.

  Moreover, all the above reasoning extends directly to a construction on equivalence data, stably in the base and functorially in $\C$, $\I$.
\end{proof}

\begin{lemma} \label{lem:reedy-limits-preserve-homotopy}
  Suppose $f, g \colon \Gamma.\AA \to \Gamma.\BB$ are homotopic over $\Gamma$ in $\C^\I$.
  Then they are levelwise homotopic in $\C$, i.e.\ $f_i, g_i \colon (\Gamma.\AA)_i \to (\Gamma.\BB)_i$ are homotopic over $\Gamma_i$.
  Moreover, this map from homotopies to levelwise homotopies is stable in $\Gamma$ and functorial in $\C$, $\I$.
\end{lemma}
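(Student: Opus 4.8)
The plan is to take the $i$-th component of the given homotopy and reinterpret it as a levelwise homotopy, once the relevant path objects have been identified.

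Recall that a homotopy $h : f \homot g$ over $\Gamma$ in $\C^\I$ is a map $h : \Gamma.\AA \to \Gamma.\BB.\BB.\Id_\BB$ over $\Gamma$ with $p_{\Id_\BB} h = (f,g)$, where $\Id_\BB$ is the iterated identity context (Construction~\ref{def:identity-context}) formed from the $\Id$-type structure on $\C^\I$ of Proposition~\ref{prop:reedy-id-types}. Taking $i$-th components gives $h_i : (\Gamma.\AA)_i \to (\Gamma.\BB.\BB.\Id_\BB)_i$ over $\Gamma_i$ with $(p_{\Id_\BB})_i\, h_i = (f_i, g_i)$. The substance of the proof is to identify
\[ (\Gamma.\BB.\BB.\Id_\BB)_i = \Gamma_i.\BB_{\yon i}.\BB_{\yon i}.\Id_{\BB_{\yon i}}, \]
the standard path object of the context extension $\BB_{\yon i}$ over $\Gamma_i$ in $\C$, compatibly with the dependent projections; granting this, $h_i$ is by definition a homotopy $f_i \homot g_i$ over $\Gamma_i$. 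For the base part, the strict Reedy condition gives $(\Gamma.\CC)_i = \Gamma_i.\CC_{\yon i}$ for any Reedy type or extension $\CC$, and Reedy limits commute with reindexing (Lemma~\ref{lem:first-properties-of-reedy-limits}), so $(\Gamma.\BB.\BB)_i = \Gamma_i.\BB_{\yon i}.\BB_{\yon i}$; it remains to see that the Reedy limit at $i$ of $\Id_\BB$ is the iterated identity context of $\BB_{\yon i}$.

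The cleanest way to package this is to observe that evaluation at $i$ — the functor $\Gamma \mapsto \Gamma_i$, $\CC \mapsto \CC_{\yon i}$, which is a strict CwA map $\C^\I \to (\C,\Ty^*)$ by the strict Reedy condition together with compatibility of Reedy limits with reindexing — carries the $\Id$-type structure of Proposition~\ref{prop:reedy-id-types} to the iterated $\Id$-structure on $(\C,\Ty^*)$, and hence preserves everything derived from it, in particular iterated identity contexts and homotopies. This last claim is verified by unwinding the level-$i$ clause of the construction in Proposition~\ref{prop:reedy-id-types}, where $(\Id_A)_i$ is a descent of $\Id_{A_i}$ along a map built from the Reedy limit of $M_i \refl_A$, and matching it against Construction~\ref{def:identity-context} applied to $\BB_{\yon i}$, using that descents are stable in the base and functorial in $\C$ (Lemma~\ref{lem:descent-from-elim-structure}) and that Reedy limits of elimination structures are compatible (Lemma~\ref{lem:elim-structure-on-reedy-limit}); the argument is an induction on the length of $\BB$ parallel to the recursion defining $\Id_\BB$. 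This bookkeeping — checking that two superficially different but structurally parallel inductive constructions of $\Id_\BB$ agree after evaluating at $i$ — is the main obstacle; everything else is formal.

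Finally, stability of the assignment $h \mapsto (h_i)_{i \in \I}$ under reindexing in $\Gamma$ and functoriality in $\C$ and $\I$ are immediate, since it is built only from taking levelwise components and from the identification of path objects above, and the strict Reedy structure, Reedy limits, the $\Id$-structure on $\C^\I$, and iterated identity contexts were all established with exactly these naturality properties.
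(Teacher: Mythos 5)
Your overall strategy (take the $i$-th component of the homotopy and reinterpret its codomain as a path object in $\C$) is the same as the paper's, but the step you yourself flag as "the main obstacle" is where the argument breaks: you claim the on-the-nose identification $(\Gamma.\BB.\BB.\Id_\BB)_i = \Gamma_i.\BB_{\yon{i}}.\BB_{\yon{i}}.\Id_{\BB_{\yon{i}}}$, i.e.\ that the Reedy limit $(\Id_\BB)_{\yon{i}}$ coincides strictly with the iterated identity context of $\BB_{\yon{i}}$. This is not established by the lemmas you cite and is almost certainly false as an equality. The two sides are produced by genuinely different inductive recipes: in Proposition~\ref{prop:reedy-id-types}, each $(\Id_B)_j$ is the descent of $\Id_{B_j}$ along the map $r'$ built from the Reedy limit $M_j\refl_B$, whereas in Construction~\ref{def:identity-context} applied to $\BB_{\yon{i}}$, each component is a descent of $\Id$ along a context extension of the reflexivity map of the \emph{earlier components of that linear context extension}. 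Descent is defined via the retraction induced by an elimination structure (Lemma~\ref{lem:descent-from-elim-structure}), so its output depends on which map and which elimination structure you descend along; stability of descent in the base and compatibility of Reedy limits with elimination structures do not force these two differently-assembled types to be equal. (If the identification did hold strictly, evaluation at $i$ would be an $\Id$-structure-preserving CwA map and the whole lemma would be a one-line formality; the paper's proof is structured the way it is precisely because it is not.)

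What is true, and what the paper uses, is weaker but sufficient: $(\Id_\BB)_{\yon{i}}$ is an \emph{alternative} identity context for $\BB_{\yon{i}}$, meaning its reflexivity map $(\refl_\BB)_i : \Gamma_i.\BB_{\yon{i}} \to \Gamma_i.\BB_{\yon{i}}.\BB_{\yon{i}}.(\Id_\BB)_{\yon{i}}$ carries an elimination structure over $\Gamma_i$. This follows from Lemma~\ref{lem:elim-structure-on-reedy-limit} once one gives $\refl_\BB$ a Reedy elimination structure over $\Gamma$, which in turn follows from Proposition~\ref{prop:closure-for-reedy-elim-structures} together with the observation (recorded at the end of Construction~\ref{def:identity-context}) that $\refl_\BB$ is, up to isomorphism, a composite of isomorphisms and context extensions of the individual reflexivity maps $\refl_{B_k}$, each of which carries a Reedy elimination structure by construction. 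Eliminating along $(\refl_\BB)_i$ into $\Id_{\BB_{\yon{i}}}$ (with base case $\refl_{\BB_{\yon{i}}}$) then yields a comparison map $\Gamma_i.\BB_{\yon{i}}.\BB_{\yon{i}}.(\Id_\BB)_{\yon{i}} \to \Gamma_i.\BB_{\yon{i}}.\BB_{\yon{i}}.\Id_{\BB_{\yon{i}}}$ over $\Gamma_i.\BB_{\yon{i}}.\BB_{\yon{i}}$, and postcomposing $h_i$ with it gives the desired levelwise homotopy. Your proof becomes correct if you replace the claimed equality with this comparison-map construction; the stability and functoriality claims then go through as you state, since the elimination structures and the maps built from them are all stable in $\Gamma$ and functorial in $\C$, $\I$.
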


\begin{proof}
  Take $h \colon \Gamma.\AA \to \Gamma.\BB.\BB.\Id_{\BB}$ to be some homotopy.
  Now for each $i$, we have $h_i \colon \Gamma_i.\AA_{\yon i} \allowbreak \to \Gamma_i.\BB_{\yon i}.\BB_{\yon i}.(\Id_{\BB})_{\yon i}$, over $(f_i,g_i) \colon \Gamma_i.\AA_{\yon i} \to \Gamma_i.\BB_{\yon i}.\BB_{\yon i}$.
  To convert these into the desired levelwise homotopies, we need to construct maps $\Gamma_i.\BB_{\yon i}.\BB_{\yon i}.(\Id_{\BB})_{\yon i} \to \Gamma_i.\BB_{\yon i}.\BB_{\yon i}.\Id_{\BB_{\yon i}}$, over $\Gamma_i.\BB_{\yon i}.\BB_{\yon i}$.

  For this, it suffices to give an elimination structure on each map
  \[ (r_\BB)_i \colon \Gamma_i.\BB_{\yon i} \to \Gamma_i.\BB_{\yon i}.\BB_{\yon i}.(\Id_{\BB})_{\yon i}, \]
  or in other words, to show that $(\Id_{\BB})_{\yon i}$ is itself an alternative identity context for $\BB_{\yon i}$.
  By Lemma~\ref{lem:elim-structure-on-reedy-limit}, it suffices to give a Reedy elimination structure on $r_\BB$ over $\Gamma$.
  But this follows by Proposition~\ref{prop:closure-for-reedy-elim-structures} and the remark in Definition~\ref{def:identity-context}: $r_\BB$ is, up to isomorphism, a composite of isomorphisms and context extensions of the reflexivity maps of individual Reedy types, which by construction carry Reedy elimination structures.
\end{proof}

We are now equipped to completely characterise the equivalences of $\C^\I$ within fibrant slices, and give a recognition condition for them in general.

\begin{lemma} \label{lem:equivalence-iff-levelwise} 
  A map $f \colon \Gamma.\AA \to \Gamma.\BB$ over $\Gamma$ is an equivalence (resp.\ carries equivalence data) in $\C^\I(\Gamma)$, and hence $\C^\I$, if and only if it is a levelwise equivalence (resp.\ carries levelwise equivalence data).
    Moreover, these constructions on equivalence data are stable in $\Gamma$ and functorial in $\C$, $\I$.
\end{lemma}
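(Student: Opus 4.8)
The plan is to establish the biconditional at the level of the fibrant slice $\C^\I(\Gamma)$, the ``and hence $\C^\I$'' clause then being the agreement of the two notions of equivalence for maps between context extensions (the lemma following Definition~\ref{def:equivalence-in-cwa}), applied in $\C^\I$, which carries $\Id$-types by Proposition~\ref{prop:reedy-id-types}. The forward direction is immediate: if $f$ carries equivalence data $(g,\eta,g',\varepsilon)$ in $\C^\I(\Gamma)$, then since composition in $\C^\I$ is levelwise we have $(fg)_i = f_ig_i$, $(g'f)_i = g'_if_i$, and $\id_i = \id$; applying Lemma~\ref{lem:reedy-limits-preserve-homotopy} to the homotopies $\eta$, $\varepsilon$ yields levelwise homotopies $\eta_i : f_ig_i \homot \id$, $\varepsilon_i : g'_if_i \homot \id$ over $\Gamma_i$, so each $f_i$ carries the equivalence data $(g_i,\eta_i,g'_i,\varepsilon_i)$, stably in $\Gamma$ and functorially by the corresponding properties of that lemma.

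For the converse, suppose $f$ carries levelwise equivalence data. By Lemma~\ref{lem:levelwise-equivalence-iff-reedy} this is equivalent to Reedy equivalence data, i.e.\ a choice of equivalence data on each comparison component $m_if : (\Gamma.\AA)_i \to M_i\AA.(M_if)^*\BB_i$ over $M_i\AA$; I would work from this form. The homotopy inverse $g : \Gamma.\BB \to \Gamma.\AA$ and the homotopy $\eta : fg \homot \id_{\Gamma.\BB}$, together with the dual data $g'$, $\varepsilon$, are to be constructed together by Reedy induction over $i$, maintaining the invariant that they are defined and mutually compatible on all levels below $i$.

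At stage $i$, the data below $i$ assembles (over the strict slice $\strictslice{\I}{i}$) to a map $M_ig : M_i\BB \to M_i\AA$ and a homotopy $M_i\eta : (M_if)(M_ig) \homot \id_{M_i\BB}$ over $\Gamma_i$, where $M_if$ itself carries equivalence data by Lemma~\ref{lem:equivalences-and-reedy-limits}(\ref{item:reedy-limit-of-equiv}) applied to $f\restr{i/\I}$. Taking a homotopy-inverse $n_i$ of $m_if$ from the Reedy equivalence data, reindexing it along $M_ig$, and correcting by transport along $M_i\eta$, gives $g_i$ as a map over $M_ig$; and the homotopy data of $m_if$, combined with $M_i\eta$, extends $\eta$ to level $i$. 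The components $g'_i$, $\varepsilon_i$ are built dually. Since each $g_i$ lies over $M_ig$ and each new homotopy component over the matching-object homotopy, these pieces automatically cohere into genuine maps and homotopies of $\C^\I$, completing the induction.

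The main obstacle is exactly this coherent assembly: the individual steps are routine given stability of equivalence data under reindexing, closure under composition, the 2-out-of-3 property, and right properness of equivalences --- all available in $\C$ at each $\Gamma_i$ and each matching object --- but one must keep careful track of the matching-object data so that the levelwise constructions glue into diagram-level data rather than merely producing a levelwise equivalence. Finally, stability in $\Gamma$ and functoriality in $\C$ and $\I$ hold because every ingredient used (matching objects and comparison components, the reindexing/composition/2-out-of-3/right-properness operations on equivalence data, and transport) enjoys them.
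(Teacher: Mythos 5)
Your forward direction matches the paper's: extract a homotopy section and retraction from the equivalence data and apply Lemma~\ref{lem:reedy-limits-preserve-homotopy} to get levelwise data. For the converse, however, you take a genuinely different route from the paper, and it has a gap at exactly the hard point. The paper does \emph{not} construct $g$, $\eta$, $g'$, $\varepsilon$ directly by Reedy induction. Instead it first reduces, via the factorisation of Gambino--Garner and 2-out-of-3, to the case where $f$ is a dependent projection $\pi_A : \Gamma.A \to \Gamma$, and then invokes the characterisation ``contractible $=$ inhabited proposition'': equivalence data on $\pi_A$ amounts to a section $a$ of $A$ together with a section $h$ of $\Id_A$ over $\Gamma.A.A$. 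Both of these are plain sections of Reedy types, so the Reedy induction only ever has to produce, at stage $i$, a section extending a given matching section --- a strict condition that the levelwise data (sections $s_i$ of $A_i$ and $g_i$ of $\Id_{A_i}$, pushed through the elimination structure on the map $r'$ from the $\Id$-type construction) supplies without any coherence problem.

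The gap in your argument is the sentence claiming that the pieces ``automatically cohere into genuine maps and homotopies of $\C^\I$.'' They do not automatically cohere. A homotopy $\eta : fg \homot \id_{\Gamma.\BB}$ in $\C^\I$ is a map into the Reedy type $\Id_\BB$, so its component $\eta_i$ must restrict \emph{strictly}, under the matching projection, to $M_i\eta$. Your $g_i$ is built as a composite of a reindexed homotopy inverse of $m_if$ (whose witnessing homotopy lies over $M_i\BB$, i.e.\ over the constant homotopy) with a transport correction along $M_i\eta$ (whose witnessing data is a dependent path over $M_i\eta$). Showing that these compose to a homotopy lying strictly over $M_i\eta$ --- rather than merely over something homotopic to it --- is a nontrivial argument about composing homotopies-over-homotopies, and it is precisely the ``main obstacle'' you name but do not resolve. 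The same issue recurs for $\varepsilon$. This is the coherence problem the paper's reduction is designed to sidestep: by trading the homotopies for a single section of an identity type, all inductively constructed data becomes strict sections of Reedy types, for which extension over matching objects is routine. To repair your proof you would either need to carry out the homotopy-over-homotopy bookkeeping in detail, or adopt the paper's reduction.
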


\begin{proof}
  First, suppose $f$ is an equivalence in $\C^\I(\Gamma)$.
  Pick some homotopy section $(g,\alpha)$ and homotopy retraction $(g',\beta)$.
  By Lemma~\ref{lem:reedy-limits-preserve-homotopy}, these give levelwise homotopy sections and retractions of $f$; so $f$ is a levelwise equivalence.

  Conversely, suppose $f$ is a levelwise equivalence.
  Without loss of generality, we may assume $f$ is a dependent projection $\Gamma.\AA \to \Gamma$.
  
  (For the general case, first factor $f$ as an equivalence $f' \colon \Gamma.\AA \to \Gamma.\BB.\AA'$ in $\C^\I(\Gamma)$ followed by a dependent projection $\pi_{\AA'}$, according to \jpaacite[Lem.~11]{gambino-garner}.
  By the first direction of the present lemma, $f'$ is a levelwise equivalence; so by 2 out of 3, so is $\pi_{\AA'}$.
  So by the special case, $\pi_{\AA'}$ is an equivalence in $\C(\Gamma.\BB)$, hence in $\C(\Gamma)$; and by 2 out of 3, so is $f$.)
  
  For the case of a dependent projection, first recall that a type is contractible just if it is inhabited and a proposition \cite[Lemma~3.3.2]{\arxivorsubmission{hott:book}{hott:book-for-jpaa}}.
  That is, a dependent projection $\pi_B \colon \Delta.B \to \Delta$ (in any CwA with $\Id$-types) carries equivalence data if and only if it can be equipped with a section $b$ of $B$, together with  “proposition data”, i.e.\ a section $h$ of $\Id_B$ over $\Delta.B.B$ (and the operations witnessing this are stable in the base context).
  
  With this in mind, suppose $\pi_A \colon \Gamma.A \to \Gamma$ is a levelwise equivalence.
  By Lemma~\ref{lem:levelwise-equivalence-iff-reedy}, it is then also a Reedy equivalence.
  But its comparison maps $m_i \pi_A$ are exactly the projections $\pi_{A_i} \colon (\Gamma.A)_i \fibto M_i A$; so these are equivalences, and we may choose for each $i$ sections $s_i$ of $A_i$, and $g_i$ of $\Id_{A_i}$.
   
  From these, we will construct a sections $a$ of $A$ and $h$ of $\Id_A$, in $\C^\I$.
  The former is direct by Reedy induction, taking each $a_i$ to be the composite $\Gamma_i \to[m_iA] M_i A \to[s_i] (\Gamma.A)_i$.
  
  For the latter, first take for each $i$ a section $g'_i$ of $(\Id_A)_i$ as in the following diagram:
  \[\begin{tikzcd}
    M_i A . A_i . A_i . \Id_{A_i} \ar[d,fib,bend left,"\pi_{\Id_{A_i}}"] \ar[rr] \ar[drpb] & & (\Gamma.A.A.\Id_A)_i \ar[d,fib,bend left,"\pi_{(\Id_A)_i}"] \\
    M_i A . A_i . A_i \ar[r,"r'"] \ar[u,bend left,"g_i"] & M_i(A.A.\Id_A).A_i.A_i \ar[r,"\iso"] & M_i \Id_A \ar[u,bend left,dashed,"g_i'"]
  \end{tikzcd}\]
  where $r'$ and the isomorphism are as in the construction of identity types (Proposition~\ref{prop:reedy-id-types}), and as noted there $r'$ carries an elimination structure, which we apply to $g_i$ to yield $g_i'$.
  Now we get the desired section $h$ of $\Id_A$ by Reedy induction, setting $h_i = g_i' M_ih$:
  \[\begin{tikzcd}
    & (\Gamma.A.A.\Id_A)_i \ar[d,bend left, fib] \ar[dr,fibb,"(\pi_{\Id_A})_i"] & \\
    (\Gamma.A.A)_i \ar[r,"M_i h"] \ar[ur,dashed,"h_i"] & M_i(\Id_A) \ar[u,bend left,"g_i'"] \ar[r,fibb] & (\Gamma.A.A)_i
  \end{tikzcd}\]
  (where in $M_i h$, we consider $h$ as a map between extensions of $\Gamma.A.A$, not just of $\Gamma$).
\end{proof}

\begin{lemma} \label{lem:equivalence-if-levelwise} 
  If $f \colon \Gamma \to \Delta$ is a levelwise equivalence, then it is an equivalence in $\C^\I$.
\end{lemma}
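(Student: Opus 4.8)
The plan is to verify directly that $f$ satisfies the two clauses of Definition~\ref{def:equivalence-in-cwa} in the CwA $\C^\I$ --- weak type lifting and weak term lifting. The strategy in both cases is a Reedy induction on $\I$ whose inductive step reduces to the corresponding clause in $\C$ for a component map, which holds because each such component is an equivalence in $\C$; the resulting level-by-level data is then glued using the stability-and-functoriality clauses of these operations in $\C$, and the fibrant-slice characterisation of equivalences (Lemma~\ref{lem:equivalence-iff-levelwise}) is used to conclude.

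A preliminary reduction makes the quantification over context extensions harmless. For any Reedy context extension $\AA$ of $\Delta$, the reindexing map $f.\AA : \Gamma.f^*\AA \to \Delta.\AA$ is again a levelwise equivalence: by the construction of reindexing in Proposition~\ref{prop:weak-reedy-comp-cat}, its $i$-th component is obtained by pulling the matching-object comparison $M_i(f.\AA)$ back along the dependent projection $p_{\AA_i}$, so a Reedy induction --- using right-properness of equivalences under context extension, and the fact (Lemma~\ref{lem:equivalences-and-reedy-limits}) that Reedy limits of levelwise equivalences are equivalences --- shows each component is an equivalence in $\C$. Consequently, weak type lifting (resp.\ weak term lifting) for $f$ with an arbitrary argument $\AA$ is an instance of the same clause for the levelwise equivalence $f.\AA$ with \emph{empty} context extension, and it suffices to establish, for every levelwise equivalence $g : \Gamma' \to \Delta'$ in $\C^\I$: (i) every Reedy type $B$ over $\Gamma'$ is equivalent over $\Gamma'$ to $g^*B'$ for some Reedy type $B'$ over $\Delta'$; and (ii) every section of $g^*B$, for $B$ a Reedy type over $\Delta'$, is homotopic over $\Gamma'$ to one of the form $g^*b'$.

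For (i) we construct $B'$ over $\Delta'$ together with a map $w : \Gamma'.g^*B' \to \Gamma'.B$ over $\Gamma'$ by Reedy induction on $i$. At stage $i$ the data built below $i$ determine, as Reedy limits, the matching object $M_i B$ over $\Gamma'_i$, the matching object $M_i B'$ over $\Delta'_i$, and --- since $g$ and the partial $w$ are levelwise equivalences --- an equivalence $M_i w : \Gamma'_i.M_i(g^*B') \weqto \Gamma'_i.M_iB$ over $\Gamma'_i$ (by Lemma~\ref{lem:equivalences-and-reedy-limits}), together with the comparison $M_i(g.B') : \Gamma'_i.M_i(g^*B') \to \Delta'_i.M_iB'$ lying over $g_i$ and, by the pullback property established in Proposition~\ref{prop:weak-reedy-comp-cat}, identifying $M_i(g^*B')$ with the reindexing $g_i^*(M_iB')$. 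Transporting $B_i$ along $M_iw$ and then applying weak type lifting for the equivalence $g_i$ in $\C$ (with the appropriate context-extension argument) produces $B'_i \in \Ty(M_iB')$ and the component $w_i$; the stability and functoriality clauses of these operations in $\C$ guarantee that the choices cohere with the arrows of $\I$, so that $B'$ is a genuine Reedy type and $w$ a map of Reedy types. By construction $w$ is a levelwise equivalence, hence an equivalence in $\C^\I(\Gamma')$ --- and so in $\C^\I$ --- by Lemma~\ref{lem:equivalence-iff-levelwise}. Part (ii) is handled the same way: a Reedy induction whose step applies weak term lifting for $g_i$ in $\C$, followed by assembling the level-by-level homotopies into a homotopy in $\C^\I$ exactly as in the proof of Lemma~\ref{lem:reedy-limits-preserve-homotopy}.

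I expect the main obstacle to be organisational rather than conceptual: stating the Reedy-inductive hypotheses so that, at stage $i$, the matching objects and the comparison equivalence between $M_i(g^*B')$ and $M_iB$ are available in precisely the form that feeds $g_i$'s weak type lifting in $\C$ --- in particular, tracking how matching objects interact with Reedy reindexing, so that $M_i(g^*B')$ is identified with $g_i^*(M_iB')$ --- and then checking that the level-$i$ data respects the diagram structure, which is where the stability and functoriality of the $\C$-level operations are essential. The same matching-object bookkeeping underlies the preliminary reduction that $f.\AA$ is again a levelwise equivalence.
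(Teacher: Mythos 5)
Your proof takes essentially the same route as the paper's: for weak type lifting, a Reedy induction whose step composes the matching-object equivalence (an equivalence by Lemma~\ref{lem:equivalences-and-reedy-limits} applied to the partially built comparison map) with the right-proper context extension $g_i.B'_{\bdry{i}}$ of the levelwise equivalence, then invokes weak type lifting in $\C$ and concludes via Lemma~\ref{lem:equivalence-iff-levelwise} --- your explicit preliminary reduction absorbing the context-extension argument into $f.\AA$ is a point the paper glosses over, and the opposite orientation of your comparison map $w$ is immaterial. One caution on weak term lifting: you cannot build all the lifted components first and only \emph{then} ``assemble the homotopies,'' because at stage $i$ the given section $b_i$ lives over $M_i(g^*b)$ rather than over $g_i^*(M_i b')$, so it must be transported along the homotopy already constructed in degrees $<i$ before term lifting for $g_i$ can be applied; the homotopy therefore has to be constructed simultaneously with the term (as the paper's one-line sketch indicates), and Lemma~\ref{lem:reedy-limits-preserve-homotopy} goes in the wrong direction (from a homotopy in $\C^\I$ to levelwise ones) to perform that assembly for you.
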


\begin{proof}
  First, for weak type lifting, suppose we are given $A \in \Ty(\Gamma)$.
  We will construct a type $B$ over $\Delta$, together with a Reedy equivalence $e \colon \Gamma.A \to \Gamma.f^*B$ over $\Gamma$.
  Once this is done, it follows by Lemmas~\ref{lem:equivalences-and-reedy-limits} and~\ref{lem:equivalence-iff-levelwise} that $e$ is an equivalence in $\C^\I$, and so exhibits $B$ as a weak lift of $A$ as required.

  To construct $B$ and $e$, we work by induction.
  Suppose they have been constructed in degrees $<i$.
  We now have maps $M_i e \colon M_i A \to M_i (f^*B)$, an equivalence by Lemma~\ref{lem:equivalences-and-reedy-limits} and our inductive assumption on $e$, and $M_i(f.B) \colon M_i (f^*B) \to M_i B$, an equivalence by right properness.
  So we can take a weak lifting of $A_i$ along the composite equivalence $(f_i.B_\bdry{i})(M_i e) \colon M_i A \to M_i B$, to obtain $B_i \in \Ty(M_i B)$ and an equivalence $w_i \colon M_i A . A_i \to M_i A . ((f_i.B_\bdry{i}) M_i e)^* B_i$.
  
  \[ \begin{tikzcd}
  M_A.A_i \ar[d,fib] \ar[r,dashed,"w_i",weq'] & M_i A .((f_i.B_\bdry{i})M_ie)^* B_i \ar[dl,fib] \ar[r,dashed,weq'] \ar[drpb] & M_i B . B_i \ar[d,fib,dashed] \\
  M_i A \ar[d,fibb] \ar[r,"M_i e",weq'] & M_i (f^*B) \ar[dl,fibb] \ar[drpb] \ar[r,"M_i (f.B)",weq'] & M_i B \ar[d,fibb] \\
  \Gamma_i \ar[rr,weq',"f_i"] & & \Delta_i
  \end{tikzcd} \]  

  Now taking $e_i$ as $((M_i e).(f_i.B_\bdry{i})^* B_i)w_i \colon (\Gamma.A)_i \to M_i A . (f_i.B_\bdry{i})^* B_i = (\Gamma.f^*B)_i$, its comparison component at $i$ is exactly $w_i$; so $e$ is a Reedy equivalence in degree $i$ as required.

  For weak term lifting, the argument is similarly straightforward.
  The lifting term and homotopy are built by induction, using at each stage first transport along the homotopy in lower degrees, and then weak term lifting along a right-proper pullback of a component of $f$.
  %
  %
  %
  \end{proof}

Summarising the two previous lemmas, we have:

\begin{lemma} \label{lem:equivalences-are-levelwise} 
  \leavevmode \begin{enumerate}
  \item A map $f \colon \Gamma.\AA \to \Gamma.\BB$ over $\Gamma$ is an equivalence (resp.\ carries equivalence data) in $\C^\I(\Gamma)$, and hence $\C^\I$, if and only if it is a levelwise equivalence (resp.\ carries levelwise equivalence data).
    Moreover, these constructions on equivalence data are stable in $\Gamma$ and functorial in $\C$, $\I$.
    
  \item A map $f \colon \Gamma \to \Delta$ is an equivalence in $\C^\I$ if it is a levelwise equivalence.
    \qed
  \end{enumerate}
\end{lemma}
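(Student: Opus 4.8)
The plan is almost trivial: Lemma~\ref{lem:equivalences-are-levelwise} merely collects the two results established immediately above. Part~(1) is word-for-word Lemma~\ref{lem:equivalence-iff-levelwise}, including the ``moreover'' clause about stability in $\Gamma$ and functoriality in $\C$ and $\I$, and part~(2) is word-for-word Lemma~\ref{lem:equivalence-if-levelwise}. So the proof I would write is a single sentence citing those two lemmas; there is no real obstacle at this point.

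For the record, since the summary hides it, here is where the content actually sits. The ``only if'' half of part~(1) comes from Lemma~\ref{lem:reedy-limits-preserve-homotopy}, pushing a chosen homotopy section and retraction of $f$ down to the levels; that lemma in turn leans on $(\Id_\BB)_{\yon i}$ being an identity context for $\BB_{\yon i}$, which follows from the Reedy elimination structure on $r_\BB$ (via Lemma~\ref{lem:elim-structure-on-reedy-limit}) together with the description of $r_\BB$ as a composite of isomorphisms and context extensions of individual reflexivity maps from Construction~\ref{def:identity-context}. The ``if'' half reduces first, via the $\Id$-type factorisation \cite[Lem.~11]{gambino-garner} and $2$ out of $3$, to the case of a dependent projection $\pi_A$, and then, via the characterisation of contractibility as being inhabited and a proposition \cite[Lemma~3.3.2]{hott:book}, to building a section $a$ of $A$ and a section $h$ of $\Id_A$ in $\C^\I$ --- both by Reedy induction, at each level feeding the levelwise sections into the descent/elimination data produced in Proposition~\ref{prop:reedy-id-types} and using Lemma~\ref{lem:levelwise-equivalence-iff-reedy} to move between levelwise and Reedy equivalences. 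Part~(2) is a genuinely separate argument: for a levelwise equivalence $f : \Gamma \to \Delta$, weak type lifting and weak term lifting are each done by Reedy induction, combining right properness of equivalences (to transfer the problem from $M_i(f^*B)$ to $M_i B$) with the equivalence of matching objects supplied by Lemma~\ref{lem:equivalences-and-reedy-limits}, which produces a Reedy equivalence; part~(1) then upgrades this to an equivalence in $\C^\I$.

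So the one thing to double-check when writing the summary lemma is bookkeeping --- that the stability/functoriality data asserted in part~(1) is exactly what Lemma~\ref{lem:equivalence-iff-levelwise} returns, and that part~(2) is correctly stated as only an implication (the converse not being needed here). Neither presents a difficulty, and I would simply write ``\emph{Immediate from Lemmas~\ref{lem:equivalence-iff-levelwise} and~\ref{lem:equivalence-if-levelwise}.}''
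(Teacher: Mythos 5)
Your proposal is correct and matches the paper exactly: the paper states this lemma with a \verb|\qed| immediately after the phrase ``Summarising the two previous lemmas, we have:'', so its ``proof'' is precisely the one-sentence citation of Lemmas~\ref{lem:equivalence-iff-levelwise} and~\ref{lem:equivalence-if-levelwise} that you propose. Your recap of where the real content lives (homotopies descending to levels, the reduction to dependent projections via the Gambino--Garner factorisation, and the separate lifting argument for part~(2)) also accurately reflects the proofs of those two lemmas.
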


\begin{remark}
  Note that the slightly careful phrasing above is necessary: in general, an equivalence $f \colon \Gamma \to \Delta$ in $\C^\I$ may fail to be levelwise.
  For instance, consider the case where:
  \begin{itemize}
  \item $\I$ is the arrow category $(1 \rightarrow 0)$; 
  \item $\C$ is constructed by freely adjoining a terminal object $\bot$ to some non-trivial CwA, e.g.\ $\Set$, and setting $\Ty(\bot) \coloneqq 0$;
  \item $e_0 \colon \Gamma_0 \to \Delta_0$ is $\id_{\bot}$, and $e_1 \colon \Gamma_1 \to \Delta_1$ is any non-equivalence in $\Set$:
    \[ \begin{tikzcd}[sep = small]
      \Gamma_1 \ar[r,"e_1"] \ar[d] & \Delta_1 \ar[d] \\
      \bot \ar[r,"e_0"] & \bot
    \end{tikzcd} \]
  \end{itemize}
  Then $e$ is not a levelwise equivalence, by choice of $e_1$; but it is an equivalence in $\C^\rightarrow$, since there are no Reedy types over $\Gamma$ (as there are no types over $\bot$), so both weak lifting properties hold vacuously.
\end{remark}

\subsection{$\Sigma$-types in inverse diagrams}

\begin{proposition}
  Suppose $\C$ is equipped with $\Sigma$-type structure.
  Then so is $\C^\I$.
  Moreover, this construction is functorial in $\C$ and $\I$.
\end{proposition}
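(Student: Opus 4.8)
The plan is to follow the template of Propositions~\ref{prop:reedy-unit-types} and~\ref{prop:reedy-id-types}. Given Reedy types $A$ over $\Gamma \in \C^\I$ and $B$ over $\Gamma.A$, I would construct the Reedy type $\Sigma[A,B] \in \Ty(\Gamma)$, the pairing map $\pair_{A,B} : \Gamma.A.B \to \Gamma.\Sigma[A,B]$ over $\Gamma$, and a canonical \emph{Reedy} elimination structure on $\pair_{A,B}$, all by Reedy induction over $i \in \I$; then Corollary~\ref{cor:elim-structure-from-reedy} converts that Reedy elimination structure into an honest elimination structure on $\pair_{A,B}$ in $\C^\I$, which is exactly the $\synsplit$-operation together with its computation rule. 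Strictness of $\Sigma[A,B]$ is automatic from building it levelwise over its canonical matching objects, as in the earlier propositions.

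For the inductive step, suppose everything has been built in degrees $j \prec i$. Then the matching objects $M_i(A.B)$, $M_i\Sigma[A,B]$ and the induced map $M_i\pair : M_i(A.B) \to M_i\Sigma[A,B]$ over $\Gamma_i$ are determined, and applying Lemma~\ref{lem:elim-structure-on-reedy-limit}(1) to $\pair_{A,B}$ restricted to $\strictslice\I i$ (with the single-type finite extension $0 \extto \bdry i$), its inductive Reedy elimination structure shows that $M_i\pair$ carries an elimination structure over $\Gamma_i$. The bookkeeping point is that $M_i(A.B)$ is canonically the context extension $M_iA.B_{\bdry i}$, where $B_{\bdry i}$ is the boundary matching data of $B$, weakened from $M_iA$; correspondingly $M_iB = M_iA.A_i.B_{\bdry i} \iso M_iA.B_{\bdry i}.A_i = M_i(A.B).A_i$, so $B_i \in \Ty(M_iB)$ reorders to a type $B_i' \in \Ty(M_i(A.B).A_i)$, while $A_i \in \Ty(M_iA)$ reindexes along $M_i(A.B) \to M_iA$. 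Hence $\C$'s $\Sigma$-structure yields $\Sigma[A_i,B_i'] \in \Ty(M_i(A.B))$ and a pairing map $\pair_{A_i,B_i'} : M_i(A.B).A_i.B_i' \to M_i(A.B).\Sigma[A_i,B_i']$ carrying its canonical elimination structure. I would then set
\[ \Sigma[A,B]_i \coloneqq \descend{M_i\pair}{\Sigma[A_i,B_i']} \in \Ty(M_i\Sigma[A,B]), \]
using the descent operation of Lemma~\ref{lem:descent-from-elim-structure} along the elimination structure of $M_i\pair$, and define $(\pair_{A,B})_i : (\Gamma.A.B)_i \to (\Gamma.\Sigma[A,B])_i$ as the composite of $\pair_{A_i,B_i'}$ (after the canonical identification $(\Gamma.A.B)_i \iso M_i(A.B).A_i.B_i'$) with the top edge of the descent pullback square, whose target is $M_i\Sigma[A,B].\Sigma[A,B]_i = (\Gamma.\Sigma[A,B])_i$. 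Since $(M_i\pair)^*\Sigma[A,B]_i = \Sigma[A_i,B_i']$ by the defining property of descent, the comparison component $m_i\pair_{A,B}$ is literally $\pair_{A_i,B_i'}$, which carries the required elimination structure over $M_i(A.B)$; this closes the induction.

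It remains to check stability and functoriality. Every ingredient—canonical matching objects and Reedy limits, the levelwise $\Sigma$-structure of $\C$, descent along elimination structures—is stable under reindexing in $\Gamma$ and functorial in $\C$ (for CwA maps preserving $\Sigma$-types) and in $\I$, so $\Sigma[A,B]$, $\pair_{A,B}$, and the derived $\synsplit$-operation are stable under reindexing, giving a $\Sigma$-type structure on $\C^\I$ functorial in $\C$ and $\I$. The main obstacle is exactly the bookkeeping gathered above: pinning down that $M_i(A.B) = M_iA.B_{\bdry i}$ on the nose, that $B_{\bdry i}$ is weakened from $M_iA$ so that the reordering isomorphisms used are canonical, and that $(\Gamma.A.B)_i$ and $(\Gamma.\Sigma[A,B])_i$ come out as the expected extensions, so that $(\pair_{A,B})_i$ and its comparison component are as described. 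Once this is settled, the argument is routine, just as in the unit- and identity-type cases.
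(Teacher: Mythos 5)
Your proposal is correct and follows essentially the same route as the paper: Reedy induction, using Lemma~\ref{lem:elim-structure-on-reedy-limit} to get an elimination structure on $M_i\pair$, descent along it to define $\Sigma[A,B]_i$ over $M_i\Sigma[A,B]$, and the observation that the comparison component of $\pair$ is the levelwise pairing map, so Corollary~\ref{cor:elim-structure-from-reedy} supplies the eliminator. The only (immaterial) difference is order of operations: the paper descends $A_i$ and $B_i$ individually along $M_i\pair$ and then forms $\Sigma_{\Abar_i}\Bbar_i$ over $M_i\Sigma[A,B]$, whereas you form $\Sigma[A_i,B_i']$ over $M_i(A.B)$ first and descend the resulting single type; both give a type pulling back to $\Sigma[A_i,B_i']$ on the nose, so the rest of the argument is identical.
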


\begin{proof}
  The construction is analogous to the one for identity types in Proposition~\ref{prop:reedy-id-types}.
 
  Let $\Gamma.A.B \fibto \Gamma.A \fibto \Gamma$ be a pair of types over a diagram in $\C^\I$.
  We will construct by Reedy induction the type $\Sigma_A B$ over $\Gamma$, along with its pairing morphism $\pair \colon \Gamma.A.B \to \Gamma.\Sigma_A B$ and a Reedy elimination structure on $\pair$ over $\Gamma$.
  
  As usual, we assume by induction that these are defined on $\I_{<i}$, and wish to define them at $i$.
  We will construct $(\Sigma_A B)_i$ as a $\Sigma$-type of a pair of types $\Abar_i$, $\Bbar_i$ over $M_i\Sigma_A B$.
  
  By the inductive Reedy elimination structure of $\pair$, and Lemma~\ref{lem:elim-structure-on-reedy-limit}, we know that $M_i \pair \colon M_i(A.B) \to M_i(\Sigma_A B)$ carries an elimination structure; so we can take $\Abar_i$, $\Bbar_i$ to be the descent of $A_i$, $B_i$ along $M_i \pair$, according to Lemma~\ref{lem:descent-from-elim-structure}.
  We then set $(\Sigma_A B)_i \coloneqq \Sigma_{\Abar_i} \Bbar_i$, and take ${\pair}_i$ to be the composite of the top edge of the following diagram:
  \[
  \begin{maxwidthtikzpicture}[cd-style,x={(2.8cm,0cm)},y={(0cm,1.8cm)},z={(3cm,-1cm)}]
    \node(MiAB) at (0,0) {$M_i (A.B)$};
    \node(GABi) at (-1,2) {$(\Gamma.A.B)_i$};
    \arr (GABi) to (MiAB);
    \node(MiABA) at (0,1) {$M_i (A.B).A_i$};
    \arr[fib] (MiABA) to (MiAB);
    \node(MiABAB) at (0,2) {$M_i (A.B).A_i.B_i$};
    \arr[fib] (MiABAB) to (MiABA);
    \arr (GABi) to node {$\iso$} (MiABAB);
    \node(MiSig) at (2.4,0) {$M_i \Sigma_A B$};
    \node(MiSigA) at (2.4,1) {$M_i \Sigma_A B.\Abar_i$};
    \arr[fib] (MiSigA) to (MiSig);
    \node(MiSigAB) at (2.4,2) {$M_i \Sigma_A B . \Abar_i.\Bbar_i$};
    \arr[fib] (MiSigAB) to (MiSigA);
    \arr (MiAB) to node {$M_i \pair_{A,B} $} (MiSig);
    \arr (MiABA) to (MiSigA);
    \draw let
      \p1 = (MiABA), 
      \p2 = (MiAB), 
      \p3 = (MiSigA), 
      \p4 = (MiSig), 
      \p5 = ($(\p1)!0.25!(\p2)$),
      \p6 = ($(\p1)!3.5ex!(\p3)$),
      \p7 = ($(\p5)+(\p6)-(\p1)$)
      in
      ($(\p7)!1ex!(\p5)$) -- (\p7) -- ($(\p7)!1ex!(\p6)$);
    \arr (MiABAB) to (MiSigAB);
    \draw let
      \p1 = (MiABAB), 
      \p2 = (MiABA), 
      \p3 = (MiSigAB), 
      \p4 = (MiSigA), 
      \p5 = ($(\p1)!0.25!(\p2)$),
      \p6 = ($(\p1)!3.5ex!(\p3)$),
      \p7 = ($(\p5)+(\p6)-(\p1)$)
      in
      ($(\p7)!1ex!(\p5)$) -- (\p7) -- ($(\p7)!1ex!(\p6)$);
    \node(MiABSig) at (0,2,1) {$M_i (A.B).\Sigma_{A_i} {B_i}$};
    \arr[fib,crossing over] (MiABSig) to (MiAB);
    \arr (MiABAB) to node {$\pair_{A_i,B_i}$} (MiABSig);
    \node(MiSigSig) at (2.4,2,1) {$M_i \Sigma_A B.\Sigma_{\Abar_i} {\Bbar_i}$};
    \arr[fib] (MiSigSig) to (MiSig);
    \arr (MiSigAB) to node {$\pair_{\Abar_i,\Bbar_i}$} (MiSigSig);
    \arr (MiABSig) to (MiSigSig);
  \end{maxwidthtikzpicture}
  \]

  By stability of the $\Sigma$-structure of $\C$, the comparison component $m_i \pair_{A,B}$ is just the composition of the isomorphism $(\Gamma.A.B)_i \iso M_i(A.B).A_i.B_i$ with the pairing map of $\Sigma_{A_i} B_i$, so carries an elimination structure as required.
  Functoriality in $\C$ and $\I$ is once again routine.
\end{proof}

\subsection{$\Pi$-types in inverse diagrams}

\begin{proposition} \label{prop:reedy-pi-types}
  Suppose $\C$ carries $\Pi$-type (resp.\ $\Pi_\eta$-) structure.
  Then so does $\C^\I$.
  Moreover, this construction is functorial in $\C$ and $\I$.
\end{proposition}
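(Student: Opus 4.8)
The plan is to follow the Reedy-inductive template of the two preceding proofs --- for $\Id$-types (Proposition~\ref{prop:reedy-id-types}) and for $\Sigma$-types --- but with the role played there by descent along an elimination structure taken over instead by the iterated $\Pi$-types of Construction~\ref{prop:iterated-pi}. Since $\Pi$-types carry no analogue of an elimination structure, the inductive step has to be organised somewhat differently: rather than descending the level-$i$ data of $A$ and $B$, one builds $(\Pi[A,B])_i$ directly as an iterated $\Pi$-type in $\C$ whose first argument is the entire matching context of $A$ at $i$.

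Concretely, given $\Gamma.A.B$ in $\C^\I$, I would construct $\Pi[A,B]$ over $\Gamma$, together with $\ev_{A,B} : \Gamma.\Pi[A,B].A \to \Gamma.A.B$ over $\Gamma.A$ and the $\lambda$ operation, by Reedy induction over $i \in \I$, assuming all this defined over $\I_{<i}$. At stage $i$ one has the matching objects $M_i\Pi[A,B]$, $M_iA$, $M_iB$, the Reedy limit $A_{\yon{i}} \in \Ty^*(\Gamma_i)$ with $\Gamma_i.A_{\yon{i}} = (\Gamma.A)_i$, and --- crucially --- a map $\theta_i : M_i\Pi[A,B].A_{\yon{i}} \to M_iB$ assembled from the lower-degree components of $\ev$: the object $M_i\Pi[A,B].A_{\yon{i}}$ carries a canonical $\yon{i}$-cone into $\Gamma.A$ (from the universal cone of $A_{\yon{i}}$) and a compatible $\bdry{i}$-cone into $\Gamma.A.B$ obtained by feeding the pairs supplied by the universal cone of $M_i\Pi[A,B]$ and by $A_{\yon{i}}$ through the components $\ev_j$ with $j < i$, whence the universal property of the relative matching object $M_iB$ for $\pi_B : \Gamma.A.B \to \Gamma.A$ produces $\theta_i$. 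I would then set $(\Pi[A,B])_i \coloneqq \Pi[A_{\yon{i}}, \theta_i^*B_i] \in \Ty(M_i\Pi[A,B])$, the iterated $\Pi$-type of Construction~\ref{prop:iterated-pi} (with $A_{\yon{i}}$ reindexed along $M_i\Pi[A,B] \to \Gamma_i$). Since, by the definition of strict Reedy types and their context extensions, $(\Gamma.\Pi[A,B].A)_i$ is canonically $M_i\Pi[A,B].(\Pi[A,B])_i.A_{\yon{i}}$, which is exactly the domain of $\ev_{A_{\yon{i}}, \theta_i^*B_i}$, I would take $\ev_{A,B}$ at level $i$ to be that evaluation map followed by the evident map $M_i\Pi[A,B].A_{\yon{i}}.\theta_i^*B_i \to M_iB.B_i = (\Gamma.A.B)_i$ induced by $\theta_i$, and $\lambda$ at level $i$ to send a Reedy section $b$ of $B$ over $\Gamma.A$ to $\lambda(b_i)$.

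The remaining work is bookkeeping. One must check that these level-$i$ data cohere into an honest strict Reedy type and Reedy-natural operations: in particular that $\theta_i$ precomposed with $M_i\lambda(b).A_{\yon{i}}$ recovers the matching section of $b$ --- so that $(M_i\lambda(b))^*(\Pi[A,B])_i = \Pi[A_{\yon{i}}, (M_ib)^*B_i]$ and $\lambda(b_i)$ is a legitimate section --- and hence that the computation rule $\ev(\lambda(b),a) = ba$ holds level by level; all of this follows from the corresponding facts in $\C$ (the $\beta$-rule for the iterated $\Pi$-types of Construction~\ref{prop:iterated-pi}) together with the inductive hypothesis. Stability under reindexing in $\Gamma$ and functoriality in $\C$ and $\I$ are then immediate, since every ingredient --- Reedy limits and matching objects (Lemma~\ref{lem:first-properties-of-reedy-limits}), and the $\Pi$-structure of $\C$ applied at each level --- is stable and functorial. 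For the $\Pi_\eta$ strengthening it suffices to note that the $\eta$-expansion map in $\C^\I$ is the identity if and only if it is so at each level, that at level $i$ it is the $\eta$-expansion map of the iterated $\Pi$-type $\Pi[A_{\yon{i}}, \theta_i^*B_i]$, and that the iterated $\Pi$-types of Construction~\ref{prop:iterated-pi} inherit the $\eta$-rule from $\C$, their $\eta$-expansion decomposing as a composite of the $\eta$-expansions of the constituent single-argument $\Pi$-types, each an identity.

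I expect the main obstacle to be precisely the handling of the first argument of $\Pi$. For $\Id$- and $\Sigma$-types the inductive step is essentially mechanical once descent along the matching map is available; here, by contrast, $(\Pi[A,B])_i$ must quantify over the whole matching context $A_{\yon{i}}$ rather than over $A_i$ alone, and its codomain has to be routed through the map $\theta_i$, which is itself built out of $\ev$ in lower degrees via the universal property of $M_iB$. Verifying that the resulting $\ev$, $\lambda$, and $\beta$-rule (and, in the $\Pi_\eta$ case, the $\eta$-rule) remain compatible with the matching data all the way up the Reedy induction is the one point demanding genuine care.
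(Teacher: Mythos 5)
Your proposal is correct and follows essentially the same route as the paper: the paper likewise defines $(\Pi[A,B])_i$ as the iterated $\Pi$-type $\Pi[A_{\yon{i}}, f^*B_i]$ over $M_i\Pi[A,B]$, where $f$ (your $\theta_i$) is induced from the lower-degree evaluation data via the universal property of $M_iB$ (presented there as a "weakening in the middle" of $M_i\ev_{A,B}$), and then obtains $\ev$, $\lambda$, the $\beta$-rule, and the $\eta$-rule levelwise from the corresponding properties of iterated $\Pi$-types in $\C$. The coherence check you flag for $\lambda$ is exactly the commutativity-over-$M_iB$ verification the paper carries out using the inductive equation relating $M_i\ev_{A,B}$, $M_i(\lambda b)$, and $M_ib$.
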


\begin{proof}
  First, let $\Gamma.A.B \fibto \Gamma.A \fibto \Gamma$ be a pair of types over a diagram in $\C^\I$; we will construct the type $\Pi[A,B]$, along with the morphism $\ev_{A,B} \colon \Gamma.\Pi[A,B].A \allowbreak \to \Gamma.A.B$ over $\Gamma.A$.
  As usual, we work by induction on $i$, giving these at $i \in \I$ assuming they are given on $\I_{<i}$.
  
  Briefly, we will take $\Pi[A,B]_i \in \Ty(M_i(\Pi[A,B]))$ to be an iterated $\Pi$-type $\Pi[ A_{\yon{i}} , f^* B_i ]$, for a certain map $f \colon M_i \Pi[A,B] . A_{\yon{i}} \to M_i B$, and $\ev_i$ will be the evaluation map of that iterated $\Pi$-type.
  Specifically, this map $f$ is constructed as a “weakening by $A_i$ in the middle” of $M_i\ev_{A,B} \colon M_i(\Pi[A,B].A) \to M_i(A.B)$:
  \[ 
  \phantom{}
  \begin{tikzpicture}[cd-style,x={(2cm,0cm)},y={(0cm,2cm)},z={(2.5cm,-0.8cm)}]
    
    \node(MiPiA) at (0,0) {$M_i(\Pi[A,B].A)$};
    \node(MiA) at (3,0) {$M_iA$};
    \arr (MiPiA) to (MiA);
    \node(MiPiAi) at (0,1) {$M_i\Pi[A,B]. A_{\yon{i}}$};
    \arr[fib] (MiPiAi) to (MiPiA);
    \node(GAi) at (3,1) {$(\Gamma.A)_i \mathrlap{{} = M_iA.A_i}$};
    \arr[fib] (GAi) to (MiA);
    \draw let
      \p1 = (MiPiAi), 
      \p2 = (MiPiA), 
      \p3 = (GAi), 
      \p4 = (MiA), 
      \p5 = ($(\p1)!0.25!(\p2)$),
      \p6 = ($(\p1)!0.1!(\p3)$),
      \p7 = ($(\p5)+(\p6)-(\p1)$)
      in
      ($(\p7)!1ex!(\p5)$) -- (\p7) -- ($(\p7)!1ex!(\p6)$);

    \node (MiAB) at (3,0,-1) {$M_i(A.B)$};
    \arr[fibb] (MiAB) to (MiA);
    \arr (MiPiA) to node {$M_i\ev_{A,B}$} (MiAB);


    \node (MiB) at (3,1,-1) {$M_iB$};
    \arr[fibb] (MiB) to (GAi);
    \arr (MiB) to (MiAB);
    \arr (MiPiAi) to node {$f$} (MiB);
    \draw let
      \p1 = (MiB), 
      \p2 = (MiAB), 
      \p3 = (GAi), 
      \p4 = (MiA), 
      \p5 = ($(\p1)!0.2!(\p2)$),
      \p6 = ($(\p1)!0.2!(\p3)$),
      \p7 = ($(\p5)+(\p6)-(\p1)$)
      in
      ($(\p7)!1ex!(\p5)$) -- (\p7) -- ($(\p7)!1ex!(\p6)$);
    
    \arr[crossing over] (MiPiAi) to (GAi);

  \end{tikzpicture}
  \phantom{{} = M_iA.A_i} \]
  Pulling $B_i$ back along $f$ now yields the iterated context extension
 \[ M_i\Pi[A,B] . A_{\yon{i}} . f^* B_i 
 \fibto M_i\Pi[A,B] . A_{\yon{i}}
 \fibbto M_i \Pi[A,B]. \]
  whose iterated $\Pi$-type, $\Pi[A_{\yon{i}}, f^* B_i] \in \Ty (M_i\Pi[A,B])$, we take as $\Pi[A,B]_i$.
  The evaluation map of this iterated $\Pi$-type is then a map 
  \[ \ev_{A_{\yon{i},f^*B_i}} \colon M_i\Pi[A,B] . \Pi[A_{\yon{i}}, f^* B_i] . A_{\yon{i}} \to M_i\Pi[A,B].A_{\yon{i}}.f^*B_i ; \]
  but $M_i\Pi[A,B] . \Pi[A_{\yon{i}}, f^* B_i] . A_{\yon{i}} = (\Gamma.\Pi[A,B].A)_i$, so the composite \arxivorsubmission{map}{morphism} $(f.B_i)\ev_{A_{\yon{i},f^*B_i}}$ is as required for $(\ev_{A,B})_i$.
  Summarising all this diagrammatically, we have:
  \[ \begin{maxwidth}
  \phantom{}
  \begin{tikzpicture}[cd-style,x={(2cm,0cm)},y={(0cm,2cm)},z={(2.5cm,-0.8cm)}]
    
    \node(MiPi) at (0,0) {$M_i\Pi[A,B]$};
    \node(Gi) at (3,0) {$\Gamma_i$};
    \arr[fibb] (MiPi) to (Gi);
    \node(MiPiA) at (0,1) {$M_i(\Pi[A,B].A)$};
    \arr[fibb] (MiPiA) to (MiPi);
    \node(MiA) at (3,1) {$M_iA \mathrlap{{} = \Gamma_i.A_{\del i}}$};
    \arr[fibb] (MiA) to (Gi);
    \arr (MiPiA) to (MiA);
    \node(MiPiAi) at (0,2) {$M_i\Pi[A,B]. A_{\yon{i}}$};
    \arr[fib] (MiPiAi) to (MiPiA);
    \node(GAi) at (3,2) {$(\Gamma.A)_i \mathrlap{{} = M_iA.A_i}$};
    \arr[fib] (GAi) to (MiA);
    \draw let
      \p1 = (MiPiA), \p2 = (MiPi), \p3 = (MiA), \p4 = (Gi), 
      \p5 = ($(\p1)!3ex!(\p2)$), \p6 = ($(\p1)!3.5ex!(\p3)$), \p7 = ($(\p5)+(\p6)-(\p1)$)
      in ($(\p7)!1ex!(\p5)$) -- (\p7) -- ($(\p7)!1ex!(\p6)$);
    \draw let
      \p1 = (MiPiAi), \p2 = (MiPiA), \p3 = (GAi), \p4 = (MiA), 
      \p5 = ($(\p1)!3ex!(\p2)$), \p6 = ($(\p1)!3.5ex!(\p3)$), \p7 = ($(\p5)+(\p6)-(\p1)$)
      in ($(\p7)!1ex!(\p5)$) -- (\p7) -- ($(\p7)!1ex!(\p6)$);

    \node (MiAB) at (3,1,0.-1) {$M_i(A.B)$};
    \arr[fibb] (MiAB) to (MiA);
    \arr (MiPiA) to node {$M_i \ev_{A,B}$} (MiAB);


    \node (MiB) at (3,2,-1) {$M_iB$};
    \arr[fibb] (MiB) to (GAi);
    \arr (MiB) to (MiAB);
    \arr (MiPiAi) to node {$f$} (MiB);
    \draw let
      \p1 = (MiB), \p2 = (MiAB), \p3 = (GAi), \p4 = (MiA), 
      \p5 = ($(\p1)!2.5ex!(\p2)$), \p6 = ($(\p1)!3ex!(\p3)$), \p7 = ($(\p5)+(\p6)-(\p1)$)
      in ($(\p7)!1ex!(\p5)$) -- (\p7) -- ($(\p7)!1ex!(\p6)$);
    
    \arr[crossing over] (MiPiAi) to (GAi);

    \node(MiPiAiB) at (0,3) {$M_i\Pi[A,B]. A_{\yon{i}}.f^*B_i$};
    \arr[fib] (MiPiAiB) to (MiPiAi);
    \node(GABi) at (3,3,-1) {$(\Gamma.A.B)_i \mathrlap{{} = M_i(B).B_i}$};
    \arr[fib] (GABi) to (MiB);
    \arr (MiPiAiB) to node[auto=right] {$f.B_i$} (GABi);
    \draw let
      \p1 = (MiPiAiB), \p2 = (MiPiAi), \p3 = (GABi), \p4 = (MiB), 
      \p5 = ($(\p1)!3.7ex!(\p2)$), \p6 = ($(\p1)!3.6ex!(\p3)$), \p7 = ($(\p5)+(\p6)-(\p1)$)
      in ($(\p7)!1ex!(\p5)$) -- (\p7) -- ($(\p7)!1ex!(\p6)$);

    \node (GPii) at (-3.75,1,1.25) {$ M_i\Pi[A,B] . \Pi[A_{\yon{i}}, f^* B_i]$};
    \arr[fib] (GPii) to (MiPi);

    \node (GPiAi) at (-3.75,3,1.25) {$ M_i\Pi[A,B] . \Pi[A_{\yon{i}}, f^* B_i] . A_{\yon{i}}$};
    \arr[fibb] (GPiAi) to (GPii);
    \arr (GPiAi) to (MiPiAi);
    \arr (GPiAi) to node[auto=right] {$\ev_{A_{\yon{i},f^*B_i}}$} (MiPiAiB);
    \arr[bend left=20] (GPiAi) to node[anchor=center,fill=white] {$(\ev_{A,B})_i$} (GABi);
    \draw let
      \p1 = (GPiAi), \p2 = (GPii), \p3 = (MiPiAi), \p4 = (MiPiA), 
      \p5 = ($(\p1)!2.5ex!(\p2)$), \p6 = ($(\p1)!3ex!(\p3)$), \p7 = ($(\p5)+(\p6)-(\p1)$)
      in ($(\p7)!1ex!(\p5)$) -- (\p7) -- ($(\p7)!1ex!(\p6)$);
  \end{tikzpicture}
  $\phantom{{} = M_iA.A_i}$ \end{maxwidth} \]
  Here we see also that $(\ev_{A,B})_i$ lies over $(\Gamma.A)_i$, as required.

  This completes the construction of $\Pi[A,B]$ and $\ev_{A,B}$.
  It remains to construct $\lambda$.
  So, fix some $\Gamma$, $A$, $B$ as above, along with a section $b \colon \Gamma.A \to \Gamma.A.B$.
  We need to construct a section $\lambda b \colon \Gamma \to \Gamma.\Pi[A,B]$, such that $\ev_{A,B}(\lambda b.A) = b$.

As usual, we work by induction: we assume that $\lambda b$ is defined on $\I_{<i}$, satisfying the desired equation, and wish to construct $(\lambda b)_i$.
In particular, it suffices to construct a section of $(M_i (\lambda b))^* \Pi[A,B]_i$, where $M_i (\lambda b) \colon \Gamma \to M_i (\Pi[A,B])$.

But by the construction of $\Pi[A,B]_i$ as an iterated $\Pi$-type (and stability of that under pullback), it suffices to construct a section $s$ as in the following diagram; $\lambda s$ then gives the desired section of $(M_i (\lambda b))^* \Pi[A,B]_i$:

\vspace{-\baselineskip}  
\[
  \begin{maxwidthtikzpicture}[cd-style,x={(2cm,0cm)},y={(0cm,2cm)},z={(2.5cm,-0.8cm)}]
    
    \node(MiPi) at (0,0) {$M_i\Pi[A,B]$};
    \node(Gi) at (3,0) {$\Gamma_i$};
    \arr[fib] (MiPi) to (Gi);
    \node(MiPiA) at (0,1) {$M_i(\Pi[A,B].A)$};
    \arr[fibb] (MiPiA) to (MiPi);
    \node(MiA) at (3,1) {$M_iA$};
    \arr[fibb] (MiA) to (Gi);
    \arr (MiPiA) to (MiA);
    \node(MiPiAi) at (0,2) {$M_i\Pi[A,B]. A_{\yon{i}}$};
    \node(GAi) at (3,2) {$(\Gamma.A)_i$};
    \arr[fib] (GAi) to (MiA);

    \node (MiAB) at (3,1,-1) {$M_i(A.B)$};
    \arr[fibb] (MiAB) to (MiA);
    \arr (MiPiA) to node {$M_i \ev_{A,B}$} (MiAB);


    \node (MiB) at (3,2,-1) {$M_iB$};
    \arr[fibb] (MiB) to (GAi);
    \arr (MiB) to (MiAB);
    \arr (MiPiAi) to node {$f$} (MiB);
    
    \arr[crossing over] (MiPiAi) to (GAi);

    \node(MiPiAiB) at (0,3) {$M_i\Pi[A,B]. A_{\yon{i}}.f^*B_i$};
    \node(GABi) at (3,3,-1) {$(\Gamma.A.B)_i$};
    \arr[fib] (GABi) to (MiB);
    \arr (MiPiAiB) to (GABi);

    \node(GiL) at (-3,0) {$\Gamma_i$};
    \arr (GiL) to node {$M_i(\lambda b)$} (MiPi);
    \arr[bend right = 5] (GiL.-30) to node[auto=right] {$\id$} (Gi.210);
    \node(MiAL) at (-3,1) {$M_iA$};
    \arr[fibb] (MiAL) to (GiL);
    \arr (MiAL) to node {$M_i((\lambda b).A)$} (MiPiA);
    \node(GAiL) at (-3,2) {$(\Gamma.A)_i$};
    \arr[fib] (GAiL) to (MiAL);
    \arr (GAiL) to node {$M_i(\lambda b).A_{\yon{i}}$} (MiPiAi);
    \node(GAiBL) at (-3,3) {$(\Gamma.A)_i . (M_i(\lambda b).A_{\yon{i}})^* f^* B_i$};
    \arr[fib] (GAiBL) to (GAiL);
    \arr (GAiBL) to (MiPiAiB);

    \arr[dashed, bend left] (GAiL) to node {$s$} (GAiBL);
    \arr[dashed, bend left, out = 5, in = 210] (GAiL) to node {$b_i$} (GABi);
    \arr[bend left = 10] (MiAL) to node {$M_ib$} (MiAB);

    \arr[fib, crossing over] (MiPiAi) to (MiPiA);
    \arr[fib, crossing over] (MiPiAiB) to (MiPiAi);
  \end{maxwidthtikzpicture} \]

  Such a section $s$ corresponds by pullback to a map $(\Gamma.A)_i \to (\Gamma.A.B)_i$ over $M_i(B)$.
  The evident candidate is $b_i$; so we just need to show that $b_i$ commutes over $M_i B$.
  For this, it suffices to check commutativity into $(\Gamma.A)_i$ and $M_i(A.B)$.
  But the former of these is clear, and the latter follows from the equation $(M_i\ev_{A,B})(M_i (\lambda b)) = M_i b$, which we know by induction.

  This gives the desired $(\lambda b)_i$; its computation rule $(\lambda b . A)_i (\ev_{A,B})_i = b_i$ then follows directly from the computation rule of $\Pi[A,B]_i$ as an iterated $\Pi$-type.

  All these constructions are stable under reindexing, and so assemble into a $\Pi$-structure on $\C^\I$; and it is routine to check that this satisfies the $\Pi$-$\eta$-rule if the original one on $\C$ did, and is functorial in $\C$ and $\I$.
\end{proof}

In fact, this construction provides a little more than just $\Pi$-types of Reedy types.

\begin{proposition} \label{prop:levelwise-to-reedy-pi-types}
  Suppose $\C$ is equipped with $\Pi$-type structure.
  Then for any $\Gamma \in \C^\I$, any \emph{levelwise extension} $\AA$ of $\Gamma$, and any Reedy type $B$ over $\Gamma.\AA$, there is a Reedy type $\Pi[\AA, B]$ over $\Gamma$, together with a map $\ev_{\AA,B} \colon \Gamma.\Pi[\AA, B].\AA \to \Gamma.\AA.B$ over $\AA$ and operation $\lambda_{\AA,B}$ as in the definition of $\Pi$-type structure.
  
    Moreover, this construction is stable in the base $\Gamma$, and functorial in $\C$ and $\I$.
\end{proposition}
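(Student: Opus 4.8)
The plan is to re-run the construction from the proof of Proposition~\ref{prop:reedy-pi-types} almost verbatim, now with $\AA$ a levelwise context extension in place of a single Reedy type; the idea is unchanged, and only one genuinely new point arises. That point is the following: although $\AA$ is merely levelwise and not a Reedy type, the map $\pi_{\AA} : \Gamma.\AA \to \Gamma$ in $\C^\I$ --- and, for $B$ a Reedy type over $\Gamma.\AA$, also the composite $\Gamma.\AA.B \to \Gamma$ --- still admits relative matching objects at each $i \in \I$, and these are the total objects of context extensions. I would verify this by adapting the proof of the master lemma (Lemma~\ref{lem:master-lemma}): one takes $M_i \AA \coloneqq \Gamma_i . \AA_{\bdry{i}}$ with $\AA_{\bdry{i}} \in \Ty^*(\Gamma_i)$ built by the same iterated-pullback recipe, except that the factors now come from the types occurring in the $\AA_j$ for $j \prec i$, reindexed along the structure maps of $\Gamma$ and $\AA$. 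The master lemma itself does not apply, since $\AA$ is not a weak Reedy type, but its mechanism --- finite iterated pullbacks of dependent projections --- is available in any CwA. Like all relative matching objects, these are stable and functorial in the usual senses and are preserved by precomposition along ordered discrete opfibrations (the argument of Proposition~\ref{prop:discrete-opfib-matching-object} uses only the limit property); and the Reedy type $B$ over the diagram $\Gamma.\AA$ has matching objects in the ordinary sense via the master lemma, with $(\Gamma.\AA)_i = \Gamma_i.\AA_i$, so that $M_i B = \Gamma_i.\AA_i.B_{\bdry{i}}$ for some context extension $B_{\bdry{i}}$.

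Granting this, I would carry out the Reedy induction on $i \in \I$ exactly as in Proposition~\ref{prop:reedy-pi-types}. Assuming $\Pi[\AA,B]$, $\ev_{\AA,B}$, and $\lambda_{\AA,B}$ defined over $\I_{<i}$, one forms $M_i\Pi[\AA,B]$, the matching-level evaluation $M_i\ev_{\AA,B}$, and its ``weakening in the middle'' $f$: a map out of $M_i\Pi[\AA,B]$ extended by a context extension, into $M_i B$, where the relevant extension is now simply $\AA_i$ (suitably reindexed), since a levelwise extension contributes all of its level-$i$ data directly, with nothing ``underneath'' it. One then sets $\Pi[\AA,B]_i \coloneqq \Pi[\AA_i, f^*B_i] \in \Ty(M_i\Pi[\AA,B])$, using the iterated $\Pi$-type operation of Construction~\ref{prop:iterated-pi} (a $\Pi$-type of a context extension against the single type $B_i$), and takes $(\ev_{\AA,B})_i$ to be the evaluation map of this iterated $\Pi$-type, post-composed with $f.B_i$ so as to land in $M_i B . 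B_i = (\Gamma.\AA.B)_i$. This has exactly the shape of $(\ev_{A,B})_i$ from Proposition~\ref{prop:reedy-pi-types}, with $\AA_i$ in the role of $A_{\yon{i}}$ there, so the identifications recognising the domain as $(\Gamma.\Pi[\AA,B].\AA)_i$ go through as before. The operation $\lambda_{\AA,B}$, together with the computation rule, is produced by the same parallel induction: at level $i$ one abstracts the section of $\Pi[\AA_i, f^*B_i]$ corresponding by pullback to $b_i$, using only the inductive compatibility of $b$ with $\ev_{\AA,B}$ below degree $i$.

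The part I expect to be most delicate is the bookkeeping around matching objects: because a levelwise extension contributes $\AA_i$ directly to $(\Gamma.\AA)_i$, rather than through the strict decomposition $(\Gamma.A)_i = M_i A . A_i$ that Reedy types satisfy, several intermediate objects --- $M_i\AA$, $M_i B$, and the source and target of $f$ --- take a somewhat different shape than in Proposition~\ref{prop:reedy-pi-types}, and one has to check that they still fit together correctly, although no genuinely new idea is involved. Everything else --- the computation rule, stability under reindexing in $\Gamma$, and functoriality in $\C$ along $\Pi$-type-preserving maps and in $\I$ along ordered discrete opfibrations --- then follows as in Proposition~\ref{prop:reedy-pi-types}, since every ingredient used (relative matching objects, iterated $\Pi$-types, and pullbacks of types) is itself stable and functorial in those senses.
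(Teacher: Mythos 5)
There is a genuine gap at the foundation of your argument: the claim that the relative matching objects $M_i\AA$ (and hence $M_i(\AA.B)$, $M_i(\Pi[\AA,B].\AA)$) exist as context extensions when $\AA$ is merely levelwise is false in general, and the paper's proof explicitly routes around exactly this point. The iterated-pullback recipe of the master lemma works for Reedy types precisely because each $A_j$ is a single type over $M_jA$: attaching the cell for a non-identity $\alpha : i \to j$ then automatically enforces the cone-compatibility condition relating the component at $\alpha$ to the already-chosen components at $\beta\alpha$, for non-identity $\beta$ out of $j$. For a levelwise extension, $\AA_j$ lives over $\Gamma_j$ only, so pulling it back along $\Gamma_\alpha$ yields all elements of $\AA_j$ over the right point of $\Gamma_j$, with no constraint tying them to the data already chosen at the levels below $j$; the object your recipe produces therefore fails the universal property of the matching object as soon as $i$ admits composable non-identity arrows, and the true matching object is an equalizer-type limit that a CwA need not possess at all. (The recipe does happen to work over $\SpanCat$, where no non-identity arrows compose, which may be why it looks plausible.) Consequently $M_i\ev_{\AA,B}$ is not available, and $f$ cannot be obtained as its ``weakening in the middle''.

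The repair is what the paper does: observe that $M_iB$ \emph{does} exist --- it is the matching object of the Reedy type $B$ over the diagram $\Gamma.\AA$, supplied by the master lemma as a context extension of $(\Gamma.\AA)_i = \Gamma_i.\AA_i$ --- and define the required map $f : M_i\Pi[\AA,B].\AA_i \to M_iB$ directly, by specifying its components according to the universal property of $M_iB$ as a limit, using the already-constructed components $(\ev_{\AA,B})_j$ for $j \prec i$. Your final formulas --- $\Pi[\AA,B]_i = \Pi[\AA_i, f^*B_i]$, with $(\ev_{\AA,B})_i$ the evaluation map of this iterated $\Pi$-type, and the parallel induction for $\lambda$ --- are the right ones once $f$ is obtained this way; it is only your intermediate construction of $f$ via nonexistent matching objects that must be replaced.
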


\begin{proof}
  The construction is exactly as for Proposition~\ref{prop:reedy-pi-types}.
  The only uses made of $A$ being a Reedy type were the invocations of matching objects $M_i A$, $M_i (A.B)$ in, for instance, the definition of the map $f \colon M_i \Pi[A,B].A_{\yon i} \to M_i B$ as a pullback of $M_i \ev_{A,B} \colon M_i(\Pi[A,B].A) \to M_i(A.B)$.
  For the general case of a levelwise extension $\AA$, we cannot use $M_i \ev_{A,B}$ since the matching objects it goes between do not exist.
  However, the required map $f \colon M_i \Pi[A,B].\AA_i \allowbreak \to M_i B$ can still be defined straightforwardly (albeit less concisely), by explicitly specifying its components according to the universal property of $M_i B$ as a limit; and similarly for the other steps that make use of Reediness of $A$.
\end{proof}

\begin{proposition}
  Suppose the $\Pi$-type structure of $\C$ is extensional, i.e.\ extends to $\Piext$-structure in the sense of Definition \ref{def:funext-structure}.
  Then so is the induced $\Pi$-type structure on $\C^\I$, functorially in $\C$ and $\I$.
\end{proposition}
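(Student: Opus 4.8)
The plan is to reduce, via Voevodsky's reformulation of function extensionality (Lemma~\ref{lem:funext-iff-r-equiv}), to a statement about equivalence data on the comparison components of a single family of maps, and then to identify those comparison components with iterated-$\Pi$-type funext maps in $\C$.

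First note that $\C^\I$ already carries $\Id$-type structure (Proposition~\ref{prop:reedy-id-types}) and $\Pi_\eta$-structure (Proposition~\ref{prop:reedy-pi-types}, using that $\Piext$-structure on $\C$ includes $\Pi_\eta$), so Lemma~\ref{lem:funext-iff-r-equiv} applies to $\C^\I$. By that lemma, to equip $\C^\I$ with $\Piext$-structure, functorially in $\C$ and $\I$, it suffices to equip, for each $\Gamma \in \C^\I$ and suitable Reedy types $A$, $B$, the map
\[ w_{A,B} \coloneqq \lambda_{A, B.B.\Id_B}(\refl_B \ev_{A,B}) : \Gamma.\Pi[A,B] \to \Gamma.\Pi[A, B.B.\Id_B] \]
with equivalence data in $\C^\I$, stably in $\Gamma$ and functorially in $\C$ under maps preserving $\Id$- and $\Pi_\eta$-structure (in particular, functorially under funext-preserving CwA maps). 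Since $\Pi[A,B]$ and $\Pi[A,B.B.\Id_B]$ are Reedy types over $\Gamma$, the map $w_{A,B}$ is a map between (length-one) context extensions over $\Gamma$; so by Lemma~\ref{lem:equivalences-are-levelwise}(1) together with Lemma~\ref{lem:levelwise-equivalence-iff-reedy}, equipping it with equivalence data in $\C^\I(\Gamma)$ is equivalent to equipping it with \emph{Reedy} equivalence data, i.e.\ equivalence data on each comparison component $m_i w_{A,B}$ (Definitions~\ref{def:comparison-component}, \ref{def:reedy-equivalence}), and these conversions are stable in $\Gamma$ and functorial in $\C$, $\I$. So it remains to equip each $m_i w_{A,B}$ with equivalence data, compatibly.

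Here we use the explicit form of the earlier constructions. From the proof of Proposition~\ref{prop:reedy-pi-types}, $\Pi[A,B]_i$ is the iterated $\Pi$-type $\Pi[A_{\yon i}, f^* B_i]$ over $M_i\Pi[A,B]$ (for the ``weakening in the middle'' map $f$ defined there), with $(\ev_{A,B})_i$ the corresponding iterated evaluation map; and from Proposition~\ref{prop:reedy-id-types}, $(\Id_B)_i$ is a descent of $\Id_{B_i}$. Unwinding these (and using the analogous description of $\Pi[A, B.B.\Id_B]_i$), the comparison component $m_i w_{A,B}$ agrees, up to a canonical isomorphism over $M_i\Pi[A,B]$ assembled from context-reordering and descent isomorphisms, with the map $\lambda_{\AA, \BB.\BB.\Id_\BB}(\refl_\BB \ev_{\AA,\BB})$ of Lemma~\ref{lem:funext-iff-r-equiv} for the context extensions $\AA \coloneqq A_{\yon i}$ and $\BB \coloneqq f^* B_i$ in the CwA $(\C, \Ty^*)$ of context extensions. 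Now $(\C, \Ty^*)$ carries function extensionality structure (Lemma~\ref{lem:iterated-pi-extensionality}); applying Lemma~\ref{lem:funext-iff-r-equiv} \emph{within} $(\C,\Ty^*)$ equips this map with equivalence data, which we transport back along the isomorphism — isomorphisms canonically carry equivalence data, and equivalence data is stable under composition — to obtain the required equivalence data on $m_i w_{A,B}$.

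Finally, every ingredient used — matching objects and the map $f$, the iterated-$\Pi$ and descended-$\Id$ components, the funext data on $(\C,\Ty^*)$, the identifying isomorphisms, and the data conversions of Lemmas~\ref{lem:levelwise-equivalence-iff-reedy} and~\ref{lem:equivalences-are-levelwise} — is stable under reindexing in the base and functorial in $\C$, $\I$ (for CwA maps preserving the relevant structure). Assembling these, the resulting equivalence data on $w_{A,B}$ is stable in $\Gamma$ and suitably functorial, so by Lemma~\ref{lem:funext-iff-r-equiv} it determines function extensionality structure on $\C^\I$, functorially in $\C$ and $\I$. I expect the main obstacle to be the third paragraph: identifying $m_i w_{A,B}$ with the iterated-$\Pi$ funext map requires carefully tracking the descent isomorphisms relating $(\Id_B)_i$ to $\Id_{B_i}$ and the reindexings along $f$ through the definitions of $\lambda$ and $\ev$; everything else is assembly of cited results, and this delicate step could alternatively be circumvented by re-running the Reedy induction of Proposition~\ref{prop:reedy-pi-types} with function extensionality for iterated $\Pi$-types (Lemma~\ref{lem:iterated-pi-extensionality}) in place of bare $\Pi$-types.
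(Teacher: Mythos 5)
Your proposal is correct and follows essentially the same route as the paper's proof: reduce via Lemma~\ref{lem:funext-iff-r-equiv} to equivalence data on the map $R = \lambda(\refl_B\,\ev_{A,B})$, pass to the comparison components $m_i R$ via Lemmas~\ref{lem:levelwise-equivalence-iff-reedy} and~\ref{lem:equivalence-iff-levelwise}, and identify these with iterated-$\Pi$ constructions over $M_i\Pi[A,B]$ by unwinding the Reedy $\Pi$- and $\Id$-type constructions. The only, immaterial, difference is the packaging of the last step: the paper identifies $m_i R$ as post-composition with (a matching-level) $\refl_B$ under $\Pi[A_{\yon{i}},-]$ and invokes the covariant action of iterated $\Pi$-types on that equivalence, whereas you identify it with the Voevodsky map for the lifted structure on $(\C,\Ty^*)$ and invoke Lemmas~\ref{lem:iterated-pi-extensionality} and~\ref{lem:funext-iff-r-equiv} directly --- the same bookkeeping of descent and reindexing identifications is required either way, and you correctly flag it as the delicate point.
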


\begin{proof}
  Recall from Lemma~\ref{lem:funext-iff-r-equiv} that a $\Pi$-type structure is extensional just if there is an operation giving, for each suitable $\Gamma$, $A$, $B$, structured equivalence data over $\Gamma$ on the map
  \[ R \coloneqq \lambda_{A,B.B.\Id_B}(\refl_B \ev_{A,B}) \colon \Gamma . \Pi[A,B] \to \Gamma . \Pi[A,B.B.\Id_B], \] 
  stably in $\Gamma$.
  So, assume $\C$ is equipped with such an operation; we will then construct it on $\C^\I$.

  Given $\Gamma$, $A$, $B$ in $\C^\I$, we want equivalence data in $\C^\I$ on the map $R$ defined above; it suffices by Lemmas~\ref{lem:levelwise-equivalence-iff-reedy} and \ref{lem:equivalence-iff-levelwise} to construct equivalence data in $\C$ on the comparison map $m_i R$:
  \[\begin{maxwidthtikzcd}
  (\Gamma . \Pi[A,B])_i \ar[r,"m_i R"] \ar[rd, fib] & M_i \Pi[A,B] . (M_i R)^* \Pi[A,B.B.\Id_B]_i  \ar[r] \ar[d,fib] \ar[drpb] & (\Gamma . \Pi[A,B.B.\Id_B])_i \ar[d,fib]   \\
   &  M_i \Pi[A,B] \ar[r, "M_i R"] & M_i \Pi[A,B.B.\Id_B].
  \end{maxwidthtikzcd}\]  
 
  Inspecting the construction of the $\Pi$-types involved, we see that we have an equality of context extensions
$\Pi[A,B.B.\Id_B]_i = \Pi[A_{\yon{i}},g^*(B.B.\Id_B)_i]$
over $M_i \Pi[A,B.B.\Id_B]$, where $g \colon M_i \Pi[A,B.B.\Id_B] . A_{\yon{i}} \to M_i (B.B.\Id_B)$ is analogous to the map $f$ in the construction of $\Pi$-types.
  From this,
$ (M_i R)^* \Pi[A,B.B.\Id_B]_i = \Pi[A_{\yon{i}},(M_i R. A_{\yon{i}})^*g^*(B.B.\Id_B)_i]$ as context extensions over $M_i \Pi[A,B]$,
since by weakening $(M_i R)^* A_{\yon{i}} = A_{\yon{i}}$.
  But by the definition of $R$ as $\lambda (\refl_B \ev_{A,B})$, for $\refl_B \colon \Gamma.A.B \to \Gamma.A.B.B.\Id_B$, and by the construction of $g$ using $\ev_{A,B.B.\Id_B}$, we find that $g (M_i R . A_{\yon{i}}) = (\refl_B)_i f$, where $f \colon M_i \Pi[A,B] . A_{\yon{i}} \to M_i B$ is as in the construction of $\Pi$-types.
  So
  $ R_i^* \Pi[A,B.B.\Id_B]_i = \Pi[A_{\yon{i}},f^*\refl_B^*(B.B.\Id_B)_i]$
  as context extensions over $M_i \Pi[A,B]$,
  and the comparison map $m_i R$ corresponds to the map $M_i \Pi[A,B] . \Pi[A_{\yon{i}},f^*B_i]$ $\to M_i \Pi[A,B] . \Pi[A_{\yon{i}},f^*\refl_B^*(B.B.\Id_B)_i]$ induced by composition with $\refl_B$.

  But $\refl_B$ canonically carries equivalence data, and by extensionality of the $\Pi$-type structure in $\C$, this gives equivalence data on the induced map of $\Pi$-types.
  So each $m_i R$ carries equivalence data, and every step was stable in the base $\Gamma$ and functorial in $\C$ and $\I$; so we are done.
\end{proof}

Summarising the results of this section, we have:

\begin{proposition} \label{prop:logical-structure-summary}
  Let $\C$ be a CwA, and $\I$ an ordered inverse category.
  If $\C$ carries $\Id$-, $\Sigma$-,  unit, $\Pi$-, $\Pi_\eta$, or $\Pi_\ext$-types, then so does $\C^\I$.
  Moreover, all this logical structure is preserved by the functorial action under ordered discrete opfibrations in $\I$, and under CwA maps preserving the given logical structure on $\C$. \qed
\end{proposition}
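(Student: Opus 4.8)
The plan is that there is essentially nothing to do here beyond assembling the individual results proved above. I would proceed structure by structure: unit types are handled by Proposition~\ref{prop:reedy-unit-types}, identity types by Proposition~\ref{prop:reedy-id-types}, $\Sigma$-types by the corresponding proposition in Section~\ref{sec:logical-structure}, plain $\Pi$-types and $\Pi_\eta$-types by Proposition~\ref{prop:reedy-pi-types}, and extensional $\Pi$-types (function extensionality) by the final proposition of that section. In each case the cited result also records functoriality in $\C$ — with respect to CwA maps preserving whatever structure is under consideration — and in $\I$ — with respect to ordered discrete opfibrations — which is exactly the content of the ``moreover'' clause. So the proof consists of citing these propositions.

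The only mild coherence point worth spelling out is that these functorial actions agree across the several structures: for instance, the $\Id$- and $\Pi_\eta$-structure underlying a $\Pi_\ext$-structure on $\C$ induce on $\C^\I$ precisely the $\Id$- and $\Pi_\eta$-structure underlying the derived $\Pi_\ext$-structure, and a CwA map of $\C$ preserving $\Pi_\ext$-structure thereby preserves all of the induced structures on $\C^\I$ simultaneously. This is immediate from the observation that each induced structure on $\C^\I$ is built by the same levelwise recipe out of the corresponding structure on $\C$, and that the functorial action $F^\I$ commutes with those recipes; likewise the action of an ordered discrete opfibration $u : \J \to \I$ commutes with the levelwise constructions because it preserves matching objects (Proposition~\ref{prop:discrete-opfib-matching-object}) and the relevant Reedy limits (Lemma~\ref{lem:first-properties-of-reedy-limits}). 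Hence there is no genuine obstacle: the statement reduces to a reference to the preceding propositions, together with the remark that their functoriality clauses are mutually compatible in the evident way.
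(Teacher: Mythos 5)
Your proposal matches the paper exactly: the proposition is stated with an immediate \qed as a summary of Propositions~\ref{prop:reedy-unit-types}, \ref{prop:reedy-id-types}, the $\Sigma$-type proposition, Proposition~\ref{prop:reedy-pi-types}, and the extensionality proposition, each of which already carries the relevant functoriality clause. Your additional remark on the mutual compatibility of the functorial actions is a reasonable (if unstated in the paper) point of care, but nothing further is needed.
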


\begin{example}
  Note that for functoriality in $\I$, the restriction to \emph{ordered} discrete opfibrations is really unavoidable here.
  A non-ordered discrete fibration $u \colon \J \to \I$ will still induce a \emph{pseudo}-map of CwA’s $\C^\I \to \C^\J$: acting (strictly) naturally on types, and preserving context extension up to coherent isomorphism.
  However, that pseudo-map will not typically preserve the logical structure constructed in this section.

  For instance, consider the “symmetry” automorphism $\sigma \colon \SpanCat \to \SpanCat$, switching $0$ and $1$.
  For any CwA $\C$, this induces a symmetry pseudo-map $\sigma^* \colon \Span[\C] \to \Span[\C]$; but it is straightforward to check that this will not strictly preserve $\Pi$-types in general.
\end{example}


\section{Homotopical diagrams} \label{sec:homotopical-diagrams}

\begin{definition}[cf.~{\cite[\S 1.5.1]{dhks}}]
  A \defemph{homotopical category} is a category $\C$ together with a distinguished subclass $\W$ of morphisms, containing all identities and closed under 2 out of 6.
  The morphisms in $\W$ will be called \defemph{equivalences}.

  A functor $F \colon (\C,\W) \to (\C',\W')$ is called \defemph{homotopical} if it preserves equivalences.

  Given a homotopical category $\C = (\C,\W)$, write $\disc{\C}$ for $\C$ considered as an ordinary category (i.e.\ with the equivalences forgotten).
\end{definition}

In any CwA with identity types, we have the class of equivalences given by Definition~\ref{def:equivalence-in-cwa}.
Given a homotopical inverse category $\I = (\I,\W)$, we can therefore consider \emph{homotopical} diagrams on $\I$ in $\C$.

The main goal of this section is to show that these form a sub-CwA $\C^\I$ of the ordinary diagram CwA $\C^{\disc{\I}}$ already constructed, and moreover that under reasonable assumptions, $\C^\I$ is closed under much of the logical structure on $\C^{\disc{\I}}$.

\subsection{Homotopical diagrams} \label{sec:cwa-of-homotopical-diagrams}

For this section, fix a CwA $\C$ with $\Id$-types.

\begin{definition}
  Let $\I$ be an ordered homotopical inverse category.
  
  A levelwise extension $\AA$ of a diagram $\Gamma$ in $\C^{\disc{\I}}$ is \defemph{homotopical} if for each equivalence $\alpha \colon i \to j$ in $\I$, the comparison map $\Gamma_i.\AA_i \to \Gamma_i.\Gamma_\alpha^* \AA_j$ is an equivalence.
  A (Reedy) type in $\C^{\disc{\I}}$ is \defemph{homotopical} if it is so when viewed as a levelwise extension.
\end{definition}

\begin{proposition}
  For any ordered homotopical inverse category $\I$, the homotopical types and levelwise extensions in $\C^{\disc{\I}}$ are closed under reindexing. 
\end{proposition}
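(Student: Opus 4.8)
The plan is to reduce the statement to the case of \emph{levelwise} extensions, and there to identify the comparison map of a reindexed extension with a reindexing (in a fibrant slice) of the original comparison map, so that closure follows from the already-recorded stability of equivalences under reindexing between fibrant slices.

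\textbf{Reduction to levelwise extensions.} Recall there is a \emph{strict} CwA map $U \colon \C^{\disc{\I}} \to \C^{\disc{\I}}_\lw$ acting as the identity on base categories. Being strict, $U$ commutes with reindexing; since it also fixes the base, $U(f^*A) = f^*(UA)$ for any map $f \colon \Gamma' \to \Gamma$ and Reedy type $A$ over $\Gamma$. As a Reedy type is by definition homotopical precisely when its underlying levelwise extension $UA$ is, the Reedy case follows from the levelwise one. So from now on I would work with a homotopical levelwise extension $\AA$ of $\Gamma \in \C^{\disc{\I}}$ and a map $f \colon \Gamma' \to \Gamma$, aiming to show $f^*\AA$ is homotopical.

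\textbf{Identifying the comparison map.} For an arrow $\alpha \colon i \to j$ of $\I$, write $c^\AA_\alpha \colon \Gamma_i.\AA_i \to \Gamma_i.\Gamma_\alpha^*\AA_j$ for the comparison map over $\Gamma_i$, i.e.\ the map in the fibrant slice $\C(\Gamma_i)$ corresponding, under the pullback property of $\Gamma_i.\Gamma_\alpha^*\AA_j$, to the diagram component $(\Gamma.\AA)_\alpha \colon \Gamma_i.\AA_i \to \Gamma_j.\AA_j$ over $\Gamma_\alpha$; homotopicity of $\AA$ is exactly the assertion that $c^\AA_\alpha$ is an equivalence in $\C(\Gamma_i)$ whenever $\alpha$ is an equivalence in $\I$. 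The key point is then the identity $c^{f^*\AA}_\alpha = f_i^*(c^\AA_\alpha)$ in $\C(\Gamma'_i)$. It is checked as follows: naturality of $f$ gives $\Gamma_\alpha f_i = f_j \Gamma'_\alpha$, hence $f_i^*\Gamma_\alpha^*\AA_j = {\Gamma'_\alpha}^* f_j^*\AA_j = {\Gamma'_\alpha}^*(f^*\AA)_j$; moreover the diagram component $(\Gamma'.f^*\AA)_\alpha$ is, by the definition of levelwise reindexing of diagrams (equivalently, by the uniqueness clause in the pullback universal property, using that $(f.\AA)_i = f_i.\AA_i$), the composite of $f_i^*(c^\AA_\alpha)$ with the canonical map $\Gamma'_i.{\Gamma'_\alpha}^*(f^*\AA)_j \to \Gamma'_j.(f^*\AA)_j$. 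Passing back through the pullback property of $\Gamma'_i.{\Gamma'_\alpha}^*(f^*\AA)_j$ exhibits $c^{f^*\AA}_\alpha$ as $f_i^*(c^\AA_\alpha)$.

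\textbf{Conclusion and main obstacle.} Given this identity, suppose $\alpha$ is an equivalence in $\I$. Then $c^\AA_\alpha$ is an equivalence in $\C(\Gamma_i)$, and since equivalences are stable under reindexing between fibrant slices, $c^{f^*\AA}_\alpha = f_i^*(c^\AA_\alpha)$ is an equivalence in $\C(\Gamma'_i)$; hence $f^*\AA$ is homotopical. The only step requiring genuine care is the identification $c^{f^*\AA}_\alpha = f_i^*(c^\AA_\alpha)$, which is a diagram chase with the pullback universal property showing that comparison components are computed compatibly with reindexing of the base diagram; everything else is immediate from the cited stability lemma and the strictness of the Reedy-to-levelwise CwA map.
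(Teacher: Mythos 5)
Your proof is correct and takes essentially the same route as the paper, whose entire proof is the one-liner ``immediate by the fact that equivalences between context extensions are closed under re-indexing in the base''; your argument simply spells out the reduction to levelwise extensions and the identification $c^{f^*\AA}_\alpha = f_i^*(c^\AA_\alpha)$ that the paper leaves implicit.
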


\begin{proof}
  Immediate by the fact that equivalences between context extensions are closed under re-indexing in the base.
\end{proof}

\begin{proposition} \label{prop:homotopical-types}
  A levelwise extension $\AA$ of a homotopical diagram $\Gamma$ is homotopical if and only if its comprehension is a homotopical diagram.
\end{proposition}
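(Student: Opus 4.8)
The plan is to reduce both implications to a single factorization of diagram maps together with the 2-out-of-3 property of equivalences. Fix an equivalence $\alpha : i \to j$ in $\I$. The diagram map $(\Gamma.\AA)_\alpha : \Gamma_i.\AA_i \to \Gamma_j.\AA_j$ lies over $\Gamma_\alpha : \Gamma_i \to \Gamma_j$, and the projection $\Gamma_j.\AA_j \to \Gamma_j$ is a composite of dependent projections; so by the universal property of the iterated pullback $\Gamma_i.\Gamma_\alpha^*\AA_j$, this map factors uniquely as
\[ \Gamma_i.\AA_i \xrightarrow{\ c_\alpha\ } \Gamma_i.\Gamma_\alpha^*\AA_j \xrightarrow{\ \Gamma_\alpha.\AA_j\ } \Gamma_j.\AA_j, \]
where $c_\alpha$ is precisely the comparison map appearing in the definition of a homotopical levelwise extension. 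Thus, running over all equivalences $\alpha$: $\AA$ is homotopical iff each $c_\alpha$ is an equivalence, while $\Gamma.\AA$ is a homotopical diagram iff each $(\Gamma.\AA)_\alpha$ is an equivalence.

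Next I would observe that the middle map $\Gamma_\alpha.\AA_j$ is always an equivalence. Indeed, $\Gamma$ is homotopical by hypothesis, so $\Gamma_\alpha$ is an equivalence in $\C$; then right properness of equivalences (the cited Prop.~4.9 of \cite{kapulkin-lumsdaine:homotopy-theory-of-type-theories}) says that reindexing an equivalence along a context extension again yields an equivalence, so $\Gamma_\alpha.\AA_j$ is one. Note $c_\alpha$, which is an equivalence over $\Gamma_i$ exactly when it is an equivalence in $\C$ (Prop.~4.6 of loc.\ cit.), so the two notions of equivalence may be used interchangeably here.

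Finally, apply the 2-out-of-3 property to the factorization $(\Gamma.\AA)_\alpha = (\Gamma_\alpha.\AA_j) \circ c_\alpha$: since $\Gamma_\alpha.\AA_j$ is always an equivalence, $(\Gamma.\AA)_\alpha$ is an equivalence if and only if $c_\alpha$ is. Quantifying over all equivalences $\alpha$ of $\I$ yields the claimed biconditional. I do not anticipate a serious obstacle; the only point needing a little care is checking that the comparison map of the definition of homotopical levelwise extension is genuinely the pullback-induced factor of $(\Gamma.\AA)_\alpha$, which is immediate from the way the diagram structure on the comprehension $\Gamma.\AA$ of a levelwise extension is set up.
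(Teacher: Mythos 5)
Your proof is correct and is essentially identical to the paper's: the paper likewise factors $(\Gamma.\AA)_\alpha$ through $\Gamma_i.\Gamma_\alpha^*\AA_j$ as the comparison map followed by $\Gamma_\alpha.\AA_j$, notes the latter is an equivalence by right properness since $\Gamma$ is homotopical, and concludes by 2-out-of-3.
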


\begin{proof}
  Immediate by right properness and 2 out of 3: if $\Gamma_\alpha$ is an equivalence, then the comparison map $\Gamma_i.\AA_i \to \Gamma_i.\Gamma_\alpha^* \AA_j$ is an equivalence if and only if $(\Gamma.\AA)_\alpha$ is.
  \[ \begin{verticalhack} \begin{tikzcd}
    \Gamma_i.\AA_i \ar[rr,bend left=20,"(\Gamma.\AA)_\alpha"] \ar[r] \ar[dr,fibb] & \Gamma_i.\Gamma_\alpha^*\AA_j \ar[r,weq,"\Gamma_\alpha.\AA_j"'] \ar[d,fibb] \ar[drpb] & \Gamma_j.\AA_j \ar[d,fibb] \\
    & \Gamma_i \ar[r,weq',"\Gamma_\alpha"] & \Gamma_j
  \end{tikzcd} \end{verticalhack} \qedhere \]
\end{proof}

\begin{definition} \label{def:cwa-of-homotopical-diagrams}
  Take $\C^\I$ (resp.\ $\C^\I_{\lw}$) to be the full sub-CwA of $\C^{\disc{\I}}$ (resp.\ $\C^{\disc{\I}}_{\lw}$) on homotopical diagrams and homotopical Reedy types (resp.\ homotopical levelwise extensions).
  The preceding propositions ensure that this indeed forms a sub-CwA.
\end{definition}

In the remainder of this paper, we will consider homotopical types only over homotopical diagrams; we will therefore make use of Proposition~\ref{prop:homotopical-types} without comment.

It is easy to see that this construction is functorial in suitable maps:
 
\begin{proposition} \leavevmode
  \begin{enumerate}
    \item For any homotopical functor $u \colon \J \to \I$ between homotopical categories, and CwA $\C$, the induced map $u^* \colon \C^{\disc{\I}}_{\lw} \to \C^{\disc{\J}}_{\lw}$ restricts to $u^* \colon \C^\I_{\lw} \to \C^\J_{\lw}$.
    \item For any ordered homotopical discrete opfibration $u \colon \J \to \I$ between ordered homotopical inverse categories, and CwA $\C$, the induced map $u^* \colon \C^{\disc{\I}} \to \C^{\disc{\J}}$ restricts to $u^* \colon \C^\I \to \C^\J$.
    \item For any ordered homotopical inverse category $\I$ and \emph{homotopical} map of CwA's $F \colon \C \to \D$, the induced map $F^\I \colon \C^{\disc{\I}} \to \D^{\disc{\I}}$ restricts to $F^\I \colon \C^\I \to \D^\I$. \qed
  \end{enumerate}
\end{proposition}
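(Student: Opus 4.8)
The plan is the same in all three cases. In each, the relevant functor — $u^*$ in (1) and (2), $F^\I$ in (3) — is already known from the earlier sections to be a well-defined CwA map on the \emph{non}-homotopical diagram CwA's; and by Definition~\ref{def:cwa-of-homotopical-diagrams} the homotopical diagram CwA's are \emph{full} sub-CwA's, cut out by the conditions ``homotopical diagram'' on objects and ``homotopical type/levelwise extension'' on types. So it suffices, in each case, to check that the functor carries homotopical diagrams to homotopical diagrams and homotopical (levelwise, resp.\ Reedy) types to homotopical ones. Moreover, by Proposition~\ref{prop:homotopical-types}, over a homotopical base a Reedy type is homotopical exactly when its comprehension is a homotopical diagram; so for (2) and (3) I would reduce the statement about types to the statement about diagrams, using that $u^*$ and $F^\I$ are maps of comprehension categories and hence commute with comprehension.

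For (1): an arbitrary functor $u : \J \to \I$ acts on diagrams and on levelwise extensions simply by precomposition, so $(u^*\Gamma)_\beta = \Gamma_{u\beta}$ for $\beta : j \to j'$ in $\J$, and the comparison map of $u^*\AA$ at such a $\beta$ is literally the comparison map of $\AA$ at $u\beta$. If $u$ is homotopical then every equivalence $\beta$ of $\J$ has $u\beta$ an equivalence of $\I$; hence $u^*\Gamma$ is homotopical whenever $\Gamma$ is, and $u^*\AA$ is a homotopical levelwise extension whenever $\AA$ is. That is (1).

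For (2), I would apply (1) to the homotopical functor $u$ and to the diagram $\Gamma.A$: since $u^*(\Gamma.A) = u^*\Gamma.u^*A$ as $\J$-diagrams (as $u^*$ preserves context extension), the comprehension of $u^*A$ is homotopical whenever that of $A$ is, i.e.\ $u^*A$ is a homotopical Reedy type whenever $A$ is. For (3), it remains only to observe that a CwA map $F$ that is homotopical, i.e.\ preserves equivalences, carries a homotopical diagram $X$ — one with each $X_\alpha$ an equivalence, for $\alpha$ an equivalence of $\I$ — to a diagram $F^\I X$ with $(F^\I X)_\alpha = F(X_\alpha)$ again an equivalence; so $F^\I$ preserves homotopical diagrams, hence homotopical Reedy types via comprehension.

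The part I expect to need the most care — the only genuine obstacle — is confirming that the actions of $u^*$ and $F^\I$ on Reedy types really are compatible with their actions on the underlying levelwise extensions (equivalently, that they commute with comprehension on the nose). This is not purely formal, since $u^*A$ and $F^\I A$ are produced by Reedy induction through canonically reconstructed matching objects; it relies on the strict, not merely up-to-isomorphism, preservation of Reedy limits, hence of matching objects, by discrete opfibrations and by CwA maps, recorded in Proposition~\ref{prop:discrete-opfib-matching-object} and Lemma~\ref{lem:first-properties-of-reedy-limits} (and, for the dependence on the domain category, Lemma~\ref{lem:functoriality-of-reedy-limits-in-domain}). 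Granting that, every other step is immediate.
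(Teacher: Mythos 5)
Your argument is correct and is exactly the one the paper has in mind: the paper states this proposition with the proof omitted as immediate (the \qed is in the statement), and your filling-in — homotopicality is a levelwise condition on the action on arrows, which precomposition along a homotopical functor and postcomposition with a homotopical CwA map visibly preserve, with the Reedy-type cases reduced to the diagram case via Proposition~\ref{prop:homotopical-types} and strict compatibility with comprehension — is precisely the intended justification.
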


\optionaltodo{Reword the following to explicitly give proposition that map between cxl cats with Id-types is homotopical.  Perhaps allso add results later that the induced maps between diagram cats are all homotopical.}
\begin{example}
  Note the extra condition in the last item: a CwA map acts on homotopical diagrams only if it is itself homotopical, i.e.~preserves equivalences.
  A map of CwAs with identity types will always preserve equivalences within fibrant slices, since these are defined in terms of identity types.
  However, the definition of general equivalences in CwA’s is not algebraic, so their preservation is not automatic.

  For instance, every category $\C$ admits a CwA structure in which $\Ty$ is the terminal presheaf $1$, and context extension acts trivially: every dependent projection is an identity map.
  Call this CwA $(\C,1)$; it admits (a unique choice of) identity, unit, $\Sigma$-, and extensional $\Pi$-type structure.
  Each fibrant slice of $(\C,1)$ is the terminal CwA, so every map in it is an equivalence.
  Meanwhile, with a little care, one can put a CwA structure on $\Set$, equivalent to the usual one, except that over each context $X$, there is only one “singleton family” (i.e.\ only one $A \in \Ty(X)$ such that each $A_x$ is a singleton), and the context extension by this family is $\id_X$.
  Call this $\Set'$; it carries all the same structure mentioned above, and its equivalences will be just the isomorphisms.
  
  Now any functor $F \colon \C \to \Set$ induces a CwA map $(\C,1) \to \Set'$ preserving the aforementioned structure; but if $F$ hits any non-isomorphism, this will not preserve equivalences.
\end{example}

\begin{proposition} \label{prop:diag-equiv-to-homot-diag}
  Any diagram levelwise equivalent to a homotopical diagram is homotopical.
  Moreover, if $w \colon \Gamma.\AA \to \Gamma.\BB$ is an equivalence of extensions over a homotopical base, and either of $\AA$, $\BB$ is homotopical, then so is the other.  
\end{proposition}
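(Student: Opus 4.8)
The plan is to deduce both statements from the 2-out-of-3 property for equivalences in $\C$ together with the facts about homotopical diagrams and levelwise equivalences already established. For the first claim, I would suppose $f$ is a levelwise equivalence relating $\Gamma$ to a homotopical diagram $\Delta$ (the direction of $f$ is immaterial, since the argument below is symmetric). Given any equivalence $\alpha : i \to j$ of $\I$, I would look at the naturality square for $f$ at $\alpha$: its two horizontal edges $f_i$, $f_j$ are equivalences by hypothesis, and the edge $\Delta_\alpha$ is an equivalence since $\Delta$ is homotopical, so $\Gamma_\alpha$ is an equivalence by 2-out-of-3. As $\alpha$ ranges over the equivalences of $\I$, this is precisely the assertion that $\Gamma$ is homotopical. (If ``levelwise equivalent'' is to be read as connected by a zigzag of levelwise equivalences, one simply iterates this single-step case.)

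For the second claim, the first step is to note that an equivalence $w : \Gamma.\AA \to \Gamma.\BB$ of extensions over $\Gamma$ is a levelwise equivalence, by Lemma~\ref{lem:equivalences-are-levelwise}(1); hence $\Gamma.\AA$ and $\Gamma.\BB$, viewed as diagrams in $\C^{\disc{\I}}$, are levelwise equivalent. Assuming without loss of generality that $\AA$ is homotopical, Proposition~\ref{prop:homotopical-types} --- applicable because the base $\Gamma$ is homotopical --- shows that the comprehension $\Gamma.\AA$ is a homotopical diagram. By the first claim, $\Gamma.\BB$ is then homotopical as well, and a second application of Proposition~\ref{prop:homotopical-types} (once more using that $\Gamma$ is homotopical) yields that $\BB$ is homotopical.

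I do not expect any real obstacle: the content is carried entirely by results proved earlier (2-out-of-3 for equivalences, Lemma~\ref{lem:equivalences-are-levelwise}, and Proposition~\ref{prop:homotopical-types}), and what remains is bookkeeping. The points needing mild care are that ``levelwise equivalent'' should be understood as a levelwise-equivalence map in either direction (or a zigzag of such), and that whatever kind of ``extension'' $\AA$, $\BB$ are --- Reedy types, context extensions, or levelwise extensions --- the operative notion of ``homotopical'' is the one obtained by regarding them as levelwise extensions, so that Proposition~\ref{prop:homotopical-types} applies directly.
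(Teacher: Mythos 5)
Your proof is correct and follows the paper's own argument essentially verbatim: the first claim is the 2-out-of-3 property applied to the naturality squares, and the second reduces to the first via Lemma~\ref{lem:equivalences-are-levelwise} (equivalences of extensions over a base are levelwise) together with Proposition~\ref{prop:homotopical-types}. The paper's proof is just a terser statement of exactly these two steps.
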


\begin{proof}
  The first statement is immediate by 2 out of 3.
  The second follows since equivalences in fibrant slices are levelwise, by Lemma~\ref{lem:equivalences-are-levelwise}.
\end{proof}

\subsection{Logical structure} \label{sec:homotopical-logical-structure}

We give logical structure on $\C^\I$ by showing that, under suitable hypotheses, it is closed under the relevant operations in $\C^{\disc{\I}}$.
It is enough to show closure under the type-forming operations; for term-forming operations, closure is automatic since $\C^\I$ is a full subcategory.

For the covariant type-formers we consider, no extra hypotheses are needed:
\begin{proposition} \label{prop:homotopical-id-etc}
  For any ordered homotopical inverse category $\I$, $\C^\I$ is closed under the $\Id$-type, $\Sigma$-type, and unit type operations on $\C^{\disc{\I}}$ constructed in Section~\ref{sec:logical-structure}.
\end{proposition}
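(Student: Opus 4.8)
The plan is to show that each of the three type-forming operations in question ($\Id$, $\Sigma$, unit) preserves homotopicality, i.e.\ takes a homotopical Reedy type (over a homotopical diagram) to a homotopical Reedy type. Since $\C^\I$ is a full sub-CwA of $\C^{\disc\I}$, this is exactly what is needed. By definition, a Reedy type $C$ over $\Gamma$ is homotopical iff for each equivalence $\alpha : i \to j$ in $\I$, the comparison map $(\Gamma.C)_i \to \Gamma_i . \Gamma_\alpha^*(C_j)$ is an equivalence; and by Proposition~\ref{prop:homotopical-types} this is equivalent to asking that the comprehension $\Gamma.C$ be a homotopical diagram, i.e.\ that $(\Gamma.C)_\alpha$ be an equivalence for every equivalence $\alpha$.

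The key observation is that all three type-formers are constructed levelwise out of the ambient type-formers of $\C$ applied at matching objects, and — crucially — each comes equipped (via Corollary~\ref{cor:elim-structure-from-reedy}) with an elimination structure, hence by Lemma~\ref{lem:descent-from-elim-structure} with a retraction of the relevant structure map. For unit types, the projection $\Gamma.1_\Gamma \to \Gamma$ carries an elimination structure, so by Lemma~\ref{lem:equivalences-are-levelwise}(1) (the characterisation of equivalences in fibrant slices as levelwise equivalences, combined with the fact that a levelwise-contractible type is contractible) it is an equivalence in $\C^{\disc\I}$; now $1_\Gamma$ is homotopical because, in the square relating $(\Gamma.1_\Gamma)_\alpha$ to $\Gamma_\alpha$, both vertical projections are equivalences and $\Gamma_\alpha$ is an equivalence, so $(\Gamma.1_\Gamma)_\alpha$ is an equivalence by 2-out-of-3. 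For $\Sigma$-types, the pairing map $\pair : \Gamma.A.B \to \Gamma.\Sigma_A B$ carries a Reedy elimination structure by construction, hence an elimination structure in $\C^{\disc\I}$; such a map is always an equivalence when $A$ (equivalently $B$, equivalently $A.B$) is — and here we use that $A$ and $B$ are homotopical, so $\Gamma.A.B$ is a homotopical diagram, so by Proposition~\ref{prop:diag-equiv-to-homot-diag} (a diagram equivalent to a homotopical one is homotopical) $\Gamma.\Sigma_A B$ is homotopical, i.e.\ $\Sigma_A B$ is a homotopical Reedy type. For $\Id$-types the argument is parallel: $\refl_A : \Gamma.A \to \Gamma.A.A.\Id_A$ carries an elimination structure in $\C^{\disc\I}$, so it is an equivalence; since $A$ is homotopical, $\Gamma.A.A$ is a homotopical diagram (closure of homotopicality under context extension by homotopical types, i.e.\ iterating Proposition~\ref{prop:homotopical-types}), and then $\Gamma.A.A.\Id_A$ is homotopical because it is equivalent to $\Gamma.A$, which is homotopical; so $\Id_A$ is a homotopical Reedy type.

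So the skeleton is: (1) recall that the canonical structure maps ($\star_\Gamma$, $\pair_{A,B}$, $\refl_A$) carry elimination structures in $\C^{\disc\I}$, hence are equivalences there, by the results of Section~\ref{sec:logical-structure} together with the ``contractibility = inhabited + propositional'' argument used inside the proof of Lemma~\ref{lem:equivalence-iff-levelwise}; (2) note that the domains of these maps are homotopical diagrams, using closure of homotopicality under context extension by homotopical types; (3) conclude via Proposition~\ref{prop:diag-equiv-to-homot-diag} that the codomains ($\Gamma.1_\Gamma$, $\Gamma.\Sigma_A B$, $\Gamma.A.A.\Id_A$) are homotopical diagrams; (4) read off from Proposition~\ref{prop:homotopical-types} that the new types are homotopical. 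The main obstacle I anticipate is step (1): establishing cleanly that an elimination structure on a dependent projection (or, more generally, that $\pair$ and $\refl$) forces that map to be an equivalence in $\C^{\disc\I}$ — one must either cite the fibration-category folklore that a map with a left lifting property against dependent projections is an equivalence, or redo the ``inhabited + propositional $\Rightarrow$ contractible'' argument in this setting; the paper's Lemma~\ref{lem:equivalence-iff-levelwise} essentially contains what is needed, so the work is in extracting it. The homotopicality-of-domains bookkeeping in step (2) is routine but needs the (easy) observation that context extension of a homotopical diagram by a homotopical type is again homotopical, which is just Proposition~\ref{prop:homotopical-types} applied inductively.
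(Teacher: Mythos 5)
Your proposal is correct and follows essentially the same route as the paper: in each case the constructor map ($\star_\Gamma$, $\pair_{A,B}$, $\refl_A$) is an equivalence in $\C^{\disc{\I}}$ lying in a fibrant slice, hence a levelwise equivalence by Lemma~\ref{lem:equivalences-are-levelwise}, and its domain is already known to be homotopical, so the codomain is homotopical by Proposition~\ref{prop:diag-equiv-to-homot-diag}. The paper simply takes as standard the fact that these constructor maps are equivalences in any CwA with the relevant structure, whereas you spell out the elimination-structure justification (and, minor slip, for the unit type the elimination structure sits on the section $\star_\Gamma$ rather than on the projection — harmless by 2-out-of-3).
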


\begin{proof}
  We claim that in each case, the comprehension of the type produced is levelwise equivalent to a diagram which we already know is homotopical, so is homotopical by Proposition~\ref{prop:diag-equiv-to-homot-diag}.

  The levelwise equivalences are given by the constructor maps:
  \begin{itemize}
  \item $\refl_A \colon \Gamma .A \to \Gamma . A . A . \Id_A$;
  \item $\pair \colon \Gamma . A . B \to \Gamma . \Sigma_A B$;
  \item $\star_{\Gamma} \colon \Gamma \to \Gamma . 1_\Gamma$.
  \end{itemize}
  These maps are certainly equivalences in $\C^{\disc{\I}}$, since they are always equivalences in any CwA with the logical structure in question.
  But they also lie within fibrant slices of $\C^{\disc{\I}}$, and so by Lemma~\ref{lem:equivalences-are-levelwise} are levelwise.
\end{proof}

The question of closure under $\Pi$-types is less straightforward.
To obtain this closure, we will assume two extra hypotheses: functional extensionality in $\C$, and the condition that all maps in $\I$ are equivalences.

The extensionality hypothesis is unsurprising, since it is needed to know that $\Pi$-types respect equivalences in $\C$.
On the other hand, the hypothesis that all maps in $\I$ are equivalences seems quite possibly stronger than necessary.
However, \emph{some} restriction on $\I$ is certainly needed for the closure of $\C^\I$ under $\Pi$-types:

\begin{example} \label{ex:pi-types-in-weak-maps}
  Let $\WkMapCat$ be the homotopical Reedy category representing weak maps, $(0 \weqfrom 01 \to 1)$.
  Consider $\Set$ as a (large) CwA with $\Ty(X) := \Set^X$, in the evident way (or, if concerned about size issues, use $\FinSet$ instead).
  Then $\Set^\WkMapCat$ is not closed under $\Pi$-types in $\Set^{\SpanCat}$.
  
  To see this, note that equivalences in $\Set$ are exactly isomorphisms, and so $\Set^\WkMapCat \equiv \Set^{\rightarrow}$.
  Failure of closure then amounts roughly to the fact that $\Set^{\rightarrow} \to \Set^\SpanCat$ does not preserve exponentials, which may be seen from the standard calculation of exponentials in presheaf categories. 
\end{example}

The proof that $\Pi$-types preserve homotopicality under these assumptions requires a couple of preparatory observations on the levelwise-to-Reedy $\Pi$-types given in Proposition~\ref{prop:levelwise-to-reedy-pi-types}.
Fix for the next couple of lemmas a CwA with $\Id$- and $\Pi_\ext$-structure $\C$, and an ordered inverse category $\I$.

\newcommand{\constdiag}[2]{C_{#1}{#2}}
\begin{lemma} \label{lem:levelwise-pi-as-iterated}
  Let $\AA$ be a context extension over $\Gamma$ in $\C$; write $\constdiag{\I}{\Gamma}$ and $\constdiag{\I}{\AA}$ for the constant diagram and levelwise extension on these in $\C^\I$.
  Let $B$ be a Reedy type over $\constdiag{\I}{\Gamma}.\constdiag{\I}{\AA}$ in $\C^\I$.
  Then $\Pi[\constdiag{\I}{\AA},B]_i$ is exactly the iterated $\Pi$-type $\Pi[\AA,B_{\yon{i}}]$ over $\Gamma$ in $\C$.
\end{lemma}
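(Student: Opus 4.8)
The plan is to unfold both sides and match them by Reedy induction on $i \in \I$. On the left, $\Pi[\constdiag{\I}{\AA},B]$ is built as in Proposition~\ref{prop:levelwise-to-reedy-pi-types} (following the recipe of Proposition~\ref{prop:reedy-pi-types}): at stage $i$ one forms the matching objects of the diagrams $\constdiag{\I}{\Gamma}.\Pi[\constdiag{\I}{\AA},B]$, $\constdiag{\I}{\Gamma}.\constdiag{\I}{\AA}$ and $\constdiag{\I}{\Gamma}.\constdiag{\I}{\AA}.B$, together with the ``middle-weakening'' map $f_i : M_i\Pi[\constdiag{\I}{\AA},B].\AA \to M_iB$, and sets $\Pi[\constdiag{\I}{\AA},B]_i := \Pi[(M_i\Pi[\constdiag{\I}{\AA},B])^*\AA,\, f_i^*B_i]$, an iterated $\C$-level $\Pi$-type over $M_i\Pi[\constdiag{\I}{\AA},B]$ (where $\AA$ appears in place of the $A_{\yon i}$ of the Reedy case because the levelwise extension $\constdiag{\I}{\AA}$ is constant, so its $i$-th component is $\AA$). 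On the right, writing $B_{\yon i} = B_{\bdry i}.B_i$, Construction~\ref{prop:iterated-pi} builds $\Pi[\AA,B_{\yon i}]$ by induction on the length of $B_{\yon i}$ as $\Pi[\AA,B_{\bdry i}].\Pi[\AA,\ev_{\AA,B_{\bdry i}}^*B_i]$.

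First I would dispose of the matching objects. Since $\constdiag{\I}{\Gamma}$ and $\constdiag{\I}{\AA}$ are constant, the cone-compatibility conditions trivialise and the universal properties give $M_i(\constdiag{\I}{\Gamma}.\constdiag{\I}{\AA}) = \Gamma.\AA$ and $M_iB = \Gamma.\AA.B_{\bdry i}$ directly. By the inductive hypothesis, $\Pi[\constdiag{\I}{\AA},B]$ and the context extensions $\Pi[\AA,B_{\yon j}]$ agree in degrees $j \prec i$; and one checks that forming $M_i$ of this below-$i$ data reproduces the iterated-$\Pi$ construction over $\bdry i$, i.e.\ $\Pi[\constdiag{\I}{\AA},B]_{\bdry i} = \Pi[\AA,B_{\bdry i}]$ and hence $M_i\Pi[\constdiag{\I}{\AA},B] = \Gamma.\Pi[\AA,B_{\bdry i}]$. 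This is a coherence between ``matching object $=$ Reedy limit over $\bdry i$'' and the operation $\Pi[\AA,-]$ on context extensions; it holds because $\Pi[\AA,-]$ is built component-by-component in Construction~\ref{prop:iterated-pi}, so it commutes with the context extensions out of which the Reedy limit is assembled.

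The heart of the argument is then the identification $f_i = \ev_{\AA,B_{\bdry i}}$. Under the identifications above, $f_i$ is a map $\Gamma.\Pi[\AA,B_{\bdry i}].\AA \to \Gamma.\AA.B_{\bdry i}$; unwinding its defining universal property — $f_i$ is determined, via the limit structure of $M_iB$, by the evaluation data in degrees below $i$ — and using that by induction those lower evaluation maps are precisely the iterated evaluations produced by Construction~\ref{prop:iterated-pi}, one reads off $f_i = \ev_{\AA,B_{\bdry i}}$. Consequently $\Pi[\constdiag{\I}{\AA},B]_i = \Pi[\AA,\ev_{\AA,B_{\bdry i}}^*B_i]$, which is exactly the final component of $\Pi[\AA,B_{\yon i}]$; combined with $\Pi[\constdiag{\I}{\AA},B]_{\bdry i} = \Pi[\AA,B_{\bdry i}]$ from the previous paragraph this gives $\Pi[\constdiag{\I}{\AA},B]_{\yon i} = \Pi[\AA,B_{\yon i}]$, and the corresponding evaluation maps match by the same bookkeeping, closing the induction.

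I expect the identification of $f_i$ with the iterated evaluation map to be the main obstacle: it is where one must be most careful about how the limit structure of $M_iB$ interacts with the evaluation maps in lower degrees, and about which context is being weakened where. Everything else — the computation of matching objects of constant diagrams, the component-wise commutation of $\Pi[\AA,-]$ with context extensions, and the final reassembly — is routine unwinding of the two constructions.
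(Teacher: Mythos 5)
Your proof takes exactly the route of the paper's own (one-line) argument --- Reedy induction on $i$, unwinding the construction of Proposition~\ref{prop:levelwise-to-reedy-pi-types} --- and the details you supply (the identification $M_iB = \Gamma.\AA.B_{\bdry{i}}$, the inductive identification $M_i\Pi[\constdiag{\I}{\AA},B] = \Gamma.\Pi[\AA,B_{\bdry{i}}]$ via component-wise compatibility of $\Pi[\AA,-]$ with the Reedy limit, and the identification of the middle-weakening map $f$ with $\ev_{\AA,B_{\bdry{i}}}$) are the right ones. One throwaway claim is inaccurate but harmless: the matching object of the constant levelwise extension is not $\Gamma.\AA$ in general (it would be a fibre product of copies of $\Gamma.\AA$ over $\Gamma$ indexed by the components of $\bdry{i}$, and matching objects of levelwise extensions need not even exist, which is precisely why Proposition~\ref{prop:levelwise-to-reedy-pi-types} defines $f$ by its components into the limit $M_iB$ rather than as a pullback of $M_i\ev$); your argument never actually uses this claim.
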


\begin{proof}
  By induction on $i \in \I$, unwinding the construction of Proposition~\ref{prop:levelwise-to-reedy-pi-types}.
\end{proof}

\begin{lemma} \label{lem:levelwise-pi-contravariant}
  Let $\AA$, $\AA'$ be levelwise extensions over $\Gamma \in \C^\I$, and $B$ a Reedy type over $\Gamma.\AA$, and suppose $w \colon \Gamma.\AA' \to \Gamma.\AA$ is a levelwise equivalence over $\Gamma$.
  Then the map $\Gamma.\Pi[\AA,B] \to \Gamma.\Pi[\AA',w^*B]$ representing precomposition with $w$ is again a levelwise equivalence.
\end{lemma}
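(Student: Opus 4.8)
The plan is to reduce the statement, one level at a time, to the corresponding fact for iterated $\Pi$-types in the base CwA $\C$. Write $R : \Gamma.\Pi[\AA,B] \to \Gamma.\Pi[\AA',w^*B]$ for the precomposition map in question; it is a map over $\Gamma$. By Lemma~\ref{lem:levelwise-equivalence-iff-reedy} it suffices to show that $R$ is a \emph{Reedy} equivalence, i.e.\ that each comparison component $m_iR : (\Gamma.\Pi[\AA,B])_i \to M_i\Pi[\AA,B].(M_iR)^*\Pi[\AA',w^*B]_i$ is an equivalence in $\C$; I would prove this by the usual Reedy induction on $i \in \I$, the inductive hypothesis being needed only to handle the degree-${<}i$ data entering the matching objects below.

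The crux is to identify $m_iR$ explicitly. Unwinding the construction of the levelwise-to-Reedy $\Pi$-types (Proposition~\ref{prop:levelwise-to-reedy-pi-types}), one has $(\Gamma.\Pi[\AA,B])_i = M_i\Pi[\AA,B].\Pi[\AA_i, f^*B_i]$, and similarly $\Pi[\AA',w^*B]_i = \Pi[\AA'_i, g^*(w^*B)_i]$ over $M_i\Pi[\AA',w^*B]$, for the weakening maps $f$, $g$ of that construction. Now $M_iR$ lies over $\Gamma_i$, so reindexing $\Pi[\AA',w^*B]_i$ along $M_iR$ leaves the levelwise data $\AA'_i$ (pulled back from $\Gamma_i$) unchanged; and tracing how $w$ enters the evaluation map used to build $g$, one finds $g \cdot (M_iR.\AA'_i) = f \cdot v$, where $v : M_i\Pi[\AA,B].\AA'_i \to M_i\Pi[\AA,B].\AA_i$ is the reindexing of the component $w_i : \Gamma_i.\AA'_i \to \Gamma_i.\AA_i$ along $M_i\Pi[\AA,B] \to \Gamma_i$. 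Consequently $(M_iR)^*\Pi[\AA',w^*B]_i = \Pi[\AA'_i, v^*f^*B_i]$, and under this identification $m_iR$ is precisely the map $\Pi[\AA_i, f^*B_i] \to \Pi[\AA'_i, v^*f^*B_i]$ representing precomposition with $v$, over the base $M_i\Pi[\AA,B]$.

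Finally, $v$ is an equivalence, since its component $w_i$ is one (as $w$ is a levelwise equivalence) and equivalences between fibrant slices are stable under reindexing in the base. Hence $m_iR$ is an equivalence by the domain-contravariant analogue of Lemma~\ref{lem:iterated-pi-covariant} for the CwA $\C$: precomposition in an iterated $\Pi$-type with an equivalence in the domain is again an equivalence, given function extensionality. (This base fact follows from Lemma~\ref{lem:iterated-pi-covariant} by the standard retract argument, using a homotopy inverse $v'$ of $v$ and transport along the homotopies $vv' \homot \id$, $v'v \homot \id$; it is where the extensionality hypothesis on $\C$ is consumed.) This finishes the induction, so $R$ is a Reedy and hence, by Lemma~\ref{lem:levelwise-equivalence-iff-reedy}, a levelwise equivalence. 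The only real work is the identification in the second paragraph — matching the construction of $\Pi[\AA',w^*B]_i$, after reindexing along $M_iR$, with a pullback of $\Pi[\AA_i,f^*B_i]$ along a reindexing of $w_i$, and verifying that $m_iR$ is literally the associated precomposition map (not merely homotopic to one); this is routine but bookkeeping-heavy, following the pattern of the construction of the $\Pi$-types themselves.
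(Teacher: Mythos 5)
Your proposal is correct and follows essentially the same route as the paper, whose entire proof is ``again by induction on $i \in \I$, unwinding the constructions of Proposition~\ref{prop:levelwise-to-reedy-pi-types}''; your write-up is a faithful expansion of that unwinding, identifying each comparison component $m_iR$ with a precomposition map of iterated $\Pi$-types in $\C$ along a pullback of $w_i$ and invoking Lemma~\ref{lem:levelwise-equivalence-iff-reedy}. The one ingredient you make explicit that the paper leaves implicit is the base-level fact that precomposition with a domain equivalence is an equivalence of iterated $\Pi$-types; your derivation of it from function extensionality (Lemma~\ref{lem:iterated-pi-extensionality}) together with Lemma~\ref{lem:iterated-pi-covariant} is the standard one and is sound.
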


\begin{proof}
  Again by induction on $i \in \I$, unwinding the constructions of Proposition~\ref{prop:levelwise-to-reedy-pi-types}.
\end{proof}

\begin{proposition} \label{prop:pi-types-homotopical}
  Let $\I$ be an ordered homotopical inverse category with all maps equivalences, and $\C$ a CwA with $\Id$- and $\Pi_\ext$-structure.
  Then $\C^\I$ is closed under $\Pi$-types in $\C^{\disc{\I}}$, and hence carries $\Pi_\ext$-structure.
\end{proposition}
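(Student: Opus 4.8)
My plan is as follows. Let $\Gamma$ be a homotopical diagram and $A$, $B$ homotopical Reedy types over $\Gamma$ and $\Gamma.A$ respectively; I must show that the Reedy type $\Pi[A,B]$ of Proposition~\ref{prop:reedy-pi-types} is again homotopical. By Proposition~\ref{prop:homotopical-types} this reduces to showing that the comprehension $\Gamma.\Pi[A,B]$ is a homotopical diagram, and --- since every map of $\I$ is assumed to be an equivalence --- that amounts to showing that every structure map $(\Gamma.\Pi[A,B])_\alpha$, for $\alpha\colon i\to j$ in $\I$, is an equivalence in $\C$. Unwinding the definition of a homotopical levelwise extension, this is the same as showing that for each $\alpha\colon i\to j$ the comparison map $c^{\Pi[A,B]}_\alpha\colon \Gamma_i.\Pi[A,B]_{\yon i}\to\Gamma_i.\Gamma_\alpha^*(\Pi[A,B]_{\yon j})$ over $\Gamma_i$ is an equivalence, where $\Pi[A,B]_{\yon i}$ is the Reedy limit along $0\to\yon i$ appearing in the construction of $\Pi$-types.

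The key step will be to identify $c^{\Pi[A,B]}_\alpha$, up to the coherence isomorphisms of Lemma~\ref{lem:first-properties-of-reedy-limits}, as a composite of two maps of iterated $\Pi$-types. Working by Reedy induction on $i$, I would use Proposition~\ref{prop:levelwise-to-reedy-pi-types} together with Lemma~\ref{lem:levelwise-pi-as-iterated} to recognise $\Pi[A,B]_{\yon i}$ as the iterated $\Pi$-type of the levelwise $A$- and $B$-data over the sub-copresheaf $\yon i$ --- the matching-object pieces being supplied by the inductive hypothesis --- and then trace through the construction of the structure maps of $\Gamma.\Pi[A,B]$ to see that $c^{\Pi[A,B]}_\alpha$ is exactly the map induced by the restriction $\yon j\to\yon i$ along $\alpha$, acting as the comparison map $c^A_\alpha\colon\Gamma_i.A_{\yon i}\to\Gamma_i.\Gamma_\alpha^*(A_{\yon j})$ on the domain data and as the corresponding comparison map $c^B_\alpha$ on the codomain data.

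Granting this identification, the conclusion follows quickly. The map $c^A_\alpha$ is an equivalence because $A$ is homotopical and $\alpha$ is an equivalence, so by Lemma~\ref{lem:levelwise-pi-contravariant} precomposition with it induces a levelwise equivalence of $\Pi$-types; similarly $c^B_\alpha$ is an equivalence because $B$ is homotopical, so by Lemma~\ref{lem:iterated-pi-covariant} postcomposition with it induces an equivalence; composing these yields that $c^{\Pi[A,B]}_\alpha$ is an equivalence, as required. Function extensionality in $\C$ enters exactly as in Lemmas~\ref{lem:iterated-pi-extensionality}, \ref{lem:iterated-pi-covariant}, and \ref{lem:funext-iff-r-equiv}, both to make the $\Pi$-operation respect equivalences and to provide the coherences in the composite. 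As in Proposition~\ref{prop:reedy-pi-types}, every step is stable under reindexing in the base and functorial in $\C$ and $\I$; so $\C^\I$ is closed under $\Pi$-types in $\C^{\disc{\I}}$, and being a full sub-CwA it is automatically closed under the $\lambda$, $\ev$, and function-extensionality structure as well, hence carries $\Pi_\ext$-structure.

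I expect the main obstacle to be the bookkeeping behind the second paragraph: making the identification of $\Pi[A,B]_{\yon i}$ with the $\yon i$-indexed iterated $\Pi$-type of the $A$- and $B$-data precise through the Reedy induction, keeping track of the coherence isomorphisms between Reedy limits and iterated $\Pi$-types, and checking compatibility with the matching-object construction at each stage so that $c^{\Pi[A,B]}_\alpha$ really does decompose into the two $\Pi$-maps claimed. This is also the step where the hypothesis that \emph{all} maps of $\I$ are equivalences is genuinely used: the Reedy limit $\Pi[A,B]_{\yon i}$ depends on $A$ and $B$ at every object admitting a non-identity map from $i$, and for the restriction along an arbitrary $\alpha$ to be controlled by homotopicality of $A$ and $B$ one needs all of those structure maps --- not merely the ones marked as equivalences in $\I$ --- to be equivalences; this is exactly what goes wrong in Example~\ref{ex:pi-types-in-weak-maps}.
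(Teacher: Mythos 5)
Your overall route is the paper's: compare the components of the Reedy $\Pi$-type with the levelwise iterated $\Pi$-types $\Pi[A_{\yon{i}},B_{\yon{i}}]$, and then exploit the contravariant (Lemma~\ref{lem:levelwise-pi-contravariant}) and covariant (Lemma~\ref{lem:iterated-pi-covariant}) actions of $\Pi$ on equivalences, with function extensionality supplying the needed homotopies. But the central step as you state it would fail: $\Pi[A,B]_{\yon{i}}$ is \emph{not} identified with $\Pi[A_{\yon{i}},B_{\yon{i}}]$ up to the coherence isomorphisms of the Reedy-limit machinery. Unwinding Proposition~\ref{prop:reedy-pi-types}, the top component of $\Pi[A,B]_{\yon{i}}$ is indeed $\Pi[A_{\yon{i}},f^*B_i]$, but the lower components (those making up $M_i\Pi[A,B]$) are pullbacks to $\Gamma_i$ of iterated $\Pi$-types whose \emph{domains} are the $A_{\yon{j}}$ for the various $j$ under $i$, whereas every component of $\Pi[A_{\yon{i}},B_{\yon{i}}]$ has domain $A_{\yon{i}}$. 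Since the pullback of $A_{\yon{j}}$ to $\Gamma_i$ is only equivalent to $A_{\yon{i}}$ (via the comparison map, using homotopicality of $A$), not isomorphic to it, the two context extensions are genuinely non-isomorphic; the comparison between them is an \emph{equivalence} that must be built, and building it is the actual content of the proof. Correspondingly, $(\Gamma.\Pi[A,B])_\alpha$ does not decompose on the nose as a contravariant $\Pi$-map followed by a covariant one; it only fits into a pentagon, commuting up to homotopy (by Lemma~\ref{lem:iterated-pi-extensionality}), whose other edges are that composite conjugated by the comparison equivalences and by base-change equivalences along $\Gamma_\alpha$, after which one concludes by 2-out-of-3.

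The missing construction is the comparison equivalence itself, and it is exactly where the hypothesis that all maps of $\I$ are equivalences enters. The paper obtains it by applying the contravariant action of $\Pi$ for levelwise domains (Proposition~\ref{prop:levelwise-to-reedy-pi-types} and Lemma~\ref{lem:levelwise-pi-contravariant}) to the map $\nu_{\Gamma.A} : C_{j/\I}(\Gamma_j.A_{\yon{j}}) \to (\Gamma.A)\restr{j/\I}$ out of the constant diagram, whose components are structure maps of $\Gamma.A$; these are all equivalences only because \emph{every} arrow of $\I$ is marked. Lemma~\ref{lem:levelwise-pi-as-iterated} --- which applies only to constant domains, not to a general Reedy $A$ as you invoke it --- then identifies the resulting component at $\id_j$ with $\Gamma_j.\Pi[A_{\yon{j}},B_{\yon{j}}]$. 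Your closing intuition about why the hypothesis is needed is essentially right, but it should be located here, in making the constant-domain replacement a levelwise equivalence, rather than in the restriction along $\alpha$. In short: same strategy, but the "bookkeeping" you defer is not bookkeeping; it is the construction of a nontrivial equivalence, and without it the argument does not go through.
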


\begin{proof}
  Take homotopical diagrams and types $\Gamma$, $A$, and $B$ as in the input of $\Pi$-formation; we need to show that $\Pi[A,B]$ is homotopical.

  Roughly, our strategy will be to show that under the given assumption on $\I$, each component $\Pi[A,B]_i$ of the dependent product is “naturally” equivalent to the corresponding \emph{levelwise} dependent product $\Pi[A_{\yon i},B_{\yon i}]$, and then to note that this latter construction “respects equivalences”.
  This is complicated firstly by the fact that the levelwise dependent product does not itself form a diagram, since it is not covariantly functorial in the domain, so the desired “natural” equivalence can at best be dinatural; and secondly, by the change of base between different values of $\Gamma$.

  Concretely, given $\alpha \colon i \to j$ in $\I$, we will construct a diagram in $\C$
  \[ \begin{tikzcd}
      (\Gamma.\Pi[A,B])_i \ar[rr,weq,"w_5"'] \ar[dd] \ar[ddd,shift right={2em},bend right={40},"{(\Gamma.\Pi[A,B])_\alpha}"']
    & & \Gamma_i.\Pi[A_{\yon i}, B_{\yon i}] \ar[dr,weq',"w_4"]
    &[-6em]
    \\[-1em] & & & \hspace{-2em} \Gamma_i.\Pi[A_{\yon i}, (\Gamma.A_\alpha)^* B_{\yon j}]
    \\[-1em] \Gamma_i. \Gamma_\alpha^* \Pi[A,B]_{\yon j} \ar[rr,weq,"w_2"'] \ar[d,weq']
    & &  \Gamma_i.\Pi[\Gamma_\alpha^*A_{\yon j}, \Gamma_\alpha^* B_{\yon j}]  \ar[ur,weq,"w_3"'] \ar[d,weq']
    \\ (\Gamma.\Pi[A,B])_j \ar[rr,weq,"w_1"']
    & & \Gamma_j.\Pi[A_{\yon j}, B_{\yon j}]
  \end{tikzcd} \]
  commuting up to homotopy, and show that the marked maps are equivalences.
  With this done, it follows by 2 out of 3 that $(\Gamma.\Pi[A,B])_\alpha$ is an equivalence as required.

  The lower two vertical maps are pullbacks of the equivalence $\Gamma_\alpha \colon \Gamma_i \to \Gamma_j$ along the context extensions $\Pi[A,B]_{\yon j}$ and $\Pi[A_{\yon j}, B_{\yon j}]$, so are equivalences by right properness.
  The upper-left vertical is then the factorisation of $\Gamma_\alpha$ through the pullback $\Gamma_i. \Gamma_\alpha^* \Pi[A,B]_{\yon j}$.
 
  We will construct the remaining equivalences $w_1$--$w_5$ one by one.
  For $w_1$ and $w_2$, first consider the following diagram in $\C^{j/\I}$:
  \[ \begin{tikzcd}
      \bullet \ar[d,fib] \ar[r] \ar[drpb]
      & \bullet \ar[d,fib] \ar[r] \ar[drpb]
      & \Gamma.A.B\restr{j/\I} \ar[d,fib]
      \\
      \constdiag{j/\I}{(\Gamma_j.A_{\yon j})} \ar[r,dashed,"f"'] \ar[d,lw]
              \ar[rr,bend left=15,"\nu_{\Gamma.A}" pos={0.35},"\sim" pos={0.65}]
      & \bullet \ar[d,fib] \ar[r] \ar[drpb]
      & \Gamma.A \restr{j/\I} \ar[d,fib]
      \\
     \constdiag{j/\I}{\Gamma_j} \ar[r,equal]
      & \constdiag{j/\I}{\Gamma_j} \ar[r,"\nu_\Gamma"',weq]
      & \Gamma \restr{j/\I}
    \end{tikzcd} \]
  Here $(-)\restr{j/\I}$ denotes restriction of diagrams, types, etc.\ along the forgetful functor $j/\I \to \I$, and $C_{j/\I}$ denotes constant diagrams.
  The map $\nu_\Gamma \colon C_{j/\I} \Gamma_j \to \Gamma\restr{j/\I}$ has components given by the action on maps of $\Gamma$, and is a levelwise equivalence since all maps in $\I$ are equivalences; and $\nu_{\Gamma.A}$ similarly.
  Other objects and maps are induced by pullback as indicated; all horizontal maps are levelwise equivalences, by right properness and 2 out of 3.

  Taking $\Pi$-types of each column (in the Reedy diagram CwA structure on $\C^{j/\I}$) and evaluating at $\id_j \in j/\I$ yields a diagram in $\C$:
  \[ \begin{tikzcd}
      \Gamma_j.\Pi[A_{\yon j}, B_{\yon j}] \ar[d,fibb]
      & (\Gamma.\Pi[A,B])_j \ar[d,fibb] \ar[r,"\id"] \ar[drpb] \ar[l,dashed,"w_1"',weq]
      & (\Gamma.\Pi[A,B])_j \ar[d,fibb]
      \\
      \Gamma_j \ar[r,equal]
      & \Gamma_j \ar[r,"\id"]
      & \Gamma_j
    \end{tikzcd} \]
  Here the $\Pi$-type of the third column gives precisely $(\Gamma.\Pi[A,B])_j$, since restriction $\C^\I \to \C^{j/\I}$ preserves $\Pi$-types; the $\Pi$-type of the second column is a (trivial) reindexing of this; and the $\Pi$-type of the first column gives the iterated $\Pi$-type shown, by Lemma~\ref{lem:levelwise-pi-as-iterated}.
  Finally, $w_1$ is induced by the contravariant action of $\Pi$ (Lemma~\ref{lem:levelwise-pi-contravariant}), applied to the map $f$ above and evaluated at $\id_j$.
  
  Next, $w_2$ is the reindexing of $w_1$ along $\Gamma_\alpha : \Gamma_i \to \Gamma_j$; this is an equivalence by stability in the base (or 2 out of 3), and the square from $w_2$ to $w_1$ commutes.

  For $w_3$, we extend the last two diagrams leftward:
  \[ \begin{tikzcd}
      \bullet \ar[d,fib] \ar[r] \ar[drpb]
      & \bullet \ar[d,fib] \ar[r] \ar[drpb]
      &[-2em] \constdiag{j/\I}{(\Gamma_j.A_{\yon j})}.\nu_{\Gamma.A}^*(B \restr{j/\I}) \ar[d,fib]
      \\ \constdiag{j/\I}{(\Gamma_i. A_{\yon i})} \ar[r,dashed,"g"'] \ar[d,lw]
              \ar[rr,bend left=12,"\constdiag{j/\I} (\Gamma.A_\alpha)" pos={0.35},"\sim" pos={0.65}]
      & \constdiag{j/\I}{(\Gamma_i. \Gamma_\alpha^* A_{\yon j})} \ar[drpb] \ar[r] \ar[d,lw]
      & \constdiag{j/\I}{(\Gamma_j.A_{\yon j})} \ar[d,lw]
      \\
      \constdiag{j/\I}{\Gamma_i} \ar[r,equal]
      & \constdiag{j/\I}{\Gamma_i} \ar[r,"\constdiag{j/\I}{\Gamma_\alpha}"',weq]
      & \constdiag{j/\I}{\Gamma_j}
      \\
      \Gamma_i \Pi[A_{\yon i}, (\Gamma.A_\alpha)^* B_{\yon j}]  \ar[d,fibb]
      & \Gamma_i.\Pi[\Gamma_\alpha^* A_{\yon j}, \Gamma_\alpha^* B_{\yon j}]  \ar[d,fibb]  \ar[r] \ar[drpb] \ar[l,dashed,"w_3"',weq]
      & \Gamma_j.\Pi[A_{\yon j}, B_{\yon j}] \ar[d,fibb]
      \\
      \Gamma_i \ar[r,equal]
      & \Gamma_i \ar[r,"\Gamma_\alpha"',weq]
      & \Gamma_j
    \end{tikzcd} \]
  As before, the first diagram is in $\C^{j/\I}$, with all horizontal arrows levelwise equivalences; the second diagram is the $\Pi$-types of the columns of the first, evaluated at $\id_j$; Lemma~\ref{lem:levelwise-pi-as-iterated} ensures that these $\Pi$-types compute to the objects shown; and $w_3$ is given by the contravariant action of $\Pi$, this time on $g$.
  
  By contrast,  $w_4$ is given by the \emph{covariant} action of iterated $\Pi$-types (Lemma \ref{lem:iterated-pi-covariant}), applied to the map $(\Gamma.A.B)_i \to (\Gamma.A)_i(\Gamma.A)_\alpha^*B_{\yon{j}}$ induced by $(\Gamma.A.B)_\alpha$, which is an equivalence by right properness and 2 out of 3.
  
  The last equivalence $w_5$ is analogous to $w_1$, but with $i$ instead of $j$.

  Finally, the pentagon in the main diagram commutes up to homotopy by extensionality for iterated $\Pi$-types (Lemma~\ref{lem:iterated-pi-extensionality}), along with the defining properties of the maps $w_i$.  
\end{proof}

Putting together Propositions~\ref{prop:homotopical-id-etc}, \ref{prop:pi-types-homotopical},  and~\ref{prop:logical-structure-summary}, we have:

\begin{proposition} \label{prop:homotopical-structure-summary}
  Let $\C$ be a CwA with $\Id$-types, and $\I$ an ordered homotopical inverse category.
  Then the homotopical diagram CwA $\C^\I$ carries $\Id$-types; if additionally $\C$ carries $\Sigma$- or unit types, then so does $\C^\I$; and if additionally all maps in $\I$ are equivalences, then if $\C$ carries extensional $\Pi$-types, so does $\C^\I$.
  
  Moreover, all this logical structure is preserved by the functorial action under ordered homotopical discrete opfibrations in $\I$, and under CwA maps preserving the given logical structure on $\C$. \qed
\end{proposition}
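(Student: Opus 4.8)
The plan is to assemble the results of this section with those of Section~\ref{sec:logical-structure}, so the argument is largely bookkeeping. First, by Proposition~\ref{prop:logical-structure-summary} the non-homotopical diagram CwA $\C^{\disc{\I}}$ already carries $\Id$-types, and carries $\Sigma$-, unit-, and $\Pi_\ext$-types whenever $\C$ does. Second, by Definition~\ref{def:cwa-of-homotopical-diagrams}, $\C^\I$ is a \emph{full} sub-CwA of $\C^{\disc{\I}}$: its base is the full subcategory on homotopical diagrams, and its types are exactly the homotopical Reedy types. Hence, to equip $\C^\I$ with a given piece of logical structure, it suffices to check that the corresponding type-forming operation on $\C^{\disc{\I}}$ sends homotopical inputs to homotopical outputs; the term-forming operations, their computation rules, and their stability under reindexing are then inherited from $\C^{\disc{\I}}$ automatically, since all objects and maps they involve already lie in the full subcategory $\C^\I$ — this is the observation made at the start of Section~\ref{sec:homotopical-logical-structure}.

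Closure of $\C^\I$ under the type-formers is then exactly what the earlier propositions provide: Proposition~\ref{prop:homotopical-id-etc} for $\Id$-, $\Sigma$-, and unit-types (with no extra hypotheses on $\I$ or $\C$ beyond $\Id$-types), and Proposition~\ref{prop:pi-types-homotopical} for $\Pi$-types, under the hypotheses that $\C$ carries $\Pi_\ext$-structure and all maps in $\I$ are equivalences; the latter proposition moreover records that the resulting $\Pi$-structure on $\C^\I$ is again extensional. One small point to spell out here: the \emph{data} of function extensionality structure on $\C^{\disc{\I}}$ — the maps $\funext_{A,B}$ and homotopies $\funextcomp_{A,B}$ of Definition~\ref{def:funext-structure}, equivalently (via Lemma~\ref{lem:funext-iff-r-equiv}) the chosen equivalence data on the maps $\lambda(\refl_B\ev_{A,B})$ — are maps and homotopies between context extensions built from $\Pi$- and $\Id$-types of homotopical types, hence homotopical by the closure results, so they too live in $\C^\I$ by fullness. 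Thus $\C^\I$ inherits the $\Pi_\ext$-structure as a literal restriction of that on $\C^{\disc{\I}}$.

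For the functoriality claims, I would invoke the functoriality proposition established just after Definition~\ref{def:cwa-of-homotopical-diagrams}: an ordered homotopical discrete opfibration $u : \J \to \I$ induces $u^* : \C^\I \to \C^\J$ as a restriction of $u^* : \C^{\disc{\I}} \to \C^{\disc{\J}}$, and a homotopical CwA map $F : \C \to \D$ induces $F^\I : \C^\I \to \D^\I$ as a restriction of $F^\I : \C^{\disc{\I}} \to \D^{\disc{\I}}$. Since by Proposition~\ref{prop:logical-structure-summary} these actions preserve the logical structure on the non-homotopical diagram CwAs, and the structure on the homotopical sub-CwAs is by construction just the restriction of that structure, the restricted actions preserve it as well. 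This is where the case split on which structure the opfibration/CwA-map is assumed to preserve is inherited verbatim from the non-homotopical statement.

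I do not expect a genuine obstacle here: all the substantive content has already been discharged in Propositions~\ref{prop:homotopical-id-etc} and~\ref{prop:pi-types-homotopical} (of which the $\Pi$-case, Proposition~\ref{prop:pi-types-homotopical}, with its dinatural comparison to levelwise dependent products, is by far the hardest). The only thing to be careful about in stitching them together is precisely the bookkeeping of the second paragraph — checking that fullness of $\C^\I$ in $\C^{\disc{\I}}$ really does transport the term-level structure, its coherence, and the auxiliary funext data for free — and that the functoriality statement is quantified only over \emph{ordered homotopical} discrete opfibrations and \emph{homotopical} CwA maps, as in the restricted functoriality proposition above.
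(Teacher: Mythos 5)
Your proposal is correct and matches the paper's own (essentially one-line) argument: the proposition is proved by "putting together" Proposition~\ref{prop:homotopical-id-etc}, Proposition~\ref{prop:pi-types-homotopical}, and Proposition~\ref{prop:logical-structure-summary}, with fullness of $\C^\I$ in $\C^{\disc{\I}}$ handling the term-level structure exactly as you describe. Your additional bookkeeping about the funext data and the restricted functoriality is a faithful elaboration of what the paper leaves implicit.
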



\section{Properties of induced functors} \label{sec:fibrations-and-shit}

We conclude by investigating conditions on functors $u \colon \J \to \I$ under which the induced maps of CwA's $u^* \colon \C^\I \to \C^\J$ enjoy desirable extra properties.
Specifically, we will want to know when this map is a local \emph{fibration}, \emph{trivial fibration}, or \emph{equivalence} in the sense of \jpaacite{kapulkin-lumsdaine:homotopy-theory-of-type-theories}, which we recall here:

\begin{definition}[{\jpaacite[Def.~3.11]{kapulkin-lumsdaine:homotopy-theory-of-type-theories}}] \label{def:fibrations-and-equivs-of-cwas}
  Let $F \colon \C \to \D$ be a map of CwA’s.  Then we say:
  
  \begin{enumerate}
  \item $F$ is a \defemph{local fibration} if it satisfies the properties
    \begin{itemize}
    \item \defemph{equivalence lifting}: given any $\Gamma \in \C$, $A \in \Ty_\C\Gamma$, $B \in \Ty_\D (F\Gamma)$, and structured equivalence $w \colon FA \equiv B$ over $F\Gamma$, there exists some lift $\bar{B} \in \Ty_\C \Gamma$ of $B$, together with a structured equivalence $\bar{w} \colon A \equiv \bar{B}$ over $\Gamma$ such that $F \bar{w} = w$;
    \item \defemph{path lifting}: given any $\Gamma \in \C$, $A \in \Ty_\C\Gamma$, section $a$ of the projection $p_A$ in $\C$, section $a'$ of $p_{FA}$ in $\D$, and section $e$ of $p_{\Id_{FA}(Fa,a')}$, there exist lifts of $a'$, $e$ to $\C$.
    \end{itemize}
  
  \item $F$ is a \defemph{local trivial fibration} if it satisfies
    \begin{itemize}
    \item \defemph{type lifting}: given any $\Gamma \in \C$ and $A \in \Ty_\D (F\Gamma)$, there exists some $\overline{A} \in \Ty_\C \Gamma$ such that $F\overline{A} = A$;
    \item \defemph{term lifting}: given any $\Gamma \in \C$, $A \in \Ty_\C\Gamma$, and section $a$ of $FA$ in $\D$, there exists some section $\overline{a}$ of $A$ such that $F\overline{a} = a$.
    \end{itemize}
  
  \item $F$ is a \defemph{local equivalence} if it satisfies
    \begin{itemize}
    \item \defemph{weak type lifting}: given any $\Gamma \in \C$ and $A \in \Ty_\D (F\Gamma)$, there exists $\overline{A} \in \Ty_\C \Gamma$ and equivalence $w \colon F\Gamma. F\overline{A} \weqto F\Gamma.A$ over $F\Gamma$;
      
    \item \defemph{weak term lifting}: given any $\Gamma \in \C$, $A \in \Ty_\C\Gamma$, and section $a$ of $FA$ in $\D$, there exist some section $\overline{a}$ of $A$ and propositional equality $p \colon \Id_{FA}(F\overline{a},a)$.
    \end{itemize}
  \end{enumerate}
\end{definition}

We give these results just for \emph{homotopical} diagram CwA’s: the non-homotop-ical version then arises as a special case, by considering any ordinary category as a homotopical category with no maps marked as equivalences.  
No generality is lost, since local fibrations and equivalences already require consideration of $\Id$-types.

For the remainder of this section, fix a CwA $\C$ with identity types.

\subsection{Fibrations between diagram CwA’s}

\begin{lemma} \label{lem:fibration-between-diagram-cats}
  Let $u :\J \to \I$ be an \emph{injective} ordered homotopical discrete opfibration of ordered homotopical inverse categories.
  (In other words, $\J$ is a downward-closed subcategory of $\I$, or equivalently a co-sieve on $\I$.)
  Then the induced map $u^* \colon \C^\I \to \C^\J$ of homotopical diagram CwA’s is a local fibration.
\end{lemma}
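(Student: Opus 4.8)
The plan is to verify the two conditions in Definition~\ref{def:fibrations-and-equivs-of-cwas}(1), \emph{equivalence lifting} and \emph{path lifting}, each by the usual Reedy induction over $\I$. Both inductions hinge on one structural observation about the pair $(u,\J)$: since $u$ is a discrete opfibration and $\J$ a co-sieve, every non-identity map out of an object $uj$ (for $j \in \J$) in $\I$ lifts uniquely to a map out of $j$ in $\J$, and in particular has codomain in the image of $u$. Hence, for any diagram, section, or type over a diagram on $\I$, the matching object $M_{uj}(-)$ and the comparison maps into it involve only the data at levels in $\im u$; combined with the fact that $u^*$ strictly preserves relative matching objects (Proposition~\ref{prop:discrete-opfib-matching-object}, with compatibility with the orderings as in Proposition~\ref{prop:strict-reedy-types}), this yields strict equalities $M_{uj}(X) = M_j(u^*X)$, and likewise for induced maps on matching objects and for comparison components. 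The upshot driving both inductions is: at a level $i = uj \in \im u$ we must — and can — copy the prescribed $\C^\J$-datum verbatim, while at a level $i \notin \im u$ we are free to make any convenient choice, since it is invisible to $u^*$.

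For equivalence lifting I would take $\Gamma \in \C^\I$, a homotopical type $A$ over $\Gamma$, a homotopical type $B$ over $u^*\Gamma$ in $\C^\J$, and equivalence data $w$ on an equivalence $u^*A \weqto B$ over $u^*\Gamma$; by Lemma~\ref{lem:equivalences-are-levelwise} this $w$ is a Reedy equivalence, hence recorded by equivalence data on each comparison component $m_j w$. I then build by Reedy induction a type $\bar B$ over $\Gamma$ in $\C^{\disc\I}$ and Reedy equivalence data $\bar w$ on a map $\bar w : \Gamma.A \to \Gamma.\bar B$ over $\Gamma$. At stage $i$ the induced equivalence $M_i\bar w : M_i A \to M_i\bar B$ over $\Gamma_i$ is already determined, and is an equivalence by Lemma~\ref{lem:equivalences-and-reedy-limits}(\ref{item:reedy-limit-of-equiv}). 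If $i = uj$, the observation identifies $M_i\bar B$ and $M_i\bar w$ with $M_j B$ and $M_j w$, so I set $\bar B_i := B_j$ and give $m_i\bar w$ the equivalence data of $m_j w$. If $i \notin \im u$, I apply weak type lifting for the equivalence $M_i\bar w$ (Definition~\ref{def:equivalence-in-cwa}) to the type $A_i$, obtaining $\bar B_i \in \Ty(M_i\bar B)$ and equivalence data on a map $M_iA.A_i \to M_iA.(M_i\bar w)^*\bar B_i$ to serve as $m_i\bar w$ — exactly the step used in the proof of Lemma~\ref{lem:equivalence-if-levelwise}. Each $m_i\bar w$ then carries equivalence data, so $\bar w$ is an equivalence in $\C^{\disc\I}(\Gamma)$ by Lemma~\ref{lem:equivalences-are-levelwise}; since $\Gamma$ is homotopical, $\bar B$ is homotopical by Proposition~\ref{prop:diag-equiv-to-homot-diag}, hence a genuine type of $\C^\I$, and by construction $u^*\bar B = B$ and $u^*\bar w = w$, as required.

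Path lifting is analogous and slightly easier. Given $\Gamma$, $A$, a section $a$ of $p_A$ in $\C^\I$, a section $a'$ of $p_{u^*A}$, and a section $e$ of the relevant $\Id$-type — i.e.\ a homotopy $u^*a \homot a'$ over $u^*\Gamma$ — I build a section $\bar a'$ of $p_A$ in $\C^\I$ and a homotopy $\bar e : a \homot \bar a'$ over $\Gamma$ by Reedy induction, maintaining $u^*\bar a' = a'$ and $u^*\bar e = e$. Below $i$ the data determine the induced section $M_i\bar a'$ of $A_{\bdry{i}}$ over $\Gamma_i$ and, via Lemma~\ref{lem:reedy-limits-preserve-homotopy}, a homotopy $M_i a \homot M_i\bar a'$. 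For $i = uj$ I set $\bar a'_i := a'_j$ and $\bar e_i := e_j$, which typechecks by the structural observation; for $i \notin \im u$ I transport $a_i$ — a section of $(M_i a)^*A_i$ over $\Gamma_i$ — along the homotopy $M_i a \homot M_i\bar a'$ using the $\Id$-elimination of $\C$, obtaining both a section $\bar a'_i$ of $(M_i\bar a')^*A_i$ and a path over this homotopy to serve as $\bar e_i$. Assembling the levels produces the required lifts of $a'$ and $e$.

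I expect the only genuine subtlety to be the structural observation of the first paragraph — that being a co-sieve under a discrete opfibration forces the matching objects over image-levels to depend only on image-levels — since this is precisely what makes the ``copy on the image, fill in freely off the image'' strategy coherent and compatible with $u^*$ on the nose. Once that is in hand, both inductions are routine applications of the lemmas of Sections~\ref{sec:inverse-diagrams}--\ref{sec:homotopical-diagrams}, and the verification is complete.
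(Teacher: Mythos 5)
Your proposal is correct and follows essentially the same route as the paper: Reedy induction, copying the given $\C^\J$-data verbatim at levels in $\im u$ (justified by the cosieve/discrete-opfibration observation that matching objects there see only image levels), and filling in freely off the image, with Proposition~\ref{prop:diag-equiv-to-homot-diag} and Lemma~\ref{lem:equivalences-are-levelwise} closing the argument. The only (cosmetic) difference is at levels $i \notin \im u$, where the paper simply takes $\bar B_i := (M_i\bar e)^*A_i$ and invokes right properness, rather than appealing to the weak type lifting property of the equivalence $M_i\bar w$ as you do; both choices work.
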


\begin{proof}
  For type lifting, consider $\Gamma \in \C^\I$, $A \in \Ty^\I(\Gamma)$, $B \in \Ty^\J(u^*\Gamma)$, and an equivalence $e \colon u^*\Gamma.B \to u^*\Gamma. u^*A$ over $u^*\Gamma$.
  We will define an extension $\overline{B} \in \Ty^{\disc{\I}}(\Gamma)$ of $B$, along with a levelwise equivalence $\overline{e} \colon \Gamma. \overline{B} \to \Gamma. A$ over $\Gamma$ extending $e$.
  By Proposition~\ref{prop:diag-equiv-to-homot-diag}, $\overline{B}$ must be homotopical since $A$ is, and by Lemma~\ref{lem:equivalence-iff-levelwise}, $\overline{e}$ will be an equivalence in $\C^\I$, as required.
 
  As usual, we work by induction on $j \in \I$.
  If $j = ui$ for some $i \in \J$, we take $\overline{B}_j = B_i$ and $\overline{e}_j = e_i$ (an equivalence by \ref{lem:equivalence-iff-levelwise}).
  Otherwise, if $j \notin \im u$, take $\overline{B}_j$ to be $(M_j \overline{e})^* A_j$ and $\overline{e}_j$ to be $(M_j \overline{e}). A_j$:
  \[\begin{tikzcd}[column sep=large]
    M_j\overline{B}. (M_j \overline{e})^* A_j \ar[r, "\overline{e}_j", weq'] \ar[d,fib] \ar[drpb] 
    & M_j A. A_j \ar[d,fib] \\ 
    M_j \overline{B} \ar[r, "M_j\overline{e}", weq'] & M_j A.
  \end{tikzcd}\]
  By Lemmas~\ref{lem:equivalences-and-reedy-limits} and \ref{lem:levelwise-equivalence-iff-reedy}, $M_j \overline{e}$ is an equivalence, and hence by right properness, so is $\overline{e}_j$.

  For term lifting, the argument is exactly parallel, using transport along a “matching map” propositional equality.
\end{proof}

\subsection{Equivalences between diagram CwA’s}

\begin{definition}[{\jpaacite[p.~24]{szumilo:two-models}}]
  A homotopical functor $u \colon \J \to \I$ between homotopical categories is a \defemph{homotopy equivalence} if there is some homotopical functor $v \colon \I \to \J$ such that $uv$ and $vu$ are homotopic, via zigzags of natural weak equivalences, to the respective identity functors.
\end{definition}

The main remaining goal of this section is to show that if an ordered discrete opfibration $u \colon \J \to \I$ is a homotopy equivalence, then the induced map $u^* \colon \C^\I \to \C^\J$ is an equivalence of CwA’s (Theorem~\ref{thm:homotopy-equivalence} below).
This involves several technical difficulties, arising from two main sources: firstly, from the zigzags witnessing the homotopy equivalence, and further zigzags and grids arising from these; and secondly, from the fact that the functors appearing in these zigzags may not be discrete opfibrations, so will not act on Reedy types.
We therefore first establish some tools for dealing with these issues.

\begin{lemma} \label{lem:reedy-factorisation}
  Let $\J$ be an ordered homotopical inverse category, and suppose $\C$ is equipped with $\Sigma$-types.
  Suppose $f \colon \Gamma.\AA \to \Gamma.\BB$ is a map between levelwise extensions over $\Gamma$.
  Then $f$ can be factored as $\Gamma.\AA \to[w] \Gamma.\BB.C \to \Gamma.\BB$, where $C$ is a Reedy type over $\Gamma.\BB$ and $w$ is a levelwise equivalence.
\end{lemma}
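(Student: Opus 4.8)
The plan is to build the Reedy type $C$ and the levelwise equivalence $w$ by the usual Reedy induction on $j \in \J$, mimicking the standard mapping-path-space factorisation but performed level by level so that the result is genuinely a Reedy type rather than merely a levelwise extension. At a level $j$, we may assume $C$ and $w$ have been constructed over all $\J_{<j}$; this gives us, in particular, the matching objects $M_j(\BB.C)$ and $M_j\AA$, together with the comparison data for $f$ and $w$ in lower degrees. The type $C_j$ should be taken to live over $M_j(\BB.C)$ — i.e.\ a \emph{Reedy} type — and the most natural candidate is the $\Sigma$-type, over $M_j(\BB.C)$, of the descent of $\AA_j$ along the comparison component $m_j w$ (in lower degrees) together with an identity type witnessing agreement with $f_j$; concretely, $C_j$ is the codomain of a path-object factorisation of the comparison component $m_j f : (\Gamma.\AA)_j \to M_j\AA . (M_j f)^*\BB_j$, carried out fibrewise over $M_j(\BB.C)$ using the $\Sigma$- and $\Id$-structure of $\C$. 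The component $w_j$ is then the "inclusion of constant paths" map into this path object, which is an equivalence over $M_j\BB.C_j$, hence — being a comparison component — makes $w$ a Reedy equivalence in degree $j$.

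The key steps, in order, are: (1) set up the induction and identify what lower-degree data are available (matching objects, $m_j f$, $m_j w$); (2) at level $j$, apply the mapping-path-space factorisation \emph{relatively}, over the base $M_j(\BB.C)$, using $\Sigma$- and $\Id$-types to produce $C_j$ as a Reedy type and $w_j$ together with its comparison component $m_j w$; (3) check that $m_j w$ is an equivalence, so that by Definition~\ref{def:reedy-equivalence} $w$ is a Reedy equivalence, hence a levelwise equivalence by Lemma~\ref{lem:equivalences-and-reedy-limits}(\ref{item:levelwise-from-reedy-equiv}) (and then an equivalence in $\C^\J$ by Lemma~\ref{lem:equivalences-are-levelwise}, though for the statement levelwise suffices); (4) verify that the evident projection $\Gamma.\BB.C \to \Gamma.\BB$ composed with $w$ recovers $f$, which holds by construction of the path-object factorisation; (5) confirm stability under reindexing, so that the pieces assemble into an honest Reedy type over all of $\J$.

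The main obstacle I expect is step (2): carrying out the path-object factorisation so that $C_j$ is genuinely a type over the \emph{matching object} $M_j(\BB.C)$ rather than over $(\Gamma.\BB)_j$, while simultaneously ensuring its comparison component $m_j w$ is an equivalence. The subtlety is that the "input" comparison component $m_j f$ lands in $M_j\AA.(M_jf)^*\BB_j$, whose $\AA$-part must be descended along the already-constructed equivalence $m_j w$ in lower degrees before one can take the fibrewise path object over $M_j(\BB.C)$; keeping track of these descents (Lemma~\ref{lem:descent-from-elim-structure}) and the accompanying reindexing bookkeeping, and checking the resulting $m_j w$ is an equivalence (by 2-out-of-3 against the lower-degree comparison data, much as in Lemma~\ref{lem:levelwise-equivalence-iff-reedy}), is where the real work lies. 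The remaining verifications — commutation with the projection to $\Gamma.\BB$, and stability/functoriality — are routine inductions of the kind already carried out repeatedly in Section~\ref{sec:logical-structure}.
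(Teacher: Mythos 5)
Your overall strategy --- Reedy induction, producing $C_j$ at each level by a mapping-path-space factorisation built from $\Id$- and $\Sigma$-types, with the levelwise equivalence coming out of the factorisation --- is exactly the paper's proof (which cites the factorisation of Gambino--Garner, packaged via an iterated $\Sigma$-type into a single type, as in \cite[Lem.~3.2.11]{avigad-kapulkin-lumsdaine}). The one substantive misstep is in your step (2): you propose to factor the ``comparison component'' $m_j f : (\Gamma.\AA)_j \to M_j\AA.(M_jf)^*\BB_j$, but this object does not exist here --- $\AA$ and $\BB$ are only \emph{levelwise} extensions, so they have no matching objects $M_j\AA$, $M_j\BB$ and $f$ has no $M_jf$; this is also what forces you into the unnecessary ``descend the $\AA$-part along $m_jw$'' detour. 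The correct (and simpler) map to factor is the canonical map $g_j : (\Gamma.\AA)_j \to M_jC = (\Gamma.\BB)_j.C_{\bdry j}$ into the matching object of the type \emph{being constructed}, induced by $f_j$ together with the components $w_k$ already built for $k$ in the strict coslice under $j$. Since $g_j$ lives in the fibrant slice $\C(\Gamma_j)$, factoring it as an equivalence followed by a dependent projection immediately makes $w_j$ an equivalence over $\Gamma_j$, so $w$ is levelwise by construction --- no passage through Reedy equivalence data or 2-out-of-3 against lower-degree comparison maps is needed.
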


\begin{proof}
  Work by Reedy induction; suppose $C$, $w$ have been constructed in dimensions $<i$.
  Then we have a map $g_i \colon (\Gamma.\AA)_i \to M_i C$ over $\Gamma_i$, induced by $f_i$ together with $w_j$ for $j \in i/\J$.
  Since $g_i$ lies in the fibrant slice $\C(\Gamma_i)$, we can obtain $C_i$, $w_i$ by factoring it as $(\Gamma.\AA)_i \to[w_i] M_i C . C_i \fibto M_i C$ using \jpaacite[Lem.~3.2.11]{avigad-kapulkin-lumsdaine}; that is, using the factorisation of \jpaacite[Lem.~11]{gambino-garner}, then taking an iterated $\Sigma$-type to collapse the context extension factor into a single type.
  %
  %
\end{proof}

An important special case of Lemma~\ref{lem:reedy-factorisation} is when $\BB$ is the empty context extension; this gives the “Reedy replacement” of a levelwise extension $\AA$ over $\Gamma$.

\begin{lemma} \label{lem:levelwise-to-reedy-descent}
  Let $\J$ be an ordered homotopical inverse category, and suppose $\C$ is equipped with $\Sigma$-types.
  Suppose $e \colon \Gamma \weqto \Delta$ is an equivalence in $\C^\J$, and $\AA$ a levelwise extension over $\Gamma$.
  Then there is a \emph{Reedy} type $B$ over $\Delta$, and equivalence $\bar{e} \colon \Gamma.\AA \to \Delta.B$ over $e$.
\end{lemma}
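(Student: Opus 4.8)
The plan is to first replace the levelwise extension $\AA$ by an honest Reedy type over $\Gamma$, and then to descend that Reedy type along the equivalence $e$. Concretely, I would apply the ``Reedy replacement'' special case of Lemma~\ref{lem:reedy-factorisation} (the case of empty target extension) to $\AA$, obtaining a Reedy type $A'$ over $\Gamma$ together with a levelwise equivalence $w : \Gamma.\AA \to \Gamma.A'$ over $\Gamma$. By Lemma~\ref{lem:equivalences-are-levelwise}, $w$ is then an equivalence in the fibrant slice $\C^\J(\Gamma)$ (and if $\AA$ is homotopical then so is $A'$, by Proposition~\ref{prop:diag-equiv-to-homot-diag}, although in $\C^\J$ every Reedy type is homotopical in any case). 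Now, since $e : \Gamma \weqto \Delta$ is an equivalence in $\C^\J$, its weak type lifting property applied to the Reedy type $A'$ over $\Gamma$ yields a type $B$ over $\Delta$ --- automatically a strict Reedy type, since strict Reedy types are precisely the types of the CwA $\C^\J$ --- together with an equivalence $v$ over $\Gamma$ exhibiting $\Gamma.e^*B \weqto \Gamma.A'$.

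It remains to assemble these. Let $u : \Gamma.\AA \weqto \Gamma.e^*B$ be the composite of $w$ with a homotopy inverse of $v$; such an inverse exists since $v$ is an equivalence in the fibrant slice $\C^\J(\Gamma)$, and $u$ is an equivalence over $\Gamma$ by closure of equivalences under composition. Finally set $\bar e \coloneqq (e.B) \circ u : \Gamma.\AA \to \Delta.B$. The map $e.B$ is an equivalence by right properness for the equivalence $e$, so $\bar e$ is an equivalence by $2$-out-of-$3$; and $\bar e$ lies over $e$ since $u$ lies over $\Gamma$ and $e.B$ lies over $e$, chasing the defining pullback square of $e.B$. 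This produces the required Reedy type $B$ over $\Delta$ and equivalence $\bar e$ over $e$.

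There is no deep obstacle here: the weight is carried by Lemmas~\ref{lem:reedy-factorisation} and~\ref{lem:equivalences-are-levelwise} and the formal properties of equivalences (composition, homotopy inverses in fibrant slices, right properness). The points needing care are the observation that weak type lifting for $e$ delivers a \emph{strict} Reedy type (because those are exactly the types of $\C^\J$), the orientation of the various equivalences, and the bookkeeping of which maps lie over which base so that the final composite is genuinely over $e$; keeping ``Reedy type'' and ``levelwise extension'' distinct throughout is the main thing to watch.
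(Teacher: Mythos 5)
Your proof is correct and follows essentially the same route as the paper: Reedy-replace $\AA$ via Lemma~\ref{lem:reedy-factorisation}, apply weak type lifting of $e$ to the replacement to get $B$, and compose with $e.B$. The only (inessential) difference is that the paper takes the weak-type-lifting equivalence directly in the direction $\Gamma.A' \weqto \Gamma.e^*B$, so your extra homotopy-inverse step is not needed.
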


\begin{proof}
   First, by Lemma~\ref{lem:reedy-factorisation}, we obtain a Reedy replacement for $\AA$, i.e.\ a Reedy type $A'$ with an equivalence $w_0 \colon \Gamma.\AA \weqto \Gamma.A'$  over $\Gamma$.
   Then, since $e$ is an equivalence, its weak type lifting property gives a Reedy type $B$ over $\Delta$ along with an equivalence $w_1 \colon \Gamma.A' \weqto \Gamma.e^*B$ over $\Gamma$.
   Taking $\bar{e} \coloneqq (e.B)w_1w_0 \colon \Gamma.\AA \to \Delta.B$, we are done.
\end{proof}

\begin{lemma} \label{lem:span-equiv-to-map}
  Fix an ordered homotopical inverse category $\J$; suppose $\C$ is equipped with $\Sigma$-types.
  Let $\Gamma \in \C^\J$ be a diagram, $A_1$, $A_2$ Reedy types over $\Gamma$, and $B$ a levelwise extension over $\Gamma$, along with levelwise equivalences $e_\varepsilon \colon \Gamma.B \to \Gamma.A_\varepsilon$ over $\Gamma$ (for $\varepsilon = 0,1$).
  Then there is an equivalence $b \colon \Gamma.A_0 \to \Gamma.A_1$ over $\Gamma$.
\end{lemma}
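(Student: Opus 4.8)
The plan is to collapse the span $\Gamma.A_0 \from[e_0] \Gamma.B \to[e_1] \Gamma.A_1$ into a single map, by replacing the levelwise-extension apex $\Gamma.B$ with a Reedy extension while retaining a diagram map back to $\Gamma.B$. First I would apply Lemma~\ref{lem:reedy-factorisation} to $e_0 : \Gamma.B \to \Gamma.A_0$ (regarding $A_0$ as a levelwise extension via the canonical map $\C^\J \to \C^\J_\lw$), obtaining a factorisation $\Gamma.B \to[w] \Gamma.A_0.C \to[\pi_C] \Gamma.A_0$ with $C$ a Reedy type over $\Gamma.A_0$ and $w$ a levelwise equivalence over $\Gamma$. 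Since $e_0 = \pi_C w$ and both $e_0$ and $w$ are levelwise equivalences, 2-out-of-3 makes $\pi_C$ a levelwise equivalence of Reedy types over $\Gamma$; by Lemma~\ref{lem:equivalences-are-levelwise}(1) it is then an equivalence in the fibrant slice $\C^\J(\Gamma)$, so it admits a homotopy inverse $s : \Gamma.A_0 \to \Gamma.A_0.C$ over $\Gamma$, which is in particular again a levelwise equivalence.

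Next I would extract a diagram-map retraction of $w$. The factorisation produced by Lemma~\ref{lem:reedy-factorisation} is assembled levelwise from the mapping-path-object factorisation of \cite{gambino-garner} followed by an iterated-$\Sigma$ collapse of a context extension into a single type, and each of these steps carries a canonical, natural retraction: the first projection off the mapping path object, and the destructor $\Gamma.\Sigma_A B \to \Gamma.A.B$ derived from the $\Sigma$-eliminator, which satisfies ``destructor'' $\circ\, \pair = \id$ strictly by the $\Sigma$-computation rule. Assembled levelwise, these give a map of diagrams $q : \Gamma.A_0.C \to \Gamma.B$ over $\Gamma$ with $q w = \id_{\Gamma.B}$; by 2-out-of-3 again, $q$ is a levelwise equivalence.

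Finally I would set $b \coloneqq e_1 \circ q \circ s : \Gamma.A_0 \to \Gamma.A_1$. As a composite of the levelwise equivalences $s$, $q$, $e_1$ over $\Gamma$, this is a levelwise equivalence over $\Gamma$ between the Reedy types $A_0$ and $A_1$; so by Lemma~\ref{lem:equivalences-are-levelwise}(1) it is an equivalence in $\C^\J(\Gamma)$, and hence in $\C^\J$, as required.

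The hard part will be the second step: one must know that the Reedy factorisation of Lemma~\ref{lem:reedy-factorisation} can be taken to come equipped with a retraction of $w$ that is a genuine map of diagrams, not merely a levelwise family of retractions. This rests on the naturality of the mapping-path-object construction together with the strictness of the $\Sigma$-computation rule; without such a retraction the span could not be collapsed, since a levelwise equivalence out of a levelwise extension into a Reedy extension need not admit a diagram-level homotopy inverse.
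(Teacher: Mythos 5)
Your overall strategy — replace the levelwise apex of the span by a Reedy extension and then collapse the span into a single map — is the right idea, but the route you take through it has a genuine gap at exactly the step you flag as "the hard part": the diagram-level retraction $q$ of $w$ does not exist by the naturality argument you sketch. Unwind the construction of Lemma~\ref{lem:reedy-factorisation}: at level $i$ one factors not $f_i$ but the induced map $g_i : (\Gamma.B)_i \to M_i C$ into the matching object, and a general element of $(\Gamma.A_0.C)_i = M_iC.C_i$ consists of boundary data $(c_\alpha)_{\alpha : i \to j}$ (each unpacking via the $\Sigma$-destructor to a pair $(a_\alpha, p_\alpha)$ with $a_\alpha \in (\Gamma.B)_j$) \emph{together with} an independent top-level pair $(a_i, p_i)$. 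The levelwise retraction $r_i$ returns $a_i$, so strict naturality would require $a_\alpha = (\Gamma.B)_\alpha(a_i)$ for every point; but the only link between these is the identity proof $p_i$, which makes them \emph{propositionally} equal, not definitionally. So the levelwise retractions do not assemble into a map of diagrams, and the $\Sigma$-computation rule only rescues you on the image of $w$, which is not enough. (Nor can you invert $w$ up to homotopy in the fibrant slice, since $\Gamma.B$ is merely a levelwise extension and identity contexts are not available for it — which, as you note, is the whole reason these lemmas exist.)

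The paper sidesteps the need to map back to $\Gamma.B$ entirely: it applies Lemma~\ref{lem:reedy-factorisation} to the \emph{joint} map $(e_0,e_1) : \Gamma.B \to \Gamma.A_0.A_1$, obtaining $\Gamma.B \weqto \Gamma.A_0.A_1.C \to \Gamma.A_0.A_1$ with $C$ Reedy. The two projections $p_\varepsilon : \Gamma.A_0.A_1.C \to \Gamma.A_\varepsilon$ are then equivalences by 2-out-of-3, and since $p_0$ is both an equivalence and a composite of dependent projections it admits an honest section $s_0$; the composite $p_1 s_0$ is the desired equivalence $\Gamma.A_0 \to \Gamma.A_1$. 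I'd suggest adopting that decomposition: your steps 1 and 3 are fine, but step 2 cannot be repaired without substantially more (homotopy-coherent) machinery.
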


\begin{proof}
  First factor $(e_0,e_1) \colon \Gamma.B \to \Gamma.A_0.A_1$ as an equivalence followed by a Reedy type $\Gamma.A_0.A_1.C \fibto \Gamma.A_0.A_1$, by Lemma~\ref{lem:reedy-factorisation}.
  By 2 out of 3, the maps $p_{\varepsilon} \colon \Gamma.A_0.A_1.C \to \Gamma.A_{\varepsilon}$ are both equivalences.
  Since $p_0$ is both an equivalence and a context extension, it has some section $s_0$; the composite $p_1 s_0$ gives an equivalence $\Gamma.A_0 \to \Gamma.A_1$ as desired.
\end{proof}

\begin{lemma} \label{lem:zigzag-improvement}
  Let $\J$ be an ordered homotopical inverse category.
  Suppose given a zigzag of diagrams $\gamma \colon \Gamma_0 \zigzagto \Gamma_n$, and a zigzag $\alpha \colon \Gamma_0.A_0 \zigzagto \Gamma_n.A_n$ of levelwise extensions over this, whose endpoints are moreover Reedy types $A_0, A_n$:
  \[ \begin{tikzcd}[sep=tiny]
    & \Gamma_1.\AA_1 \ar[dl] \ar[dr] \ar[dd,lw] & & \phantom{\Gamma.\AA} \ar[dl] \ar[r,phantom,description,"\cdots"] & \phantom{\Gamma.\AA} \ar[dr] & \\
    \Gamma_0.A_0 \ar[dd,fib] & & \Gamma_2.\AA_2 \ar[dd,lw] &  &  & \Gamma_n.A_n \ar[dd,fib] \\
    & \Gamma_1 \ar[dl] \ar[dr] & & \phantom{\Gamma} \ar{dl} \ar[r,phantom,description,"\cdots"] & \phantom{\Gamma} \ar[dr] \\
    \Gamma_0 & & \Gamma_2 & & & \Gamma_n
  \end{tikzcd} \]
  with all maps in the zigzags equivalences.

  Then we can replace $\alpha$ with a new zigzag $\alpha'$, also over $\gamma$, and with the same endpoints $\Gamma.A_0$, $\Gamma.A_n$, but with all objects occurring in $\alpha'$ being \emph{Reedy} types over $\Gamma_i$.
\end{lemma}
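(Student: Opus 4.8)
The plan is to upgrade the zigzag $\alpha$ one internal object at a time, moving from the endpoints inwards, using Lemma~\ref{lem:levelwise-to-reedy-descent} to replace each levelwise extension with a Reedy type lying over the same base diagram, while keeping track of equivalences along the way. First I would observe that the endpoints $A_0$, $A_n$ are already Reedy by hypothesis, so there is nothing to do there; the issue is purely with the internal vertices $\AA_1, \ldots, \AA_{n-1}$, which are merely levelwise. The key tool is that each edge of $\alpha$ is an equivalence, and each edge of $\gamma$ is an equivalence, so the base diagrams $\Gamma_i$ are all equivalent to one another; in particular each $\Gamma_i$ is an equivalence away from an endpoint base where (by assumption) we have a Reedy type.

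Concretely, I would induct on the number of internal vertices that are still merely levelwise. Pick an internal vertex $\Gamma_k.\AA_k$ which is adjacent (in the zigzag $\alpha$) to a vertex $\Gamma_{k\pm1}.A_{k\pm1}$ already known to be a Reedy type; there is always such a vertex as long as some internal vertex is still levelwise, since the endpoints are Reedy. The edge of $\alpha$ between them lies over an edge of $\gamma$, which is an equivalence $e : \Gamma_k \weqto \Gamma_{k\pm1}$ (or its reverse). Apply Lemma~\ref{lem:levelwise-to-reedy-descent} to $e$ (or $e^{-1}$, after noting equivalences in $\C^\J$ admit a homotopy inverse, or simply applying the lemma with the roles of source and target of the equivalence exchanged — the statement of the lemma already allows transporting a levelwise extension along an equivalence $e : \Gamma \weqto \Delta$ to a Reedy type over $\Delta$) to the levelwise extension $\AA_k$ over $\Gamma_k$: this produces a Reedy type $A'_k$ over $\Gamma_{k}$ together with an equivalence $\bar{e} : \Gamma_k.\AA_k \weqto \Gamma_k.A'_k$ over $\Gamma_k$. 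Replace $\AA_k$ by $A'_k$ in the zigzag, inserting the equivalence $\bar{e}$ as an extra edge (or composing it with the two adjacent edges of $\alpha$); the new zigzag $\alpha^{(1)}$ is still over $\gamma$ after lengthening $\gamma$ by an identity edge at $\Gamma_k$, still has all edges equivalences (by 2-out-of-3, composing $\bar e$ with adjacent edges), still has the same endpoints, and has one fewer internal levelwise vertex. Iterating, after finitely many steps all internal objects are Reedy types, giving the desired $\alpha'$.

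The main obstacle is bookkeeping rather than substance: making sure that after each replacement the zigzag still satisfies all the required properties simultaneously — in particular that the replaced edge is indeed an equivalence (by 2-out-of-3 applied to $\bar e$ and the old edge), that the new vertex still lies over the correct component $\Gamma_k$ of $\gamma$, and that inserting $\bar e$ does not disturb the endpoints. One slightly delicate point is that Lemma~\ref{lem:levelwise-to-reedy-descent} transports $\AA_k$ \emph{along an equivalence into the target}, so to Reedy-replace $\AA_k$ \emph{over its own base} $\Gamma_k$ one applies the lemma with $e = \id_{\Gamma_k}$, yielding the Reedy replacement from Lemma~\ref{lem:reedy-factorisation}; the equivalence already present in the zigzag then connects this to the neighbouring Reedy vertex. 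A final remark: there is no circularity with the neighbouring vertex's Reedy status, since we only ever consume it, never modify it, and we always process a vertex adjacent to an already-Reedy one, so the induction is well-founded. \qed
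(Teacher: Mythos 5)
There is a genuine gap: your induction step only works for half of the internal vertices. In the zigzag $\alpha$, the internal objects alternate between \emph{cospan} vertices (the even-indexed ones, which only \emph{receive} edges) and \emph{span apexes} (the odd-indexed ones, which only \emph{emit} edges). For a cospan vertex, your recipe is fine: Reedy-replace $\AA_k$ in place via Lemma~\ref{lem:reedy-factorisation} to get $\bar e : \Gamma_k.\AA_k \weqto \Gamma_k.A'_k$, and postcompose the incoming edges with $\bar e$. But for a span apex, rewiring an outgoing edge $f : \Gamma_k.\AA_k \to \Gamma_{k\pm1}.A_{k\pm1}$ requires an actual map $\Gamma_k.A'_k \to \Gamma_k.\AA_k$ with which to precompose, i.e.\ an inverse of $\bar e$. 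Here $\bar e$ is only a \emph{levelwise} equivalence of diagrams: each component has homotopy-inverse data in $\C$, but these levelwise inverses need not assemble into a map of diagrams, so no such inverse is available. Your two suggested fixes both fail at these vertices: ``inserting $\bar e$ as an extra edge'' leaves the levelwise object $\Gamma_k.\AA_k$ in the zigzag (so no progress is made), and ``composing with the adjacent edges'' is exactly the step that needs the non-existent inverse.

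This is precisely the difficulty that Lemma~\ref{lem:span-equiv-to-map} --- which you never invoke --- is designed to solve. The paper's proof treats the two kinds of vertex by genuinely different methods: cospan vertices are Reedy-replaced in place as you describe, but each span apex $\AA_{2i+1}$ is discarded entirely and replaced by the pullback $e^*A_{2i}$ of a (now Reedy) neighbour to $\Gamma_{2i+1}$; the new outgoing edge to the other neighbour is then manufactured by Lemma~\ref{lem:span-equiv-to-map}, whose proof produces the required ``wrong-way'' map by taking a section of a dependent projection that is both a context extension of Reedy types and an equivalence --- a trick unavailable for your $\bar e$. To repair your argument you would need to split the induction step into these two cases and route the span-apex case through Lemma~\ref{lem:span-equiv-to-map}.
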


\begin{proof}
  First, for each cospan in the zigzag, replace its vertex $\AA_{2i}$ with its Reedy-fibrant replacement $A_{2i}$ according to Lemma~\ref{lem:reedy-factorisation}.
  Next, for the vertex $\AA_{2i+1}$ of each span in the zigzag, pull back its neighbours $A_{2i}$, $A_{2i+2}$ to $\Gamma_{2i+1}$.
  Lemma~\ref{lem:span-equiv-to-map} then gives us an equivalence $e_i \colon e^*A_{2i} \to g^*A_{2i+1}$; now replace each $\AA_{2i+1}$ with $e^*A_{2i}$, using $e_i$ to produce the new span.
\end{proof}

\begin{lemma} \label{lem:square-completion}
  Let $\J$ be an ordered homotopical inverse category.
  Suppose given a commutative square of base diagrams in $\C^\J$, with all maps equivalences, and Reedy types and equivalences between them over some proper connected subgraph of the boundary of the square, e.g.
  \[
  \begin{tikzcd}
    \Gamma_3 \ar[r,weq] \ar[d,weq] & \Gamma_2 \ar[d,weq] \\
    \Gamma_1 \ar[r,weq] & \Gamma_0
  \end{tikzcd}
  \qquad 
  \begin{tikzcd}
    \Gamma_3.A_3 \ar[r,weq] \ar[d,weq] & \Gamma_2.A_2 \\
    \Gamma_1.A_1 \ar[r,weq] & \Gamma_0.A_0
  \end{tikzcd}
  \]
  Then this partial square can be completed to a full square (not necessarily commutative) of Reedy types and equivalences over the boundary of the base square.
\end{lemma}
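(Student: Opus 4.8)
The plan is to transport the given Reedy types around the square until all four corners carry Reedy types and three of the four boundary edges carry equivalences, and then to build the remaining edge using the commutativity of the base square, Lemma~\ref{lem:span-equiv-to-map}, and a descent property for equivalences along equivalences of diagrams. (As in Lemmas~\ref{lem:reedy-factorisation}--\ref{lem:zigzag-improvement}, I assume $\C$ carries $\Sigma$-types, so that those lemmas apply.)

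\emph{Reduction.} Label the corners $\Gamma_3,\Gamma_2,\Gamma_1,\Gamma_0$ with boundary maps $a\colon\Gamma_3\to\Gamma_2$, $c\colon\Gamma_3\to\Gamma_1$, $b\colon\Gamma_2\to\Gamma_0$, $d\colon\Gamma_1\to\Gamma_0$, all equivalences. A proper connected subgraph of the boundary $4$-cycle is a path, so it suffices to enlarge it one edge at a time: if an uncovered corner $X$ is joined by a boundary edge to a covered corner $Y$, transport the type over $Y$ along that edge — by pullback if the edge points $X\to Y$ (the pullback of a homotopical Reedy type is again one, and comes with an equivalence over the edge by right properness), and by Lemma~\ref{lem:levelwise-to-reedy-descent} if it points $Y\to X$. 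After finitely many steps we have homotopical Reedy types $A_3,A_2,A_1,A_0$ over the corners and equivalences over three boundary edges; exactly one edge remains. I will treat the hardest configuration, in which the missing edge is incident to the terminal corner $\Gamma_0$ — say it is $b$, with the equivalences in hand being $\bar a\colon\Gamma_3.A_3\weqto\Gamma_2.A_2$, $\bar c\colon\Gamma_3.A_3\weqto\Gamma_1.A_1$, $\bar d\colon\Gamma_1.A_1\weqto\Gamma_0.A_0$. (If the missing edge instead emanates from $\Gamma_3$, the argument below applies with the final descent step omitted, as one can then compare directly over $\Gamma_3$.)

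\emph{Reduction to a span and straightening.} Put $e\coloneqq ba=dc\colon\Gamma_3\to\Gamma_0$ (commutativity of the base square), an equivalence. Pull $A_2$ back along $a$ and $A_0$ back along $e$, and factor the two composite equivalences out of $\Gamma_3.A_3$ through these pullbacks:
\[ \bar a=(a.A_2)\circ\phi,\qquad \bar d\,\bar c=(e.A_0)\circ\psi, \]
with $\phi\colon\Gamma_3.A_3\to\Gamma_3.a^*A_2$ and $\psi\colon\Gamma_3.A_3\to\Gamma_3.e^*A_0=\Gamma_3.a^*(b^*A_0)$ over $\Gamma_3$. By right properness $a.A_2$ and $e.A_0$ are equivalences, so $\phi$ and $\psi$ are equivalences over $\Gamma_3$ by $2$-out-of-$3$. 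Applying Lemma~\ref{lem:span-equiv-to-map} to the span $\phi,\psi$ then yields an equivalence $\theta\colon\Gamma_3.a^*A_2\weqto\Gamma_3.a^*(b^*A_0)$ over $\Gamma_3$.

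\emph{Descent.} The last step — which I expect to be the main obstacle — is to descend $\theta$ along $a$ to an equivalence $\bar\theta\colon\Gamma_2.A_2\weqto\Gamma_2.b^*A_0$ over $\Gamma_2$; then $\bar b\coloneqq(b.A_0)\circ\bar\theta\colon\Gamma_2.A_2\to\Gamma_0.A_0$ is an equivalence over $b$, completing the square (which need not commute, so no further coherence is required). This rests on $a$ being an equivalence in $\C^\J$: weak term lifting for $a$ turns $\theta$, viewed as a section, into a map $\bar\theta$ over $\Gamma_2$ with $a^*\bar\theta$ homotopic to $\theta$, hence an equivalence; and $a^*$ moreover \emph{reflects} equivalences between the relevant fibrant slices, so $\bar\theta$ is itself an equivalence. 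This reflection claim is the one ingredient not quoted verbatim above, but it follows from the toolkit of \cite{kapulkin-lumsdaine:homotopy-theory-of-type-theories}: factor $\bar\theta$ as an equivalence followed by a dependent projection $p_D$ (Lemma~\ref{lem:reedy-factorisation}), note that $a^*$ preserves this factorisation and commutes with forming $p_D$, and recall that $p_D$ is an equivalence exactly when $D$ is contractible — i.e.\ inhabited and a proposition (cf.\ the proof of Lemma~\ref{lem:equivalences-are-levelwise}) — a property both preserved and reflected by pullback along the equivalences $a.(b^*A_0)$ and its context extensions (equivalences by right properness), again via weak term lifting. Finally, every type produced along the way is homotopical by Proposition~\ref{prop:diag-equiv-to-homot-diag}, since each is a levelwise equivalent of a given homotopical type over a homotopical base.
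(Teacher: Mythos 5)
Your overall strategy matches the paper's: transport types around the boundary by pullback and by Lemma~\ref{lem:levelwise-to-reedy-descent}, straighten the resulting comparison data over a single corner, merge with Lemma~\ref{lem:span-equiv-to-map}, and descend along an equivalence of bases where needed. Your main case (missing edge into $\Gamma_0$) is correct and corresponds to the paper's case (II); your descent step is sound, though over-engineered --- you do not need the contractibility detour to see that $a^*$ reflects equivalences. Since $a.A_2$ and $a.(b^*A_0)$ are equivalences by right properness and $\bar\theta\circ(a.A_2)=(a.(b^*A_0))\circ(a^*\bar\theta)$, the general 2-out-of-3 property (Lemma~2.27, which holds for arbitrary maps, not just within fibrant slices) immediately gives that $\bar\theta$ is an equivalence once $a^*\bar\theta$ is.

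The genuine gap is the parenthetical dismissal of the other case. When the missing edge emanates from $\Gamma_3$ (say it is $a$, with equivalences given over $c$, $d$, $b$), the two branches of the square do \emph{not} both give maps out of $\Gamma_3.A_3$: after pulling back to $\Gamma_3$ you obtain a \emph{cospan} $\Gamma_3.A_3\to\Gamma_3.e^*A_0\leftarrow\Gamma_3.a^*A_2$ of equivalences over $\Gamma_3$, not a span out of a common apex, so Lemma~\ref{lem:span-equiv-to-map} does not apply as stated and ``the argument below with the descent step omitted'' is not literally available. This case needs one further move: either invert the leg $\Gamma_3.a^*A_2\to\Gamma_3.e^*A_0$ (using that an equivalence in a fibrant slice has a homotopy inverse which is itself an equivalence) and compose, or form the homotopy pullback $\Gamma_3.A_3.a^*A_2.(\,\cdot\,,\cdot\,)^*\Id_{e^*A_0}$ to convert the cospan into a span and then apply Lemma~\ref{lem:span-equiv-to-map} --- the latter is exactly what the paper does in its case (I). The repair is easy and stays within the toolkit you cite, but as written the second case is not proved.
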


\begin{proof}
  Write $w$ for the composite equivalence $\Gamma_3 \weqto \Gamma_0$.
 
  By some combination of pulling back types along the base maps and pushing them forward according to Lemma~\ref{lem:levelwise-to-reedy-descent}, it is clear that any connected partial boundary can be completed to a diagram of one of the two following forms:
  \[
  \text{(I):} \hskip 0.5em plus 0.5em
  \begin{tikzcd}[ampersand replacement=\&]
    \begin{array}{cc} &\Gamma_3.A_3'\\ \Gamma_3.A_3 & \end{array}
          \ar[r,weq,start anchor={[yshift=1ex,xshift=-1ex]},near start]
          \ar[d,weq,start anchor={[xshift=-3.5ex,yshift=0.5ex]}]
      \& \Gamma_2.A_2 \ar[d,weq] \\
    \Gamma_1.A_1 \ar[r,weq] \& \Gamma_0.A_0
  \end{tikzcd}
  \hskip 1em plus 1em
  \text{(II):} \hskip 0.5em plus 0.5em
  \begin{tikzcd}[ampersand replacement=\&]
    \Gamma_3.A_3 \ar[d,weq] \ar[r,weq]
    \& \Gamma_2.A_2 \ar[d,weq,end anchor={[xshift=3.5ex,yshift=-0.5ex]}] \\
    \Gamma_1.A_1 \ar[r,weq,end anchor={[yshift=-1ex,xshift=1ex]}]
    \& \begin{array}{cc} &\Gamma_0.A_0'\\ \Gamma_0.A_0 & \end{array}
  \end{tikzcd}
  \]

  In case (I), pulling $A_0$ back along $w$ we get a cospan of types and equivalences $e \colon \Gamma_3.A_3 \weqto \Gamma_3.w^*A_0$, $e' \colon \Gamma_3.A_3' \to \Gamma_3.w^*A_0$ over $\Gamma$.
  Then $\Gamma_3.A_3.A_3'.(e,e')^*\Id_{w^*A_0}$ gives a span-equiva\-lence from $A_3$ to $A_3'$ over $\Gamma$.
  But now at most one of $A_3$, $A_3'$ was in the original partial boundary; without loss of generality, suppose $A_3'$ was not in it.
  Then use Lemma~\ref{lem:span-equiv-to-map} to turn the span-equivalence into an equivalence $\Gamma_3.A_3 \weqto \Gamma_3.A_3'$ over $\Gamma_3$.
  Composing this with the equivalence $\Gamma_3.A_3' \weqto \Gamma_2.A_2$,  we have the full square as required.

  In case (II), $A_3$ gives a span-equivalence from $w^*A_0$ to $w^*A_0'$ over $\Gamma_3$.
  Again, we may suppose without loss of generality that $A_0'$ was not in the original partial boundary.
  Then use Lemma~\ref{lem:span-equiv-to-map} to obtain an equivalence $\Gamma_3.w^*A_0' \weqto \Gamma_3.w^*A_0$ over $\Gamma_3$.
  Since $w$ is an equivalence, this descends to some equivalence $\Gamma_0.A_0' \weqto \Gamma_0.A_0$ over $\Gamma_0$.
  But now composing this with the equivalence $\Gamma_2.A_2 \weqto \Gamma_0.A_0'$, we once again have the required full square.
\end{proof}

\begin{lemma}  \label{lem:grid-completion}
  Let $\J$ be an ordered homotopical inverse category.
  Suppose we have a commutative grid of diagrams $\Gamma_{i,j}$ in $\C^\J$, with each row and column a zigzag of equivalences, as in the diagram below.
  Suppose moreover we have Reedy types $A_0, D_0$ over $\Gamma_{0,0}$, and a path of zigzags of levelwise extensions and equivalences between them going around the boundary of the base grid:
  \newcommand{\ph}{\phantom{\Gamma}}
  \[ \begin{tikzcd}[column sep=small,row sep=scriptsize,font=\scriptsize,baseline={(current bounding box.south)}]
    \Gamma_{0,0} 
      & \Gamma_{0,1} \ar[l,weq'] \ar[r,weq]
      & \ph \ar[r,phantom,description,"\cdots"]
      & \ph \ar[r,weq]
      & \Gamma_{0,m} \\
    \Gamma_{1,0} \ar[u,weq] \ar[d,description,weq']
      & \Gamma_{1,1} \ar[l,weq'] \ar[r,weq] \ar[u,weq] \ar[d,description,weq']
      & \ph \ar[r,phantom,description,"\cdots"]
      & \ph \ar[r,weq]
      & \Gamma_{1,m} \ar[u,weq'] \ar[d,description,weq] \\
    \ph \ar[d,description,phantom,"\vdots"] & \ph & \ph \ar[dr,description,phantom,"\ddots"] & \ph & \ph \ar[d,description,phantom,"\vdots"] \\
    \ph \ar[d,description,weq'] & \ph \ar[d,description,weq'] & \ph & \ph & \ph \ar[d,description,weq] \\
    \Gamma_{n,0}
      & \Gamma_{n,1} \ar[l,weq'] \ar[r,weq]
      & \ph \ar[r,phantom,description,"\cdots"]
      & \ph \ar[r,weq]
      & \Gamma_{n,m}
    \end{tikzcd}
    \hskip 1em plus 3em
    \begin{tikzcd}[column sep=small,row sep=scriptsize,font=\scriptsize,ampersand replacement=\&,baseline={(current bounding box.south)}]
    \begin{array}{cc} & A_0\\ D_0 & \end{array}
      \& A_0 \ar[l,weq',end anchor={[yshift=0.75ex,xshift=-1.25ex]}]  \ar[r,weq]
      \& \ph \ar[r,phantom,description,"\cdots"]
      \& \ph \ar[r,weq]
      \& A_m=B_0 \\
    D_1 \ar[u,weq,end anchor={[xshift=-2ex,yshift=0.5ex]}] \ar[d,description,weq']
      \& \ph
      \& \ph 
      \& \ph
      \& B_1 \ar[u,weq'] \ar[d,description,weq] \\
    \ph \ar[d,description,phantom,"\vdots"] \& \ph \& \ph \& \ph \& \ph \ar[d,description,phantom,"\vdots"] \\
    \ph \ar[d,description,weq'] \& \ph \& \ph \& \ph \& \ph \ar[d,description,weq] \\
    D_n = C_0
      \& C_1 \ar[l,weq']  \ar[r,weq]
      \& \ph \ar[r,phantom,description,"\cdots"]
      \& \ph \ar[r,weq]
      \& C_m = B_n
    \end{tikzcd}\]

  Then there is some equivalence $A_0 \weqto D_0$ over $\Gamma_{0,0}$.
\end{lemma}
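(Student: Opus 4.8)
The plan is to reduce the grid to a single zigzag by collapsing it one row (or column) at a time, using the square-completion lemma repeatedly, and then to collapse the resulting zigzag by the zigzag-improvement lemma plus the span-to-map lemma.

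First I would set up the induction: I claim that for each $k$ with $0 \le k \le n$, the portion of the grid consisting of rows $0$ through $k$ (together with the boundary data already given on the outer edges) can be completed so that we have a zigzag of Reedy types and equivalences over the $k$-th row of the base grid, running from a Reedy type over $\Gamma_{k,0}$ equivalent to $A_0$ (via Reedy types and equivalences down the left edge) to $B_k$ (the datum already given on the right edge). The base case $k=0$ is exactly the top row of the supplied boundary data. For the inductive step, suppose we have such a zigzag over row $k$; I consider each small square of the base grid between rows $k$ and $k+1$. Reading left to right, each such square has three of its four sides equipped with Reedy types and equivalences: the top side (from the inductive hypothesis), the left side (either from the given left-edge boundary data, or — for squares past the first — from the previously completed square's right side), and, for the last square in the row, the right side from the given right-edge boundary data. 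Applying Lemma~\ref{lem:square-completion} to each such square in turn, left to right, produces Reedy types and equivalences over its remaining side, in particular over its bottom side; these bottom sides assemble into the required zigzag over row $k+1$. (For the very first square of each row, the left-edge datum $D_k \weqto D_{k+1}$ plus the endpoint $D_n = C_0$ ensures the left-edge hypothesis holds; for the last square, the right-edge datum supplies the fourth side, and the square need not commute, which is exactly what Lemma~\ref{lem:square-completion} delivers.)

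Carrying this induction through to $k=n$ yields a zigzag of Reedy types and equivalences over the bottom row of the base grid, from some Reedy type $\tilde{D}$ over $\Gamma_{n,0}$ — connected to $D_0$ by a chain of Reedy types and equivalences up the left edge, hence by 2-out-of-3 and descent along the left-edge equivalences (Lemma~\ref{lem:levelwise-to-reedy-descent} or simply pulling back and using right properness, since the left-edge maps are equivalences) to an equivalence-span relating it to $D_0$ over $\Gamma_{0,0}$ — to $B_n = C_m$, which by the bottom-edge data $C_i$ is connected back to $C_0 = D_n$, and thence up the left edge again to $D_0$. Gathering all of this, together with the top row connecting $A_0$ down the left edge: what remains is a single closed zigzag of Reedy types and equivalences, all ultimately over $\Gamma_{0,0}$ after reindexing along the (equivalence) base maps, with $A_0$ and $D_0$ as two of its vertices.

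Finally I would invoke Lemma~\ref{lem:zigzag-improvement} to replace this zigzag by one in which every vertex is a Reedy type over $\Gamma_{0,0}$ itself, and then collapse it using Lemma~\ref{lem:span-equiv-to-map} repeatedly: each consecutive span in the zigzag gives, by that lemma, a single equivalence between its two feet over $\Gamma_{0,0}$, and composing these (using 2-out-of-3 to handle the intervening cospans) produces the desired equivalence $A_0 \weqto D_0$ over $\Gamma_{0,0}$. The main obstacle, I expect, is purely bookkeeping: keeping track of over which base object each type lives, and making sure that at each application of Lemma~\ref{lem:square-completion} the three sides we feed in really do form a \emph{connected} partial boundary with the types lying over the correct vertices — in particular handling the change of base coming from the non-identity equivalences in the grid, and discharging the "without loss of generality" choices about which side of each square is the new one. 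None of the steps introduces genuinely new ideas beyond those already packaged in Lemmas~\ref{lem:reedy-factorisation}--\ref{lem:square-completion}; the content is in orchestrating them.
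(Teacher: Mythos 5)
Your overall toolkit is the right one --- Lemmas~\ref{lem:zigzag-improvement}, \ref{lem:square-completion} and \ref{lem:span-equiv-to-map} are exactly what the paper uses --- but your sweep direction breaks the argument at the very first step. The given boundary data is a connected \emph{path} from $A_0$ around the grid to $D_0$: its only discontinuity is at the corner $\Gamma_{0,0}$, where the two loose ends $A_0$ and $D_0$ sit over the same base object with no equivalence between them (that equivalence is the conclusion, not a hypothesis). Your induction starts by feeding the top-left square its top side (which carries $A_0$ at the corner) and its left side (which carries $D_0$ there). Those two sides do \emph{not} form a connected partial boundary in the sense required by Lemma~\ref{lem:square-completion}, so the lemma does not apply, and the induction cannot get off the ground. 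The paper's proof sweeps in the opposite direction --- starting at the bottom-right square, where the given sides genuinely meet (in $B_n = C_m$), and proceeding upwards and leftwards --- precisely so that every application of square-completion sees a connected partial boundary, and so that the problematic top-left square is reached \emph{last}, by which point it has all four sides filled in plus the double type at $\Gamma_{0,0}$; that is exactly configuration (I) from the proof of Lemma~\ref{lem:square-completion}, whose argument then directly produces the equivalence $A_0 \weqto D_0$.

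Two secondary points. First, Lemma~\ref{lem:square-completion} takes \emph{Reedy} types as input, while the supplied boundary path consists of levelwise extensions; so Lemma~\ref{lem:zigzag-improvement} must be applied to the boundary data \emph{before} any square-completion, not at the end as you propose. Second, even with the sweep direction corrected, your plan to finish by collapsing a long closed zigzag over a non-constant base loop (reindexing everything to $\Gamma_{0,0}$, descending along wrong-way base equivalences, etc.)\ is considerably more delicate than you suggest and is unnecessary: once the grid is filled except for the top-left square, the case (I) argument already hands you a span-equivalence between $A_0$ and $D_0$ over the single base $\Gamma_{0,0}$, and Lemma~\ref{lem:span-equiv-to-map} finishes immediately.
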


\begin{proof}
  First, by applying Lemma~\ref{lem:zigzag-improvement}, we can assume that all the types involved are Reedy.
  By repeatedly applying Lemma~\ref{lem:square-completion}, starting from the bottom right and proceeding upwards and leftwards, we can complete the path to a (not necessarily commutative) grid of Reedy types and equivalences over the base grid, except for the top-left square, which ends up like case (I) of the proof of Lemma~\ref{lem:square-completion}.
  But now, just as in the proof of that case, we can obtain from this square an equivalence $A_0 \weqto D_0$ over $\Gamma_{0,0}$ as required.
\end{proof}

We are now prepared for the main result:
\begin{theorem} \label{thm:homotopy-equivalence}
 Let $u \colon \J \to \I$ be an ordered homotopical discrete opfibration between ordered homotopical inverse categories.
 If $u$ is a homotopy equivalence, then the CwA map $u^* \colon \C^\I \to \C^\J$ is a local weak equivalence.
\end{theorem}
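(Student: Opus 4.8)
The map $u^*:\C^\I\to\C^\J$ is already a well-defined map of CwA's (it is an ordered homotopical discrete opfibration, so restricts to homotopical diagrams and types), so the task is to verify the two conditions of Definition~\ref{def:fibrations-and-equivs-of-cwas}(3): weak type lifting and weak term lifting. The plan is to exploit a homotopical inverse $v:\I\to\J$ with witnessing zigzags of natural weak equivalences $\phi:uv\zigzagto\id_\I$ and $\psi:vu\zigzagto\id_\J$. First I would record the elementary mechanism underlying everything: since $v$ and all the functors appearing in $\phi,\psi$ are homotopical, each natural weak equivalence in these zigzags, applied to a homotopical diagram (or to a homotopical levelwise extension, by naturality in the diagram), yields a \emph{levelwise} equivalence, hence an equivalence in the relevant homotopical diagram CwA by Lemma~\ref{lem:equivalences-are-levelwise}. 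Thus $\phi$ and $\psi$ induce, naturally, zigzags of equivalences $(uv)^*\Gamma\zigzagto\Gamma$ in $\C^\I$ and $(vu)^*\Delta\zigzagto\Delta$ in $\C^\J$. I would also note that $u^*$, being the restriction along a discrete opfibration, preserves matching objects (Proposition~\ref{prop:discrete-opfib-matching-object}), hence preserves Reedy types and levelwise equivalences; whereas $v$, being only a functor, acts merely on the levelwise CwA's $\C^{\bullet}_{\mathrm{lw}}$.

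For weak type lifting, given a homotopical Reedy type $A$ over $u^*\Gamma$ in $\C^\J$, I would form $v^*A$, a homotopical levelwise extension over $(uv)^*\Gamma=v^*u^*\Gamma$ in $\C^\I$, replace it by a Reedy type via Lemma~\ref{lem:reedy-factorisation}, and then transport it along the zigzag $(uv)^*\Gamma\zigzagto\Gamma$ supplied by $\phi$: at a forward edge, apply the descent lemma (Lemma~\ref{lem:levelwise-to-reedy-descent}); at a backward edge, reindex the current Reedy type and invoke right properness. This yields a homotopical Reedy type $\overline{A}$ over $\Gamma$ (homotopicality being preserved at each stage by Propositions~\ref{prop:diag-equiv-to-homot-diag} and~\ref{prop:homotopical-types}), together with a zigzag of equivalences of Reedy types covering $(uv)^*\Gamma\zigzagto\Gamma$. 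This $\overline{A}$ is the proposed lift. To see that $u^*\overline{A}$ is equivalent to $A$ over $u^*\Gamma$, I would apply $u^*$ to the transport zigzag — since $u^*$ preserves levelwise equivalences, this gives a zigzag in $\C^\J$ relating $u^*\overline{A}$ to $(vu)^*A=u^*v^*A$ over a base zigzag obtained from $\phi$ — and then apply $\psi$ to the homotopical diagram $u^*\Gamma.A$, giving a zigzag relating $(vu)^*A$ back to $A$ over a base zigzag obtained from $\psi$. Concatenating, one is left with a zigzag of equivalences between the two Reedy types $u^*\overline{A}$ and $A$, both over $u^*\Gamma$, lying over a zigzag of equivalences from $u^*\Gamma$ to itself. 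Collapsing this into a single equivalence $u^*\Gamma.u^*\overline{A}\weqto u^*\Gamma.A$ over $u^*\Gamma$ is exactly what Lemmas~\ref{lem:zigzag-improvement}, \ref{lem:span-equiv-to-map}, \ref{lem:square-completion} and~\ref{lem:grid-completion} are designed for: one first makes all intermediate types Reedy (Lemma~\ref{lem:zigzag-improvement}), uses the naturality squares of $\phi$ and $\psi$ to fill the configuration out to a grid of diagrams, and then extracts the equivalence over the corner $u^*\Gamma$ via Lemma~\ref{lem:grid-completion}.

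Weak term lifting follows the same pattern, now transporting a section and a propositional equality in place of a type and an equivalence: given $A$ over $\Gamma$ in $\C^\I$ and a section $a$ of $u^*A$ in $\C^\J$, one pushes $v^*a$ back along $\phi$ to obtain a section $\overline{a}$ of $A$ (the lifted section and its witnessing homotopies built by the usual Reedy induction, using transport along the homotopies at lower stages and term lifting along right-proper reindexings of the components of $\phi$ and $\psi$), and then checks that $u^*\overline{a}$ is propositionally equal to $a$ by the same concatenate-and-rectify argument, now carried out for sections and the identity types between them.

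The only genuinely delicate point — and the main obstacle — is precisely that rectification step. The functors occurring in the homotopies $\phi,\psi$ are not discrete opfibrations, so they do not act on Reedy types, and the homotopies are zigzags rather than single natural transformations, so one cannot simply conjugate; moreover applying a homotopy to a map of diagrams produces a grid rather than a zigzag. Managing these zigzags and grids, converting levelwise data to Reedy data at each stage, and finally contracting a zigzag lying over a closed zigzag of equivalences down to an honest equivalence over a fixed base, is where essentially all the work lies, and is the reason for developing Lemmas~\ref{lem:reedy-factorisation}--\ref{lem:grid-completion}.
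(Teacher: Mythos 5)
Your overall architecture coincides with the paper's: take a homotopical quasi-inverse $v$ with witnessing zigzags, transport $v^*A$ back along the unit zigzag to a Reedy type $\bar{A}$ over $\Gamma$ via Lemma~\ref{lem:levelwise-to-reedy-descent}, and then compare $u^*\bar{A}$ with $A$ by assembling a path of zigzags of types over a closed loop of base equivalences at $u^*\Gamma$ and contracting it with Lemmas~\ref{lem:zigzag-improvement}--\ref{lem:grid-completion}. You also correctly locate the difficulty in the final rectification step.

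There is, however, a genuine gap in how you propose to carry that step out. You assert that the closed base loop --- the leg $u^*(\eta^*\Gamma) : u^*\Gamma \zigzagto u^*v^*u^*\Gamma$ obtained by whiskering the unit zigzag (your $\phi$) with $u$, concatenated with the leg obtained by evaluating the counit zigzag (your $\psi$) at $u^*\Gamma$ --- can be ``filled out to a grid'' using ``the naturality squares of $\phi$ and $\psi$''. It cannot: a commutative grid bounded by $\eta u$ on one side and $u\varepsilon$ on the other is exactly a witness of the triangle identity, which holds for an \emph{adjoint} equivalence but fails for a general homotopy equivalence. Naturality gives you squares relating, say, $\eta_{uvu}$ to $uvu\,\varepsilon^{-1}$, but no square relating $\eta_u$ directly to $u\varepsilon$. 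The paper's proof resolves this by adjointifying first: it replaces $\varepsilon$ by the modified counit $\varepsilon\cdot v\eta_u^{-1}\cdot vu\varepsilon^{-1}$ and builds an explicit commutative grid of functors $u$, $uvu$, $uvuvu$ out of naturality grids, ``connection'' grids (which implement the cancellations needed for the modified counit), and degeneracy grids, realising the homotopy $\eta_u \homot u(\varepsilon\cdot v\eta_u^{-1}\cdot vu\varepsilon^{-1})^{-1}$; the path of types around the boundary is correspondingly routed over this larger grid, with the counit leg padded accordingly, before Lemma~\ref{lem:grid-completion} is applied. Your sketch needs this adjointification (or an equivalent device) to go through; as written, the grid you invoke does not exist.
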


\begin{proof}
  Take $v \colon \I \to \J$, along with zigzags $\eta \colon \id \zigzagto uv $, $\varepsilon \colon vu \zigzagto \id$ of natural equivalences witnessing that $u$ is a homotopy equivalence.
  
  Note we cannot assume that $v$ or the functors appearing in $\eta$, $\varepsilon$ are discrete opfibrations, so they may not act on Reedy types;
  but they do act on \emph{levelwise} extensions, and on the underlying diagram categories, this action is 2-functorial.

  First, we show weak type lifting for $u^*$.
  The argument is rather involved, but in outline is entirely analogous to showing that an equivalence of categories $F \colon \C \to \D$  (presented via unit and counit isomorphisms) induces an essentially surjective map on slices:
  \begin{itemize}
  \item given $A \to FC$, apply a quasi-inverse $G$ to get $GA \to GFC$; composing with the inverse unit $\eta_C^{-1} \colon GFC \to C$ gives a map $GA \to C$;
  \item the co-unit gives an isomorphism $\varepsilon_A \colon FGA \iso A$, and provided the equivalence was adjoint, this isomorphism will be over $FC$;
  \item if the original equivalence was not assumed adjoint, one adjointifies it beforehand, replacing $\varepsilon$ with the modified co-unit $\varepsilon \cdot G \eta^{-1}_F \cdot GF\varepsilon^{-1}$.
  \end{itemize}
  
  Returning to weak type lifting, take $\Gamma \in \C^\I$, and $A \in \Ty^\J(u^* \Gamma)$.
  Then $v^*A$ is a levelwise extension over $v^*u^*\Gamma$; so by alternately pulling back and pushing forward along the zigzag $\eta^*_\Gamma$ (using Lemma~\ref{lem:levelwise-to-reedy-descent}), we obtain a type $\bar{A}$ over $\Gamma$ connected to $v^*A$ by a zigzag $\alpha \colon \bar{A} \zigzagto v^*A$ of levelwise equivalences between levelwise extensions over $\eta^* \Gamma \colon \Gamma \zigzagto v^*u^*\Gamma$.
  (Here and in the rest of this proof, we write just $A$ for the total object $u^* \Gamma . A$, and similarly for other Reedy/levelwise extensions.)

  We now need to show that $u^* \bar{A} \equiv A$ over $u^* \Gamma$.
  For this, we start by constructing a commutative grid of functors $\J \to \I$ and zigzags of natural weak equivalences between them:
  \newcommand{\e}{\varepsilon}
  \[\begin{tikzcd}
    u \ar[r,equal] \ar[d,zigzag,"u\e^{-1}"'] & u \ar[r,equal] \ar[d,zigzag,"u\e^{-1}"'] & u \ar[r,zigzag,"\eta_u"] \ar[d,zigzag,"u\e^{-1}"'] & uvu \ar[d,zigzag,"uvu\e^{-1}"] \ar[dl,phantom,"\scriptstyle \text{(nat)}"] \\
    uvu \ar[r,equal] \ar[d,equal] & uvu \ar[r,equal] \ar[d,zigzag,"\eta^{-1}_u"'] & uvu \ar[r,zigzag,"\eta_{uvu}"] \ar[d,equal] & uvuvu \ar[d,equal] \\
    uvu \ar[r,zigzag,"\eta^{-1}_u"] \ar[d,equal] & u \ar[r,zigzag,"\eta_u"] \ar[d,zigzag,"\eta_u"'] & uvu \ar[r,zigzag,"\eta_{uvu}"] \ar[d,zigzag,"\eta_{uvu}"] & uvuvu \ar[d,equal] \\
    uvu \ar[r,equal] & uvu \ar[r,zigzag,"uv\eta_u"'] \ar[ur,phantom,"\scriptstyle \text{(nat)}"] & uvuvu \ar[r,equal] & uvuvu
  \end{tikzcd}\]
  Here each square is itself a grid, of one of three kinds (up to orientation): either a “naturality” grid, i.e.\ for some zizgags $\beta \colon b_0 \zigzagto b_m$ and $\gamma \colon c_0 \zigzagto c_n$ the grid with $(i,j)$th entry $b_ic_j$,
  \newcommand{\ph}{\phantom{b_mc_n}}
  \[ \begin{tikzpicture}[cd-style, row sep = 1em, column sep = 0.6 em, font=\scriptsize]
    \matrix (g) [matrix of math nodes]{
      b_0c_0      & b_0c_1       & b_0 c_2 & \ph &\ph & b_0c_{n-1}  & b_0c_n \\
      b_1c_0      & b_1c_1       & b_1c_2  & \ph &\ph & b_1c_{n-1}  & b_1c_n \\
      b_2c_0      & b_2c_1       & \ph       & \ph &\ph & \ph           & b_2c_n \\
      \ph & \ph & \ph & \ph &\ph & \ph & \ph \\
      \ph & \ph & \ph & \ph &\ph & \ph & \ph \\
      b_{m-1}c_0 & b_{m-1}c_1 & \ph       & \ph & \ph & \ph          & b_{m-1}c_n \\
      b_mc_0     & b_mc_1      & b_mc_2 & \ph & \ph & b_mc_{n-1} & b_mc_n \\
    };

    \arr (g-1-2) to (g-1-1);
    \arr (g-1-2) to (g-1-3);
    \arr (g-1-4) to (g-1-3);
    \arr[description,phantom] (g-1-4) to node{$\displaystyle \cdots$} (g-1-5); 
    \arr (g-1-6) to (g-1-5);
    \arr (g-1-6) to (g-1-7);
    \arr (g-2-1) to (g-1-1);
    \arr (g-2-2) to (g-1-2);
    \arr (g-2-3) to (g-1-3);
    \arr (g-2-6) to (g-1-6);
    \arr (g-2-7) to (g-1-7);
    \arr (g-2-2) to (g-2-1);
    \arr (g-2-2) to (g-2-3);
    \arr (g-2-4) to (g-2-3);
    \arr (g-2-6) to (g-2-5);
    \arr (g-2-6) to (g-2-7);
    \arr (g-2-1) to (g-3-1);
    \arr (g-2-2) to (g-3-2);
    \arr (g-2-3) to (g-3-3);
    \arr (g-2-6) to (g-3-6);
    \arr (g-2-7) to (g-3-7);
    \arr (g-3-2) to (g-3-1);
    \arr (g-3-2) to (g-3-3);
    \arr (g-3-6) to (g-3-7);
    \arr (g-4-1) to (g-3-1);
    \arr (g-4-2) to (g-3-2);
    \arr (g-4-7) to (g-3-7);
    \arr[description,phantom] (g-4-1) to node {$\vdots$} (g-5-1); 
    \arr[description,phantom] (g-4-4) to node {$\ddots$} (g-5-5); 
    \arr[description,phantom] (g-4-7) to node {$\vdots$} (g-5-7); 
    \arr (g-6-1) to (g-5-1);
    \arr (g-6-2) to (g-5-2);
    \arr (g-6-7) to (g-5-7);
    \arr (g-6-2) to (g-6-1);
    \arr (g-6-2) to (g-6-3);
    \arr (g-6-6) to (g-6-7);
    \arr (g-6-1) to (g-7-1);
    \arr (g-6-2) to (g-7-2);
    \arr (g-6-3) to (g-7-3);
    \arr (g-6-6) to (g-7-6);
    \arr (g-6-7) to (g-7-7);
    \arr (g-7-2) to (g-7-1);
    \arr (g-7-2) to (g-7-3);
    \arr (g-7-4) to (g-7-3);
    \arr[description,phantom] (g-7-4) to node {$\displaystyle \cdots$} (g-7-5); 
    \arr (g-7-6) to (g-7-5);
    \arr (g-7-6) to (g-7-7);

    \node (top-label-vert) at ($ (g-1-1.north) + (0,0.3) $) {}; 
    \arr[zigzag,description]
      (g-1-1 |- top-label-vert)
      to node [preaction={fill,white}] {$b_0\gamma$}
      (g-1-7 |- top-label-vert);

    \node (left-label-horiz) at ($ (g-6-1.west) - (0.35,0) $) {};
    \arr[zigzag,description]
      (g-1-1 -| left-label-horiz)
      to node [preaction={fill,white}] {$\beta_{c_0}$}
      (g-7-1 -| left-label-horiz);

    \node (bottom-label-vert) at ($ (g-7-1.south) - (0,0.3) $) {};
    \arr[zigzag,description]
      (g-7-1 |- bottom-label-vert)
      to node [preaction={fill,white}] {$b_m\gamma$}
      (g-7-7 |- bottom-label-vert);

    \node (right-label-horiz) at ($ (g-6-7.east) + (0.35,0) $) {};
    \arr[zigzag,description]
      (g-1-7 -| right-label-horiz)
      to node [preaction={fill,white}] {$\beta_{c_n}$}
      (g-7-7 -| right-label-horiz);
  \end{tikzpicture} \]
  or a “connection” grid, i.e.\ for some zigzag $\beta \colon b_0 \zigzagto b_m$ the grid with $(i,j)$th object $b_{\max(i,j)}$,
  \[ \begin{tikzpicture}[cd-style, row sep = 1em, column sep = 0.6 em, font=\scriptsize]
    \matrix (g) [matrix of math nodes]{
      b_0      & b_1       & b_2 & \ph &\ph & b_{m-1}  & b_m \\
      b_1      & b_1       & b_2 & \ph &\ph & b_{m-1}  & b_m \\
      b_2      & b_2       & b_2 & \ph &\ph & \ph       & b_m \\
      \ph & \ph & \ph & \ph &\ph & \ph & \ph \\
      \ph & \ph & \ph & \ph &\ph & \ph & \ph \\
      b_{m-1} & b_{m-1} & \ph & \ph & \ph & b_{m-1} & b_m \\
      b_m     & b_m      & b_m & \ph & \ph & b_m     & b_m \\
    };

    \arr (g-1-2) to (g-1-1);
    \arr (g-1-2) to (g-1-3);
    \arr (g-1-4) to (g-1-3);
    \arr[description,phantom] (g-1-4) to node{$\displaystyle \cdots$} (g-1-5); 
    \arr (g-1-6) to (g-1-5);
    \arr (g-1-6) to (g-1-7);
    \arr (g-2-1) to (g-1-1);
    \arr (g-2-2) to (g-1-2);
    \arr (g-2-3) to (g-1-3);
    \arr (g-2-6) to (g-1-6);
    \arr (g-2-7) to (g-1-7);
    \arr (g-2-2) to (g-2-1);
    \arr (g-2-2) to (g-2-3);
    \arr (g-2-4) to (g-2-3);
    \arr (g-2-6) to (g-2-5);
    \arr (g-2-6) to (g-2-7);
    \arr (g-2-1) to (g-3-1);
    \arr (g-2-2) to (g-3-2);
    \arr (g-2-3) to (g-3-3);
    \arr (g-2-6) to (g-3-6);
    \arr (g-2-7) to (g-3-7);
    \arr (g-3-2) to (g-3-1);
    \arr (g-3-2) to (g-3-3);
    \arr (g-3-4) to (g-3-3);
    \arr (g-3-6) to (g-3-7);
    \arr (g-4-1) to (g-3-1);
    \arr (g-4-2) to (g-3-2);
    \arr (g-4-3) to (g-3-3);
    \arr (g-4-7) to (g-3-7);
    \arr[description,phantom] (g-4-1) to node {$\vdots$} (g-5-1); 
    \arr[description,phantom] (g-4-4) to node {$\ddots$} (g-5-5); 
    \arr[description,phantom] (g-4-7) to node {$\vdots$} (g-5-7); 
    \arr (g-6-1) to (g-5-1);
    \arr (g-6-2) to (g-5-2);
    \arr (g-6-6) to (g-5-6);
    \arr (g-6-7) to (g-5-7);
    \arr (g-6-2) to (g-6-1);
    \arr (g-6-2) to (g-6-3);
    \arr (g-6-6) to (g-6-5);
    \arr (g-6-6) to (g-6-7);
    \arr (g-6-1) to (g-7-1);
    \arr (g-6-2) to (g-7-2);
    \arr (g-6-3) to (g-7-3);
    \arr (g-6-6) to (g-7-6);
    \arr (g-6-7) to (g-7-7);
    \arr (g-7-2) to (g-7-1);
    \arr (g-7-2) to (g-7-3);
    \arr (g-7-4) to (g-7-3);
    \arr[description,phantom] (g-7-4) to node {$\displaystyle \cdots$} (g-7-5); 
    \arr (g-7-6) to (g-7-5);
    \arr (g-7-6) to (g-7-7);
 
    \node (top-label-vert) at ($ (g-1-1.north) + (0,0.3) $) {}; 
    \arr[zigzag,description]
      (g-1-1 |- top-label-vert)
      to node [preaction={fill,white}] {$\beta$}
      (g-1-7 |- top-label-vert);

    \node (left-label-horiz) at ($ (g-6-1.west) - (0.35,0) $) {};
    \arr[zigzag,description]
      (g-1-1 -| left-label-horiz)
      to node [preaction={fill,white}] {$\beta $}
      (g-7-1 -| left-label-horiz);

    \node (bottom-label-vert) at ($ (g-7-1.south) - (0,0.3) $) {};
    \arr[equal,description]
      (g-7-1 |- bottom-label-vert)
      to node [preaction={fill,white}] {$\id_{b_m}$}
      (g-7-7 |- bottom-label-vert);

    \node (right-label-horiz) at ($ (g-6-7.east) + (0.35,0) $) {};
    \arr[equal,description]
      (g-1-7 -| right-label-horiz)
      to node [preaction={fill,white}] {$\id_{b_m}$}
      (g-7-7 -| right-label-horiz);
  \end{tikzpicture} \]
  or else an evident “degeneracy” grid.
  Overall, comparing the top edge with the path around the sides and bottom, the complete grid is analogous to the triangle equality for the adjointification of an equivalence, $\eta_u \homot u(\varepsilon \cdot v \eta^{-1}_u \cdot vu\varepsilon^{-1})^{-1}$.
  
  \optionaltodo{Probably the big diagrams of the “naturality” and “connection” grids could be cut down a bit while still showing enough information, maybe even fit side-by-side.}

  Postcomposing the grid with $\Gamma$ gives us a corresponding commutative grid of base diagrams and equivalences in $\C^\J$:
  \[\begin{maxwidthtikzcd}[font=\scriptsize,column sep=large,row sep=normal]
    u^*\Gamma \ar[r,equal] \ar[d,zigzag,"(\e^{-1})^*u^*\Gamma"']
      & u^*\Gamma \ar[r,equal] \ar[d,zigzag,"(\e^{-1})^*u^*\Gamma"']
      & u^*\Gamma \ar[r,zigzag,"u^*\eta^*\Gamma"] \ar[d,zigzag,"u^*(\e^{-1})^*\Gamma"']
      & u^*v^*u^*\Gamma \ar[d,zigzag,"(\e^{-1})^* u^*v^*u^*\Gamma"] \ar[dl,phantom,"\scriptstyle \text{(nat)}"] \\
    u^*v^*u^*\Gamma \ar[r,equal] \ar[d,equal]
      & u^*v^*u^*\Gamma \ar[r,equal] \ar[d,zigzag,"u^*(\eta^{-1})^*\Gamma"']
      & u^*v^*u^*\Gamma \ar[r,zigzag,"u^*v^*u^*\eta^*\Gamma"] \ar[d,equal]
      & u^*v^*u^*v^*u^*\Gamma \ar[d,equal] \\
    u^*v^*u^*\Gamma \ar[r,zigzag,"u^*(\eta^{-1})^*\Gamma"] \ar[d,equal]
      & u^*\Gamma \ar[r,zigzag,"u^*\eta^*\Gamma"] \ar[d,zigzag,"u^*\eta^*\Gamma"']
      & u^*v^*u^*\Gamma \ar[r,zigzag,"u^*v^*u^*\eta^*\Gamma"] \ar[d,zigzag,"u^*v^*u^*\eta^*\Gamma"]
      & u^*v^*u^*v^*u^*\Gamma \ar[d,equal] \\
    u^*v^*u^*\Gamma \ar[r,equal]
      & u^*v^*u^*\Gamma \ar[r,zigzag,"u^*\eta^*v^*u^*\Gamma"'] \ar[ur,phantom,"\scriptstyle \text{(nat)}"]
      & u^*v^*u^*v^*u^*\Gamma \ar[r,equal]
      & u^*v^*u^*v^*u^*\Gamma
  \end{maxwidthtikzcd}\]

  We also have a path of zigzags of equivalences over the boundary of the grid above, going from $u^*\bar{A}$ around to $A$ (each over $u^*\Gamma$).
  Here the top edge is $u^*$ applied to the zigzag $\alpha \colon \bar{A} \zigzagto v^*A$ obtained in the construction of $\bar{A}$, appropriately padded with identity zigzags, while the other edges consist of $A$ postcomposed with a similar padding of the modified counit $\varepsilon\, v\eta^{-1}_u\, vu\varepsilon^{-1}$:
  \[\begin{tikzcd}[ampersand replacement=\&,font=\scriptsize,row sep=normal]
    \begin{array}{cc} & u^*\bar{A} \\ A & \end{array}
          \ar[r,equal,start anchor={[yshift=1ex,xshift=-1ex]}]
          \ar[d,zigzag,start anchor={[xshift=-2.4ex,yshift=0.5ex]},"(\e^{-1})^*A"',near start]
      \& u^*\bar{A} \ar[r,equal] 
      \&[4ex] u^*\bar{A} \ar[r,zigzag,"u^*\alpha"]
      \& u^*v^*A \ar[d,zigzag,"(\e^{-1})^* u^*v^*A"] \\
    u^*v^*A \ar[d,equal]
      \&  
      \& 
      \& u^*v^*u^*v^*A \ar[d,equal] \\
    u^*v^*A \ar[d,equal]
      \&  
      \& 
      \& u^*v^*u^*v^*A \ar[d,equal] \\
    u^*v^*A \ar[r,equal]
      \& u^*v^*A \ar[r,zigzag,"u^*\eta^*v^*A"']
      \& u^*v^*u^*v^*A \ar[r,equal]
      \& u^*v^*u^*v^*A
  \end{tikzcd}\]
  But now Lemma~\ref{lem:grid-completion} provides an equivalence $u^*\bar{A} \to A$ over $u^*\Gamma$, as required.

  The proof of weak term lifting is an analogous adaptation of the argument that an equivalence of categories, presented via a quasi-inverse, induces full functors on slices.
\end{proof}

\begin{corollary}
  Let $u \colon \J \to \I$ be an ordered discrete opfibration between ordered homotopical inverse categories that is both injective and a homotopy equivalence.
  Then $u^* \colon \C^\I \to \C^\J$ is a local trivial fibration.
\end{corollary}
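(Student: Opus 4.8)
The plan is simply to combine the two immediately preceding results with the standard observation that a map which is both a local fibration and a local weak equivalence is a local trivial fibration. First I would note that the hypotheses line up: since $u$ is a homotopy equivalence it is in particular a homotopical functor, so $u$ is an \emph{injective ordered homotopical discrete opfibration} which is moreover a homotopy equivalence, and hence both Lemma~\ref{lem:fibration-between-diagram-cats} and Theorem~\ref{thm:homotopy-equivalence} apply to it.

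Applying Lemma~\ref{lem:fibration-between-diagram-cats} (using injectivity of $u$) shows that $u^* : \C^\I \to \C^\J$ is a local fibration, and applying Theorem~\ref{thm:homotopy-equivalence} (using that $u$ is a homotopy equivalence) shows that $u^*$ is a local weak equivalence. It then remains to deduce that $u^*$ is a local trivial fibration, which I would verify directly against Definition~\ref{def:fibrations-and-equivs-of-cwas}. For type lifting, given $\Gamma \in \C^\I$ and $A \in \Ty^\J(u^*\Gamma)$, weak type lifting yields a (homotopical Reedy) type $\bar A_0$ over $\Gamma$ together with an equivalence $w : u^*\Gamma.u^*\bar A_0 \weqto u^*\Gamma.A$ over $u^*\Gamma$; equipping $w$ with equivalence data and feeding $\bar A_0$, $A$, $w$ into equivalence lifting produces a type $\bar A$ over $\Gamma$ with $u^*\bar A = A$ on the nose. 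For term lifting, given a section $a$ of $u^*A$, weak term lifting yields a section $\bar a_0$ of $A$ together with a propositional equality $p : \Id_{u^*A}(u^*\bar a_0, a)$; feeding $\bar a_0$, $a$, $p$ into path lifting produces a section $\bar a$ of $A$ with $u^*\bar a = a$. This establishes both lifting properties, so $u^*$ is a local trivial fibration.

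There is no genuine obstacle here: all the content sits in Lemma~\ref{lem:fibration-between-diagram-cats} and Theorem~\ref{thm:homotopy-equivalence}, and the combination step is the familiar fact --- valid for any CwA map between CwA's with $\Id$-types --- that local trivial fibrations are exactly the maps that are simultaneously local fibrations and local weak equivalences. The only mild point requiring care is that the equivalences and propositional equalities supplied by weak type lifting and weak term lifting can be promoted to the structured data demanded as input by equivalence lifting and path lifting; but this is immediate, since in a CwA with $\Id$-types an equivalence is by definition a map admitting such data, and any such choice suffices.
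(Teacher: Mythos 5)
Your proposal is correct and follows the same route as the paper: apply Lemma~\ref{lem:fibration-between-diagram-cats} for the local fibration property and Theorem~\ref{thm:homotopy-equivalence} for the local equivalence property, then conclude. The only difference is that where the paper simply cites \cite[Prop.~4.20]{kapulkin-lumsdaine:homotopy-theory-of-type-theories} for the fact that a local fibration which is a local equivalence is a local trivial fibration, you verify that implication directly from Definition~\ref{def:fibrations-and-equivs-of-cwas} (feeding the weak type/term lifts into equivalence/path lifting), which is a correct inline proof of the cited result.
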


\begin{proof}
  $u^*$ is a local fibration by Lemma~\ref{lem:fibration-between-diagram-cats} and a local equivalence by Theorem~\ref{thm:homotopy-equivalence}; so by \jpaacite[Prop.~4.20]{kapulkin-lumsdaine:homotopy-theory-of-type-theories}, it is a local trivial fibration.
\end{proof}


\section{Summary and outlook} \label{sec:conclusion} 

For quick reference, we summarise the main constructions of the paper as follows:
 
\begin{theorem} \leavevmode
  \begin{enumerate}
    \item %
      For any CwA $\C$ and ordered inverse category $\I$, there is a CwA $\C^\I$ of $\I$-diagrams and Reedy $\I$-types in $\C$, functorial with respect to CwA maps in $\C$ and ordered discrete opfibrations in $\I$.
      If $\C$ carries $\Id$-, $\Sigma$-, $\Pi$-, $\Pi_\eta$-, $\Pi_\ext$-, or unit type structure, then so does $\C^\I$, and the functorial action respects these.
      (Note however the requirement of \emph{ordered} discrete opfibrations, for the functoriality in $\I$.) 

    \item %
      For any CwA $\C$ with $\Id$-types and ordered homotopical inverse category $\I$, there is a sub-CwA $\C^\I \subseteq \C^{\disc{\I}}$ of homotopical diagrams and Reedy types, functorial with respect to homotopical CwA maps and ordered homotopical discrete opfibrations.
      $\C^\I$ is closed under the $\Id$-type structure on $\C^{\disc{\I}}$, along with $\Sigma$- and unit type structure when $\C$ is equipped with these, and $\Pi_\ext$-structure if $\C$ is equipped with this and additionally all maps in $\I$ are equivalences.
      (Note the requirement of functional extensionality in $\C$, for $\Pi$-structure on $\C^\I$.)

    \item %
      Let $\C$ be a CwA with $\Id$-types, and $u \colon\J \to \I$ an ordered homotopical discrete opfibration.
      If $u$ is injective, then the induced map $u^* \colon\C^\I \to \C^\J$ is a local fibration of CwA’s.
      If $u$ is a homotopy equivalence, then $u^*$ is a local equivalence.
  \end{enumerate}
\end{theorem}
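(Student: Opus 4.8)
This is a summary statement, so the plan is essentially to assemble and cite the results developed through Sections~\ref{sec:inverse-diagrams}--\ref{sec:fibrations-and-shit}, in three blocks corresponding to the three numbered items.

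For item (1), I would first recall that the CwA $\C^\I$ of strict Reedy types is exactly the data packaged in Definition~\ref{def:cwa-of-strict-reedy-types}: the base category $\C^\I$ of diagrams, together with the presheaf $\Ty_{\reedy}$ corresponding (via discrete fibrations $\leftrightarrow$ presheaves) to $\T_{\reedy(\C,\I)}$. That this genuinely forms a CwA follows from Proposition~\ref{prop:weak-reedy-comp-cat} (weak Reedy types form a comprehension category, given an ordering, with cartesian lifts and levelwise-pullback comprehension) combined with Proposition~\ref{prop:strict-reedy-types}, which shows the strict Reedy types are a discrete subfibration closed under reindexing and that strictification $\str$ makes the passage harmless. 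Bifunctoriality under CwA maps and \emph{ordered} discrete opfibrations is likewise Proposition~\ref{prop:strict-reedy-types}(4),(6) together with Definition~\ref{def:cwa-of-strict-reedy-types}. The logical-structure clause is then precisely Proposition~\ref{prop:logical-structure-summary}, which in turn collects Proposition~\ref{prop:reedy-unit-types} (unit), Proposition~\ref{prop:reedy-id-types} ($\Id$), the unnamed $\Sigma$-type proposition, Proposition~\ref{prop:reedy-pi-types} ($\Pi$ and $\Pi_\eta$), and the unnamed extensionality proposition ($\Pi_\ext$), each of which also records preservation under the functorial actions.

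For item (2), the sub-CwA $\C^\I \subseteq \C^{\disc{\I}}$ on homotopical diagrams and homotopical Reedy types is Definition~\ref{def:cwa-of-homotopical-diagrams}; that it is closed under reindexing and comprehension --- hence is a genuine sub-CwA --- is the content of the two propositions immediately preceding that definition (closure under reindexing, and Proposition~\ref{prop:homotopical-types} relating homotopicality of a levelwise extension to that of its comprehension), and bifunctoriality under homotopical CwA maps and ordered homotopical discrete opfibrations is the proposition just after. Closure under logical structure is exactly Proposition~\ref{prop:homotopical-structure-summary}: Proposition~\ref{prop:homotopical-id-etc} gives $\Id$, $\Sigma$, and unit types (no extra hypotheses, via the constructor maps being levelwise equivalences and Proposition~\ref{prop:diag-equiv-to-homot-diag}), and Proposition~\ref{prop:pi-types-homotopical} gives $\Pi_\ext$ under the two stated hypotheses (funext in $\C$, all maps in $\I$ equivalences).

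For item (3), the local-fibration clause is Lemma~\ref{lem:fibration-between-diagram-cats} (an injective ordered homotopical discrete opfibration induces a local fibration on diagram CwA's), and the local-equivalence clause is Theorem~\ref{thm:homotopy-equivalence} (a homotopy-equivalence ordered homotopical discrete opfibration induces a local weak equivalence). There is essentially nothing new to prove here beyond noting these fit the hypotheses verbatim. The only ``obstacle'' is purely bookkeeping: one must be slightly careful that the functoriality and preservation statements quoted in items (1) and (2) are stated for the same class of maps --- CwA maps preserving the logical structure under consideration, and ordered (homotopical) discrete opfibrations --- which is indeed how each of the cited propositions is phrased. Hence the statement follows immediately by combining the cited results. \qed
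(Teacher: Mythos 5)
Your proposal is correct and takes essentially the same route as the paper's own proof, which likewise simply points to Definition~\ref{def:cwa-of-strict-reedy-types} and Proposition~\ref{prop:logical-structure-summary} for item (1), Section~\ref{sec:homotopical-diagrams} and Proposition~\ref{prop:homotopical-structure-summary} for item (2), and Section~\ref{sec:fibrations-and-shit} (Lemma~\ref{lem:fibration-between-diagram-cats} and Theorem~\ref{thm:homotopy-equivalence}) for item (3). Your version merely unfolds the citations one level deeper, which is fine.
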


\begin{proof}
  The construction of $\C^\I$ for non-homotopical $\I$ constitutes most of Section~\ref{sec:inverse-diagrams}, completed in Definition~\ref{def:cwa-of-strict-reedy-types}, and its logical structure is given in Section~\ref{sec:logical-structure}, summed up in Proposition~\ref{prop:logical-structure-summary}.
  The sub-CwA of homotopical diagrams is presented in Section~\ref{sec:homotopical-diagrams}, and summarised in Proposition~\ref{prop:homotopical-structure-summary}.
  The results on the precomposition maps $u^* \colon\C^\I \to \C^\J$ constitute Section~\ref{sec:fibrations-and-shit}.
\end{proof}

\begin{remark}[Applications]
  The present work was primarily motivated by the cases required in the companion paper \jpaacite[\textsection 5]{kapulkin-lumsdaine:homotopy-theory-of-type-theories}: specifically, the CwA $\C^\EqvCat$ of \emph{span-equivalences}, along with auxiliary constructions to witness reflexivity, transitivity, and symmetry of span-equivalences, all also presented as homotopical inverse diagram models.

  \optionaltodo{Should we say anything here about further applications?  They seem to fall into two classes:}
  
  \optionaltodo{Things we plan to follow in future work: conservativity kind of shit)}
  
  \optionaltodo{Things we hope other people will follow up because we can’t be bothered to but might mention here if we want to take partial credit for them anyway \ldots E.g.\ homotopy coherent diagrams by replacing non-inverse categories by homotopical inverse categories; cf.\ Szumi\l o (where exactly??) for doing this in fib cats.  E.g.\ also, semi-simplicial diagrams, cf.\ that Kraus paper; but also make sure to note the difference between this and the open problem of internal semi-simplicial types.}

  \optionaltodo{I [PLL] guess probably best not to mention the latter; and we should mention the former iff we can sketch out enough of a paper to confidently say it's “in prep”, which is very unlikely to happen before the first archive release of this.}
\end{remark}

\begin{remark}[Related work] \label{rmk:related-work}
  Special cases of the present construction, and closely related constructions, have previously been considered by several authors.

  \begin{enumerate}
  \item A special case of inverse diagrams was studied by Tonelli as the \emph{basic pairs} model \jpaacite{tonelli}.
    Precisely, the model studied there amounts to the diagram CwA $\C_\T^{\SpanCat}$, where $\C_\T$ is the syntactic CwA of the type theory considered there, and $\SpanCat$ is the “walking span.” \optionaltodo{Add: of Example~\ref{ex:span-inverse-category}.}

  \item The \emph{spreads} model of Martin-Löf \cite{martin-lof:spreads-seminars} can likewise be seen as the diagram CwA $\C_\T^{\N^\op}$, where $\N$ is the posetal inverse category $(\N,\leq)$.

  \item \label{item:shulman}
    A closely analogous general inverse diagrams construction is given by Shulman in \jpaacite{shulman:inverse-diagrams}.
    Key differences between that work and ours include:
    \begin{enumerate}
    \item Shulman works with \emph{type-theoretic fibration categories} rather than CwA’s --- a categorically somewhat cleaner setting than ours, and expected to be equivalent in some $\infty$-categorical sense, but corresponding less directly to traditional type theory;
    \item Shulman does not study the homotopical case;
    \item Shulman considers not just diagrams valued in a single CwA, but more general \emph{oplax diagrams} valued in a suitable diagram of CwA’s (cf.~Remark~\ref{rmk:generalisations}(\ref{item:oplax-diagrams}) below);
    \item besides the logical structure considered in the present work, Shulman additionally considers univalent universes (cf.~Remark~\ref{rmk:generalisations}(\ref{item:univalent-universes})).
    \end{enumerate}
  \item
    Kraus \cite[Ch.~8]{kraus:thesis} makes fruitful use of inverse diagrams, primarily as an auxiliary tool for studying a given base CwA (or type-theoretic fibration category) rather than as a model in their own right.
    \optionaltodo{Actually, we should read that chapter more carefully—its constructions overlap a bit with ours!}
    This direction is further developed by Kraus and Sattler in \jpaacite{kraus-sattler:space-valued-diagrams}, with a focus on types of diagrams \emph{internal} to the type theory under consideration.

  \item
    Inverse categories (or closely related structures) have also been used to specify dependently sorted signatures, in for instance Makkai’s FOLDS \cite{makkai:folds}, developed in the framework of CwA’s by Palmgren \jpaacite{palmgren:cwfs-and-folds}, and from another type-theoretic angle by Tsementzis \jpaacite{tsementzis:first-order-logic-with-isomorphism}.

  \item
    Inverse diagram models valued in $\Set$ are special cases of \emph{presheaf} models, which (over arbitrary index categories) are the subject of a very extensive literature.
  \end{enumerate}
\end{remark}

\begin{remark}[Generalisations] \label{rmk:generalisations}

The constructions of the present paper naturally suggest several further generalisations which we have not pursued
--- primarily for the sake of keeping this paper to a focused scope and manageable length,
but also since all our intended applications fit within the current framework, and so without further motivating examples to steer by, it was unclear what would be good natural hypotheses for these generalisations.

Nonetheless, we set down a few preliminary notes which may be helpful for readers wanting applications not covered by the present results,
since we have been frequently asked about such possibilities when speaking about this work. 

\begin{enumerate}
\item
  The definitions of Reedy types extend without difficulty from CwA’s to more general comprehension categories,
  and it seems likely that the constructions of logical structure on $\C^\I$ should also extend to that setting, using something like the pseudo-stable logical structure of Lumsdaine and Warren \jpaacite{lumsdaine-warren:local-universes}.
  
  However, the basic theory of such logical structure on comprehension categories is as yet very little developed; so extending the present constructions to that setting would require a significant amount of preliminary development.

\item \label{item:oplax-diagrams}
  Similarly, it seems likely that many of the present constructions should work without much modification not just for diagrams valued in a single CwA $\C$, but for some analogue of the oplax diagrams considered by Shulman in \jpaacite{shulman:inverse-diagrams}.

\item
  More concretely, our assumption in Proposition~\ref{prop:pi-types-homotopical} that all maps in $\I$ are equivalences is certainly stronger than necessary for constructing $\Pi$-types in $\C^\I$.
  
  On the one hand, there are at least some other cases where $\C^\I$ is closed under $\Pi$-types in $\C^{\disc{\I}}$: for instance, the degenerate case where \emph{no} maps in $\I$ are equivalences.
  On the other hand, one may hope that $\C^\I$ may sometimes have $\Pi$-types even if they do not agree with those of $\C^{\disc{\I}}$, as for instance in Example~\ref{ex:pi-types-in-weak-maps}.
  It seems unlikely to us that $\C^\I$ as defined here can admit $\Pi$-types in such cases for general $\C$; but it seems hopeful that by modifying $\C^\I$ to include chosen equivalence data, one might be able to construct $\Pi$-types for such examples.

\item \label{item:univalent-universes}
  Shulman \jpaacite{shulman:inverse-diagrams} shows that in the setting of type-theoretic fibration categories, univalent universes lift from the input categories to the categories of oplax inverse diagrams.
  It seems hopeful that an analogous construction should give univalent universes in our CwA’s of homotopical inverse diagrams, but we have not pursued this since it was not needed in our applications.

\item
  The results of Section~\ref{sec:fibrations-and-shit} were stated merely in terms of existence of liftings.
  However, their proofs gave rather more: \emph{pullback-stable} type- and term-lifting operations, which moreover are functorial in various senses.
  These can therefore be read as constructing \emph{structured} analogues of the fibrations and equivalences of the companion paper \jpaacite[\textsection 3]{kapulkin-lumsdaine:homotopy-theory-of-type-theories}.
\end{enumerate}
\end{remark}


\subsection*{Acknowledgements}

This work benefited greatly from the authors’ time together at the Isaac Newton Institute in Cambridge, during the Big Proof programme in summer 2017;
we would therefore like to thank the programme organisers --- especially Jeremy Avigad --- and the INI for their support and hospitality during this programme (supported by EPSRC grant EP/K032208/1).
The second-named author would also like to thank the members of the Stockholm Logic Seminar --- especially Erik Palmgren, Per Martin-Löf, Chaitanya Leena Subramaniam, and Håkon Gylterud --- for many useful comments and discussions.
We also thank Mike Shulman for catching a serious typo in the first public version, and an anonymous referee for very thorough and valuable feedback, in particular pushing us to improve the proof of Proposition~\ref{prop:pi-types-homotopical}.



\ifarxiv
  \bibliographystyle{amsalphaurlmod}
\else
  \bibliographystyle{elsarticle-num}
\fi

\bibliography{general-bibliography}

\newcommand{\etalchar}[1]{$^{#1}$}
\providecommand{\bysame}{\leavevmode\hbox to3em{\hrulefill}\thinspace}
\providecommand{\MR}{\relax\ifhmode\unskip\space\fi MR }
\providecommand{\MRhref}[2]{%
  \href{http://www.ams.org/mathscinet-getitem?mr=#1}{#2}
}
\providecommand{\href}[2]{#2}
\begin{thebibliography}{DHKS04}

\bibitem[AKL15]{avigad-kapulkin-lumsdaine}
Jeremy Avigad, Krzysztof Kapulkin, and Peter~LeFanu Lumsdaine, \emph{Homotopy
  limits in type theory}, Math. Structures Comput. Sci. \textbf{25} (2015),
  no.~5, 1040--1070, \href {http://arxiv.org/abs/1304.0680}
  {\path{arXiv:1304.0680}}, \href {http://dx.doi.org/10.1017/S0960129514000498}
  {\path{doi:10.1017/S0960129514000498}}.

\bibitem[Bro73]{brown:abstract-homotopy-theory}
Kenneth~S. Brown, \emph{Abstract homotopy theory and generalized sheaf
  cohomology}, Trans. Amer. Math. Soc. \textbf{186} (1973), 419--458, \href
  {http://dx.doi.org/10.2307/1996573} {\path{doi:10.2307/1996573}}.

\bibitem[DHKS04]{dhks}
William~G. Dwyer, Philip~S. Hirschhorn, Daniel~M. Kan, and Jeffrey~H. Smith,
  \emph{Homotopy limit functors on model categories and homotopical
  categories}, Mathematical Surveys and Monographs, vol. 113, American
  Mathematical Society, Providence, RI, 2004, \href
  {http://dx.doi.org/10.1090/surv/113} {\path{doi:10.1090/surv/113}}.

\bibitem[Gar09a]{garner:on-the-strength}
Richard Garner, \emph{On the strength of dependent products in the type theory
  of {M}artin-{L}\"of}, Ann. Pure Appl. Logic \textbf{160} (2009), no.~1,
  1--12, \href {http://arxiv.org/abs/0803.4466} {\path{arXiv:0803.4466}}, \href
  {http://dx.doi.org/10.1016/j.apal.2008.12.003}
  {\path{doi:10.1016/j.apal.2008.12.003}}.

\bibitem[Gar09b]{garner:2-d-models}
\bysame, \emph{Two-dimensional models of type theory}, Math. Structures Comput.
  Sci. \textbf{19} (2009), no.~4, 687--736, \href
  {http://arxiv.org/abs/0808.2122} {\path{arXiv:0808.2122}}, \href
  {http://dx.doi.org/10.1017/S0960129509007646}
  {\path{doi:10.1017/S0960129509007646}}.

\bibitem[GG08]{gambino-garner}
Nicola Gambino and Richard Garner, \emph{The identity type weak factorisation
  system}, Theoret. Comput. Sci. \textbf{409} (2008), no.~1, 94--109, \href
  {http://arxiv.org/abs/0803.4349} {\path{arXiv:0803.4349}}, \href
  {http://dx.doi.org/10.1016/j.tcs.2008.08.030}
  {\path{doi:10.1016/j.tcs.2008.08.030}}.

\bibitem[Kel82]{kelly:basic-concepts}
G.~Maxwell Kelly, \emph{Basic concepts of enriched category theory}, London
  Mathematical Society Lecture Note Series, vol.~64, Cambridge University
  Press, 1982, Also available in Reprints in Theory and Applications of
  Categories, No. 10 (2005) pp. 1--136,
  \url{http://www.tac.mta.ca/tac/reprints/articles/10/tr10abs.html}.

\bibitem[KL12]{kapulkin-lumsdaine:simplicial-model}
Krzysztof Kapulkin and Peter~LeFanu Lumsdaine, \emph{The simplicial model of
  {Univalent} {Foundations} (after {Voevodsky})}, {Journal} of the European
  Mathematical Society (to appear), preprint 2012, \href
  {http://arxiv.org/abs/1211.2851} {\path{arXiv:1211.2851}}.

\bibitem[KL18]{kapulkin-lumsdaine:homotopy-theory-of-type-theories}
\bysame, \emph{The homotopy theory of type theories}, Adv. Math. \textbf{337}
  (2018), 1--38, \href {http://arxiv.org/abs/1610.00037}
  {\path{arXiv:1610.00037}}, \href
  {http://dx.doi.org/10.1016/j.aim.2018.08.003}
  {\path{doi:10.1016/j.aim.2018.08.003}}.

\bibitem[Kra15]{kraus:thesis}
Nicolai Kraus, \emph{Truncation levels in homotopy type theory}, Ph.D. thesis,
  University of Nottingham, 2015,
  \url{http://eprints.nottingham.ac.uk/28986/1/thesis.pdf}.

\bibitem[KS17]{kraus-sattler:space-valued-diagrams}
Nicolai Kraus and Christian Sattler, \emph{Space-valued diagrams,
  type-theoretically}, extended abstract, 2017, \href
  {http://arxiv.org/abs/1704.04543} {\path{arXiv:1704.04543}}.

\bibitem[Lum10]{lumsdaine:thesis}
Peter~LeFanu Lumsdaine, \emph{Higher categories from type theories}, Ph.D.
  thesis, Carnegie Mellon University, 2010,
  \url{http://peterlefanulumsdaine.com/research/Lumsdaine-2010-Thesis.pdf}.

\bibitem[Lum11]{lumsdaine:funext-blog}
\bysame, \emph{Strong functional extensionality from weak}, blog post, December
  2011,
  \url{https://homotopytypetheory.org/2011/12/19/strong-funext-from-weak/}.

\bibitem[LW15]{lumsdaine-warren:local-universes}
Peter~LeFanu Lumsdaine and Michael~A. Warren, \emph{The local universes model:
  an overlooked coherence construction for dependent type theories}, ACM Trans.
  Comput. Log. \textbf{16} (2015), no.~3, Art. 23, 31, \href
  {http://arxiv.org/abs/1411.1736} {\path{arXiv:1411.1736}}, \href
  {http://dx.doi.org/10.1145/2754931} {\path{doi:10.1145/2754931}}.

\bibitem[Mak95]{makkai:folds}
Michael Makkai, \emph{First order logic with dependent sorts, with applications
  to category theory}, \url{http://www.math.mcgill.ca/makkai/folds/}, 1995.

\bibitem[ML84]{martin-lof:bibliopolis}
Per Martin-L{\"o}f, \emph{Intuitionistic type theory}, Studies in Proof Theory.
  Lecture Notes, vol.~1, Bibliopolis, Naples, 1984.

\bibitem[ML10]{martin-lof:spreads-seminars}
\bysame, \emph{Spreads and choice sequences in type theory}, March--April 2010,
  Seminars given at Stockholm University.

\bibitem[Pal19]{palmgren:cwfs-and-folds}
Erik Palmgren, \emph{Categories with families and first-order logic with
  dependent sorts}, Ann. Pure Appl. Logic \textbf{170} (2019), no.~12, 102715,
  75, \href {http://arxiv.org/abs/1605.01586} {\path{arXiv:1605.01586}}, \href
  {http://dx.doi.org/10.1016/j.apal.2019.102715}
  {\path{doi:10.1016/j.apal.2019.102715}}.

\bibitem[RB09]{radulescu-banu}
Andrei R{\u{a}}dulescu-Banu, \emph{Cofibrations in homotopy theory}, preprint,
  2009, \href {http://arxiv.org/abs/math/0610009} {\path{arXiv:math/0610009}}.

\bibitem[Shu15]{shulman:inverse-diagrams}
Michael Shulman, \emph{Univalence for inverse diagrams and homotopy
  canonicity}, Math. Structures Comput. Sci. \textbf{25} (2015), no.~5,
  1203--1277, \href {http://arxiv.org/abs/1203.3253} {\path{arXiv:1203.3253}},
  \href {http://dx.doi.org/10.1017/S0960129514000565}
  {\path{doi:10.1017/S0960129514000565}}.

\bibitem[Szu14]{szumilo:two-models}
Karol Szumi{\l}o, \emph{Two models for the homotopy theory of cocomplete
  homotopy theories}, Ph.D. thesis, University of Bonn, 2014, \href
  {http://arxiv.org/abs/1411.0303} {\path{arXiv:1411.0303}}.

\bibitem[Ton13]{tonelli}
Simone Tonelli, \emph{Investigations into a model of type theory based on the
  concept of basic pair}, Master's thesis, Stockholm University, 2013,
  supervisors Erik Palmgren and Giovanni Sambin,
  \url{http://kurser.math.su.se/pluginfile.php/16103/mod_folder/content/0/2013/2013_08_report.pdf}.

\bibitem[Tse16]{tsementzis:first-order-logic-with-isomorphism}
Dimitris Tsementzis, \emph{First--order logic with isomorphism}, preprint,
  2016, \href {http://arxiv.org/abs/1603.03092} {\path{arXiv:1603.03092}}.

\bibitem[{Uni}13]{hott:book}
The {Univalent Foundations Program}, \emph{Homotopy type theory: Univalent
  foundations of mathematics}, \url{http://homotopytypetheory.org/book},
  Institute for Advanced Study, 2013.

\bibitem[VAG{\etalchar{+}}]{UniMath}
Vladimir Voevodsky, Benedikt Ahrens, Daniel Grayson, et~al., \emph{{\em
  UniMath}: {Univalent} {Mathematics}}, Available at
  \url{https://github.com/UniMath}.

\bibitem[vdBG11]{garner-berg:types-are-weak}
Benno van~den Berg and Richard Garner, \emph{Types are weak
  {$\omega$}-groupoids}, Proc. Lond. Math. Soc. (3) \textbf{102} (2011), no.~2,
  370--394, \href {http://arxiv.org/abs/0812.0298} {\path{arXiv:0812.0298}},
  \href {http://dx.doi.org/10.1112/plms/pdq026}
  {\path{doi:10.1112/plms/pdq026}}.

\end{thebibliography}

\end{document}